\def\Z{\mathbb{Z}}
\def\Q{\mathbb{Q}}
\newcommand{\eps}{\varepsilon}
\newcommand{\la}{\lambda}
\newcommand{\ga}{{\gamma}}
\def\N{{\mathbb N}}
\renewcommand{\to}{\rightarrow}
\numberwithin{equation}{section}
\theoremstyle{plain}
\newtheorem{teor}{Theorem}[section]
\newtheorem{ese}[teor]{Example}
\newtheorem{prop}[teor]{Proposition}
\newtheorem{lem}[teor]{Lemma}
\newtheorem{cor}[teor]{Corollary}
\newcommand{\bdm}{\begin{displaymath}}
\newcommand{\edm}{\end{displaymath}}
\newcommand{\bpb}{\begin{prob}}
\newcommand{\epb}{\end{prob}}
\newcommand{\beq}{\begin{equation}}
\newcommand{\eeq}{\end{equation}}
\newcommand{\bem}{\begin{multline}}
\newcommand{\eem}{\end{multline}}
\newcommand{\bes}{\begin{ese}}
\newcommand{\ees}{\end{ese}}
\newcommand{\bde}{\begin{defi}}
\newcommand{\ede}{\end{defi}}
\newcommand{\bpr}{\begin{prop}}
\newcommand{\epr}{\end{prop}}
\newcommand{\ble}{\begin{lem}}
\newcommand{\ele}{\end{lem}}
\newcommand{\bte}{\begin{teor}}
\newcommand{\ete}{\end{teor}}
\newcommand{\bco}{\begin{cor}}
\newcommand{\eco}{\end{cor}}
\theoremstyle{definition}
\newtheorem{defi}[teor]{Definition}
\newtheorem{remark}[teor]{Remark}
\newcommand{\R}{\mathbb{R}}
\newcommand{\T}{\mathbb{T}}
\newcommand{\calI}{{\mathcal I}}
\newcommand{\calJ}{{\mathcal J}}
\newcommand{\MM}{{\mathcal M}}
\newcommand{\NN}{{\mathcal N}}
\newcommand{\calP}{{\mathcal P}}
\newcommand{\RR}{{\mathcal R}}
\newcommand{\TT}{{\mathcal T}}
\newcommand{\al}{\alpha}
\newcommand{\kkk}{\kappa}
\newcommand{\AAA}{\boldsymbol{A}}
\newcommand{{\resonance}}{relevant self-energy cluster }
\newcommand{\de}{\delta}
\newcommand{\ZZ}{\mathbb{Z}}
\newcommand{\OO}{\mathcal{O}}
\newcommand{\Zodd}{\mathbb{Z}_\mathrm{odd}}
\newcommand{\Lm}{\Lambda}
\newcommand{\Res}{\mathrm{Res}}
\newcommand{\Ker}{\mathrm{Ker}}
\newcommand{\odd}{\mathrm{odd}}
\newcommand{\ad}{\mathrm{ad}}
\begin{document}

\title{\bf Chaotic--like transfers of energy in Hamiltonian PDEs
}
\date{}

\author{\bf
 Filippo Giuliani$^{\dag}$,  Marcel Guardia$^{\dag}$,  Pau Martin$^{\dag}$, Stefano Pasquali$^{\dag}$
\\
\small
${}^\dag$ UPC \footnote{
This project has received funding from the European Research Council (ERC) under the European Union's Horizon 2020 research and innovation programme (grant agreement No 757802). P.M. has been partially funded by the Spanish Government MINECO-FEDER grant PGC2018-100928-B-I00. M. G and S. P. have been also partially supported by
the Spanish MINECO-FEDER Grant PGC2018-098676-B-100 (AEI/FEDER/UE) and by the Catalan Institution for Research
and Advanced Studies via an ICREA Academia Prize 2019.  F.G., M. G, P. M. and S.P. have  been also partially supported by the Catalan grant 2017SGR1049. S.P. acknowledges financial support from the Spanish ``Ministerio de Ciencia, Innovaci\'on y
Universidades'', through the Mar\'{\i}a de Maeztu Programme for Units of Excellence (2015-
2019) and the Barcelona Graduate School of Mathematics.
}
}

\maketitle

\begin{abstract}
We consider the nonlinear cubic Wave, the Hartree and the nonlinear cubic
Beam equations on $\mathbb{T}^2$ and we prove the existence of
different types of solutions which exchange energy between Fourier modes in certain
time scales. This exchange can be considered ``chaotic-like'' since either the
choice of activated modes or the time spent in each transfer
can be chosen randomly. The key point of the construction of those orbits is the
existence of heteroclinic connections between invariant objects and the
construction of symbolic dynamics (a Smale horseshoe) for the Birkhoff
Normal Form truncation of those equations.
%
\end{abstract}

\tableofcontents

\section{Introduction}


A fundamental question in nonlinear Hamiltonian Partial Differential Equations (PDEs) on compact manifolds is to understand how solutions can exchange energy among Fourier modes as time evolves.
A way to capture such behaviors is to analyze the invariant objects of the equation (or a ``good approximation of it''), such as periodic orbits or invariant tori, and to understand how they structure the global dynamics through their stable and unstable manifolds and their possible intersections.
This ``dynamical systems'' approach works very well, for instance, for PDEs on the torus $\mathbb{T}^n$. Such equations can be seen as infinite dimensional systems of ODEs for the Fourier coefficients and classical perturbative arguments can be adapted to the infinite dimensional context for the analysis of stability and instability phenomena. This approach has been classically applied to the analysis of stable motions, that is KAM Theory (the literature is huge, we refer to \cite{BertiBumi} for an overview on the subject and to the reference therein). However, its application to  exchange of energy phenomena is much more recent.

In the last decade there has been a lot of activity in building exchange of energy behaviors in different Hamiltonian PDEs almost exclusively  for the nonlinear Schr\"odinger equation. They can be classified into two groups. The first one are the so-called beating solutions \cite{GrebertV11,GT,GPT,HT, HausP17}. Those are orbits that are essentially supported on a finite numbers of modes and whose energy oscillates between those modes in a certain time range.

The other group are those addressing the problem of transfer of energy. That is, constructing orbits whose energy is transferred to increasingly higher modes as time evolves \cite{Bourgain96,Kuksin96,Kuksin97b,CKSTT,Hani12,Guardia14,GuardiaK12,HaniPTV15, HausProcesi,GuardiaHP16, Pocovnicu11, Pocovnicu12, Maspero18g, Delort10, GerardG10,  GerardG11}. Those are solutions whose dynamics is essentially supported  in a large number of modes and it is related to \emph{weak turbulence}. J. Bourgain considered this problem one of the key questions in Hamiltonian PDEs for the XXI century \cite{Bourgain00b}.

Most of  these results rely on analyzing certain truncations of the Hamiltonian PDEs (its first order Birkhoff normal form truncation) and building invariant objects for such models. Note that these first order Birkhoff normal forms are typically  non-integrable Hamiltonian systems (at least in dimension greater or equal than 2)  with very complicated dynamics. Nevertheless, restricted to suitably chosen invariant subspaces those models are integrable (they have ``enough'' first integrals in involution), and therefore one can have a very precise knowledge of their orbits in such invariant subspaces. Most of the results cited above strongly rely on this integrability on subspaces to  construct unstable motions and exchange of energy solutions. This is somewhat surprising from the point of view of (finite dimensional) dynamical systems where usually unstable motions and drifting orbits must rely on non-integrability and transverse homoclinic orbits.

Can one take advantage of the non-integrability and chaoticity  of a normal form truncation to construct new types of beating solutions? Can one exploit this chaoticity/non-integrability to build new type of dynamics in Hamiltonian PDEs? This is the goal of this paper. We consider three different PDEs, a nonlinear Wave equation, a nonlinear Beam equation and the Hartree equation (see \eqref{Wave}, \eqref{Beam} and \eqref{Hartree} below) and we are able to show the non-integrability and chaoticity (symbolic dynamics) of its Birkhoff normal form. This allows us to obtain different types of exchange of energy behaviors for the actual PDEs in some time scales. In particular,
\begin{itemize}
 \item Solutions which exchange energy in a chaotic-like way between a given set of modes. By chaotic-like we refer to orbits such that oscillate in being supported in two different sets of modes and the ``oscillation times'' can be chosen ``randomly'', see Theorem \ref{TeoWaveBeam} below for the precise statement.
\item Chaotic-like transfer of energy phenomenon: those orbits are essentially supported in a finite number of modes and the support is changing as follows. At each transition two modes get deactivated (their modulus becomes essentially constant) and we can choose randomly which new two modes are activated (their modulus starts oscillating) among certain set.
See Theorem \ref{thm:traveling_periodic_beating} below for the precise statement.
\end{itemize}
These results provide different types of beating solutions which are significantly different from the previous results \cite{GT,GPT,HT}. The beating solutions in these papers  exchange energy periodically in time and they rely on integrability and existence of action-angle variables. On the contrary,  in the present paper the oscillations can be ``randomly'' chosen: in the first one with respect to the time and in the second one with respect to the choice of activated modes.


Our second result leads to transfer of energy. However, the transfer does not involve arbitrarily high modes and therefore does not lead to growth of Sobolev norms.
The methods in \cite{CKSTT} for the construction of solutions exhibiting growth of the norms seem to fit very well for the NLS model  \cite{Hani12,Guardia14,GuardiaK12,HaniPTV15, HausProcesi,GuardiaHP16}. Nevertheless, it is not clear how to apply it to other PDEs.  We think that the present work could represent a first step to strengthen the  strategy in \cite{CKSTT} so that is  applicable to other PDEs by incorporating tools and mechanisms  inspired by the theory of Arnold diffusion. In Section \ref{sec:growth} we relate our results to the approach developed in \cite{CKSTT}.

The key point to  obtain the results in this paper is to consider certain first order truncations of the PDEs which can be treated as nearly integrable Hamiltonian systems. Then, one can apply classical methods in dynamical systems such as Melnikov Theory, shadowing arguments (Lambda lemma), hyperbolic invariant sets and symbolic dynamics.

\color{black}

\subsection{Main results}\label{sec:NLWMain}
Consider the completely resonant cubic nonlinear Wave equation on the $2$-dimensional torus
\begin{equation}\label{Wave}
u_{tt}-\Delta u+u^3=0 \qquad u=u(t, x), \quad t\in\mathbb{R}, \quad x\in\mathbb{T}^2
\end{equation}
and the cubic nonlinear Beam equation
\begin{equation}\label{Beam}
u_{tt}+\Delta^2 u+u^3=0\qquad u=u(t, x), \quad t\in\mathbb{R}, \quad x\in\mathbb{T}^2.
\end{equation}
We prove the existence of special beating solutions for such PDEs, namely solutions that exhibit transfer of energy between Fourier modes. Such solutions $u(t, x)$ are mainly Fourier supported on a finite set of $4$-tuple resonant modes
 \begin{equation}\label{def:lambdaset}
 \Lambda:=\{n^{(r)}_j\}^{r=1, \dots, N}_{j=1, \dots, 4}\subset\Z^2,
 \end{equation}
 with $N\geq 2$, in the sense that
 \[
 u(t, x)=\sum_{j\in\Lambda} a_j(t)\,e^{\mathrm{i} j\cdot x}+R(t, x)
 \]
 where $R(t, x)$ is small in some Sobolev norm. The transfers of energy between modes in $\Lambda$ are \emph{chaotic-like}, in the following sense. Either
\begin{itemize}
\item[(a)] one can prescribe a finite sequence of times $t_1, \dots, t_n$ and find a solution that exists for \emph{long but finite time} exhibiting transfers of energy among the modes in $\Lambda$ at the prescribed times $t_1, \dots, t_n$
\end{itemize}
 or
 \begin{itemize}
 \item[(b)] one can prescribe a sequence of resonant tuples $\{n^{(r_n)}_{j}\}_{n=1, \dots, k}\subseteq \Lambda$ and find a  solution and a sequence of times $t_1, \dots, t_k$ such that at time zero many  modes are "switched off" (modulus of the modes almost constant) and at times $t_n$ the modes $(n^{(r_n)}_{1},n^{(r_n)}_{2},n^{(r_n)}_{3},n^{(r_n)}_{4})$ are "switched on", in the sense that they start to exchange between them.
\end{itemize}
Those phenomena are consequence of the presence of (partially) hyperbolic, finite dimensional manifolds which are approximately invariant for the equations \eqref{Wave}, \eqref{Beam} and possess stable and unstable invariant manifolds that intersect transversally within some energy level.

We look for beating solutions in the following subspace
\[
\mathcal{U}_{\odd}:=\left\{ u=\sum_{j\in\mathbb{Z}_{\odd}^2} u_j \,e^{\mathrm{i} j \cdot x} \right  \}, \quad \mathbb{Z}^2_{\odd}:=\left\{ (j^{(1)},
j^{(2)})\in\mathbb{Z}^2 \,: \,\,
j^{(1)}\,\,\text{odd}\,\,,\,\,j^{(2)}\,\,\mbox{even}\right \},
\]
which is invariant under the flow of the equations \eqref{Wave}, \eqref{Beam} (see \cite{Procesi2010ANF}).
The origin of such subspace is an elliptic fixed point and the solutions of the variational equation
\[
\ddot{u}_j+\la_j^2 u_j=0 \quad j\in\mathbb{Z}^2_\odd
\]
where $\la_j=|j|$ (for the Wave equation \eqref{Wave}) and $\la_j=|j|^2$ (for the Beam equation \eqref{Beam}),
 are superposition of decoupled harmonic oscillators, hence all solutions are periodic/quasi-periodic/almost-periodic in time. In particular there is no transfer of energy between the linear modes when time evolves.
This implies that the existence of beating solutions (if any) depend on the presence of the nonlinearities.
To catch the nonlinear effects in a neighborhood of an elliptic equilibrium we perform a Birkhoff normal form analysis. Namely
we construct changes of coordinates\footnote{It is well known that the existence of such changes of coordinates cannot be always guaranteed because of the presence of small divisor problems and / or derivatives in the nonlinear terms. At this stage, one can consider  the normal form truncation as a formal ``good first order'' of the full equation. To show that is truly a good first order in the regions of the phase space that we consider, we adopt the strategy of performing a \emph{weak version} of the Birkhoff normal form which does not remove all the non-resonant terms but a finite number of them.} that transform the Hamiltonian of the equations \eqref{Wave}, \eqref{Beam} into a Hamiltonian of the form
\begin{equation}\label{def:BNFAbs}
K=K^{(2)}+K^{(4)}+\mathcal{R},
\end{equation}
where $K^{(i)}$ are homogenous terms of degree $i$ and $\mathcal{R}$ is a function that can be considered as a small perturbation. Then, one can consider  the truncated system
\begin{equation}\label{def:HamN}
\NN:=K^{(2)}+K^{(4)},
\end{equation}
called \emph{normal form} (see \eqref{def:commonham} below for the explicit formulas),  as a model which describes the effective dynamics of equations \eqref{Wave}, \eqref{Beam} for a certain range of times.

The normal form Hamiltonian $\NN$ possesses many finite-dimensional, symplectic, invariant subspaces of the form $V_\Lambda:=\{ u_j=0\,\, \,\,\forall j\notin \Lambda \}$, where $\Lambda\subset\mathbb{Z}^2_\odd$ is a finite set.
 We shall prove the following.
\begin{teor}\label{thm:geometric}
Let $N\geq 2$. There exist sets\footnote{Actually there exist ``many sets'' with such properties. See Remark \ref{rmk:Lambda} below.} $\Lambda\subset\mathbb{Z}^2_\odd$ of cardinality $4 N$ such that $V_\Lambda$ is invariant by the dynamics of $\NN$ and the following holds.
\begin{itemize}
\item[(i)] Let $N=2$. Then, the flow $\Phi_t$ associated to $\NN$ in $V_\Lambda$ has the following property. There exists a section $\Pi$ transverse to the flow $\Phi_t$ such that the induced Poincar\'e map
\[
 \calP:\mathcal{U}=\mathring{\mathcal{U}}\subset\Pi\to\Pi
\]
has an invariant set $X\subset \mathcal{U}$ which is homeomorphic to $\Sigma\times\T^5$ where $\Sigma=\N^\ZZ$ is the set of sequences of natural numbers. Moreover, the dynamics of $\calP:X\to X$ is topologically conjugated to the following dynamics
\[
 \widetilde\calP:\Sigma\times\T^5\to \Sigma\times\T^5, \qquad  \widetilde\calP(\omega, \theta)=(\sigma\omega, \theta+f(\omega))
\]
where $\sigma$ is the usual shift $(\sigma\omega)_k=\omega_{k+1}$ and $f:\Sigma\to\R^5$ is a continuous function.

Namely $\calP$ has a Smale horseshoe of infinite symbols as a factor.
\item[(ii)] There exist $N$ partially hyperbolic  $2(N+1)$-dimensional tori $\mathbb{T}_1, \dots, \mathbb{T}_N$ invariant for the restriction of the normal form Hamiltonian $\mathcal{N}$ at the subspace $V_\Lambda$ which have the following property. Take arbitrarily small neighborhoods $V_i$ of $\mathbb{T}_i$ and  any sequence $\{p_i\}_{i\geq 1}\subset\N^\N$. Then,  there exists an orbit $u(t)$ and a sequence of times $\{t_i\}_{i\geq 1}$ such that
\[
u(t_i)\in V_{p_i}.
\]
\end{itemize}
\end{teor}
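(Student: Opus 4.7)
The plan is to prove that $\NN$, restricted to a carefully chosen finite-dimensional invariant subspace $V_\Lambda$, is a near-integrable Hamiltonian system exhibiting a partially hyperbolic structure with transverse heteroclinic connections, from which symbolic dynamics and shadowing will follow by classical dynamical systems arguments. I would select $\Lambda$ as the disjoint union of $N$ resonant \emph{rectangles} in $\mathbb{Z}^2_\odd$, i.e.\ quadruples $\{n^{(r)}_1,n^{(r)}_2,n^{(r)}_3,n^{(r)}_4\}$ satisfying the momentum and frequency resonance conditions appropriate for each equation. The combinatorial choice must also ensure that the genuinely resonant quartic monomials among the $4N$ selected modes are exactly those of the individual rectangles (together with integrable mass-mass interactions), so that on $V_\Lambda$ the normal form $\NN$ takes the shape of $N$ coupled ``toy models'' plus action-only terms that merely produce frequency shifts.

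Each single-rectangle toy model, after symplectic reduction by its Noether integrals (the $L^2$-mass of the rectangle, its $\Z^2$-momentum, and the action splittings preserved by the resonant monomial), is an integrable one-degree-of-freedom pendulum-type system possessing two hyperbolic equilibria joined by a coincident homoclinic loop. Undoing the reduction, each equilibrium lifts to a partially hyperbolic invariant torus of dimension $2(N+1)$ in $V_\Lambda$, and these are the tori $\mathbb{T}_1,\dots,\mathbb{T}_N$ of (ii). The crucial analytic step is then a Melnikov-type computation showing that, once the mass-mass couplings between distinct rectangles and the higher-order resonant corrections are taken into account, the coincident separatrices split in a nondegenerate way, producing transverse intersections $W^u(\mathbb{T}_i)\pitchfork W^s(\mathbb{T}_j)$ inside a common energy level. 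The arithmetic selection of the rectangles has to be arranged so that the relevant Melnikov integrals do not vanish identically.

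For part (i) with $N=2$, I would construct the horseshoe via a Shilnikov-type variant of the Smale--Birkhoff theorem adapted to normally hyperbolic invariant tori. Fixing a section $\Pi$ transverse to the flow near $\mathbb{T}_1$, the transverse homoclinic intersection yields a return map $\calP$ whose restriction to a suitable invariant subset is conjugate to a shift. Because the return time along a homoclinic excursion grows without bound as the orbit approaches $\mathbb{T}_1$, the number of revolutions around $\mathbb{T}_1$ between two consecutive excursions can be taken to be any natural number, and this accounts for the infinite alphabet $\Sigma=\N^\ZZ$. The invariant set is homeomorphic to $\Sigma\times\T^5$, where the $\T^5$ factor records the five transverse angles conjugate to the conserved quantities that remain integrable on $\Pi$ (the masses and momenta of the two rectangles modulo the energy level and the section). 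The skew-product form $\widetilde\calP(\omega,\theta)=(\sigma\omega,\theta+f(\omega))$ reflects the fact that these angles simply advance by a rigid rotation per excursion, with a rotation vector that depends continuously on the itinerary through the dependence of the transit times on the base point.

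For part (ii), the same Melnikov analysis applied to each pair $(\mathbb{T}_i,\mathbb{T}_j)$ provides transverse heteroclinic orbits, and a standard shadowing argument based on the Lambda lemma for normally hyperbolic invariant manifolds then produces, for any prescribed sequence $\{p_i\}\subset\N^\N$ and any chosen neighborhoods $V_i$ of $\mathbb{T}_i$, an orbit $u(t)$ and times $\{t_i\}$ with $u(t_i)\in V_{p_i}$. The main obstacle throughout is the Melnikov/non-integrability computation: the invariant manifolds are high-dimensional and the splitting is encoded by a periodic function on a high-dimensional torus, so establishing nondegeneracy requires a delicate and explicit choice of the rectangles $\{n^{(r)}_j\}$, together with an effective evaluation of the associated Melnikov potential that rules out any hidden symmetry forcing the separatrices to remain coincident.
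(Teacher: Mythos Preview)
Your overall architecture---choose $\Lambda$ as $N$ resonant tuples, reduce by the first integrals to an $N$-degree-of-freedom near-integrable system, apply Melnikov theory, then invoke horseshoe and Lambda-lemma arguments---is exactly the paper's route. Two substantive points, however, are missing from your plan and would block the argument as written.

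First, the separatrix geometry of the unperturbed reduced system is not a homoclinic loop: each one-degree-of-freedom factor has saddles at $K_j=0$ and at $K_j=1$ joined by \emph{heteroclinic} arcs, so the $\varepsilon=0$ system has coincident heteroclinic manifolds between distinct equilibria $\mathfrak e_\pm^{(0)}$ and $\mathfrak e_\pm^{(1)}$. A direct Melnikov computation therefore only yields transverse heteroclinics between saddles on different energy strata, which is useless for a horseshoe. The paper handles this via a two-parameter trick: it deforms the integrable part by an auxiliary $\delta$ so that the new unperturbed system already possesses a genuine homoclinic to $\mathfrak e_\pm^{(0)}$; the homoclinic Melnikov potential is then computed by approximating the $\delta$-homoclinic by the concatenation of the two $\delta=0$ heteroclinics, and only at the end one sets $\delta=\varepsilon$. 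The outcome is an explicit non-degeneracy condition---$d_{12}\neq 0$ for $N=2$, $\det\mathcal D\neq 0$ in general---on the action--action couplings $d_{ij}K_iK_j$ between distinct rectangles (these are part of the quartic normal form, not ``higher-order corrections'').

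Second, neither the horseshoe in (i) nor the transition chain in (ii) is built from homoclinics to the saddles or to the tori $\mathbb T_j$. Transverse homoclinics to equilibria of a flow do not directly give symbolic dynamics, and the $\mathbb T_j$ in (ii) are not lifts of equilibria. The paper instead passes to nearby hyperbolic \emph{periodic orbits} $\mathtt P_{\varepsilon,h,k}$ in the reduced system (one per rectangle, on the plane $\{K_\ell=0,\ \ell\neq k\}$ at small energy $h>0$), shows by a further comparison with the saddle Melnikov function that their invariant manifolds intersect transversally, and then applies Moser's horseshoe construction for (i) and the Fontich--Martin inclination lemma for periodic orbits for (ii). The tori $\mathbb T_1,\dots,\mathbb T_N$ of the statement are the lifts of these periodic orbits after undoing the symplectic reduction, and the $\mathbb T^5$ factor in (i) arises the same way; all the dynamical work is done in the reduced system.
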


\begin{remark}\label{rmk:Lambda}
The set $\Lambda\subset\ZZ^2$ is the union of $N$ resonant tuples (with certain properties). The ``shape'' of the resonant tuples in $\mathbb{Z}^2$ are different for the Beam and Wave Equations. For the Beam equation, as for the cubic nonlinear Schr\"odinger equation, are rectangles with vertices in  $\ZZ^2$. For the Wave equation are modes $n_1,n_2,n_3,n_4\in\mathbb{Z}^2$, which satisfy
\[
 n_1-n_2+n_3-n_4=0,\quad | n_1|-|n_2|+|n_3|-|n_4|=0.
\]
Those tuples form a parallelogram inscribed on an ellipse with foci at $F_1=0$ and $F_2=n_1+n_2$ and semi-major axis $a=(|n_1|+|n_2|)/2$.

Let us explain in which sense there are \emph{many sets $\Lambda\subset\ZZ^2$} for which Theorem \ref{thm:geometric} (and also Theorems \ref{TeoWaveBeam} and \ref{thm:traveling_periodic_beating} below) are satisfied. Theorem \ref{thm:geometric} relies on  proving the transverse intersection of certain invariant manifolds. This transversality is proven by  perturbative methods and, therefore, we need $\mathcal{N}|_\Lambda$ to be close to integrable. For the Wave \eqref{Wave} and Beam \eqref{Beam} equations this relies on choosing appropriate sets $\Lambda$.
The precise statement goes as follows. Fix $\eps>0$ (which will measure the closeness to integrability). Then, for any $R\gg 1$,  one can  choose the resonant tuples in the set $\Lambda$ generically in the annulus
 \[
  R(1-\eps)\leq |n|\leq R (1+\eps).
 \]
Generically means that one has to exclude the zero set of a finite number of algebraic varieties (and the number of those is independent of $\eps$ and $R$).
\end{remark}

\begin{figure} 
\begin{center}
\includegraphics[width=0.5\textwidth]{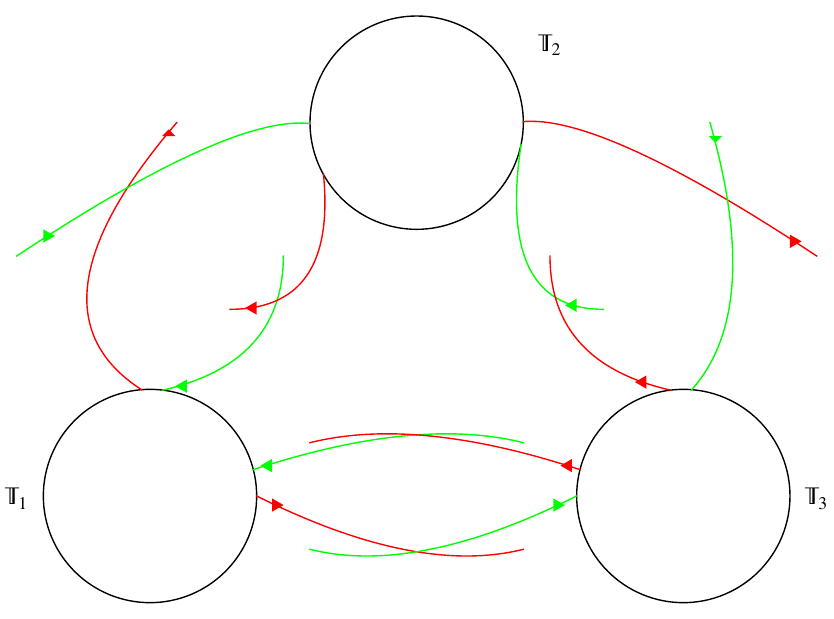}
\end{center}
\caption{Invariant tori with their stable (green) and unstable (red) invariant manifolds. This transition chain of tori allows (plus the Lambda lemma) gives the orbits of Item $(ii)$ of Theorem \ref{thm:geometric} which visit the invariant tori with any prescribed orbit}
\label{fig:SeqTori}
\end{figure}

The items $(a)$ and $(b)$ above are consequences respectively of items $(i)$ and $(ii)$ in Theorem \ref{thm:geometric}. Let us make some remark on the type of dynamics for the normal form Hamiltonian $\mathcal{N}$.
\begin{itemize}
 \item Item (i) of Theorem \ref{thm:geometric} gives the existence of invariant sets for the Birkhoff normal form truncation which possess chaotic dynamics. Such chaotic dynamics is obtained through the classical Smale horseshoe dynamics for a suitable Poincar\'e map. This invariant set is constructed in the neighborhood of homoclinic points to an invariant tori orbit (which becomes a periodic orbit for a suitable symplectic reduction). The (infinite) symbols codify the closeness to the invariant manifolds of the periodic orbit, and therefore the larger the symbol is the longer the return time to the section $\Pi$ is. In particular, one can construct orbits which take longer and longer time to return $\Pi$ for higher iterates.

 Even if the theorem, as stated, gives the existence of one invariant set, one actually can construct  a Smale horseshoe at each energy level.

 \item Item (ii) of Theorem \ref{thm:geometric} gives orbits which visit (possibly infinitely many times) a given set of invariant tori in any prescribed order. The construction of such orbits follows the classical strategy of Arnold Diffusion \cite{Arnold64}. That is, is a consequence of the existence of a chain of invariant tori (again periodic orbits in a suitable symplectic reduction) connected by transverse heteroclinic connections (see Figure \ref{fig:SeqTori}) plus a classical shadowing argument (Lambda lemma, see for instance \cite{FontichM98}).

 This is radically different from the approach in \cite{CKSTT, GuardiaK12}. In these papers, the authors consider the normal form associated to the nonlinear cubic Schr\"odinger equation. This normal form  has ``extra integrability'', due to the symmetries of the model,
and the considered heteroclinic orbits are not transverse. Therefore,  the associated  shadowing arguments are more delicate.  We refer to \cite{DelshamsSZ18} for a thorough analysis of non-transverse shadowing arguments. In particular, the authors of this paper show that the number of dimensions needed for the shadowing depend on the number of tori the orbits have to visit (what they called the \emph{dropping the dimension} mechanism).


 As for item (i) one can obtain the explained behavior at each energy level. Indeed, the invariant tori come in families parameterized by the energy level and therefore one can obtain this shadowing behavior at each energy level as well

\end{itemize}

Note that the knowledge of the orbits obtained in Theorem \ref{thm:geometric} is \emph{for all time}. If one adds the errors dropped from the original equation, that is $\mathcal{R}$ in \eqref{def:BNFAbs}, one can obtain orbits for equations \eqref{Wave}, \eqref{Beam} which follow the orbits of  Theorem \ref{thm:geometric} for some time scales. Next theorem gives solutions of equations \eqref{Wave} and \eqref{Beam} which (approximately) behave as those obtained in Item $(i)$ of Theorem \ref{thm:geometric}.

\begin{teor}\label{TeoWaveBeam}
Let $N=2$ and fix  $0<\varepsilon\ll 1$. Then for a large choice of sets $\Lambda=\{n_i\}_{i=1}^8\subset\mathbb{Z}^2$ as in \eqref{def:lambdaset}
there exists $\mathtt{T}_0\gg 1$ such that for all $\mathtt{T}\geq \mathtt{T}_0$ there exists $M_0>0$ such that for all $M\geq M_0$ there exists $\delta_0=\delta_0(M, \varepsilon, \mathtt{T})>0$ such that $\forall \delta\in (0, \delta_0)$ the following holds.

Choose any   $k\geq  1$ and any sequence $\{ m_j\}_{j=1}^k$ such that $m_j\geq M_0$ and\footnote{The condition $\sum_{j=1}^k m_j\leq M-k$ is just to ensure that the sequence $\{t_j\}_{j=1}^k$ defined below belongs to the interval  $[0, \delta^{-2} M \mathtt{T}]$.} $\sum_{j=1}^k m_j\leq M-k$.  Then, there exists a solution $u(t, x)$ of \eqref{Wave}, \eqref{Beam} for $t\in [0, \delta^{-2} M \mathtt{T}]$  of the form
\begin{equation*}
u(t, x)=\frac{\de}{\sqrt{2}}\sum_{i=1}^ 8 |n_i|^{-\kappa/2}\left(a_{n_i}( t)\,e^{\mathrm{i} n_i \cdot x}+\overline{a_{n_i}}(t)\,e^{-\mathrm{i} n_i \cdot x}\right)+R_1(t, x)
\end{equation*}
where $\kappa=1$ for the Wave equation \eqref{Wave} and $\kappa=2$ for the Beam equation \eqref{Beam}, and  $\sup_{t\in [0, \delta^{-2} M \mathtt{T}]}\lVert R_1 \rVert_{H^s(\T^2)}\lesssim_s \delta^{3/2}$ for all $s\geq 0$. The first order $\{ a_{n_i}\}_{i=1\dots 8}$
satisfies
\[
\begin{split}
\lvert a_{n_1}(t) \rvert^2&=\lvert a_{n_3} (t) \rvert^2=1- \lvert a_{n_2}(t) \rvert^2=1-\lvert a_{n_4} (t) \rvert^2,\\
\lvert a_{n_5}(t) \rvert^2&=\lvert a_{n_7} (t) \rvert^2=1- \lvert a_{n_6}(t) \rvert^2=1-\lvert a_{n_8} (t) \rvert^2,
\end{split}
\]
and has the following behavior.
\begin{itemize}
 \item \textbf{First resonant tuple (Periodic transfer of energy):} There exists a $\mathtt{T}$-periodic function $Q(t)$, independent of $\delta$ and satisfying $\min_{[0,\mathtt{T}]} Q(t) < \eps$ and $\max_{[0,\mathtt{T}]} |Q(t)| > 1-\eps$, such that
 \[\lvert a_{n_1}(t) \rvert^2= Q (\delta^{2} t)+R_2(t)\qquad \text{with} \qquad \sup_{t\in \mathbb{R}} \lvert R_2(t) \rvert\leq \varepsilon.\]
\item \textbf{Second resonant tuple (Chaotic-like transfer of energy):} There exists a sequence of times $\{t_j\}_{j=0}^{k}$ satisfying $t_0=0$ and
\[
 t_{j+1}=t_j+\de^{-2}\mathtt{T}\left(m_j+\theta_j\right)\qquad \text{with}\qquad \theta_j\in (0,1)
\]
such that
\[
 \lvert a_{n_5}( t_j) \rvert^2=\frac{1}{2}.
\]
Moreover, there exists another sequence $\{\bar t_j\}_{j=1\ldots k}$ satisfying $t_j<\bar t_j<t_{j+1}$ such that,
\begin{equation}\label{eq:TheoRandomTimeOsc1}
\begin{split}
\lvert a_{n_5}(t) \rvert^2&>\frac{1}{2}\qquad \text{for}\qquad t\in (t_j,\bar t_j)\\
\lvert a_{n_5}(t) \rvert^2&<\frac{1}{2}\qquad \text{for}\qquad t\in (\bar t_j,t_{j+1})
\end{split}
%
\end{equation}
and
\begin{equation}\label{eq:TheoRandomTimeOsc2}
\sup_{t\in (t_j,\bar t_j)}\lvert a_{n_5}(t) \rvert^2\geq 1-\varepsilon\qquad \text{ and }\qquad
\inf_{t\in (\bar t_j, t_{j+1})}\lvert a_{n_5}(t) \rvert^2\leq \varepsilon.
\end{equation}

\end{itemize}
\end{teor}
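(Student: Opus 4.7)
The plan is to build the desired solution by combining three ingredients: a weak Birkhoff normal form reduction bringing the equation to a Hamiltonian of the form $K=K^{(2)}+K^{(4)}+\mathcal{R}$ as in \eqref{def:BNFAbs}; the geometric information of Theorem \ref{thm:geometric}(i) applied to the subspace $V_\Lambda$ associated to the two prescribed resonant tuples; and a shadowing/approximation argument which transfers the information from the truncated dynamics of $\NN$ to the full PDE on the time interval $[0,\delta^{-2}M\mathtt{T}]$.

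First I would fix $\Lambda=\{n_1,\ldots,n_8\}\subset\Z^2_{\odd}$ as the union of two resonant tuples selected generically in a large annulus according to Remark \ref{rmk:Lambda}, so that $V_\Lambda$ is invariant under $\NN$ and the transversality hypotheses of Theorem \ref{thm:geometric}(i) hold. The rescaling $u\mapsto\delta u$, together with the $|n_i|^{-\kappa/2}$ weights dictated by the symplectic form of \eqref{Wave}, \eqref{Beam}, turns $K^{(4)}$ into a perturbation of size $\delta^2$ of the quadratic part and makes the remainder $\mathcal{R}$ of strictly higher order in $\delta$. Passing to interaction variables removes the fast linear rotation generated by $K^{(2)}$ and exhibits the slow time scale $\delta^{-2}$. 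In these slow variables, restricted to $V_\Lambda$, the normal form $\NN$ splits into the sum of two independent one-tuple Hamiltonians (the cross-terms being integrals of the total actions), each having the standard toy-model phase portrait with two hyperbolic periodic objects connected by a homoclinic loop after symplectic reduction.

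On the first tuple I would pick an initial datum on a closed orbit of this toy model traversing a full cycle in slow time $\mathtt{T}$, whose amplitude $|a_{n_1}|^2$ oscillates between values bounded by $\eps$ and $1-\eps$; this produces the $\mathtt{T}$-periodic profile $Q(s)$ with $\lvert a_{n_1}(t)\rvert^2=Q(\delta^{2}t)+R_2(t)$, the error $R_2$ being controlled by the shadowing step below. On the second tuple, Theorem \ref{thm:geometric}(i) gives a Poincar\'e map $\calP$ on a section $\Pi$ transverse to the flow, with an invariant set topologically conjugated to $\sigma\times(\mathrm{rotation})$ on $\Sigma\times\T^5$, and $\Pi$ can be chosen so that $\{\lvert a_{n_5}\rvert^2=1/2\}$ lies in it. The symbol $m_j\in\N$ codes how many slow periods the orbit lingers near the invariant periodic object before making a homoclinic excursion, which translates into a slow return time approximately $\mathtt{T}(m_j+\theta_j)$ with $\theta_j\in(0,1)$, and hence into the physical increment $t_{j+1}-t_j=\delta^{-2}\mathtt{T}(m_j+\theta_j)$. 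The monotonicity statement \eqref{eq:TheoRandomTimeOsc1} and the extremal values \eqref{eq:TheoRandomTimeOsc2} are then read directly on the geometry of the horseshoe: between two consecutive crossings of $\{\lvert a_{n_5}\rvert^2=1/2\}$ the orbit first shadows the ``high'' periodic object (where $\lvert a_{n_5}\rvert^2$ is within $\eps$ of $1$) and then the ``low'' one (where it is within $\eps$ of $0$), so that $\bar t_j$ is the unique intermediate return to the section.

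The last and main step is the shadowing/persistence argument needed to upgrade these orbits of $\NN$ to actual solutions of \eqref{Wave} or \eqref{Beam}. The horseshoe of Theorem \ref{thm:geometric}(i) is a (suspended) normally hyperbolic invariant set, hence structurally stable under small $C^1$ perturbations of the reduced Hamiltonian. One quantifies this stability in terms of the size of $\mathcal{R}$ in suitable Sobolev norms, uses energy/Gronwall estimates to control the $H^s$ distance between the true PDE solution and its normal form approximation over the window $[0,\delta^{-2}M\mathtt{T}]$, and feeds the extra gain in $\delta$ produced by the weak Birkhoff step into the hyperbolic estimates so that the perturbed Poincar\'e map still realizes the prescribed itinerary $\{m_j\}$; the bound $\lVert R_1\rVert_{H^s}\lesssim_s\delta^{3/2}$ arises as the balance between the $\delta^{2}$ size of the nonlinear correction and a $\delta^{-1/2}$ loss coming from the length $\delta^{-2}M\mathtt{T}$ of the shadowing interval. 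The principal obstacle is to keep the hyperbolic estimates uniform both in $k$ and in the weights $m_j$: the interval $[0,\delta^{-2}M\mathtt{T}]$ has to accommodate $\sum_j m_j\le M-k$ full cycles of the homoclinic dynamics, and it is exactly the hypotheses $m_j\ge M_0$ and $\sum_j m_j+k\le M$ that guarantee a good separation between the symbolic transitions, allowing the choice $\delta_0=\delta_0(M,\eps,\mathtt{T})$ to force the perturbation to remain dynamically negligible on the whole window.
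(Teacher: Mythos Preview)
Your overall architecture is right, but there are two genuine gaps.

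First, your claim that ``the normal form $\NN$ splits into the sum of two independent one-tuple Hamiltonians (the cross-terms being integrals of the total actions)'' is incorrect, and in fact the opposite is the whole point. After the symplectic reduction to the $2$-degrees-of-freedom Hamiltonian $\mathcal{H}(\psi_1,\psi_2,K_1,K_2)$ in \eqref{calham0}, the two tuples decouple only at $\varepsilon=0$; the term $\varepsilon\, d_{12}K_1K_2$ is \emph{not} a function of conserved quantities, and it is precisely this coupling that breaks integrability and creates the horseshoe via Melnikov theory (Theorem \ref{thm:SplittingHomo2Rect} and Proposition \ref{prop:SplittingPO2Rect}). The periodic orbit around which the horseshoe is built is a genuine periodic orbit of the \emph{coupled} $2$-d.o.f.\ system, sitting at $\{K_2=0\}$ with $(\psi_1,K_1)$ on a closed level set; the homoclinic excursion is in the $K_2$-direction. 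So the first tuple looks periodic and the second looks chaotic not because they are independent but because the horseshoe is constructed near this particular periodic orbit of the full reduced system.

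Second, and more seriously, your persistence step is based on a wrong premise. You write that the horseshoe ``is a (suspended) normally hyperbolic invariant set, hence structurally stable under small $C^1$ perturbations''. In the full infinite-dimensional phase space the horseshoe has infinitely many \emph{elliptic} normal directions (all the modes outside $\Lambda$), so it is not normally hyperbolic and there is no structural-stability argument available; the paper says this explicitly in the comments after Theorem \ref{thm:traveling_periodic_beating}. The paper does not try to make the horseshoe persist at all. Instead it takes a single orbit $r(t)$ of the resonant model lying in the horseshoe, rescales it to $r^\delta(t)=\delta\, r(\delta^2 t)$, and then runs a finite-time Gronwall/bootstrap argument (Proposition \ref{approximationargument}) to show that the true PDE solution with the same initial datum stays $O(\delta^2)$-close to $r^\delta$ on $[0,\delta^{-2}T_0]$. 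No hyperbolicity is used here: the remainder $\mathcal{R}'$ is $O(\delta^5)$, the linearization of $H_{\Res}$ is $O(\delta^2)$, and Gronwall over a time window of length $\delta^{-2}T_0$ closes. This is also why $\delta_0$ depends on the total length $M\mathtt{T}$ but the error bound does not degrade with $k$ or with the individual $m_j$: there is no accumulation of hyperbolic estimates across transitions, just a single Gronwall on the whole interval. Your proposed mechanism for the $\delta^{3/2}$ bound (a $\delta^{-1/2}$ loss from the interval length) is accordingly not what is happening.
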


Note that the first order $\{\delta a_{n_i}\}_{i=1\ldots8}$ are the trajectories obtained in Theorem \ref{thm:geometric}--(i) which belong to the horseshoe. This phenomenon is genuinely nonlinear since for the linear equation the \emph{actions} $|a_{n_i}(t)|^2=\text{constant}$.

The first resonant tuple has a periodic beating behavior similar to \cite{GT}. On the contrary, the behavior of the second resonant tuple is radically different. The modulus of the modes $a_{n_i}$, $i=5,6,7,8$ ``oscillate'' from being   $\OO(\eps)$ to  being $\OO(\eps)$-close to 1. However,  the sequence of times $\{t_j\}$ in which all the modes in the tuple have the same modulus, that is
\[
\lvert a_{n_5}( t_j) \rvert^2=\lvert a_{n_6}( t_j) \rvert^2=\lvert a_{n_7}( t_j) \rvert^2= \lvert a_{n_8}( t_j) \rvert^2=\frac{1}{2},
\]
(and the modulus of $a_{n_5}$ and $ a_{n_7}$ is increasing) can be chosen randomly as any (large enough) integer multiple of $\mathtt{T}$.


Finally, let us explain the role of the constant $\mathtt{T}$ in the theorem. To build the horseshoe in Theorem \ref{thm:geometric}, we apply a symplectic reduction to $\mathcal{N}|_\Lambda$ (see \eqref{def:HamN}) which leads to a 2 degree of freedom Hamiltonian. For this Hamiltonian we construct a periodic orbit with transverse invariant homoclinic orbits. The time $\mathtt{T}$ is the period of this periodic orbit and can be taken arbitrarily big.

Now we state the second main result of this paper, which gives solutions of equations \ref{Wave} and \ref{Beam} which (approximately) behave as those obtained in Item $(ii)$ of Theorem \ref{thm:geometric}.
\begin{teor}\label{thm:traveling_periodic_beating}
Let $N\geq 2$,  $k\gg 1$, $0<\eps\ll 1$.  Then for a large choice of a set $\Lambda:=\{n^{(r)}_j\}^{r=1, \dots, N}_{j=1, \dots, 4} \subset\mathbb{Z}^2$ as in \eqref{def:lambdaset} there exist $\delta_0>0$, $T>0$,  such that for any $\de\in (0,\de_0)$ and any  sequence $\omega=(\omega_1, \dots, \omega_k), \omega_i\in \{1, \dots, N  \}$, there exists
 a solution $u(t, x)$ of the \eqref{Wave}, \eqref{Beam}
 of the form
\begin{equation*}
u(t, x)=\frac{\de}{\sqrt{2}}\sum_{n\in \Lambda} |n_i|^{-\kappa/2}\left(a_{n}(t)\,e^{\mathrm{i} n \cdot x}+\overline{a_{n}}(t)\,e^{-\mathrm{i} n \cdot x}\right)+R_3(t, x) \qquad t\in [0, \delta^{-2} T]
\end{equation*}
where $\kappa=1$ for the Wave equation \eqref{Wave} and $\kappa=2$ for the Beam equation \eqref{Beam}, $ \sup_{t\in[0, \delta^{-2} T]} \| R_3(t, x) \|_{H^s(\mathbb{T}^2)}\lesssim_s \delta^{3/2}$ for all $s\geq 0$,
and the  first order $\{ a_{n}\}_{n\in\Lambda}$,
has the following behavior:

There exist  some $\al_p,\beta_p$ satisfying
\[
\al_p<\beta_p<\al_{p+1} \qquad \text{ and }\qquad \beta_{p}-\alpha_p\gtrsim |\ln\eps|, \quad p=1, \dots, k
\]
such that, if one splits the time interval as $[0, \delta^{-2} T]=I_1\cup J_{1, 2}\cup I_2\cup J_{2, 3}\cup\dots \cup J_{k-1, k }\cup I_k$
with
\[
I_p=[\delta^{-2}\al_p, \delta^{-2}\beta_p],\quad J_{p,p+1}=[\delta^{-2}\beta_p,\delta^{-2}\al_{p+1}],
\]
such that $\{a_{n}\}_{n\in\Lambda}$ satisfies:
\begin{itemize}
\item In the \textbf{beating-time} intervals $I_p$, there exists $t_p>0$ such that
\begin{align*}
\sup_{t\in I_p} \Big| |a_{n^{(\omega_p)}_1}(t)|^2-Q(\delta^2 t-t_p)   \Big|&\le\eps \\
\sup_{t\in I_i}  |a_{n^{(\omega_i)}_1}(t)|^2
& \leq \varepsilon \qquad &\text{ for }&\qquad i\neq p,
\end{align*}
where $Q(t)$ is the periodic function given by Theorem \ref{TeoWaveBeam}.
\item In the \textbf{transition-time} intervals $J_{p, p+1}$,
\begin{align*}
\sup_{t\in J_i} |a_{n^{(\omega_i)}_1}(t)|^2&\geq 1-\eps \qquad \quad&\text{ for }&\qquad i= 1, \dots, N,\\
\end{align*}
\end{itemize}
and $|a_{n^{(\omega_i)}_1}(t)|^2=|a_{n^{(\omega_i)}_3}(t)|^2$ , $|a_{n^{(\omega_i)}_r}(t)|^2=1-|a_{n^{(\omega_i)}_{1}}(t)|^2$ with $r=2, 4$.
\end{teor}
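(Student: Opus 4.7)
My plan is to combine item $(ii)$ of Theorem \ref{thm:geometric} (which provides a transition chain of $N$ partially hyperbolic tori $\mathbb{T}_1,\ldots,\mathbb{T}_N$ in the normal form truncation $\mathcal{N}|_{V_\Lambda}$, each one corresponding to "switching on" a single resonant tuple while the remaining $N-1$ tuples remain frozen) with the same error control that is used to prove Theorem \ref{TeoWaveBeam}. The key conceptual point is that the tori $\mathbb{T}_r$ lie inside the finite-dimensional invariant subspace $V_\Lambda$, that on $V_\Lambda$ the Hamiltonian $\mathcal{N}$ is effectively a nearly integrable system for which classical Arnold-diffusion-type arguments apply, and that the transverse heteroclinic connections between the $\mathbb{T}_r$ provided by Theorem \ref{thm:geometric} allow the Lambda lemma to produce an orbit that visits them in any prescribed order $\omega_1,\ldots,\omega_k$.

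First I would fix the Birkhoff normal form decomposition \eqref{def:BNFAbs} and apply the rescaling $u=\delta v$, so that the quartic normal form $K^{(4)}$ becomes the leading dynamics on the time scale $\delta^{-2}T$ while the remainder $\mathcal{R}$ is of relative size $\delta$. On the invariant subspace $V_\Lambda$ the resulting Hamiltonian decouples (modulo small coupling between distinct tuples) into $N$ copies of the $4$-dimensional integrable toy model associated with a single resonant rectangle/parallelogram, and the torus $\mathbb{T}_r$ is built by placing the $r$-th tuple on the periodic beating orbit whose action produces the function $Q(t)$ of Theorem \ref{TeoWaveBeam}, while keeping the other $N-1$ tuples pinned at an elliptic equilibrium where $|a_{n_1^{(i)}}|^2\le \eps$. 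The extra angles give the $\mathbb{T}^{N+1}$ factor in the dimension count $2(N+1)$.

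Next, given the word $\omega\in\{1,\ldots,N\}^k$, item $(ii)$ of Theorem \ref{thm:geometric} yields an orbit $v(t)$ of $\mathcal{N}|_{V_\Lambda}$ together with times $\tau_0<\tau_1<\cdots<\tau_k$ such that $v(\tau_p)$ lies in an $\eps$-neighborhood of $\mathbb{T}_{\omega_p}$. Since the tori are partially hyperbolic and the heteroclinic connections are transverse, the Lambda lemma (see e.g.\ \cite{FontichM98}) ensures that the orbit spends a time $\gtrsim |\ln\eps|$ in each neighborhood and a uniformly bounded time $T_{\mathrm{tr}}$ along each heteroclinic segment; after unscaling this gives the partition $[0,\delta^{-2}T]=I_1\cup J_{1,2}\cup\cdots\cup I_k$ with $\beta_p-\alpha_p\gtrsim |\ln\eps|$. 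Inside $I_p$ the orbit shadows $\mathbb{T}_{\omega_p}$, so the $\omega_p$-th tuple's modulus follows $Q(\delta^2 t-t_p)$ up to $\eps$ while the other tuples satisfy $|a_{n_1^{(i)}}|^2\le \eps$; along the heteroclinics $J_{p,p+1}$ the orbit passes through the "resting" configurations where $|a_{n_1^{(i)}}|^2\ge 1-\eps$. Inserting $v(t)$ into the ansatz for $u(t,x)$, inverting the Birkhoff transformation and controlling the remainder $R_3$ by the same weak-normal-form energy estimates used for Theorem \ref{TeoWaveBeam} (Gronwall applied to the variational equation along the bounded orbit $v$, with source of size $\delta^3$) yields the bound $\|R_3\|_{H^s}\lesssim_s \delta^{3/2}$ on the whole interval $[0,\delta^{-2}T]$.

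The main obstacle I anticipate is the uniform-in-$k$ shadowing estimate. Each visit near a torus $\mathbb{T}_{\omega_p}$ loses hyperbolic splitting accuracy by a multiplicative constant coming from the Lambda lemma, and each transition introduces an $O(\delta^{3/2})$ deviation between the true PDE solution and the normal form orbit; after $k$ transitions these must still be swallowed by the smallness of the initial $\eps$-window around the heteroclinic. The transversality of the heteroclinic connections granted by item $(ii)$ of Theorem \ref{thm:geometric}, in contrast with the degenerate NLS situation of \cite{CKSTT,GuardiaK12}, is precisely what allows this iteration to close: it produces a cone field along the chain, so the shadowing constants depend only on $k$, $N$ and $\eps$ (hence on $T$), not on $\delta$. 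With $T=T(k,N,\eps)$ fixed in this way, $\delta_0$ can be chosen small enough that the error $R_3$ remains of size $\delta^{3/2}$ throughout $[0,\delta^{-2}T]$, which establishes the statement simultaneously for the Wave equation \eqref{Wave} and the Beam equation \eqref{Beam}.
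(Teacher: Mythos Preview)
Your overall strategy---use item $(ii)$ of Theorem \ref{thm:geometric} to obtain a shadowing orbit of the normal form along the prescribed sequence of tori, then transfer it to the PDE via the approximation argument of Proposition \ref{approximationargument}---is exactly the paper's approach; see Sections \ref{sec:MultiSplitting}--7.3. Two points are worth correcting. First, the other $N-1$ tuples are not pinned at an \emph{elliptic} equilibrium but at the \emph{hyperbolic} saddle $K_i=0$ (the point $P_-$ in \eqref{def:saddledown} after blow-down); this hyperbolicity is precisely what makes the tori $\mathbb{T}_r$ partially hyperbolic and the Lambda lemma applicable. Second, the paper performs the symplectic reduction of Section \ref{sectionsymplecticreduction} so that the tori $\mathbb{T}_r$ become genuine hyperbolic periodic orbits $\mathtt{P}_{\varepsilon,h,r}$ of an $N$-degree-of-freedom Hamiltonian, which allows a direct invocation of the Inclination Lemma of \cite{FontichM01} (Proposition \ref{prop:diffusion_orbits}) rather than working with the tori themselves. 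Your concern about uniform-in-$k$ shadowing is unnecessary: $k$ is fixed, the normal-form orbit exists for all time by Proposition \ref{prop:diffusion_orbits} before any $\delta$ enters, and the PDE error is controlled by a single Gronwall on $[0,\delta^{-2}T]$ with $T=T(k,N,\eps)$ already determined.
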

The solutions obtained in this theorem are approximations of those obtained in Item $(ii)$ of Theorem \ref{thm:geometric} and possess two different regimes. The orbits of Theorem \ref{thm:geometric} are obtained by shadowing a sequence of invariant tori (periodic orbits for a suitable symplectic reduction) connected by transverse heteroclinic orbits. Then, what we call \emph{beating-time} intervals are the time intervals where the orbit is in a small neighborhood of each of the periodic orbits. In this regime, (the moduli of) some modes oscillate periodically, whereas the others are at rest.  The \emph{transition-time} intervals correspond to time intervals in which the orbit is ``traveling'' along a heteroclinic orbit and is ``far'' from all periodic orbits. In this regime, all modes undergo a drastic change to drift along the heteroclinic connection.

\paragraph{Hartree equation.} Similar results hold true also for  the Hartree equation
\begin{equation}\label{Hartree}
\mathrm{i} u_t=\Delta u+(V\star \lvert u \rvert^2)\,u\qquad u=u(t, x), \quad t\in\mathbb{R}, \quad x\in\mathbb{T}^2
\end{equation}
with a convolution potential $V(x)=\sum_{j\in\mathbb{Z}^2} V_j\,e^{\mathrm{i} j \cdot x}$ such that
\begin{equation}\label{def:potential}
V\colon \mathbb{T}^2\to\mathbb{R}, \quad V(x)=V(-x)
\end{equation}
and assuming the following hypothesis. Once fixed the set $\Lambda\subset\mathbb{Z}^2$, the Fourier coefficients $V_j$ of the  potential with $j=n_1-n_2$ for some $n_1, n_2\in\Lambda$ satisfy
\begin{equation}\label{cond:potential}
\quad V_j=1+\eps\gamma_j \quad \text{with}\quad \eps\ll 1.
\end{equation}
Assume that  the coefficients $\gamma_j$ satisfy a non-degeneracy condition which is of codimension 1 and take $\eps$ small enough. Then, the Hartree equation has solutions of the form
\[
 u(t, x)=\de\sum_{n\in \Lambda} a_{n}(t)\,e^{\mathrm{i} n \cdot x}+R(t, x)
\]
where the first order $\{a_{n}\}$ and the remainder $R$ satisfy the statements given either in  Theorem \ref{TeoWaveBeam} (where $R\rightsquigarrow R_3$) or \ref{thm:traveling_periodic_beating} (where $R\rightsquigarrow R_4$) .

Since the obtaining of such behaviors for the Hartree equation is the same as for Wave and Beam equations, in Sections \ref{sec:weak_normal_form}--\ref{sec:transfer} we prove the results together for the three equations.


\paragraph{Comments to Theorems \ref{TeoWaveBeam}, \ref{thm:traveling_periodic_beating}}
\begin{itemize}
\item {\bf Smale Horseshoes in PDEs:} Theorem \ref{thm:geometric} provides a Smale Horseshoe for the Birkhoff normal form. This invariant set is partially hyperbolic and partially elliptic if considered in the whole infinite dimensional phase space. This is the reason why, a priori, this invariant set is not persistent for the full equations \ref{Wave}\ref{Beam}, \ref{Hartree}. As far as the authors know, the existence of Smale horseshoes in Hamiltonian PDEs has been mostly obtained by adding dissipation to the equation which make these sets become fully hyperbolic (see \cite{HolmesM81,BertiC02,BatelliF05}). See \cite{BatelliG08}, for an infinite dimensional Hamiltonian system with a Smale horseshoe.

\item {\bf Beating partially hyperbolic quasiperiodic tori:} The Smale horseshoe obtained in Theorem \ref{thm:geometric} possesses a dense set of periodic orbits. Even if the horseshoe may not persist for the
 equations \ref{Wave},\ref{Beam},, \ref{Hartree}, KAM Theory should give the persistence of these periodic orbits. In \cite{HausP17}, the authors prove the existence of beating KAM
 Tori. The tori in \cite{HausP17} are elliptic whereas those coming from the horseshoe would be partially elliptic and partially hyperbolic.


\item {\bf Non-integrability of $\mathcal{N}|_\Lambda$ in \eqref{def:HamN}:} Theorem \ref{thm:geometric} (and therefore Theorems \ref{TeoWaveBeam} and \ref{thm:traveling_periodic_beating}) relies on the fact that $\mathcal{N}|_\Lambda$ is not integrable and admits invariant tori with transverse homoclinic orbits. On the other hand, the Birkhoff normal form truncation associated to the cubic Nonlinear Schr\"odinger equation
\begin{equation*}
 i u_t=\Delta u-|u|^2 u,\quad x\in \mathbb{T}^2
\end{equation*}
is such that $\mathcal{N}|_\Lambda$ is integrable. Therefore, the invariant manifolds of the invariant tori coincide and one cannot construct the orbits given in  Theorems \ref{TeoWaveBeam} and \ref{thm:traveling_periodic_beating} for this equation (at least not with the tools used in the present paper).

\item {\bf Weak Birkhoff normal form:} We point out that the reduction to the resonant model $\mathcal{N}|_\Lambda$ is obtained by means of a weak version of the Birkhoff normal form procedure around elliptic fixed points, which is described in Section \ref{sec:weak_normal_form}. This is needed when we deal with the Wave equation \eqref{Wave}. Indeed, even if this PDE is semilinear (it has bounded nonlinearity) the resonant interactions between the linear frequencies of oscillation produce small divisor problems making the full normal form procedure not convergent.
This approach is well established in the KAM theory for quasi-linear PDEs on the circle (see for instance \cite{KdVAut}, \cite{FGP}).

\item {\bf Defocusing and Focusing equations:} To simplify the exposition, the theorems above only refer to the defocusing equations \eqref{Wave} and \eqref{Beam}. However, it can be checked that the sign of the nonlinearity does not play any role and therefore, Theorems \ref{thm:geometric}, \ref{TeoWaveBeam} and \ref{thm:traveling_periodic_beating} also apply to the focusing equations
 \[
  u_{tt}-\Delta u-u^3=0,\qquad  u_{tt}+\Delta^2 u-u^3=0.
 \]
\end{itemize}

%
%


%

\subsection{Transfer of energy and growth of Sobolev norms}\label{sec:growth}
The solutions of the Wave equation \eqref{Wave}/Beam equation \eqref{Beam}/Hartree equation \eqref{Hartree} obtained in Theorem \ref{thm:traveling_periodic_beating} undergo certain transfer of energy between modes. Unfortunately, such transfer of energy do not lead to growth of Sobolev norms \cite{Bourgain00b, CKSTT, GuardiaK12}.

We would like to devote this section to  relate our results to that of \cite{CKSTT}.
In \cite{CKSTT}, the authors obtain orbits undergoing growth of Sobolev norms for the defocusing nonlinear Schr\"odinger equation on $\mathbb{T}^2$. One of the key points of their proof is to construct, for the Birkhoff normal form truncation, a  chain of invariant tori (periodic orbits in certain symplectic reduction, named \emph{toy model}) which are connected by \emph{non-transverse} heteroclinic orbits.  To obtain such connections, they strongly rely on the following fact. Even if this toy model  is not integrable, it is integrable once restricted to certain invariant subspace (what can be called \emph{two generations model} following \cite{CKSTT}). Then the orbits undergoing growth of Sobolev norms are well approximated by orbits which shadow (follow closely) this chain of periodic orbits.

If one wants to use their ideas to obtain similar behavior in other equations such as the Wave \eqref{Wave}, Beam  \eqref{Beam} and Hartree \eqref{Hartree} equations, one has to face several challenges.

First of all, in these  equations, the two generations model is not integrable (for the Hartree equation it is not for a generic potential). This is not surprising. Indeed, typically (at least in finite dimensional Hamiltonian systems) unstable motion (Smale horseshoes, Arnold diffusion) is related to non-integrability. Still, even if non-integrability should ``help '' to achieve growth of Sobolev norms it makes the analysis considerably more difficult. The present paper is a first attempt to understand this regime (for the two generations model).

The models we consider are carefully chosen so that they are close to integrable and therefore  can be analyzed through perturbative methods. Unfortunately, for the Wave and Beam equation, to be close to integrable we have to choose the modes in $\Lambda$ with very similar modulus and therefore it seems difficult to use the analysis done in this paper to construct orbits undergoing growth of Sobolev norms. For the Hartree equation, one should expect that the ideas developed in this paper could lead to growth of Sobolev norms for a generic potential satisfying \eqref{def:potential}, \eqref{cond:potential}.

A second fundamental difference between NLS and the PDEs considered in this paper is about the chain of tori connected by  heteroclinic connections considered in \cite{CKSTT}. Such structure is not \emph{structurally stable} in the following sense: to have such heteroclinic connections one certainly needs that the connected invariant tori belong to the same level of energy (and to the samel level of other first integrals that the finite dimensional reduction possesses). This does not happen to be the case in other equations besides NLS. Indeed, for the Hartree equation \eqref{Hartree} with a generic potential $V$ the tori considered in \cite{CKSTT} belong to different level of energy and the same happens for the Wave and Beam equations for a generic choice of resonant tuples.

Therefore, to achieve growth of Sobolev norms for those equation one certainly needs to consider other invariant objects. The tori considered in  Theorem \ref{thm:geometric} are radically different from those in \cite{CKSTT}. These tori come in families of higher dimension which are transverse to the first integrals. Moreover,
 they are indeed connected by heteroclinic orbits. These connections are transverse and, therefore, they   are robust. We believe that  such objects could play a role if one wants to implement \cite{CKSTT} to other PDEs.\\

\section*{Acknowledgments}
The authors warmly thank Massimiliano Berti and Michela Procesi for useful discussions and comments.

This project has received funding from the European Research Council (ERC) under the European Union's Horizon 2020 research and innovation programme (grant agreement No 757802). P.M. has been partially funded by the Spanish Government MINECO-FEDER grant PGC2018-100928-B-I00. M. G and S. P. have been also partially supported by
the Spanish MINECO-FEDER Grant PGC2018-098676-B-100 (AEI/FEDER/UE) and by the Catalan Institution for Research
and Advanced Studies via an ICREA Academia Prize 2019.  F.G., M. G, P. M. and S.P. have  been also partially supported by the Catalan grant 2017SGR1049. S.P. acknowledges financial support from the Spanish ``Ministerio de Ciencia, Innovaci\'on y
Universidades'', through the Mar\'{\i}a de Maeztu Programme for Units of Excellence (2015-
2019) and the Barcelona Graduate School of Mathematics.

\label{sec:Heuristics}

\section{Heuristics and description of the paper}\label{sec:heuristics}

The general argument we use in the proofs of Theorems~\ref{TeoWaveBeam} and~\ref{thm:traveling_periodic_beating} follows some of the ideas in the literature~\cite{CKSTT,GuardiaK12,Guardia14,HausProcesi,GuardiaHP16}. The steps are the following. First, a weak Birkhoff normal form procedure simplifies the infinite dimensional Hamiltonian defined by the PDE, removing some non-resonant terms. Second, the normal form is truncated. The truncated normal form admits finite dimensional invariant subspaces. Third, a choice of these subspaces is made, defining a finite dimensional approximation of the PDE, that we call \emph{resonant model}. Some particular finite dimensional orbit of the finite dimensional model is found. Fourth and final, a true solution of the original PDE, close to the finite dimensional one for long enough time, is found. We will use this scheme, with particular choices in each step, particularly when considering the finite dimensional model.

In \cite{CKSTT, HausProcesi}, in the third step, the particular orbit found in the resonant model is obtained relying on the fact that the resonant model is integrable. More precisely, some invariant manifolds of different hyperbolic objects, coincide. Our approach is essentially different, because our resonant model is non-integrable in the sense that the invariant manifolds of several invariant objects - fixed points or periodic orbits - intersect transversally. We take advantage of the non-integrable dynamics of the finite dimensional model to obtain solutions of the truncated normal form with prescribed behavior;
indeed, non-integrable dynamics is richer than the integrable one. More details are given below.

As a matter of fact, the proofs of Theorems~\ref{TeoWaveBeam} and~\ref{thm:traveling_periodic_beating} share all these common ingredients and only differ in the  finite dimensional phenomena arisen by  non-integrability.

Let us give more details concerning our implementation of the strategy.

\paragraph{Step 1: }Each of the PDEs under consideration has a Hamiltonian structure. Let us denote by $H$  the Hamiltonian.
Given a \emph{complete} (see Definition~\ref{def:complete}) finite subset $\Lambda\subset\mathbb{Z}^2$ of resonant modes, to be chosen later, a \emph{weak normal form} scheme is applied to the Hamiltonian. This weak normal form only ``removes'' a finite number of monomials of degree $4$ of the Hamiltonian. Hence it is well defined (the normal form transformation is defined by the flow of a system of ODEs). The monomials to be killed are related to the set $\Lambda$. Although the normal form procedure is not complete and many non-resonant terms of degree $4$ are left untouched, for suitable $\Lambda$  the truncated normal form admits a finite dimensional invariant subspace supported on $\Lambda$. This is done in Section~\ref{sec:weak_normal_form}.

Once the Hamiltonian is written in the normal form coordinates, we consider the truncated normal form, disregarding the terms of degree $6$ or more. We call this truncated normal form the \emph{resonant model}.

\paragraph{Step 2: } This step is the core of the paper and can be divided as follows.
\begin{itemize}
 \item \emph{Construction of the Set $\Lambda$} (Section \ref{sec:resonantmodel4}): The set $\Lambda$ is chosen in such a way that its associated subspace of modes (see $V_{\Lambda}$ in \eqref{def:splitting}) is invariant by the flow of the resonant model, but of course satisfies other requirements. Its precise definition depends on the PDE model we consider, but all three instances (Wave, Beam and Hartree equations) of the set $\Lambda$ share some common features. They have exactly $4N$ elements which, using the terminology introduced in~\cite{CKSTT}, encompass two generations. The elements of the set $\Lambda$ are organized in groups of four,
pairwise disjoint, each of them forming a parallelogram. The choice of the modes is such that each individual parallelogram
is invariant. It also happens that the dynamics of a single parallelogram is integrable, that is, if the rest of the modes are at $0$, the dynamics of the four modes in a parallelogram is integrable. At this point is where our choice of the modes differs from other examples in the literature.

\item \emph{The dynamics of the finite dimensional model} (Sections \ref{sec:resonantmodel}, \ref{sec:Proofthm1}, \ref{sec:transfer}):
First, we choose our modes in such a way that the dynamics of the resonant model is close to integrable, where closeness to integrability is measured through some parameter $\varepsilon$.  The nearly integrability is obtained choosing properly the modes in $\Lambda$. The unperturbed system (where $\varepsilon = 0$) possesses certain invariant objects, namely  hyperbolic fixed points and hyperbolic periodic orbits, whose invariant manifolds form heteroclinic or homoclinic separatrices. Our second (generic) condition on the modes is sufficient to ensure that these heteroclinic or homoclinic manifolds split for small $\varepsilon \neq 0$, giving rise to horseshoes and instability phenomena from which we deduce the existence of certain types of orbits. The splitting of these manifolds is measured by means of a suitable set of Melnikov integrals \cite{Melnikov63}.

\item \emph{The infinite symbols Smale horseshoe} (Section \ref{sec:Proofthm1}): The orbits in Theorem~\ref{TeoWaveBeam} give rise from a horseshoe of infinite symbols that can be constructed close to a hyperbolic periodic orbit whose invariant manifolds intersect transversally. The construction of this horseshoe follows the ideas in~\cite{Moser01}. In this horseshoe, each symbol encodes the time to pass close to the periodic orbit, which then becomes random. The horseshoe can be described as follows. Let $\Gamma=\{1,2,3, \dots\}$ be a denumerable set of symbols and
\[
\Sigma = \{s= (\dots, s_1, s_0, s_1, \dots) \mid s_i \in \Gamma, \; i \in \N\},
\]
the space of bi-infinite \emph{sequences}, with the product topology. Notice that, unlike what happens when $\Gamma$ is a finite set, $\Sigma$ is not compact. The shift $\sigma: \Sigma \to \Sigma$ is the homeomorphism
on $\Sigma$ defined by $(\sigma(s))_{i} = s_{i-1}$. Following the construction of Moser in~\cite{Moser01}, given a hyperbolic periodic orbit whose invariant manifolds intersect transversally, it is possible to find a set of coordinates --- one of the coordinates is time, in $\T$ ---, a suitable section $\mathcal{S}$ that defines a return map $\phi$ and a set $Q$ in this section with $\phi(Q) = Q$, such that there exists a homeomorphism $\tau: \Sigma \to Q$ satisfying $\phi \circ \tau = \tau \circ \sigma$. The set $Q$ is in fact the intersection of forward and backward images by $\phi$ of a set of disjoint closed bands $\{\mathcal{V}_j, \; j \in \N\}$, where the index $j$ denotes precisely the time between to consecutives passes through $\mathcal{S}$ and hence measures the distance to one of the invariant manifolds of the set $\mathcal{V}_j$. In this way, $\mathcal{V}_j$ tends to the invariant manifold when $j$ tends to infinity. The set $Q$ is not compact because the return map is not defined in the invariant manifolds.

\item \emph{Shadowing of a sequence of periodic orbits} (Section \ref{sec:transfer}): the orbits in Theorem~\ref{thm:traveling_periodic_beating}
travel along a chain of periodic orbits connected by transverse heteroclinic orbits, following the diffusion mechanism
described originally by Arnold~\cite{Arnold64}. This mechanism consists of a sequence - finite or infinite - of partially hyperbolic periodic orbits\footnote{These periodic orbits are not fully hyperbolic since the system is Hamiltonian: the tangent to the periodic orbit and its conjugate direction are not hyperbolic.}, $\{\TT_i\}_{i\in I}$, $I\subset\mathbb{N}$, such that the unstable manifold of $\TT_i$, $W^u(\TT_i)$, intersects \emph{transversally} the stable manifold of $\TT_{i+1}$, $W^s(\TT_{i+1})$. Here, since the system we are considering is autonomous, \emph{transversally} means transversality in the energy level, which  implies that the intersection of the manifolds is, locally, a single heteroclinic orbit. If a nondegeneracy condition is met, this transversality is sufficient to have a Lambda Lemma that implies that
$W^u(\TT_{i+1}) \subset \overline{W^u(\TT_{i})}$ (see~\cite{FontichM01}), which in turn implies that for any $i,j \in I$,
$i< j$, $W^u(\TT_{j}) \subset \overline{W^u(\TT_{i})}$. One can then choose arbitrary small neighborhoods of the tori $\TT_i$ and orbits that visit these neighborhoods according to an increasing sequence of times.

It is worth to remark that the orbits found in the resonant model do exist for any positive time. In the case of the horseshoe with infinite symbols, one obtains orbits that arrive closer and closer to the periodic orbit, in randomly chosen times. In the case  of the diffusion orbits, one obtain solutions that wander along the chain of periodic orbits for any positive time, and can be chosen to arrive closer and closer to each periodic orbit.
\end{itemize}

\paragraph{Step 3:} The last step of the proof consists in finding a true solution of each PDE shadowing for long enough time the chosen solution of the resonant model. This is accomplished by a standard Gronwall and bootstrap argument. This relies on the Approximation argument given in Section~\ref{sec:weak_normal_form}  with the analysis of the dynamics of the Birkhoff normal form truncation of  Sections ,\ref{sec:Proofthm1}, \ref{sec:transfer}.

\section{Weak Birkhoff normal form}
\label{sec:weak_normal_form}

\subsection{Hamiltonian formalism}
In this section we show that
the Hamiltonian PDEs \eqref{Wave}, \eqref{Beam} and \eqref{Hartree} have a
Hamiltonian of the same form in an appropriate set of coordinates. We consider
spaces of functions defined on $\mathbb{T}^2$, hence it is convenient to use the
Fourier representation $u(x)=\sum_{j\in\mathbb{Z}^2} u_j\,e^{\mathrm{i} j \cdot
x}$.\\
Let us denote by $\mathcal{P}$ the phase space and $\Omega$ a symplectic form on it. The vector field $X_H$ of a Hamiltonian $H$ is uniquely determined by the formula $d H(u)[\cdot]=\Omega(X_H(u), \,\cdot)$.
\paragraph{Hamiltonian structure of equation \eqref{Hartree}}
Let us consider $\mathcal{P}:=H^1(\mathbb{T}^2; \mathbb{C})\times
H^1(\mathbb{T}^2; \mathbb{C})$ equipped with the symplectic form
$\Omega:=\mathrm{i} du \wedge d \bar{u}=\mathrm{i} \sum_{j\in\mathbb{Z}^2} d
u_j\wedge d\overline{u_j}$. If $V$ satisfies \eqref{def:potential}, the
equation \eqref{Hartree} is given by $\partial_tu=X_H(u,\overline{u})$
where\begin{equation}\label{def:HartreeHam}
\begin{aligned}
H(u, \bar{u}) &=\frac{1}{(2\pi)^2}\left(\int_{\mathbb{T}^2} |\nabla u|^2\,dx+
\frac{1}{2}\int_{\mathbb{T}^2}(V(x)\star \lvert u \rvert^2)\,\lvert u
\rvert^2\,dx\right)\\
&=\sum_{j\in\mathbb{Z}^2} \lvert j \rvert^2 \lvert u_j \rvert^2+\sum_{j_1-j_2+j_3- j_4=0} V_{j_1-j_2} u_{j_1}\,\overline{u_{j_2}}\, u_{j_3}\,\overline{u_{j_4}}.
\end{aligned}
\end{equation}

\paragraph{Hamiltonian structure of equations \eqref{Wave}, \eqref{Beam}} In the following we use the parameter $\kappa\in \{ 1, 2\}$ to treat both cases at the same time. More precisely, $\kappa=1$ if we refer to the Wave equation \eqref{Wave} or $\kappa=2$ when we consider the Beam equation \eqref{Beam}.
By setting $v:=\dot{u}$, we can express
these equations as the following system of two first order equations
\begin{equation}\label{systemwave}
\begin{cases}
\dot{u}=v,\\
\dot{v}=(-1)^{\kappa+1}\Delta^{\kappa} u-u^3.
\end{cases}
\end{equation}
We recall the subset $\mathbb{Z}^2_{\odd}:=\{ (j^{(1)},
j^{(2)})\in\mathbb{Z}^2 \,: \,\,
j^{(1)}\,\,\mbox{odd}\,\,,\,\,j^{(2)}\,\,\mbox{even} \}$. The subspace
\begin{equation}
\mathcal{U}_{\odd}:=\{ (u, v)\in H^{\kappa}(\mathbb{T}^2;
\mathbb{R})\times L^2(\mathbb{T}^2; \mathbb{R}),
\,\,u=\sum_{j\in\mathbb{Z}^2_{\odd}} u_j\,e^{\mathrm{i}\,j \cdot x},
\,\,v=\sum_{j\in\mathbb{Z}^2_{\odd}} v_j\,e^{\mathrm{i}\,j \cdot x} \}
\end{equation}
is invariant for \eqref{systemwave}.
Since $(0, 0)\notin \mathbb{Z}^2_{\odd}$ the change of variables $\Xi (u,
v)=(\Psi, \overline{\Psi})$ defined by
\begin{equation}\label{def:complexcoordinates}
\Psi:=\frac{1}{\sqrt{2}} \Big( |D|^{\kappa/2}
u-\mathrm{i} |D|^{-\kappa/2} v \Big), \qquad \overline{\Psi}:=\frac{1}{\sqrt{2}}  \Big(|D|^{\kappa/2} u+\mathrm{i} |D|^{-\kappa/2} v
\Big) \qquad \lvert D \rvert:=(-\Delta)^{1/2},
\end{equation}
is well defined  on $\mathcal{U}_{{\odd}}$ and it transforms the
system \eqref{systemwave} into the following one
\begin{equation}\label{def:newsystemwave}
\begin{cases}
-\mathrm{i}\dot{\Psi}=|D|^\kappa \Psi+\frac{1}{4} |D|^{-\kappa/2}\left( \left(
|D|^{-\kappa/2}\left( \Psi+\overline{\Psi} \right) \right)^3 \right)\\[2mm]
\mathrm{i}\dot{\overline{\Psi}}=|D|^\kappa \overline{\Psi}+\frac{1}{4}
|D|^{-\kappa/2}\left( \left( |D|^{-\kappa/2}\left( \Psi+\overline{\Psi} \right) \right)^3 \right).
\end{cases}
\end{equation}
The vector field in \eqref{def:newsystemwave} is Hamiltonian with respect to the $2$-form $\Omega:=\mathrm{i} d\Psi\wedge d\overline{\Psi}$ and Hamiltonian
\[
H(\Psi,\overline{\Psi}):=\frac{1}{(2\pi)^2}\,\left[ \int_{\mathbb{T}^2} |D|^{\kappa}
\Psi\,\overline{\Psi}\,dx+\frac{1}{4}\int_{\mathbb{T}^2}
\left(|D|^{-\kappa/2}\left(\frac{\Psi+\overline{\Psi}}{\sqrt{2}}\right)
\right)^4\,dx \, \right].
\]
  By considering the Fourier expansion
$
\Psi=\sum_{j\in\mathbb{Z}^2} a_j\,e^{\mathrm{i} j\cdot x},
$
we can consider $\Omega=\mathrm{i} \sum_{j\in\mathbb{Z}^2_{\odd}} d a_j\wedge d
\overline{a_j}$ and
\begin{equation}\label{def:hamwavebeam}
\begin{aligned}
H &=\sum_{j\in\mathbb{Z}^2_{\odd}} |j|^{\kappa}
\,a_j\,\overline{a_j}+\frac{1}{16} \sum_{\substack{j_i\in\mathbb{Z}^2_{\odd},\\
j_1+j_2+j_3+j_4=0}} \frac{(a_{j_1}+\overline{a_{-j_1}})
(a_{j_2}+\overline{a_{-j_2}})
(a_{j_3}+\overline{a_{-j_3}})(a_{j_4}+\overline{a_{-j_4}}) }{(|j_1|
\,|j_2|\,|j_3|\,|j_4|)^{\kappa/2}}.
\end{aligned}
\end{equation}
%
We observe that the Hamiltonians \eqref{def:HartreeHam} and \eqref{def:hamwavebeam} have the form\footnote{Here
we use the standard notation $a_j^+=a_j$, $a_{j}^-= \overline{a_{-j}}$.}
\begin{equation}\label{def:commonham}
H=H^{(2)}+H^{(4)}=\sum_{j\in\mathbb{Z}^2_*} \omega(j)\, a_j\,\overline{a_j}+\sum_{\substack{j_i\in\mathbb{Z}^2_*, \sigma_i\in\{ \pm \},\\ \sigma_1 j_1+\sigma_2 j_2+\sigma_3 j_3+\sigma_4 j_4=0}} C^{\sigma_1 \sigma_2 \sigma_3 \sigma_4}_{j_1 j_2 j_3 j_ 4}\, a_{j_1}^{\sigma_1}\,a_{j_2}^{\sigma_2}\,a_{j_3}^{\sigma_3}\,a_{j_4}^{\sigma_4},
\end{equation}
where
\begin{itemize}
\item (Hartree): $\mathbb{Z}^2_*=\mathbb{Z}^2$, $\omega(j)=\lvert j \rvert^2$, and the coefficients $C^{\sigma_1 \sigma_2 \sigma_3 \sigma_4}_{j_1 j_2 j_3 j_ 4}$ are defined as
\begin{equation}\label{def:Cj1}
C^{+ - + -}_{j_1 j_2 j_3 j_4}=C^{-+ - +}_{j_1 j_2 j_3 j_4}=V_{j_1-j_2}, \qquad C^{\sigma_1 \sigma_2 \sigma_3 \sigma_4}_{j_1 j_2 j_3 j_ 4}=0 \,\,\,\mbox{otherwise}.
\end{equation}
\item  ($\kappa=1$ Wave,  $\kappa=2$ Beam):
$\mathbb{Z}^2_*=\mathbb{Z}^2_{\odd}$, $\omega(j)=\lvert j \rvert^{\kappa}$ and, for 
 $(j_1, j_2, j_3, j_4) $ such that $\sigma_1 j_1+\sigma_2 j_2+\sigma_3 j_3+\sigma_4j_4=0$, we have
\begin{equation}\label{def:Cj2}
\begin{aligned}
&C^{\sigma_1 \sigma_2 \sigma_3 \sigma_4}_{j_1 j_2 j_3 j_ 4}=\frac{1}{16 \left( |j_1|| j_2
|| j_3|| j_ 4| \right)^{\kappa/2}}.
\end{aligned}
\end{equation}
%
\end{itemize}
We remark that (using \eqref{cond:potential} for the Hartree equation) the coefficients $C^{\sigma_1\sigma_2\sigma_3\sigma_4}_{j_1 j_2 j_3 j_4}$ are such that
\begin{equation}\label{def:unifbdd}
\sup_{j_1, j_2, j_3, j_4\in\mathbb{Z}_*^2} | C^{\sigma_1\sigma_2\sigma_3\sigma_4}_{j_1 j_2 j_3 j_4} |\le 2.
\end{equation}

\subsection{Weak Birkhoff normal form}

In this section we apply a Birkhoff normal form argument to the Hamiltonian \eqref{def:commonham}.
We consider the symplectic form $\Omega=\mathrm{i} \sum_{j\in\mathbb{Z}_*^2} d a_j\wedge d \overline{a_j}$. \\
We denote by $\ad_{H^{(2)}}$ the adjoint action of the Hamiltonian $H^{(2)}$. If
$F=\sum_{\sigma_1 j_1+\dots+\sigma_n j_n=0} F^{\sigma_1\dots \sigma_n}_{j_1\dots
j_n} a_{j_1}^{\sigma_1}\dots a_{j_n}^{\sigma_n}$ is a homogenous, momentum
preserving Hamiltonian of degree $n$ we have then
\[
\ad_{H^{(2)}}[F]:=\{ H^{(2)}, F \}=\sum_{\sigma_1 j_1+\dots+\sigma_n j_n=0}
\left(\sum_{i=1}^n \sigma_i \omega(j_i) \right)  F^{\sigma_1\dots
\sigma_n}_{j_1\dots j_n} a_{j_1}^{\sigma_1}\dots a_{j_n}^{\sigma_n}.
\]
We denote by $\Pi_{\Ker(H^{(2)})}$ the projection on the kernel of
$\ad_{H^{(2)}}$. The $n$-tuples $(\sigma_i, j_i)_{i=1}^n$ such that
\[
\sum_{i=1}^n \sigma_i \omega(j_i)=0, \qquad \sum_{i=1}^n \sigma_i \,j_i=0
\]
 are called $n$-resonances. Since there are no regularity issues in what follows
we decide to work on the phase space of analytic sequences. We fix $\rho>0$ and define
\[
W_{\rho}:=\left\{ a=(a_j)_{j\in\mathbb{Z}^2_*}\in\ell^1 : \lVert a
\rVert_{\rho}:=\sum_{j\in\mathbb{Z}^2_*} \lvert a_j \rvert\,e^{\rho |j |}
<\infty \right\}.
\]
We denote by $\mathcal{B}_{\rho}(\delta)$ the open ball of radius $\delta>0$
centered at the origin of $W_{\rho}$. We use the notation $A \lesssim B$ to denote $A\le C\, B$ where $C>0$ is a constant possibly depending on the fixed $\rho$.

Let $\Lambda$ be a finite subset of $\mathbb{Z}_*^2$. We consider the following
splitting $W_{\rho}=V_{\Lambda}+Z_{\Lambda}$ with
\begin{equation}\label{def:splitting}
V_{\Lambda}:=V_{\Lambda, \rho}=\left\{ a\in W_{\rho} : a_j=0\,\,\mbox{if} \,\,j\notin
\Lambda \right\}, \quad Z_{\Lambda}:=Z_{\Lambda, \rho}=\left\{  a\in W_{\rho}:
a_j=0\,\,\mbox{if} \,\,j\in \Lambda \right\}.
\end{equation}
We define
\[
\mathcal{S}_{n, k}:=\left\{(\sigma_i, j_i)_{i=1}^n :\,\,\sum_{i=1}^n \sigma_i j_i=0\,\,\mbox{such that the number of indices} \,\,j_i\notin\Lambda\,\, \mbox{is exactly}\,\, k\right\}.
\]
Given a homogenous $n$-degree, momentum preserving Hamiltonian $F=\sum_{\sigma_1 j_1+\dots+\sigma_n j_n=0} F^{\sigma_1\dots \sigma_n}_{j_1\dots j_n} a_{j_1}^{\sigma_1}\dots a_{j_n}^{\sigma_n}$, we denote by $F^{(n, k)}$ the projection of $F$ onto the monomials $a^{\sigma_1}_{j_1}\dots a_{j_n}^{\sigma_n}$ with exactly $k$ indices $j_i\notin \Lambda$. Thus $F^{(n, k)}$ is the part of the Hamiltonian $F$ which is Fourier supported on $\mathcal{S}_{n, k}$.\\
We denote by
\[
\mathcal{S}_{n, \le k}:=\cup_{i=1}^k \mathcal{S}_{n, i}, \quad \mathcal{S}_{n, \geq k}:=\cup_{i=k}^n \mathcal{S}_{n, i}.
\]
We refer to $F^{(n, \le k)}$ (and $F^{(n, \geq k)}$) the part of the Hamiltonian $H$ which is Fourier supported on $\mathcal{S}_{n, \le k}$ (and $\mathcal{S}_{n, \geq k}$).
\begin{remark}\label{rem:finiteham}
Since we assume that $\Lambda$ is finite,  the preservation of
momentum implies that the Hamiltonians $F^{(n, \le 1)}$ have compact Fourier
support.
\end{remark}
\begin{defi}
\label{def:complete}
We say that a subset $\Lambda\subset \mathbb{Z}^2$ is \emph{complete} if the following holds: given a $4$-resonance $(\sigma_i, j_i)_{i=1}^4$ we have that if $j_1, j_2, j_3\in \Lambda$ then $j_4\in\Lambda$.
\end{defi}

\begin{prop}[Weak Birkhoff normal form]\label{WBNF}
Fix $\rho>0$. Let $\Lambda\subset\mathbb{Z}^2_*$ be finite and complete
and consider the Hamiltonian $H$ in \eqref{def:commonham}.
Then,
\begin{itemize}
\item[$(i)$] There exists $\delta_1>0$ small such that $\forall \delta\in (0, \delta_1)$ there exists an analytic change of coordinates
$\Gamma\colon \mathcal{B}_{\rho}(\delta)\subset W_\rho\to
\mathcal{B}_{\rho}(2\delta)$ such that
\begin{equation}\label{birkhoffcoord}
H\circ \Gamma=H^{(2)}+\Pi_{\Ker(H^{(2)})}H^{(4, 0)}+H^{(4, \geq 2)}+\mathcal{R}
\end{equation}
where $\mathcal{R}$  satisfies
\[
\lVert X_{\mathcal{R}}(a) \rVert_{\rho}\lesssim \lVert a
\rVert_{\rho}^5\,\qquad \text{ for all }\quad a\in\mathcal{B}_{\rho}(\delta).
\]
\item[$(ii)$] Moreover, the map $\Gamma$ is close to the identity, i.e. $\lVert
\Gamma(a)-a\rVert_{\rho}\lesssim \lVert a \rVert_{\rho}^3\,$ for all
$a\in\mathcal{B}_{\rho}(\delta)$.
\end{itemize}
\end{prop}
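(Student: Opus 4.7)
The plan is to realize $\Gamma$ as the time--$1$ map $\Phi_F^1$ of a homogeneous quartic auxiliary Hamiltonian $F$, chosen to remove from $H^{(4)}$ everything except the ``kept'' piece $\Pi_{\Ker(H^{(2)})}H^{(4,0)}+H^{(4,\geq 2)}$. Splitting $H^{(4)}=H^{(4,0)}+H^{(4,1)}+H^{(4,\geq 2)}$, the central observation, which is precisely the role of completeness, is that $H^{(4,1)}$ contains no resonant monomial: if $(\sigma_i,j_i)_{i=1}^4$ were a $4$--resonance with exactly three indices in $\Lambda$ and one outside, completeness would force the fourth into $\Lambda$, a contradiction. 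Thus one may hope to kill the whole of $H^{(4,1)}$ together with the non--resonant part of $H^{(4,0)}$, by imposing the cohomological equation
\[
\ad_{H^{(2)}}[F]+H^{(4,\leq 1)}=\Pi_{\Ker(H^{(2)})}H^{(4,0)},
\]
solved with $F$ Fourier supported on $\mathcal{S}_{4,0}\cup\mathcal{S}_{4,1}$.

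The solvability is immediate. By Remark~\ref{rem:finiteham}, $H^{(4,\leq 1)}$ has compact Fourier support, so the equation reduces to inverting $\ad_{H^{(2)}}$ on a \emph{finite} set of non--resonant monomials. The divisors $\sum_i\sigma_i\omega(j_i)$ are therefore bounded below by a constant $c_\Lambda>0$ depending only on $\Lambda$, and combined with \eqref{def:unifbdd} we obtain a polynomial
\[
F=\sum_{(\sigma_i,j_i)\in \mathcal{T}}F^{\sigma_1\dots\sigma_4}_{j_1\dots j_4}\,a_{j_1}^{\sigma_1}\cdots a_{j_4}^{\sigma_4},\qquad F^{\sigma_1\dots\sigma_4}_{j_1\dots j_4}:=-\frac{H^{\sigma_1\dots\sigma_4}_{j_1\dots j_4}}{\sum_i\sigma_i\omega(j_i)},
\]
with $\mathcal{T}$ the (finite) non--resonant index set, involving finitely many monomials with uniformly bounded coefficients. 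Since $W_\rho$ is a Banach algebra with respect to convolution in $\mathbb{Z}^2_*$ (the exponential weight $e^{\rho|j|}$ is submultiplicative), the Hamiltonian vector field $X_F$ is a polynomial vector field of cubic type and satisfies $\|X_F(a)\|_\rho\lesssim \|a\|_\rho^3$ on $W_\rho$.

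From here the proof follows the standard Lie--series scheme. For $\delta_1$ sufficiently small, $\Phi_F^t$ is well defined on $\mathcal{B}_\rho(\delta_1)$ for $t\in[0,1]$ and maps $\mathcal{B}_\rho(\delta)$ into $\mathcal{B}_\rho(2\delta)$, and a Gronwall estimate gives $\|\Phi_F^1(a)-a\|_\rho\leq \int_0^1\|X_F(\Phi_F^s a)\|_\rho\,ds\lesssim\|a\|_\rho^3$, which is $(ii)$. A Taylor expansion of $s\mapsto H\circ\Phi_F^s$ at $s=0$ yields
\[
H\circ\Gamma=H+\{H,F\}+\int_0^1(1-s)\,\{\{H,F\},F\}\circ\Phi_F^s\,ds,
\]
and, substituting the cohomological equation into $\{H^{(2)},F\}$, the first two terms become $H^{(2)}+\Pi_{\Ker(H^{(2)})}H^{(4,0)}+H^{(4,\geq 2)}+\{H^{(4)},F\}$. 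Collecting
\[
\mathcal{R}:=\{H^{(4)},F\}+\int_0^1(1-s)\,\{\{H,F\},F\}\circ\Phi_F^s\,ds,
\]
both summands are homogeneous of degree at least $6$ in $a$, so the Banach algebra estimate on $W_\rho$ yields $\|X_{\mathcal{R}}(a)\|_\rho\lesssim\|a\|_\rho^5$ on $\mathcal{B}_\rho(\delta)$, establishing $(i)$.

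The only delicate point is the first paragraph: one must verify, from the precise definition of $4$--resonance (which couples momentum \emph{and} frequency conservation), that completeness truly eliminates every resonant element of $H^{(4,1)}$; if this failed, the corresponding divisors would vanish and $F$ could not be chosen to erase all of $H^{(4,\leq 1)}$. No small divisor problem arises otherwise because $H^{(4,\leq 1)}$ is already compactly supported in Fourier space by momentum conservation, so everything reduces to polynomial algebra estimates in the norm $\|\cdot\|_\rho$.
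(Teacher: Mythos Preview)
Your argument is correct and follows essentially the same route as the paper: define $F$ supported on $\mathcal{S}_{4,\le 1}$ by inverting $\ad_{H^{(2)}}$ on the non--resonant monomials (finitely many, by Remark~\ref{rem:finiteham}), take $\Gamma=\Phi_F^1$, Taylor expand, and identify $\mathcal{R}=\{H^{(4)},F\}+\int_0^1(1-s)\{\{H,F\},F\}\circ\Phi_F^s\,ds$; completeness is used exactly to ensure $\Pi_{\Ker(H^{(2)})}H^{(4,1)}=0$. One small imprecision: the integral term is not literally homogeneous in $a$ because of the composition with $\Phi_F^s$, but the estimate follows since $\|\Phi_F^s(a)\|_\rho\le 2\|a\|_\rho$ and $\{\{H,F\},F\}$ is a sum of homogeneous Hamiltonians of degree $\ge 6$, which is how the paper phrases it.
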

\begin{proof}
Let us consider the $4$-degree homogenous Hamiltonian
\[
F=\sum_{\substack{(\sigma_i, j_i)\in \mathcal{S}_{4, \le 1},\\ \sigma_1 j_1+\sigma_2 j_2+\sigma_3 j_3+\sigma_4 j_4=0}} F_{j_1 j_2 j_3 j_4}^{\sigma_1 \sigma_2 \sigma_3 \sigma_4} \,a^{\sigma_1}_{j_1}\,a^{\sigma_2}_{j_2}\,a^{\sigma_3}_{j_3}\,a^{\sigma_4}_{j_4}
\]
where
\begin{equation}\label{omologeq}
F_{j_1 j_2 j_3 j_4}^{\sigma_1 \sigma_2 \sigma_3 \sigma_4}=\begin{cases}
-\frac{\mathrm{i} C^{\sigma_1 \sigma_2 \sigma_3 \sigma_4}_{j_1 j_2 j_3 j_ 4}}{\sigma_1\omega(j_1)+\sigma_2\omega(j_2)+\sigma_3\omega(j_3)+\sigma_4\omega(j_4)} \qquad \sigma_1\omega(j_1)+\sigma_2\omega(j_2)+\sigma_3\omega(j_3)+\sigma_4\omega(j_4) \neq 0\\
0 \qquad \qquad \qquad\qquad  \quad\qquad \ \qquad \qquad   \sigma_1\omega(j_1)+\sigma_2\omega(j_2)+\sigma_3\omega(j_3)+\sigma_4\omega(j_4) = 0.
\end{cases}
\end{equation}
The function $F$ solves the homological equation
\begin{equation}\label{homoeq}
\{ H^{(2)}, F\}+H^{(4, \le 1)}=\Pi_{\Ker(H^{(2)})} H^{(4, \le 1)}.
\end{equation}
 By Remark \ref{rem:finiteham} the vector field generated by $F$ has just a finite number of non zero components. Hence $\Phi_F^t$ is the flow of a ODE with a smooth vector field. 
 We call $\Gamma=(\Phi_F^t)_{|_{t=1}}$ the time-one flow map of $F$.\\
 By Remark \ref{rem:finiteham} the denominators in \eqref{omologeq} have a uniform lower bound, hence by \eqref{def:unifbdd} the coefficients defined in \eqref{omologeq} are uniformly bounded. Then by Young's inequality it is easy to see that $\lVert X_F(a) \rVert_{\rho}\lesssim \lVert a \rVert^3_{\rho} $ for all $a\in W_{\rho}$.\\
This implies that for $\delta>0$ small enough $\lVert \Phi_F^t(a)\rVert_{\rho}\le 2 \lVert a \rVert_{\rho}$ for all $t\in[0, 1]$. Thus $\Gamma$ maps $\mathcal{B}_{\rho}(\delta)$ to $\mathcal{B}_{\rho}(2\delta)$ and
\[
\lVert \Gamma(a)-a \rVert_{\rho}\le \sup_{s\in [0, 1]}\lVert X_F(\Phi_F^s(a)) \rVert_{\rho}\le \sup_{s\in[0, 1]} \lVert \Phi_F^s(a) \rVert^3_{\rho}\lesssim \lVert a \rVert_{\rho}^3.
\]
So we have proved item $(ii)$. After the change of coordinates $\Gamma$ the Hamiltonian \eqref{def:commonham} transforms into
\begin{align*}
H\circ \Gamma &=H+\{ H, F\}+\int_0^1 (1-t) \{\{ H, F\}, F\}\circ \Phi^t_F\,dt\\
&=H^{(2)}+\Big(H^{(4, \le 1)}+ \{ H^{(2)}, F \}\Big)+H^{(4, \geq 2)}+\{ H^{(4)}, F \}+\int_0^1 (1-t) \{\{ H, F\}, F\}\circ \Phi^t_F\,dt.\\
&\stackrel{ \eqref{homoeq}}{=}H^{(2)}+\Pi_{\Ker(H^{(2)})} \,H^{(4, \le
1)}+H^{(4, \geq 2)}+\{H^{(4)}, F\}+\int_0^1 (1-t) \{\{ H, F\}, F\}\circ
\Phi^t_F\,dt.
\end{align*}
Then, the completeness of $\Lambda$ implies that
\[
 \Pi_{\Ker(H^{(2)})}H^{(4, \leq 1)}=\Pi_{\Ker(H^{(2)})}H^{(4, 0)}.
\]
Moreover, we can take $\mathcal{R}:=\{H^{(4)}, F\}+\int_0^1 (1-t) \{\{ H, F\},
F\}\circ \Phi^t_F\,dt$. We observe that $\{ H^{(4)}, F\}$ is a homogenous
Hamiltonian of degree $6$ . Regarding the integral term, we have that $\Phi^t_F$
is smooth and $\{\{ H, F \}, F\}$ is the sum of two homogenous Hamiltonians of
degree at least $6$, hence it is an analytic function on
$\mathcal{B}_{\rho}(\delta)$ that can be Taylor expanded at $a=0$. The first
term of the Taylor expansion of the vector field is a polynomial of degree $5$
and the remainder is smaller in a sufficiently small neighborhood of the origin.
 Again, by the uniform boundness of the coefficients of $H$ and $F$,
one can obtain the estimate in item $(i)$ by using Young's inequality. 
\end{proof}

Let us consider the time-dependent change of coordinates
\begin{equation}\label{rotatingcoord1}
\Psi(t):\quad \mathtt{a}_j \to a_j=\mathtt{a}_j\,e^{\mathrm{i} \omega(j)\,t},
\quad \mathtt{a}=(\mathtt{a}_j)_{j\in \mathbb{Z}_*^2}\in W_{\rho}.
\end{equation}
%
This change of coordinates leaves resonant monomials unchanged, that is 
$\Pi_{\Ker(H^{(2)})} F\circ \Psi=\Pi_{\Ker(H^{(2)})} F$.
Then, we have that
\begin{equation}\label{Hres}
\begin{aligned}
&H\circ\Gamma\circ\Psi=H_{\Res}+\mathcal{R}'(t), \qquad
H_{\Res}:=\Pi_{\mathrm{Ker}(H^{(2)})} \,H^{(4, 0)}+\mathcal{Q}(t)
\\
 & \mathcal{Q}:=H^{(4, \geq 2)}\circ \Psi(t), \quad
\mathcal{R}'=\mathcal{R}\circ \Psi(t).
\end{aligned}
\end{equation}
%
Moreover, the functions $\mathcal{Q}$ and $\mathcal{R}'$ satisfy
\begin{equation}\label{eq:EstimatesQRprime}
\lVert X_{\mathcal{Q}}(a) \rVert_{\rho}\lesssim \lVert a
\rVert_{\rho}^3
\qquad \text{ and }\qquad 
\lVert X_{\mathcal{R}'}(a) \rVert_{\rho}\lesssim \lVert a
\rVert_{\rho}^5\,\qquad \text{ for all }\quad a\in\mathcal{B}_{\rho}(\delta).
\end{equation}

Now, if one considers a complete set $\Lambda$ (see Definition
\ref{def:complete}), the associated subspace $V_{\Lambda}$ (see
\eqref{def:splitting})   is
left invariant by $X_{H_{\Res}}$. Moreover, on $V_\Lambda$,
$X_{H_{\Res}}=X_{\Pi_{\mathrm{Ker}(H^{(2)})} \,H^{(4, 0)}}$. This Hamiltonian
is scaling invariant in the sense that if $r(t)$ is a trajectory of this vector
field
\begin{equation}\label{def:scaling}
 r^\de(t)=\de r(\de^2 t)
\end{equation}
also is. Taking $\de\ll 1$, in certain time
scales,  this trajectory  $r^\de(t)$ stays close to the trajectory
of the Hamiltonian \eqref{Hres}  with the same initial condition.

\begin{prop}\label{approximationargument}
Let $T_0$ be a positive number and consider a solution $r(t)$  of the
Hamiltonian system $H_{\Res}$  in \eqref{Hres} such that it is defined for $t\in [0,T_0]$ and $r(0)\in
V_{\Lambda}$.
%

Then there exists $\delta_2=\delta_2(T_0)\le \delta_1$ (where $\delta_1$ is given in Proposition \ref{WBNF}) such that the following holds:
for all $0<\delta\le \delta_2$ the rescaled solution $r^\de$ of the Hamiltonian
$H_{\Res}$ given by \eqref{def:scaling} and the solution
$u(t)$ of the Hamiltonian system
$H_{\Res}+\mathcal{R}'$ with initial condition $u(0)=r^{\delta}(0)=\de r(0)$
satisfy
\begin{equation}
\lVert r^{\delta}(t)-u(t) \rVert_{\rho}\lesssim \delta^2 \qquad \forall t\in [0,  \delta^{-2} T_0].
\end{equation}
\end{prop}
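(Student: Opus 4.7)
The strategy is a bootstrap/Gronwall comparison between $u(t)$ and the scaling--reduced solution $r^\delta(t)$. The first task is to recognize that $r^\delta$ is itself an exact solution of $H_{\Res}$. Since $\Lambda$ is complete, every monomial in $H^{(4,\geq 2)}$ carries at least two indices outside $\Lambda$, hence the vector field $X_{H^{(4,\geq 2)}}$ vanishes identically on $V_\Lambda$; on $V_\Lambda$ the dynamics of $H_{\Res}$ reduces to that of the autonomous, scaling--invariant Hamiltonian $\Pi_{\Ker(H^{(2)})}H^{(4,0)}$. Therefore $V_\Lambda$ is invariant under the flow of $H_{\Res}$, $r(t)\in V_\Lambda$ for every $t\in[0,T_0]$, and by \eqref{def:scaling} the rescaled curve $r^\delta(t)=\delta r(\delta^2 t)$ solves $H_{\Res}$ on the full interval $[0,\delta^{-2}T_0]$, with $\|r^\delta(t)\|_\rho\leq C_0\delta$ for a constant $C_0=C_0(T_0)$ depending only on the bounded orbit $r$ on $[0,T_0]$.

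Setting $w:=u-r^\delta$, so that $w(0)=0$, the vector $w$ solves
\begin{equation*}
\dot w=\bigl(X_{H_{\Res}}(u,t)-X_{H_{\Res}}(r^\delta,t)\bigr)+X_{\mathcal{R}'}(u,t).
\end{equation*}
Since $H_{\Res}=\Pi_{\Ker(H^{(2)})}H^{(4,0)}+\mathcal{Q}(t)$ is polynomial of degree four in the $a_j$ with coefficients uniformly bounded thanks to \eqref{def:unifbdd}, Young's inequality provides the Lipschitz estimate
\begin{equation*}
\|X_{H_{\Res}}(u,t)-X_{H_{\Res}}(r^\delta,t)\|_\rho\;\lesssim\;\bigl(\|u\|_\rho+\|r^\delta\|_\rho\bigr)^2\|w\|_\rho,
\end{equation*}
while \eqref{eq:EstimatesQRprime} yields $\|X_{\mathcal{R}'}(u,t)\|_\rho\lesssim\|u\|_\rho^5$. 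I now open the bootstrap assumption $\|w(t)\|_\rho\leq\delta$ on a maximal interval $[0,T_*]\subset[0,\delta^{-2}T_0]$; this forces $\|u(t)\|_\rho\leq(C_0+1)\delta$ there, and integration of the above inequalities gives
\begin{equation*}
\|w(t)\|_\rho\;\leq\;C_1\delta^2\int_0^t\|w(s)\|_\rho\,ds+C_1\delta^5 t,\qquad t\in[0,T_*].
\end{equation*}

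Gronwall's inequality then produces $\|w(t)\|_\rho\leq\delta^3(e^{C_1\delta^2 t}-1)$, and since the Gronwall exponent $C_1\delta^2 t$ stays bounded by $C_1T_0$ on $[0,\delta^{-2}T_0]$ one obtains $\|w(t)\|_\rho\leq C(T_0)\delta^3$ uniformly on the full time window. Choosing $\delta_2=\delta_2(T_0)$ small enough makes $C(T_0)\delta^3\leq\delta/2$, which closes the bootstrap on $[0,\delta^{-2}T_0]$ and in particular gives the claimed bound $\|w(t)\|_\rho\lesssim\delta^2$ (in fact the proof delivers the sharper $\lesssim\delta^3$). The only nontrivial point is the matching of scales: the Lipschitz constant of $X_{H_{\Res}}$ on a ball of radius $\delta$ is exactly $O(\delta^2)$, which cancels against the $\delta^{-2}$--long time window so that Gronwall does not explode. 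This exact balance is a direct consequence of the scaling invariance of the leading part $\Pi_{\Ker(H^{(2)})}H^{(4,0)}$ and is precisely why the approximation can be pushed to time $\delta^{-2}T_0$ instead of merely $O(1)$; no further small divisor or normal form obstacle arises, since the weak Birkhoff procedure of Proposition \ref{WBNF} has already been applied.
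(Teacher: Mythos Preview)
Your proof is correct and follows essentially the same approach as the paper: a bootstrap assumption on $w=u-r^\delta$, the $O(\delta^2)$ Lipschitz constant of $X_{H_{\Res}}$ on a $\delta$-ball, the $O(\delta^5)$ forcing from $X_{\mathcal{R}'}$, and Gronwall over the $\delta^{-2}T_0$ window. The only cosmetic difference is that the paper splits the right-hand side a bit more finely (separating $X_{\mathcal{R}'}(r^\delta)$ from $X_{\mathcal{R}'}(u)-X_{\mathcal{R}'}(r^\delta)$, and the linearization of $X_{H_{\Res}}$ from its nonlinear remainder), and uses the bootstrap hypothesis $\|\xi\|_\rho\lesssim\delta^2$ rather than your $\|w\|_\rho\le\delta$; either choice closes, and your observation about the exact $\delta^2$-versus-$\delta^{-2}$ balance is precisely the mechanism at work.
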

\begin{proof}
We define $\xi:=u-r^{\delta}$. Then, $\xi$ satisfies
\[
 \dot{\xi}=\mathcal{Z}_0(t)+\mathcal{Z}_1(t)\xi+\mathcal{Z}_{21}(t, \xi)+\mathcal{Z}_{22}(t, \xi)
 \]
where
\begin{align*}
& \mathcal{Z}_0(t):=X_{\mathcal{R}'}(r^{\delta}(t)),\\
&\mathcal{Z}_1(t)=D X_{H_{\Res}}(r^{\delta}(t)),\\
&\mathcal{Z}_{21}(t, \xi):=X_{H_{\Res}}(r^{\delta}(t)+\xi(t))-X_{H_{\Res}}(r^{\delta}(t))-DX_{H_{\Res}}(r^{\delta}(t))\xi,\\ &\mathcal{Z}_{22}(t, \xi):=X_{\mathcal{R}'}(r^{\delta}(t)+\xi(t))-X_{\mathcal{R}'}(r^{\delta}(t)).
\end{align*}
By the estimate in item $(i)$ of Proposition \ref{WBNF}, the fact that $H_{\Res}$ is a homogenous Hamiltonian of degree $4$ and \eqref{eq:EstimatesQRprime}  we have the following estimates
\begin{equation}\label{bounds}
\begin{aligned}
&\lVert \mathcal{Z}_0(t) \rVert_{\rho}\lesssim \lVert r^{\delta} \rVert_{\rho}^5 , \quad  \lVert \mathcal{Z}_1(t) \rVert_{\rho}\lesssim \lVert r^{\delta} \rVert_{\rho}^2 \lVert \xi \rVert_{\rho},\\
&\lVert \mathcal{Z}_{21}(t, \xi) \rVert_{\rho}\lesssim  \lVert r^{\delta} \rVert_{\rho} \lVert \xi \rVert^2_{\rho}+\lVert \xi \rVert^3_{\rho}, \qquad \lVert \mathcal{Z}_{22}(t, \xi) \rVert_{\rho}\lesssim \lVert r^{\delta} \rVert^4_{\rho} \lVert \xi \rVert_{\rho}.
\end{aligned}
\end{equation}
Now we use a bootstrap argument to conclude the proof. We assume temporarily that $\lVert \xi(t) \rVert\lesssim \delta^2$ for $t\in[0, \delta^{-2} T_0]$. We already know that this is true for $t=0$ since $\xi(0)=0$. Then by Minkowsky inequality, the fact that $\lVert r^{\delta} \rVert_{\rho}\lesssim \delta$ and \eqref{bounds} we have that
\begin{equation}\label{stimaderiv}
\frac{d}{d t}\lVert \xi \rVert_{\rho}\le \lVert \mathcal{Z}_0(t)
\rVert_{\rho}+\lVert \mathcal{Z}_1(t)\xi \rVert_{\rho}+\lVert\mathcal{Z}_{21}(t,
\xi)\rVert_{\rho}+\lVert\mathcal{Z}_{22}(t, \xi)\rVert_{\rho}
\lesssim \delta^5+\delta^2 \lVert \xi \rVert_{\rho}.
\end{equation}
Thus integrating \eqref{stimaderiv} and by using Gronwall lemma we get
\[
\lVert \xi(t) \rVert_{\rho}\lesssim \delta^5 t+\delta^7 e^{\delta^2 t} \int_0^t
s\,e^{-\delta^2\,s}\,ds= \delta^5 t+\delta^3 e^{\delta^2 t} (1-e^{-\delta^2
t}(1+\delta^2 t))
\]
and since $0<t\le \delta^{-2} T_0$ we have
\[
\lVert \xi(t) \rVert_{\rho}\lesssim  \delta^3 T_0+\delta^3 e^{ T_0}.
\]
Since $T_0$ is independent from $\delta$, we can choose $\delta$ small enough
such that $\lVert \xi(t) \rVert_{\rho}\lesssim\delta^{5/2}$ for $t\in [0,
\delta^{-2} T_0]$. Since this bound is stronger than the bootstrap assumption we
can drop such hypothesis and the proof is concluded.
\end{proof}


\section{Reduction to the resonant model}\label{sec:resonantmodel4}

\subsection{Lambda set}

We introduce a suitable finite and complete (see Definition \ref{def:complete})
resonant set of modes  $\Lambda\subset\mathbb{Z}^2$, whose construction is based
on the ideas of \cite{CKSTT}.  This set is constructed such that the associated
subspace
\begin{equation}\label{def:SLambda}
V_{\Lambda}:=\{ a\in W_{\rho} : a_j=0 \quad \forall j\notin\Lambda\}
\end{equation}
is invariant under the flow associated to the Hamiltonian $H_\Res$ in
\eqref{Hres}. Later on we study the dynamics of the Hamiltonian $H_{\Res}$
restricted to initial data supported on $V_{\Lambda}$.

First we introduce the set of resonant tuples for the
nonlinear Beam equation (those of the Hartree equation are a subset of it),
\begin{align}
\AAA_{bh} &:= \left\{ (n_1,n_2,n_3,n_4) \in (\mathbb{Z}^2)^4:
n_1 \pm n_2\pm n_3\pm n_4=0, |n_1|^2\pm |n_2|^2\pm |n_3|^2\pm |n_4|^2=0
\right\}, \label{def:Abh}
\end{align}
and a subset of it, which is going to be used to build the set $\Lambda$,
\begin{align}
\widetilde\AAA_{bh} &:= \left\{ (n_1,n_2,n_3,n_4) \in (\mathbb{Z}^2)^4:
n_1-n_2+n_3-n_4=0, |n_1|^2-|n_2|^2+|n_3|^2-|n_4|^2=0 \right\}. \label{def:Abhwt}
\end{align}
Analogously, one can define the resonant tuples for the Wave equation and the
corresponding associated subset
\begin{equation}\label{def:Aw}
\begin{aligned}
\AAA_w &:= \{ (n_1,n_2,n_3,n_4) \in (\mathbb{Z}^2)^4: n_1\pm n_2\pm n_3\pm
n_4=0, |n_1|\pm |n_2|\pm |n_3|\pm |n_4|=0 \},\\
\widetilde\AAA_w &:= \{ (n_1,n_2,n_3,n_4) \in (\mathbb{Z}^2)^4:
n_1-n_2+n_3-n_4=0,
|n_1|-|n_2|+|n_3|-|n_4|=0 \}.
\end{aligned}
\end{equation}

Let $N \geq 2$ be an integer, and let $\AAA$ be either $\AAA_{bh}$ or $\AAA_w$
(analogously be  $\widetilde\AAA$ either $\widetilde\AAA_{bh}$ or
$\widetilde\AAA_w$). We define a set $\Lambda \subset \mathbb{Z}^2$ which
consists of two  disjoint generations, $\Lambda = \Lambda_1 \cup \Lambda_2$,
$|\Lambda_1|=|\Lambda_2|=2N$. Define a \emph{nuclear family} to be a set $(n_1,
n_2, n_3, n_4) \in \widetilde\AAA$ whose elements are ordered, such that $n_1$
and $n_3$ (known as the \emph{parents}) belong to the first generation
$\Lambda_1$, and $n_2$ and $n_4$ (known as the \emph{children}) belong to the
second generation $\Lambda_{2}$.
Note that if $(n_1, n_2, n_3, n_4)$ is a nuclear family, then so are $(n_1, n_4, n_3, n_2)$, $(n_3, n_2, n_1, n_4)$ and $(n_3, n_4, n_1, n_2)$. These families are called trivial permutations of the family $(n_1, n_2, n_3, n_4)$.
The first conditions to impose on the set $\Lambda$ were already imposed in the
paper \cite{CKSTT}.
\begin{itemize}
\item[$1_{\Lambda}$] (\emph{Closure}) If $n_1, n_2, n_3 \in \Lambda$ and there
exists $n\in\Z^2$ such that  $(n_1, n_2, n_3, n) \in \widetilde\AAA$ (or any
permutation of it), then $n \in \Lambda$. In other words, if three members of
a nuclear family are in $\Lambda$, so is the fourth one. This is a rephrasing
of the completeness condition (see Definition \ref{def:complete}).
\item[$2_{\Lambda}$] (\emph{Existence and uniqueness of spouse and children})
For any $n_1 \in \Lambda_1$, there exists a unique nuclear family $(n_1, n_2, n_3, n_4)$ (up to trivial permutations) such that $n_1$ is a parent of this family. In particular, each $n_1 \in \Lambda_1$ has a unique spouse $n_3 \in \Lambda_1$ and has two unique children $n_2, n_4 \in \Lambda_{2}$ (up to permutation).
\item[$3_{\Lambda}$] (\emph{Existence and uniqueness of sibling and parents})
For any $n_2 \in \Lambda_{2}$, there exists a unique nuclear family $(n_1, n_2, n_3, n_4)$ (up to trivial permutations) such that $n_2$ is a child of this family. In particular each $n_2 \in \Lambda_{2}$ has a unique sibling $n_4 \in \Lambda_{2}$ and two unique parents $n_1, n_3 \in \Lambda_1$ (up to permutation).
\item[$4_{\Lambda}$] (\emph{Faithfulness})
Apart from the nuclear families,
$\Lambda$ does not contain any other set $(n_1,n_2,n_3,n_4)\in\AAA$.
\end{itemize}

%

In the next two propositions we construct a set $\Lambda$ for the three
considered PDEs. In some of the cases we need further conditions.
%
\begin{prop}
\label{prop:LambdaSetHartreeHet}
Let $N \geq 2$ and take $\AAA=\AAA_{bh}$. Then there exists a set $\Lambda \subset \mathbb{Z}^2$, with $\Lambda = \Lambda_1 \cup \Lambda_2$ and $|\Lambda_j|=2N$, which satisfies
properties $1_{\Lambda}$--$4_{\Lambda}$ and the following additional property:
any $n_k,n_{k}',n_h,n_h'\in\Lambda$ such that $n_k \neq n_h$ and $n_k' \neq n_h'$     satisfy
\begin{equation} \label{nondegLambda}
n_k - n_h \neq n_{k}' - n_{h}'.
\end{equation}
\end{prop}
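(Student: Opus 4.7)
The plan is to build $\Lambda$ as a disjoint union of $N$ nuclear families $\mathcal{F}_1,\ldots,\mathcal{F}_N$, each of which is the vertex set of a rectangle with integer coordinates. This is natural because, for $\AAA=\AAA_{bh}$, the defining relations of $\widetilde{\AAA}_{bh}$ force $(n_1+n_3)/2=(n_2+n_4)/2=:m$ together with $|n_1-m|=|n_2-m|$, so that the four points lie at equal distance from a common midpoint $m$, with the pairs $(n_1,n_3)$ and $(n_2,n_4)$ antipodal on the circle centred at $m$: they form a rectangle inscribed in that circle.

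I would proceed by induction on the number of rectangles. Having constructed $\Lambda^{(k)}=\bigcup_{i=1}^k\mathcal{F}_i$ for $1\leq k<N$ satisfying properties $1_\Lambda$--$4_\Lambda$ and \eqref{nondegLambda}, I would add a rectangle $\mathcal{F}_{k+1}$ preserving all the conditions. Once faithfulness $4_\Lambda$ is enforced, $2_\Lambda$ and $3_\Lambda$ follow automatically from it, and $1_\Lambda$ is equivalent to the absence of ``three-in-$\Lambda$, one-outside'' resonances; so only disjointness, $4_\Lambda$, $1_\Lambda$, and \eqref{nondegLambda} need to be imposed on $\Lambda^{(k+1)}$.

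I would parametrize $\mathcal{F}_{k+1}$ by its centre $m$ and a pair of semi-diagonals $a,b\in\mathbb{Z}^2$ with $|a|=|b|$, yielding parents $m\pm a$ and children $m\pm b$. A large supply of integer rectangles with a common radius is provided by the standard number-theoretic fact that an integer $R$ with many prime factors $\equiv 1\pmod 4$ has many essentially distinct representations as a sum of two squares, each pair giving a valid $(a,b)$. For each of the finitely many forbidden configurations---a vertex coincidence with $\Lambda^{(k)}$, a mixed $4$-tuple in $\AAA_{bh}$, an external ``three-in/one-out'' resonance, or a nontrivial difference coincidence $n_k-n_h=n_{k'}-n_{h'}$ mixing $\mathcal{F}_{k+1}$ with $\Lambda^{(k)}$---the bad locus is a proper algebraic subvariety of the parameter space $(m,a,b)$. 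Choosing $m$ generically and taking $R$ large, so that the rectangle lies in an annulus as in Remark~\ref{rmk:Lambda}, avoids all such loci simultaneously.

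The main obstacle is ruling out \emph{automatic} coincidences forced by the rectangle structure. Within $\mathcal{F}_{k+1}$ one has $v_1-v_4=v_2-v_3=a+b$ and $v_1-v_2=v_4-v_3=a-b$, forced by the family relation itself, and analogous built-in resonances in $\AAA_{bh}$; the statement \eqref{nondegLambda} (and $4_\Lambda$) must be read modulo such trivial coincidences and trivial permutations of the family. After isolating these, the remaining conditions are of the form $2m+(\text{linear combination of }a,b)=n-n'$ with $n,n'\in\Lambda^{(k)}$, i.e., affine constraints on the centre $m$; each defines a lower-dimensional sublattice of admissible centres, and finitely many of them can be avoided by a generic choice. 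This completes the inductive step and hence the construction.
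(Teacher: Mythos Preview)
Your approach is correct in spirit but takes a different and more elaborate route than the paper. The paper simply invokes the construction of \cite{CKSTT}: work in $\mathbb{Q}^2$, place all of $\Lambda_1$ point by point (generically avoiding the right-angle loci that encode $4_\Lambda$), then place each pair of children on the circle with the parent segment as diameter, choosing an angle with rational tangent; finally scale to $\mathbb{Z}^2$. The only addition for this proposition is to observe that \eqref{nondegLambda} excludes, at each placement step, finitely many further points, so a generic placement still works. Your rectangle-by-rectangle induction with integer semi-diagonals and sums-of-two-squares is a valid alternative, but the $\mathbb{Q}^2$-then-scale device makes the number theory unnecessary and keeps the ``bad locus is a proper subvariety'' argument completely elementary. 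Also, the annulus condition you invoke from Remark~\ref{rmk:Lambda} is not required here; it enters only in Proposition~\ref{prop:LambdaSetWaveHom}.

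Two remarks on details. First, you are right that \eqref{nondegLambda} cannot hold literally once $\Lambda$ contains a single rectangle, since $n_1-n_2=n_4-n_3$ is forced; the paper does not flag this, but its only use of \eqref{nondegLambda} (in Lemma~\ref{lemma:HartreeMelnikovCond}) concerns differences $n_i-n_j$ with $n_i,n_j$ in \emph{different} resonant tuples, where no such forcing occurs. Your reading ``modulo trivial parallelogram identities'' is the correct one. Second, your claim that after removing these forced coincidences ``the remaining conditions are of the form $2m+(\text{linear in }a,b)=n-n'$'' is incomplete: the cases $(\text{new}-\text{new})=(\text{old}-\text{old})$ and $(\text{new}-\text{old})=(\text{new}-\text{old})$ give constraints like $\sigma a\pm\sigma' b=n-n'$ in which $m$ cancels. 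These are still proper affine subvarieties of the $(m,a,b)$-space intersected with $|a|=|b|$ and are avoided by the same genericity argument, but the choice of $m$ alone does not dispose of them.
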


\begin{prop} \label{prop:LambdaSetWaveHom}
Let $N \geq 2$ and take $\AAA=\AAA_{w}, \AAA_{bh}$. Then there exists a set $\Lambda
\subset \Zodd^2$, with $\Lambda = \Lambda_1 \cup \Lambda_2$
and $|\Lambda_j|=2N$, which satisfies conditions $1_{\Lambda}$--$4_{\Lambda}$
and the following additional condition. Take any
$n,n'\in\Lambda$, then
\begin{equation} \label{eq:Wave}
 |n| \neq |n'|.
\end{equation}
Moreover, if one takes $0<\eps\ll 1$, there exists $R=R(\eps)\gg1$ so that $\Lambda$ can be chosen to  satisfy also
%
\begin{equation}\label{eq:Wave2}
\left| |n| - R \right| < R\eps, \; \;  \text{ for all }\quad n \in \Lm.
\end{equation}
\end{prop}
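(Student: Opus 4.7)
I would build the set $\Lambda$ iteratively, constructing the $N$ nuclear families one at a time while enforcing at each step that no forbidden configuration is created. A nuclear family in $\widetilde\AAA$ admits the parametrization
\[
n_1 = c+v,\qquad n_2 = c+w,\qquad n_3 = c-v,\qquad n_4 = c-w,
\]
where $c \in \tfrac{1}{2}\Z^2$ is the common midpoint of the two diagonals and $v,w \in \Z^2$ are half-diagonals. Under this parametrization the resonance condition becomes $|v|=|w|$ (rectangles) for $\widetilde\AAA_{bh}$, and $|c+v|+|c-v|=|c+w|+|c-w|$ (parallelograms inscribed in the common ellipse with foci $\pm c$) for $\widetilde\AAA_w$. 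To guarantee $\Lambda\subset \Zodd^2$, I would take $c\in \Zodd^2$ and $v,w\in 2\Z^2$, which automatically places all four vertices in the coset $(1,0)+2\Z^2$.

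To achieve the annular condition \eqref{eq:Wave2}, I would fix $R=R(\eps)$ large and choose the centers $c_r$ with $|c_r|\approx R$ and half-diagonals $v_r,w_r$ of length much smaller than $R$. This guarantees $|c_r\pm v_r|,|c_r\pm w_r|\in [R(1-\eps),R(1+\eps)]$. In the Beam case this essentially reproduces the construction of \cite{CKSTT} up to the parity adjustment above. In the Wave case one has to exhibit non-trivial integer parallelograms inscribed in a common ellipse with foci $\pm c$; such tuples can be produced by choosing $w$ as a suitable lattice reflection of $v$ across a line through $c$, and by exploiting the positive density of lattice points on the admissible ellipses for appropriate values of $c$ and of the semi-major axis.

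The remaining conditions \eqref{eq:Wave} (distinct magnitudes) and $4_\Lambda$ (faithfulness) are \emph{generic}: each equality $|n|=|n'|$ with $n\neq n'$ and each extraneous resonance of the form $\sum_i\sigma_i\omega(n_{k_i})=0$ among modes of $\Lambda$ defines a proper algebraic condition on the $3N$ parameters $(c_r,v_r,w_r)$, and the total number of such forbidden conditions is $O(N^4)$, independent of $R$. Since the number of admissible parameter choices in the annulus grows polynomially with $R$, for $R$ large enough one can iteratively add the nuclear families $F_1,\dots,F_N$ avoiding every bad configuration. The closure property $1_\Lambda$ is then automatic from faithfulness and the nuclear family structure, while properties $2_\Lambda$ and $3_\Lambda$ follow from the disjointness of the $N$ families.

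The main obstacle lies in the Wave case: unlike the Beam resonance, which is polynomial in the coordinates, the Wave resonance $|n_1|-|n_2|+|n_3|-|n_4|=0$ is transcendental, so the set of lattice points on the relevant ellipses is less explicit and requires a careful counting argument. In addition, the most natural symmetric constructions (families related by rotations or reflections) automatically produce pairs of modes with $|n|=|n'|$ and therefore violate \eqref{eq:Wave}, so the placement of the $N$ families must be deliberately asymmetric while remaining compatible with both \eqref{eq:Wave2} and the $\Zodd^2$ constraint.
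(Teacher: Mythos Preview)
Your outline is sound for the Beam case and is close in spirit to the paper's argument there: start from a prototype configuration near a circle, perturb generically in $\Q^2$ to kill the finitely many bad algebraic conditions, then scale. Your parity device $c\in\Zodd^2$, $v,w\in 2\Z^2$ is a clean way to land in $\Zodd^2$ directly.

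The Wave case, however, has a real gap. You correctly locate the difficulty, but the two mechanisms you propose do not close it. First, the ``lattice reflection'' of $v$ to $w$ across an axis of the ellipse $\{v:|c+v|+|c-v|=L\}$ forces either $|c+w|=|c+v|$ (major-axis reflection) or $|c+w|=|c-v|$ (minor-axis reflection), so it always creates a pair of modes with equal modulus and violates \eqref{eq:Wave}; you note this yourself but do not supply a replacement. Second, the appeal to ``positive density of lattice points on the admissible ellipses'' is unsubstantiated: for a generic choice of $c$ and $L$ the curve $|c+v|+|c-v|=L$ has no integer points at all, and certainly not enough to run a genericity argument directly in $\Z^2$. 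Since, as you observe, the Wave resonance is not translation-invariant, you also cannot repair this by the $\Lambda\mapsto 2R\Lambda+(1,0)$ trick that works for Beam.

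The paper resolves this by working in $\Q^2$ with an explicit supply of rational points: each nuclear family is placed on an ellipse $x^2/a_j^2+y^2/b_j^2=1$ (translated so that one focus is at the origin) where $(a_j,b_j,c_j)$ is a rational Pythagorean triple, so the rational points on each ellipse are dense via the standard parametrization $(a_j(m^2-n^2)/(m^2+n^2),\,2b_jmn/(m^2+n^2))$. Genericity then lets one place two generations on each ellipse avoiding the finitely many intersections with circles (for \eqref{eq:Wave}) and with the ellipses/hyperbolas coming from $4_\Lambda$. The $\Zodd^2$ constraint is handled not by translation but by choosing the Pythagorean triples and the parameters $m,n$ with controlled parity (specifically $a_j,b_j$ of the form $\mathrm{odd}/\mathrm{odd}$, $c_j$ of the form $\mathrm{even}/\mathrm{odd}$, and $m$ odd, $n$ even), so that every constructed point is of the form $(\mathrm{odd}/\mathrm{odd},\mathrm{even}/\mathrm{odd})$ and an odd common denominator clears to $\Zodd^2$. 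Finally \eqref{eq:Wave2} is obtained by taking all the Pythagorean triples close to $(1,1,0)$, so the $N$ ellipses are $\eps$-close to the unit circle before scaling.
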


Let us make some comments on the extra conditions imposed on $\Lambda$ in these
propositions. Condition \eqref{eq:Wave} below is required to apply Melnikov
Theory in Section \ref{sec:resonantmodel}. Condition
\eqref{eq:Wave2} is used to obtain Hamiltonian systems on $V_{\Lambda}$
(see \eqref{def:SLambda}) which are close to integrable for the Beam and Wave
equations. For the Beam and Wave equation we also require that the first
component of the modes in $\Lambda$ is odd. This is fundamental in the
approximation argument (Proposition \ref{approximationargument}) to avoid
interactions with the mode $n=0$ which is not elliptic.

We defer the proof of the above propositions to the Appendix
\ref{app:LambdaSet}.

\begin{lem}\label{lem:HresLambda}
Consider the Hamiltonian \eqref{def:commonham} given by the equations
\eqref{Hartree}, \eqref{Beam}, \eqref{Wave} and the associated $H_{\Res}$ in
\eqref{Hres} and the set $\Lambda$ obtained in Propositions
\ref{prop:LambdaSetHartreeHet} and
\ref{prop:LambdaSetWaveHom}.
Then $V_{\Lambda}$ is invariant
and the restriction of $H_{\Res}$ to $V_{\Lambda}$ (see
\eqref{def:SLambda}) has the following form
\begin{equation}\label{def:HResOnLambda}
\begin{aligned}
&\left.(H_{\Res})\right|_{{V_{\Lambda}}}(\{\mathtt{a}_{n}\}_{n\in\Lambda})\\
&=\frac{3}{8}\sum_{\substack{j_i\in \Lambda,\\ j_1-j_2+j_3-j_4=0,\\
|j_1|^{\kkk}-|j_2|^{\kkk}+|j_3|^{\kkk}-|j_4|^{\kkk}=0}} C_{j_1\dots j_4}\,
\mathtt{a}_{j_1}\,\overline{\mathtt{a}_{j_2}}\,\mathtt{a}_{j_3}\,\overline{\mathtt{a}_{j_4}}\\
= &\, \frac{3}{8}\sum_{n\in\Lambda} C_{nnnn}\,\lvert \mathtt{a}_n \rvert^4+ \frac{3}{4}\sum_{i\neq j, n_i, n_j\in\Lambda} C_{n_i n_j n_i n_j} \lvert \mathtt{a}_{n_i} \rvert^2\,\lvert \mathtt{a}_{n_j} \rvert^2 \\
&\,+ \frac{3}{4}\sum_{k=1}^N \,(C_{n_{4k-3} n_{4k-2} n_{4k-1} n_{4k}}+C_{n_{4k-3} n_{4 k} n_{4k-1} n_{4k-2}}+C_{n_{4k-1} n_{4k-2} n_{4k-3} n_{4 k}}+C_{n_{4k-1} n_{4 k} n_{4k-3} n_{4k-2}}) \times \\
&\; \; \; \; \; \; \; \; \; \; \; \; \times \,\mathrm{Re}(\mathtt{a}_{n_{4k-3}}\,\overline{\mathtt{a}_{n_{4k-2}}}\,\mathtt{a}_{n_{4k-1}}\,\overline{\mathtt{a}_{n_{4 k}}})
\end{aligned}
\end{equation}
with $C_{j_1 j_2 j_3 j_4}=C^{+ - + -}_{j_1 j_2 j_3 j_4}$ (see \eqref{def:commonham}, \eqref{def:Cj1}, \eqref{def:Cj2}), namely
\[
\begin{aligned}
\kkk&=2, \qquad
C_{j_1 j_2 j_3 j_4}&=&\,V_{j_1-j_2}=1+\OO(\eps)\qquad \text{(Hartree)}\\
\kkk&=1, \qquad
C_{j_1 j_2 j_3 j_4}&=&\,\frac{1}{16\sqrt{\lvert j_1 \rvert \lvert j_2 \rvert
\lvert j_3 \rvert \lvert j_4
\rvert}}=\frac{1}{R^2}\left(1+\OO(\eps)\right)\qquad \text{(Wave)}\\
\kkk&=2,\qquad
C_{j_1 j_2 j_3 j_4}&=&\,\frac{1}{16\lvert j_1 \rvert \lvert j_2 \rvert \lvert j_3
\rvert \lvert j_4 \rvert}=\frac{1}{R^4}\left(1+\OO(\eps)\right)\qquad
\text{(Beam)}.
\end{aligned}
\]
Therefore,  these coefficients satisfy $C_{j_1 j_2 j_3 j_4}\neq 0$.
\end{lem}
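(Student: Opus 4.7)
The proof proceeds in three steps: invariance of $V_\Lambda$, classification of the kernel-projected monomials, and explicit evaluation of the coefficients.

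\textbf{Invariance.} Write $H_\Res = \Pi_{\Ker(H^{(2)})} H^{(4,0)} + \mathcal{Q}(t)$ with $\mathcal{Q} = H^{(4,\geq 2)} \circ \Psi(t)$. Since the rotating change of variables $\Psi(t)$ acts diagonally in Fourier, $\mathcal{Q}(t)$ is still supported on monomials carrying at least two indices outside $\Lambda$. For any $a \in V_\Lambda$ these monomials vanish, and for every $k$ the partial derivative $\partial_{\bar{\mathtt{a}}_k} \mathcal{Q}(t)$ is again a sum of cubic monomials retaining at least one factor $\mathtt{a}_j^{\sigma}$ with $j \notin \Lambda$ (the differentiation removes at most one external index). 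Hence $X_{\mathcal{Q}(t)}|_{V_\Lambda}=0$, which yields both the invariance of $V_\Lambda$ under $X_{H_\Res}$ and the identification $X_{H_\Res}|_{V_\Lambda} = X_{\Pi_{\Ker(H^{(2)})} H^{(4,0)}}|_{V_\Lambda}$.

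\textbf{Classification of resonant monomials.} I would enumerate the 4-tuples $(\sigma_i, j_i)_{i=1}^4$ with $j_i \in \Lambda$ satisfying both the momentum condition $\sum \sigma_i j_i = 0$ and the frequency resonance $\sum \sigma_i \omega(j_i) = 0$, organized by the cardinality of $\{j_1, j_2, j_3, j_4\}$. Tuples with at most two distinct values automatically satisfy both conditions when the signs split as two pluses and two minuses; summing contributions across all such sign patterns and orderings produces the $|\mathtt{a}_n|^4$ and $|\mathtt{a}_{n_i}|^2 |\mathtt{a}_{n_j}|^2$ terms, the numerical prefactors $3/8$ and $3/4$ coming from counting the distinct orderings that collapse onto the same monomial. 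The case of four distinct values is the delicate one: by the faithfulness property $4_\Lambda$, any such 4-tuple in $\Lambda$ must coincide, up to trivial permutation, with a nuclear family, i.e. with an element of $\widetilde{\AAA}$. Since a second independent linear relation of the form $\sum \sigma'_i j_i = 0$ would force two modes to coincide (contradicting distinctness), only the sign patterns $(+,-,+,-)$ and $(-,+,-,+)$ contribute (for Hartree these are the only patterns with nonzero coefficients to begin with). Grouping each nuclear family with its four trivial permutations and its complex conjugate reproduces the real-part sum $\mathrm{Re}(\mathtt{a}_{n_{4k-3}} \overline{\mathtt{a}_{n_{4k-2}}} \mathtt{a}_{n_{4k-1}} \overline{\mathtt{a}_{n_{4k}}})$ with the claimed $3/4$ factor.

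\textbf{Evaluation of the coefficients.} Formulas \eqref{def:Cj1}--\eqref{def:Cj2} give the value of $C_{j_1 j_2 j_3 j_4}=C^{+-+-}_{j_1 j_2 j_3 j_4}$ directly. For Hartree $C = V_{j_1 - j_2}$, and hypothesis \eqref{cond:potential} produces $1 + \OO(\eps)$ whenever $j_1 - j_2 = n - n'$ with $n, n' \in \Lambda$. For Wave ($\kkk=1$) and Beam ($\kkk=2$) one has $C = [16(|j_1||j_2||j_3||j_4|)^{\kkk/2}]^{-1}$; uniform application of \eqref{eq:Wave2}, which guarantees $|n| = R(1+\OO(\eps))$ for all $n \in \Lambda$, yields the asymptotic normalizations $R^{-2}(1+\OO(\eps))$ and $R^{-4}(1+\OO(\eps))$ respectively. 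In all three cases the coefficients remain strictly positive, so $C_{j_1 j_2 j_3 j_4} \neq 0$. The principal subtlety throughout is concentrated in Step~2: it is precisely the interplay of closure ($1_\Lambda$) and faithfulness ($4_\Lambda$) that rules out spurious nontrivial resonant quadruples in $\Lambda$, ensuring that the restricted Hamiltonian has the explicit closed form stated in the lemma rather than an uncontrolled collection of extra interaction terms.
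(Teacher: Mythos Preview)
Your proof is correct and follows the same approach as the paper's own (very terse) argument: invoke the combinatorial properties $1_\Lambda$--$4_\Lambda$ of the set $\Lambda$ to classify the surviving resonant monomials, and then read off the coefficient estimates from \eqref{def:Cj1}, \eqref{def:Cj2}, \eqref{cond:potential} and \eqref{eq:Wave2}. Your version is considerably more explicit---in particular about why $X_{\mathcal{Q}(t)}$ vanishes on $V_\Lambda$ and how faithfulness eliminates spurious quadruples---but the logical skeleton is identical.
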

\begin{proof}
The particular  form of Hamiltonian $(H_{\Res})_{|_{V_{\Lambda}}}$ is
a direct consequence of the Properties $1_\Lm$--$4_\Lm$ satisfied by the set
$\Lm$ and the definition of $H_\Res$ in \eqref{Hres}. The definition of the
coefficients $C_{j_1 j_2 j_3 j_4}$ is given in \eqref{def:Cj1},\eqref{def:Cj2}
and their estimates are consequence of \eqref{def:potential}, \eqref{cond:potential} and
\eqref{eq:Wave2}.
\end{proof}

We use the symmetries of the Hamiltonian \eqref{def:HResOnLambda} to remove some of the monomials by a gauge transformation. Indeed, since the mass $M:=\sum_{n\in\Lambda} \lvert \mathtt{a}_n \rvert^2$ is a conserved quantity for $(H_{\Res})_{|_{V_{\Lambda}}}$, we can consider the change of coordinates and time reparametrization
\begin{equation}\label{fase}
\alpha_n=\mathtt{a}_n\,e^{\mathrm{i} G t}\qquad \text{and}\qquad t=-(8/3)\mathtt{g}\,\tau
\qquad\text{with}\qquad  G= \frac{3}{4}\,\mathtt{g}\,M,
\end{equation}
for some $\mathtt{g}\in\mathbb{R}$ to be chosen. The new system is Hamiltonian with respect to
%
%
\begin{equation}\label{toymodel}
\begin{aligned}
& \widetilde{H}_{\Res}(\{\alpha_{n}\}_{n\in\Lambda})\\
&:=\sum_{n\in\Lambda} \lvert \alpha_{n} \rvert^4+
\sum_{n\in\Lambda} \lvert \alpha_{n} \rvert^4 \left(1-\frac{C_{n\, n\, n\, n}}{\mathtt{g}}\right)+2\sum_{n_i, n_j\in\Lambda, \,i\neq j} \lvert \alpha_{n_i} \rvert^2\,\lvert \alpha_{n_j} \rvert^2 \left(1-\frac{C_{n_i n_j n_i n_j}}{\mathtt{g}}\right)\\
&-\frac{2}{\mathtt{g}}\,\sum_{k=1}^N \,(C_{n_{4k-3} n_{4k-2} n_{4k-1} n_{4k}}+C_{n_{4k-3} n_{4 k} n_{4k-1} n_{4k-2}}+C_{n_{4k-1} n_{4k-2} n_{4k-3} n_{4 k}}+C_{n_{4k-1} n_{4 k} n_{4k-3} n_{4k-2}}) \times \\
&\; \; \; \; \; \; \; \; \; \; \; \; \times \,\mathrm{Re} (\alpha_{n_{4k-3}}\,\overline{\alpha}_{n_{4k-2}}\,\alpha_{n_{4k-1}}\,\overline{\alpha}_{n_{4 k}}).
\end{aligned}
\end{equation}
Choosing the  constant $\mathtt{g}$ in \eqref{fase} as
\begin{equation}\label{def:g}
\mathtt{g}=\,1\qquad \text{(Hartree)},\qquad
\mathtt{g}=\,\frac{1}{R^2}\qquad \text{(Wave)}\qquad
\mathtt{g}=\,\frac{1}{R^4}\qquad \text{(Beam)},
\end{equation}
%
then the Hamiltonian system \eqref{toymodel} takes the following form
%
\begin{equation}\label{hamN}
\begin{split}
\widetilde{H}_{\Res}(\alpha_{n_1}, \dots, \alpha_{ n_{2N} })=&\,\sum_{k=1}^{4N}
\lvert \alpha_{n_k} \rvert^4\,+2\,\eps  \sum_{1 \leq i,j \leq 4N}
A_{i , j} \,\lvert \alpha_{n_i} \rvert^2\,\lvert \alpha_{n_j} \rvert^2\\
&\,- 8 \,
\sum_{h=1}^N \mathtt{C}_h \,\mbox{Re}(  \alpha_{n_{4h-3}} \,
\overline{ \alpha_{n_{4h-2}} } \, \alpha_{n_{4h-1}}\, \overline{ \alpha_{n_{4h} } }
),
\end{split}
\end{equation}
where  $A = (A_{i , j}) \in \mathbb{R}^{4N \times 4N}$ is a symmetric  matrix given by
%
%
\begin{equation}\label{def:coeffmatrixA}
\eps A_{j, j}:=\frac{1}{2}-\frac{C_{n_j n_j n_j n_j}}{2 \mathtt{g}}, \qquad
\eps A_{i, j}:=1-\frac{C_{n_i n_j n_i n_j}}{\mathtt{g}} \quad i\neq j,
\end{equation}
and $(\mathtt{C}_h)_{h=1,\ldots,N}$ satisfies
\begin{equation*}
\mathtt{C}_{h}=1+\mathcal{O}( \eps ), \; \; \forall \, h=1, \dots, N.
\end{equation*}
The equations of motion read as
%
\begin{equation}\label{eq:ModelSLambda}
\mathrm{i}\,\,\dot{\alpha}_{n_k}=\left(2\lvert \alpha_{n_k} \rvert^2 + 2\eps
\sum_{1 \leq r \leq 2N} A_{k , r} \lvert \alpha_{n_r}
\rvert^2\right)\,\alpha_{n_k}-8\,\mathtt{C}_h
\,\alpha_{n_i}\,\overline{\alpha_{n_l}}\,\alpha_{n_j},
\end{equation}
where $k, i, l, j\in\{4(h-1)+1, 4(h-1)+2, 4(h-1)+3, 4h\}$ give the four modes forming a resonant tuple, $l+k$ is even (namely, following \cite{CKSTT}, $n_k$ and $n_l$ belong to the same generation) and $h \in \{1,\ldots,N\}$.

\subsection{Invariant subspaces and first integrals of the resonant model}
The system associated to Hamiltonian $\widetilde{H}_{\Res}$ in \eqref{hamN} has
large dimension and it is not integrable. Nevertheless, the properties of the
set $\Lambda$ and the particular form of the Hamiltonian $\widetilde{H}_{\Res}$ ensure that the system associated to
$\widetilde{H}_{\Res}$ has several invariant subspaces, where one can easily
analyze the dynamics.
We devote this section to analyze these invariant subspaces and the  first integrals of $\widetilde{H}_{\Res}$.

Let us split $\Lm$ both as $\Lm=\Lm_1\cup\Lm_2=\RR_1\cup\ldots\cup \RR_N$. The first
splitting refers to the two generations and the second refers to the $N$
four--wave resonances used to define $\Lambda$ (see \eqref{def:Abh} and
\eqref{def:Aw}).

Associated to this set we can consider the following invariant subspaces (recall \eqref{def:SLambda})
\begin{equation*}
V_{\Lambda_i}=\left\{\alpha\in V_{\Lambda}: \alpha_j=0\,
\text{ for }\, j\not\in \Lambda_i\right \}
\end{equation*}
and, for $\{i_1,\ldots,i_k\}\subset \{1,\ldots, N\}$, $1<k<N$,
\begin{equation*}
V_{i_1,\ldots,i_k}=\left\{\alpha\in V_{\Lambda}:
\alpha_j=0\,
\text{ for }\, j\not\in \RR_{i_1}\cup\ldots\cup\RR_{i_k}\right \}.
\end{equation*}
One can easily check that all those subspaces are invariant under the flow
associated to equation \eqref{eq:ModelSLambda}. Let us study the corresponding
dynamics.

For $V_{\Lambda_1}$ (and analogously for $V_{\Lambda_2}$)
one obtains the equation
%
\[
 \mathrm{i}\,\,\dot{\alpha}_{n_k}=\left(2\lvert \alpha_{n_k} \rvert^2 +
2\varepsilon \sum_{1 \leq r \leq 2N ,
n_r\in\Lambda_1} A_{k , r} \lvert \alpha_{n_r}
\rvert^2\right)\,\alpha_{n_k}\qquad
\text{ for }\,{\alpha}_{n_k}\in\Lambda_1.
\]
Therefore, on $V_{\Lambda_1}$, $|\alpha_{n_r}|^2$ are constants of
motion and the phase space is foliated by invariant tori
\begin{equation}\label{def:HypTori}
 \T_{I_1,\ldots I_k}=\left\{\alpha\in V_{\Lambda_1}: |\alpha_{n_k}|=I_k\right\} \quad \mbox{where} \quad I_j>0, \quad j=1, \dots, k.
\end{equation}
It can be checked (see
Section \ref{sectionsymplecticreduction} below) that these invariant tori are
hyperbolic and thus have stable and unstable invariant manifolds.

The dynamics on $V_{i_1,\ldots,i_k}$ is just given as well by
equation \eqref{eq:ModelSLambda} just considering the interactions between the
modes in the rectangles $\RR_{i_1}, \ldots\RR_{i_k}$.

Hamiltonian $\widetilde{H}_{\Res}$ in \eqref{hamN} has the
first integrals
\begin{equation}\label{comN}
\begin{aligned}
S^{(k,+)}_{i,j}= \lvert \alpha_{n_{4(k-1)+i}} \rvert^2 + \lvert \alpha_{n_{4(k-1)+j}}
\rvert^2 \qquad i+j\equiv 1 \,\,(\mbox{mod}\,\, 2), \quad i, j\in \{1, 2, 3,
4\}, \,\,\, k \in \{1, \ldots, N \},\\
S^{(k,-)}_{i,j}=\lvert \alpha_{n_{4(k-1)+i}} \rvert^2 - \lvert \alpha_{n_{4(k-1)+j}}
\rvert^2 \qquad i+j\equiv 0 \,\,(\mbox{mod}\,\, 2), \quad i, j\in \{1, 2, 3,
4\}, \,\,\, k \in \{1, \ldots, N \}.
\end{aligned}
\end{equation}
These constants of motion are in involution. They are not functionally
independent but it can be easily checked that the subset of first integrals
\begin{equation}\label{comN2}
S^{(k,-)}_{1,3}, S^{(k,-)}_{2,4}, S^{(k,+)}_{3,4},\quad \,\,\, k
\in \{1, \ldots, N \}
\end{equation}
is functionally independent in the open set $\{\alpha_{n}\neq
0:n\in\Lambda\}\subset V_{\Lambda}$. Certainly they are
not functionally independent on the invariant subspaces
$V_{\Lambda_1}, V_{\Lambda_2}$ (and in particular are not
functionally independent at the tori $\T_{I_1,\ldots I_k}$).

\subsection{The symplectic reduction}\label{sectionsymplecticreduction}
We use the first integrals \eqref{comN2} to perform a symplectic reduction to
the Hamiltonian \eqref{hamN}. It can be applied in the open set $\{\alpha_{n}\neq
0:n\in\Lambda\}\subset V_{\Lambda}$ where the first integrals are
functionally independent.  In this domain, all modes are different from zero
and thus one can consider
%
%
symplectic polar coordinates $(\theta,I) \in \mathbb{T}^{4N} \times
(0,+\infty)^{4N}$, given by
\begin{equation} \label{AcAn}
\begin{aligned}
\alpha_{n_k} = \sqrt{I_k} e^{i \theta_k}.
\end{aligned}
\end{equation}
%
In these coordinates the Hamiltonian \eqref{hamN} takes the form
%
\begin{equation}\label{hamaa}
\begin{aligned}
H(\theta, I)=& \langle I, I \rangle+2\varepsilon\, \langle A I, I \rangle\\
&-8 \sum_{h=1}^N \mathtt{C}_h \sqrt{ I_{4(h-1)+1} \, I_{4(h-1)+2} \, I_{4(h-1)+3} \, I_{4h} } \,\cos(\theta_{4(h-1)+1}-\theta_{4(h-1)+2}+\theta_{4(h-1)+3}-\theta_{4h})
\end{aligned}
 \end{equation}
 and the symplectic form $\Omega_{|_{V_{\Lambda}}}$ becomes the standard one $d\theta\wedge d I=\sum_{k=1}^{4 N} d\theta_k\wedge d I_k$. The Hamiltonian system \eqref{hamaa} has $4 N$ degrees of freedom. We perform a symplectic reduction that leads to an $N$ degrees of freedom system. In particular first  we consider the restriction of \eqref{hamN} to
\begin{equation}\label{subspaceVN}
\mathcal{V}:= \bigcap_{k=1}^N \left\{ S^{(k,-)}_{1 , 3} =
S^{(k,-)}_{2 ,4}= 0 \right\},
\end{equation}
and then we further reduce it to the manifold
\begin{equation}\label{subspaceWN}
\mathcal{W}:= \bigcap_{k=1}^N  \left\{ S^{(k,+)}_{3,4}=1
\right\}  \cap \mathcal{V}.
\end{equation}
We adopt the following notation: we denote by $\mathbf{0}_n$ the null matrix of dimension $n\times n$ and by $\mathrm{I}_n$ the identity matrix of dimension $n\times n$.
We consider the symplectic linear change of variable $\Psi\colon \mathbb{T}^{4N} \times \mathbb{R}^{4N} \to  \mathbb{T}^{4N} \times \mathbb{R}^{4N}$ defined by
\begin{equation*}
\begin{pmatrix}
\theta\\
I
\end{pmatrix}
=\Psi \begin{pmatrix}
\phi\\
J
\end{pmatrix}
\end{equation*}
with, for $h=0\ldots N-1$,
\begin{equation*}
\begin{pmatrix}
\theta_{4h+1}\\
\theta_{4h+2}\\
\theta_{4h+3}\\
\theta_{4h+4}
\end{pmatrix}
=\begin{pmatrix}
1 & 0 & 0 & 0\\
0 & 1 & 0 & 0\\
-1 & 0 & 1 & 0\\
0 & -1 & 0 & 1
\end{pmatrix}
\begin{pmatrix}
\phi_{4h+1}\\
\phi_{4h+2}\\
\phi_{4h+3}\\
\phi_{4h+4}
\end{pmatrix}
,\qquad
\begin{pmatrix}
I_{4h+1}\\
I_{4h+2}\\
I_{4h+3}\\
I_{4h+4}
\end{pmatrix}
=\begin{pmatrix}
1 & 0 & 1 & 0\\
0 & 1 & 0 & 1\\
0 & 0 & 1 & 0\\
0 & 0 & 0 & 1
\end{pmatrix}
\begin{pmatrix}
J_{4h+1}\\
J_{4h+2}\\
J_{4h+3}\\
J_{4h+4}
\end{pmatrix}
\; =: \; \tilde{\mathtt{B}}
\begin{pmatrix}
J_{4h+1}\\
J_{4h+2}\\
J_{4h+3}\\
J_{4h+4}
\end{pmatrix}.
\end{equation*}
%
%
%
We consider the restriction of the new Hamiltonian $H\circ \Psi$ at the invariant submanifold $\mathcal{V}$ defined in \eqref{subspaceVN}, which corresponds, in the coordinates \eqref{AcAn}, to the subspace
\begin{equation*}
\{  J_{4k+1}=J_{4k+2}=0 : k=0, \dots, N-1\}.
\end{equation*}
The new Hamiltonian does not depend on the angles $\{\phi_i\}_{i\in\{4k+1,4k+2 : k=0,\dots, N-1\}}$ and it reads as
%
%
\begin{align*}
\mathsf{H}\left(\{\phi_{4k+3},\phi_{4k+4},J_{4k+3},J_{4k+4}\}_{k=0}^{N-1}\right)=& 2 \sum_{h=0}^{N-1} \sum_{k=3}^4 J_{4h+k}^2 \\
&+2\varepsilon \, \sum_{0 \leq h,h' \leq N-1} \sum_{\substack{i=3, 4 \\ k=3, 4}} (\mathtt{B}^T A \mathtt{B})_{4h+i, 4h'+k} J_{4h+i} \, J_{4h'+k} \\
&-8 \sum_{h=0}^{N-1} \mathtt{C}_{h+1} \,J_{4h+3}\,J_{4h+4}\,\cos(\phi_{4h+3}-\phi_{4h+4}),
\end{align*}
where
\begin{align*}
 \mathtt{B} &= \begin{pmatrix}
 \tilde{\mathtt{B}} & {\mathbf{0}}_4 & \cdots & \cdots & {\mathbf{0}}_4 \\
 {\mathbf{0}}_4 & \tilde{\mathtt{B}} & {\mathbf{0}}_4 & \cdots & {\mathbf{0}}_4 \\
 \vdots & {\mathbf{0}}_4 & \tilde{\mathtt{B}} & {\mathbf{0}}_4 & \cdots \\
 \vdots & \vdots & \ddots & \ddots  & \vdots \\
 {\mathbf{0}}_4 & \cdots & \cdots & \cdots & \tilde{\mathtt{B}}
 \end{pmatrix}\in \mathbb{R}^{4N \, \times \, 4N}.
\end{align*}
The second symplectic reduction is obtained by considering the symplectic linear change of variable $\Phi \colon \mathbb{T}^{2N} \times \mathbb{R}^{2N} \to  \mathbb{T}^{2N} \times \mathbb{R}^{2N}$ as \[\left(\{\phi_{4k+3}\}_{k=0}^{N-1},\{\phi_{4k+4}\}_{k=0}^{N-1}, \{J_{4k+3}\}_{k=0}^{N-1}, \{J_{4k+4}\}_{k=0}^{N-1}\right)=\Phi(\{K_k\}_{k=1}^N,\{\tilde{K}_1\}_{k=1}^N,\{\psi_k\}_{k=1}^N,\{\tilde{\psi}_1\}_{k=1}^N)\]
defined by
\begin{equation*}
\begin{pmatrix}
\phi_{4k+3}\\
\phi_{4k+4}
\end{pmatrix}=\begin{pmatrix}
1 & 1 \\
0 & 1
\end{pmatrix}
\begin{pmatrix}
\psi_{k+1}\\
\tilde{\psi}_{k+1}\end{pmatrix},\qquad
\begin{pmatrix}
J_{4k+3}\\
J_{4k+4}
\end{pmatrix}
=\begin{pmatrix}
1 & 0 \\
-1 & 1
\end{pmatrix}\begin{pmatrix}
K_{k+1}\\
\tilde{K}_{k+1}
\end{pmatrix},\qquad k=0\ldots N-1.
\end{equation*}

After the reparametrization of time $t\mapsto -4 \,t$, the restriction of the transformed Hamiltonian $\mathsf{H}\circ \Phi$ to the subspace
\begin{equation*}
\mathcal{W}=\mathcal{V} \bigcap_{k=1}^N  \{ S^{(k,+)}_{3,4}=1 \}=
\bigcap_{k=1}^N \{ \tilde{K}_k=1\}
\end{equation*}
is given (up to constants) by
\begin{equation}\label{redhamN}
\begin{aligned}
\mathcal{H}(\psi_1, \ldots, \psi_N, K_1, \ldots, K_N)=& \sum_{j=1}^N K_j (1-K_j)(1 +2  \,\cos(\psi_j))\\
&+\varepsilon\Big[ \sum_{j=1}^N a_{j} K_j +\sum_{j=1}^N b_{j} K^2_j  + \sum_{i, j=1, i< j}^N d_{i j} K_i\,K_j
+\sum_{h=1}^N c_j K_j(1-K_j) \,\cos(\psi_j)\Big]
\end{aligned}
\end{equation}
where the coefficients $a_j$, $b_j$ and $d_j$ can be written in terms of the entries of the matrix $A$ in \eqref{hamN} in the following way
\begin{equation} \label{coeffN}
\begin{aligned}
a_j := -&\sum_{r=1}^N \Big[ A^{(j, r)}_{1, 2}+A^{(j, r)}_{1, 4}+A^{(j, r)}_{ 2, 3}+A^{(j, r)}_{3, 4}-\big( A^{(j, r)}_{2, 2}+ A^{(r, j)}_{2, 4}+ A^{(r, j)}_{4, 2}+ A^{(j, r)}_{4, 4}  \big)\Big]  \\
b_j := -&\Big[ A^{(j, j)}_{1, 1}+2 A^{(j, j)}_{1, 3}+A^{(j, j)}_{3, 3}-2 \big( A^{(j, j)}_{1, 2} +A^{(j, j)}_{1, 4}+A^{(j, j)}_{2, 3}+A^{(j, j)}_{3, 4}\big)+A^{(j, j)}_{2, 2}+2 A^{(j, j)}_{2, 4}+A^{(j, j)}_{4, 4}\Big]  \\
d_{i j} := -&\Big[  A^{(i, j)}_{1, 1}+ A^{(i, j)}_{1, 3}+A^{(i, j)}_{3, 1}+A^{(i, j)}_{3, 3}+A^{(i, j)}_{2, 2}+A^{(i, j)}_{2, 4}+A^{(i, j)}_{4, 2}+A^{(i, j)}_{4, 4}\\
-&\big( A^{(i, j)}_{1, 2}+A^{(i, j)}_{2, 1}+A^{(i, j)}_{1, 4}+A^{(i, j)}_{4, 1}+A^{(i, j)}_{2, 3}+A^{(i, j)}_{3, 2}+A^{(i, j)}_{3, 4}+A^{(i, j)}_{4, 3} \big)\Big], \\
\end{aligned}
\end{equation}
with $A^{(i, j)}_{n, m}:=A_{4(i-1)+n, 4(j-1)+m}$, $n, m\in \{ 1, 2, 3, 4  \}$, $i, j\in \{ 1, \dots, N \}$ and
\begin{equation}
c_j:= \frac{2}{\varepsilon}(\mathtt{C}_j-1).
\end{equation}
Recall that $A$ is symmetric, hence $d_{i j}=d_{j i}$.
\begin{remark}\label{rem:Kcartcoord}
We point out that the variables $K_i$ in \eqref{redhamN} reads, in the coordinates $\{ \alpha_j \}_j$ (see \eqref{fase}) as
\[
K_i:=|\alpha_{n_{4 (i-1)+1}}|^2=|\alpha_{n_{4 (i-1)+3}}|^2=1-|\alpha_{n_{4 (i-1)+2}}|^2=1-|\alpha_{n_{4 (i-1)+4}}|^2 \quad \forall i=1, \dots, N.
\]
\end{remark}
It can be easily seen that the hyperplanes $\{K_j=0\}$, $\{K_j=1\}$ are
invariant under the Hamiltonian \eqref{redhamN}. Indeed one can understand the
Hamiltonian \eqref{redhamN} as defined on the product sphere $(S^2)^N$ by
``blowing down'' the sets $\{K_j=0\}$, $\{K_j=1\}$ to a point in each sphere.
That is, one can consider local coordinates
\begin{equation}\label{def:cartesian}
x_j=\sqrt{2K_j}\cos\frac{\psi_j}{2}, \quad y_j=\sqrt{2K_j}\sin\frac{\psi_j}{2}
\end{equation}
which blow down $\{K_j=0\}$. Then, the
Hamiltonian \eqref{redhamN} becomes
\begin{equation}\label{redhamNCartesian}
\begin{aligned}
\mathcal{H}(x_1, \ldots, x_N, y_1, \ldots, y_N)=& \frac{1}{2}\sum_{j=1}^N
\left(3x_j^2-y_j^2\right)-\frac{1}{4}\sum_{j=1}^N
\left(3x_j^2-y_j^2\right)\left(x_j^2+y_j^2\right)\\
&+\varepsilon\Bigg[ \frac{1}{2}\sum_{j=1}^N a_{j} \left(x_j^2+y_j^2\right)
+\frac{1}{4}\sum_{j=1}^N b_{j} \left(x_j^2+y_j^2\right)^2 \\&
+
\frac{1}{4}\sum_{i, j=1, i< j}^N d_{i j}
\left(x_i^2+y_i^2\right)\left(x_j^2+y_j^2\right)\\&
+\frac{1}{4}\sum_{h=1}^N c_j
\left(x_j^2-y_j^2\right)\left(2-x_j^2-y_j^2\right)\Bigg].
\end{aligned}
\end{equation}
From the particular form of this Hamiltonian, it is clear that $\{x_j=y_j=0\}$ is
invariant under the associated flow. In particular the point
\begin{equation}\label{def:saddledown}
P_-=\left\{x_j=0, y_j=0,\quad j=1\ldots N\right\},
\end{equation}
is a saddle (for small $\eps$) with $N$ dimensional stable and unstable manifolds.
One can analogously blow down $\{K_j=1\}$ by considering the coordinates
\[
 x_j=\sqrt{2(1-K_j)}\cos\frac{\psi_j}{2}, \quad
y_j=\sqrt{2(1-K_j)}\sin\frac{\psi_j}{2}
\]
and one also obtains that, for $\eps$ small enough, $P_+=\{x_j=0, y_j=0,\,\, j=1\ldots N\}$ is a saddle  with $N$ dimensional stable and unstable manifolds. This saddle is the ``blow down'' of $\{K_1=\ldots=K_N=1\}$.

\section{Dynamics of the resonant model}\label{sec:resonantmodel}

The reduced Hamiltonian \eqref{redhamN} for $N=2$ is of the form
\begin{equation}\label{calham0}
\begin{aligned}
\mathcal{H}(\varepsilon; \psi_1, \psi_2, K_1, K_2)=&\mathcal{H}_0(\psi_1, \psi_2, K_1, K_2)+\varepsilon \mathcal{H}_1(\psi_1, \psi_2, K_1, K_2)\\
\mathcal{H}_0(\psi_1, \psi_2, K_1, K_2)=&\,\mathcal{H}_0^{(1)}(\psi_1, K_1)+\mathcal{H}_0^{(2)} (\psi_2, K_2)\\
\mathcal{H}_0^{(1)}(\psi_1, K_1)= &\,K_1(1-K_1)(1+2\cos(\psi_1))\\
\mathcal{H}_0^{(2)}(\psi_2, K_2)=&\,K_2(1-K_2)(1+2\cos(\psi_2))\\
\mathcal{H}_1(\psi_1, \psi_2, K_1, K_2)=&\,a_1K_1+b_1K_1^2+a_2 K_2+b_2 K_2^2\\
&+c_1K_1(1-K_1)\cos(\psi_1)+c_2K_2(1-K_2)\cos(\psi_2) +d_{12}K_1K_2.
\end{aligned}
\end{equation}
Note that the only term which couples the two unperturbed Hamiltonians $\mathcal{H}_0^{(1)}$, $\mathcal{H}_0^{(2)}$ is $d_{12}K_1K_2$. The Hamiltonian $\mathcal{H}$ is reversible with respect to the involution
\begin{equation}\label{anglesymmetry}
\Upsilon(\psi_1, \psi_2, K_1, K_2)=(-\psi_1,-\psi_2, K_1, K_2).
\end{equation}
%
%
\subsection{Unperturbed dynamics ($\varepsilon=0$)}\label{sec:unperturbeddynamics}

For $\eps=0$, the Hamiltonian system $\mathcal{H}_0$ is the product of the two uncoupled $1$-d.o.f systems with  Hamiltonian $\mathcal{H}_0^{(i)}$, $i=1, 2$ and therefore it is integrable.
We analyze the dynamics given by $\mathcal{H}_0^{(i)}$. We analyze it only for $\mathcal{H}_0^{(1)}$ since both Hamiltonians are equal.
%
%
The associated equations of motion  are given by
\begin{equation*}
\begin{split}
\dot{\Psi}_1=&\,(1-2 K_1) (1+2\cos(\psi_1))\\
\dot{K}_1=&\,2\sin(\psi_1)\,K_1\,(1-K_1).
\end{split}
\end{equation*}
 The sets $\{  K_1=0\}$ and $\{K_1=1\}$ are $\mathcal{H}_0^{(1)}$-invariant $1$-dimensional tori which correspond to the hyperbolic  tori \eqref{def:HypTori} after symplectic reduction  and correspond to saddles in proper ``blow down'' coordinates (see \eqref{def:cartesian}, \eqref{def:saddledown}). The sets $\{  K_1=0\}$ and $\{K_1=1\}$ possess the hyperbolic equilibrium points $\left(\pm \Psi_{*}, 0\right)$, $\left( \pm \Psi_{*}, 1 \right)$
  with
  \begin{equation}\label{def:psi}
  \Psi_*=2\pi/3.
  \end{equation}
  Such equilibria are hyperbolic with eigenvalues  $\pm \sqrt{3}$. Their invariant manifolds outside of  $\{  K_1=0\}$ and $\{K_1=1\}$ correspond to the invariant manifolds of the saddles $P_\pm$ in  \eqref{def:saddledown}.



The tori $\{  K_1=0\}$ and $\{K_1=1\}$ are on the same energy level $\mathcal{H}_0^{(1)}=0$ and the saddles  $\left(\pm \Psi_{*}, 0\right)$ and  $\left( \pm \Psi_{*}, 1 \right)$ are
 connected through the heteroclinic orbits
 \begin{equation*}
 (\psi_1(t), K_1(t))=\left(\pm \Psi_*, \dfrac{1}{1+e^{\mp \sqrt{3} t}}\right)
 \end{equation*}
 (see Figure \ref{fig:PhaseSpaceH0}).

\begin{figure} 
\begin{center}
\includegraphics[width=0.7\textwidth]{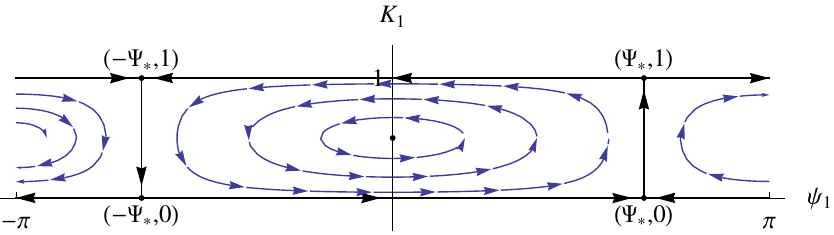}
\end{center}
\caption{{\small Phase space of the Hamiltonian $\mathcal{H}_0^{(1)}$ in \eqref{calham0}} }
\label{fig:PhaseSpaceH0}
\end{figure}

 For $0<K_1<1$, the dynamics of the Hamiltonian $\mathcal{H}_0^{(1)}$ can be also analyzed easily. Consider the ``half'' of the phase space $ (-2\pi/3,2\pi/3)\times (0, 1)\subset\T\times (0, 1)$ limited by the heteroclinic orbits (the other ``half'' is symmetric). It has an elliptic points at $(\psi_1,K_1)=(0,1/2)$ and the rest is foliated by periodic orbits
 \begin{equation}\label{def:Gammah}
\mathtt{P}_h:=\{\mathcal{H}_0^{(1)}=h\} \qquad \mbox{ with} \qquad h\in (0,3/4).
 \end{equation}
  When $h\to 0$, the periodic orbits ``tend'' to the sequence of heteroclinics and $K_1=0,1$ and therefore their period $\mathtt{T}_h\to+\infty$.


%

%
%
%
%
Hence, the dynamics  of the 2-dof Hamiltonian $\mathcal{H}_0$ in \eqref{calham0} has the following features. The invariant tori
 \begin{equation}\label{tori}
\mathbb{T}_0:=\{ K_1=K_2=0\}, \qquad \mathbb{T}_1:=\{ K_1=K_2=1 \}
\end{equation}
are two invariant Lagrangian tori for the system \eqref{calham0}. They possess the equilibrium points
\begin{equation}\label{def:saddles}
\mathfrak{e}_+^{(0)}:=(\Psi_{*}, \Psi_{*} , 0, 0), \quad \mathfrak{e}_+^{(1)}:=(\Psi_{*}, \Psi_{*} , 1, 1), \quad \mathfrak{e}_-^{(1)}:=(-\Psi_{*}, -\Psi_{*} , 1, 1), \quad \mathfrak{e}_-^{(0)}:=(-\Psi_{*}, -\Psi_{*} , 0, 0)
\end{equation} connected by the following heteroclinic manifolds
\begin{equation}\label{gammaplus}
\begin{split}
\gamma_{+}(\tau_1, \tau_2):=(\psi_1^+(\tau_1), \psi_2^+(\tau_2), K_1^+(\tau_1), K_2^+(\tau_2))&=\left(\Psi_{*}, \Psi_{*}, \dfrac{1}{1+e^{-\sqrt{3}\tau_1}},   \dfrac{1}{1+e^{-\sqrt{3} \tau_2}}  \right),\\
\gamma_{-}(\tau_1, \tau_2):=(\psi_1^-(\tau_1), \psi_2^-(\tau_2), K_1^-(\tau_1), K_2^-(\tau_2))&=\left(-\Psi_*, -\Psi_*, \dfrac{1}{1+e^{\sqrt{3}\tau_1}},   \dfrac{1}{1+e^{\sqrt{3}\tau_2}}  \right).
\end{split}
\end{equation}
In particular $\gamma_+$ connects the  points $\mathfrak{e}_+^{(0)}$, $\mathfrak{e}_+^{(1)}$ and $\gamma_-$ connects $\mathfrak{e}_-^{(1)}$ with $\mathfrak{e}_-^{(0)}$. The trajectories in the heteroclinic manifolds are just given by $\gamma_{\pm}(\tau_1+t, \tau_2+t)$, $t\in\mathbb{R}$.

The $4$-dimensional phase space of Hamiltonian $\mathcal{H}$ in \eqref{calham0} with $\eps=0$ has several three-dimensional invariant subspaces setting either $K_1$ or $K_2$ equal to $0$ or $1$, and two dimensional invariant subspaces setting either $(\psi_1,K_1)$ or $(\psi_2,K_2)$ at one of the saddles. Thus, one can define the hyperbolic periodic orbits (recall \eqref{def:Gammah})
\begin{align}\label{def:PO:ext}
\mathtt{P}^{\sigma,s}_h := \{ (\psi_1,\psi_2,K_1,K_2) : (\psi_1,K_1) \in \mathtt{P}_h, \,\psi_2= \sigma \Psi_*,\, K_2=k \}, \; \; \sigma = \pm, \; \; s =0,1,
\end{align}
and one could define analogously the other ones placing them at the other saddles.

For  $\varepsilon=0$, the Hamiltonian system \eqref{calham0} possesses two  $2$-dimensional heteroclinic manifolds $W^{s}(\mathfrak{e}_{+}^{(0)})=W^u(\mathfrak{e}_{+}^{(1)})$, $W^{u}(\mathfrak{e}_{-}^{(0)})=W^s(\mathfrak{e}_{-}^{(1)})$. They are certainly not robust under perturbations. We show that, under a generic non-degeneracy condition, those heteroclinic manifolds break down when $0<\varepsilon\ll 1$ creating transverse intersections between some of the stable and unstable invariant manifolds.


%
%

\subsection{Non-integrable dynamics $(\varepsilon>0)$: $2$ resonant tuples}

For $\eps>0$, the tori
$\mathbb{T}_0$ and $\mathbb{T}_1$ in \eqref{tori} are still invariant and they still possess  saddles which are $\eps$--close to the unperturbed saddles $\mathfrak{e}_\pm^{(j)}$, $j=0,1$. These saddles have 2-dimensional stable and unstable invariant manifolds.

\begin{remark}\label{rmk:SaddlesPerturbed}
Abusing notation, we also denote by $\mathfrak{e}_\pm^{(j)}$, $j=0,1$ the saddles of the perturbed Hamiltonian \eqref{calham0} with $0<\eps\ll 1$, which are $\eps$-close to those defined by \eqref{def:saddles}.
\end{remark}

%

%
%
%
%
%
\begin{teor}\label{thm:SplittingHomo2Rect}
Consider the Hamiltonian \eqref{calham0} and assume that
 \begin{equation}\label{tildekappas}
d_{12}\neq 0\qquad \text{(see \eqref{calham0})}.
\end{equation}
Then, there exists $\varepsilon_0>0$ such that for all $\varepsilon\in (0, \varepsilon_0)$,
 the invariant manifolds $W_{\varepsilon}^{u}(\mathfrak{e}_{+}^{(0)})$  and $W_{\varepsilon}^s(\mathfrak{e}_{-}^{(0)})$ of the saddles \eqref{def:saddles} of the Hamiltonian \eqref{calham0} intersect transversally along an orbit (within the energy level).
\end{teor}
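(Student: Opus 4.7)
At $\varepsilon=0$ the system \eqref{calham0} is the product of two integrable $1$-dof systems, so $W^u(\mathfrak{e}_+^{(0)})$ coincides with $W^s(\mathfrak{e}_+^{(1)})$ along the $2$-parameter heteroclinic family $\gamma_+$, and likewise $W^u(\mathfrak{e}_-^{(1)})=W^s(\mathfrak{e}_-^{(0)})$ along $\gamma_-$. The torus $\mathbb{T}_1=\{K_1=K_2=1\}$ is invariant at $\varepsilon=0$ and also for $\varepsilon>0$: every term of $\mathcal{H}$ carries either a factor $K_i(1-K_i)$ or a $K_iK_j$ whose $\psi$-derivative vanishes at $K_i=1$, so $\dot K_i|_{K_i=1}\equiv 0$. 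On $\mathbb{T}_1$ the inner flow in $(\psi_1,\psi_2)$ decouples (the coupling $d_{12}K_1K_2$ reduces to a constant) and has $\mathfrak{e}_+^{(1)}$ as an unstable star-node source and $\mathfrak{e}_-^{(1)}$ as a stable star-node sink, connected by an open set of inner heteroclinics. This yields at $\varepsilon=0$ a \emph{degenerate} heteroclinic chain $\mathfrak{e}_+^{(0)}\xrightarrow{\gamma_+}\mathfrak{e}_+^{(1)}\xrightarrow{\mathbb{T}_1}\mathfrak{e}_-^{(1)}\xrightarrow{\gamma_-}\mathfrak{e}_-^{(0)}$, and the goal is to show it becomes a single transverse heteroclinic orbit for $0<\varepsilon\ll 1$.

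The outer links are handled by Melnikov theory. Splitting off the $\mathbb{R}$-action of time translation on the $2$-parameter family $\gamma_+(\tau_1,\tau_2)$ leaves the essential parameter $\sigma=\tau_1-\tau_2$, and using $F=\mathcal{H}_0^{(1)}$ as a first integral of $\mathcal{H}_0$ independent of the Hamiltonian, the first-order splitting of $W^u_\varepsilon(\mathfrak{e}_+^{(0)})$ and $W^s_\varepsilon(\mathfrak{e}_+^{(1)})$ in the energy level is measured by
\[
M_+(\sigma)\;=\;\int_{-\infty}^{+\infty}\{\mathcal{H}_0^{(1)},\mathcal{H}_1\}\bigl(\gamma_+(s+\sigma,s)\bigr)\,ds.
\]
Since $1+2\cos\Psi_*=0$ forces $\partial_{K_1}\mathcal{H}_0^{(1)}|_{\gamma_+}=0$, and $K_1^{+\prime}(\tau)=\sqrt{3}\,K_1^+(\tau)(1-K_1^+(\tau))$, the bracket reduces to $-K_1^{+\prime}(s+\sigma)\,\partial_{K_1}\mathcal{H}_1|_{\gamma_+}$. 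After the substitution $u=s+\sigma$, the $(\psi_2,K_2)$-independent part of $\partial_{K_1}\mathcal{H}_1|_{\gamma_+}$ produces a $\sigma$-independent term, while the coupling $d_{12}K_2$ contributes $-d_{12}\,J(\sigma)$ with
\[
J(\sigma)=\int_{-\infty}^{+\infty}K_1^{+\prime}(u)\,K_2^+(u-\sigma)\,du,\qquad J'(\sigma)=-\int_{-\infty}^{+\infty}K_1^{+\prime}(u)\,K_2^{+\prime}(u-\sigma)\,du<0,
\]
so $J$ is a strict bijection $\mathbb{R}\to(0,1)$. Hence under the non-degeneracy $d_{12}\neq 0$ the Melnikov function $M_+$ is strictly monotone in $\sigma$ with a simple zero, which produces a transverse heteroclinic orbit $\mathfrak{e}_+^{(0)}\to\mathfrak{e}_+^{(1)}$ in the energy level. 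The reversibility $\Upsilon$ of \eqref{anglesymmetry} swaps $\mathfrak{e}_\pm^{(j)}$ and reverses time, so it conjugates this transverse connection to a transverse heteroclinic $\mathfrak{e}_-^{(1)}\to\mathfrak{e}_-^{(0)}$ in the energy level.

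To splice these two transverse heteroclinics into a single transverse orbit $\mathfrak{e}_+^{(0)}\to\mathfrak{e}_-^{(0)}$ I invoke the inclination ($\lambda$-)lemma at the hyperbolic saddles $\mathfrak{e}_\pm^{(1)}$: the transversality of the first Melnikov forces the forward flow of $W^u_\varepsilon(\mathfrak{e}_+^{(0)})$ to accumulate in $C^1$ topology on $W^u_\varepsilon(\mathfrak{e}_+^{(1)})\subset\mathbb{T}_1$, the (decoupled, integrable) inner flow transports this to a neighborhood of $\mathfrak{e}_-^{(1)}$, and a second $\lambda$-lemma application makes $W^u_\varepsilon(\mathfrak{e}_+^{(0)})$ $C^1$-close to $W^u_\varepsilon(\mathfrak{e}_-^{(1)})$; the transverse intersection of this latter manifold with $W^s_\varepsilon(\mathfrak{e}_-^{(0)})$ from the second Melnikov is then inherited by $W^u_\varepsilon(\mathfrak{e}_+^{(0)})$ itself by the $C^1$-stability of transverse intersections. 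The main obstacle is precisely this splicing step: both $W^u(\mathfrak{e}_+^{(1)})$ and $W^s(\mathfrak{e}_-^{(1)})$ lie inside the $2$-dimensional $\mathbb{T}_1$ and overlap there degenerately, so to propagate the $(K_1,K_2)$-transversality gained on the outer links across the inner transit one must carefully exploit the genuine hyperbolicity (eigenvalues $\pm\sqrt{3}$ with multiplicity two) in the transverse directions at $\mathfrak{e}_\pm^{(1)}$, arguing uniformly in $\varepsilon$.
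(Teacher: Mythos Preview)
Your Melnikov computation for the outer link is essentially correct up to the formula
\[
M_+(\sigma)\;=\;-(a_1+b_1)\;-\;d_{12}\,J(\sigma),\qquad J:\mathbb{R}\to(0,1)\ \text{strictly monotone},
\]
but the conclusion you draw from it is wrong. Monotonicity in $\sigma$ is clear once $d_{12}\neq 0$; however the range of $M_+$ is the open interval with endpoints $-(a_1+b_1)$ and $-(a_1+b_1)-d_{12}$, and nothing in the hypothesis $d_{12}\neq 0$ forces $0$ to lie in that interval. More fundamentally, the very framework of heteroclinic Melnikov between $\mathfrak{e}_+^{(0)}$ and $\mathfrak{e}_+^{(1)}$ requires these two saddles to sit on the \emph{same} energy surface for $\varepsilon>0$. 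A direct computation gives $\mathcal{H}(\mathfrak{e}_\pm^{(0)})=0$ while $\mathcal{H}(\mathfrak{e}_\pm^{(1)})=\varepsilon(a_1+b_1+a_2+b_2+d_{12})$, so unless the coefficients satisfy the extra relation $a_1+b_1+a_2+b_2+d_{12}=0$ the manifolds $W_\varepsilon^u(\mathfrak{e}_+^{(0)})$ and $W_\varepsilon^s(\mathfrak{e}_+^{(1)})$ lie on disjoint energy hypersurfaces and cannot intersect at all. In that generic case the entire torus $\mathbb{T}_1$ sits at energy $\varepsilon(a_1+b_1+a_2+b_2+d_{12})\neq 0$, so your splicing route through $\mathbb{T}_1$ is blocked from the start: there is no first transverse intersection to feed into the $\lambda$-lemma. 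Thus your argument proves the theorem only under two additional constraints on the coefficients that the statement does not assume.

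The paper avoids this obstruction by a two--parameter trick rather than by shadowing through $\mathbb{T}_1$. One rewrites $\mathcal{H}=\mathbf{H}_0(\delta)+\varepsilon\mathbf{H}_1$ with $\mathbf{H}_0^{(j)}=K_j(1-K_j)(1+2\cos\psi_j)-\delta K_j^2$, treating $\delta$ as an independent small parameter. For any $\delta>0$ the modified integrable system $\mathbf{H}_0$ already has a genuine heteroclinic manifold $\gamma_0(\vec\tau)$ joining $\mathfrak{e}_+^{(0)}$ to $\mathfrak{e}_-^{(0)}$ directly (it climbs close to $K_j=1$ but turns back without touching $\mathbb{T}_1$), so classical Melnikov theory applies with no energy--matching restriction. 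The resulting reduced Melnikov potential is computed by comparing $\gamma_0$ with the concatenation $\gamma_+\!\ast\!\gamma_-$ and shown to equal $d_{12}\,\tau_0\coth(\sqrt{3}\tau_0/2)$ up to a constant and an $O(\delta^{\nu_0})$ error; this function has a non-degenerate critical point at $\tau_0=0$ precisely when $d_{12}\neq 0$. Finally one sets $\delta=\varepsilon$ to recover the original Hamiltonian \eqref{calham0}. In short, the heteroclinic Melnikov you computed is used in the paper only as an \emph{auxiliary approximation} of the direct homoclinic Melnikov, not as a first leg of a shadowing chain.
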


Note that this theorem is not a classical perturbative result. Indeed, for $\eps=0$ the saddles $\mathfrak{e}_{\pm}^{(0)}$ did not have any connection since  their invariant manifolds coincided with those of $\mathfrak{e}_{\pm}^{(1)}$ along heteroclinic connections. Therefore, the prove of Theorem \ref{thm:SplittingHomo2Rect} is not a direct consequence of Melnikov Theory (is not a theorem about persistence, is a theorem about \emph{new} heteroclinic connections). Thus, we prove this theorem in two steps. First in Section \ref{sec:HeteroMelnikov} we apply Melnikov Theory to prove the existence of transverse (within the energy level) heteroclinic connections between  $\mathfrak{e}_{+}^{(0)}$ and  $\mathfrak{e}_{+}^{(1)}$ (under certain conditions). Then, in Section \ref{sec:HomoMelnikov}, we use this analysis to prove the existence of the connections given in Theorem \ref{thm:SplittingHomo2Rect} through a suitable modification of Melnikov Theory.

\subsubsection{Transversal heteroclinic orbits to saddles}\label{sec:HeteroMelnikov}

The first step to prove Theorem \ref{thm:SplittingHomo2Rect} is to prove the existence of heteroclinic intersections between the saddles  $\mathfrak{e}_{\pm}^{(0)}$ and $\mathfrak{e}_{\pm}^{(1)}$. This step is certainly not necessary to obtain homoclinic intersections. Nevertheless, it will make considerably easier the computation of the Melnikov function associated to the homoclinic intersections.
To obtain the mentioned heteroclinic intersections, one certainly needs that the saddles belong to the same energy level, that is, $\mathcal{H}(\mathfrak{e}_{\pm}^{(0)})=\mathcal{H}(\mathfrak{e}_{\pm}^{(1)})$. By \eqref{calham0} this condition is equivalent to
\begin{equation}\label{elc}
a_1+b_1+a_2+b_2+d_{12}=0.
\end{equation}
%
%
%

\begin{prop}\label{teoetero}
The Hamiltonian \eqref{calham0} possesses four hyperbolic fixed points $\mathfrak{e}_{\pm}^{(0)}$, $\mathfrak{e}_{\pm}^{(1)}$ such that the following holds. If \eqref{elc} is satisfied and
\begin{equation}
\label{concord}
(a_1+b_1)(a_2+b_2)>0,
\end{equation}
there exists $\varepsilon_0>0$ such that for $\varepsilon\in (0, \varepsilon_0)$ the manifolds $W_{\varepsilon}^{u}(\mathfrak{e}_+^{(0)})$ and $W_{\varepsilon}^{s}(\mathfrak{e}_+^{(1)})$ intersect transversally along orbits (within the energy level). The same happens for  $W_{\varepsilon}^{s}(\mathfrak{e}_-^{(0)})$ and $W_{\varepsilon}^{u}(\mathfrak{e}_-^{(1)})$.
\end{prop}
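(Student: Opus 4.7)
The plan is a Melnikov argument within the common energy level. The four unperturbed equilibria $\mathfrak{e}_\pm^{(j)}$ are hyperbolic (eigenvalues $\pm\sqrt{3}$ in each factor $\mathcal{H}_0^{(i)}$), hence persist for $0<\varepsilon\ll 1$ by the implicit function theorem. The hypothesis \eqref{elc} is precisely the leading-order requirement $\mathcal{H}(\mathfrak{e}_+^{(0)})=\mathcal{H}(\mathfrak{e}_+^{(1)})$, which is necessary for a heteroclinic connection between them. To detect transverse intersections of $W^u_\varepsilon(\mathfrak{e}_+^{(0)})$ and $W^s_\varepsilon(\mathfrak{e}_+^{(1)})$ inside the 3-dimensional energy surface, I would use the Melnikov function
\begin{equation*}
M(s) = \int_{-\infty}^{+\infty} \{\mathcal{H}_0^{(1)},\mathcal{H}_1\}\bigl(\gamma_+(\tau, \tau+s)\bigr)\, d\tau,
\end{equation*}
with $\gamma_+$ as in \eqref{gammaplus}. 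Here $\mathcal{H}_0^{(1)}$ vanishes identically on $\gamma_+$ and its differential is nonzero in the direction transverse to $\gamma_+$ within $\{\mathcal{H}_0=0\}$, so that a simple zero of $M$ translates into a transverse heteroclinic orbit in the energy surface.

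The computation is made tractable by the identity $1+2\cos\Psi_*=0$, which annihilates $\partial_{K_1}\mathcal{H}_0^{(1)}$ on $\gamma_+$ and leaves
\begin{equation*}
\{\mathcal{H}_0^{(1)}, \mathcal{H}_1\}\big|_{\gamma_+} = -\sqrt{3}\, K_1(1-K_1)\Bigl[a_1 - \tfrac{c_1}{2} + (2b_1+c_1)K_1 + d_{12}K_2\Bigr].
\end{equation*}
The substitution $u=K_1^+(\tau)$, with $du=\sqrt{3}\,K_1(1-K_1)\,d\tau$, turns each integral into an elementary one on $(0,1)$: the $s$-independent contributions combine (after the cancellation $-c_1/2+c_1/2=0$) into $-(a_1+b_1)$, while the $K_2$-term yields $-d_{12}\,G(e^{-\sqrt{3}\,s})$ with
\begin{equation*}
G(v)=\frac{1-v+v\ln v}{(1-v)^2},\qquad v\in(0,\infty),
\end{equation*}
so that $M(s)=-(a_1+b_1)-d_{12}\,G(e^{-\sqrt{3}\,s})$. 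An elementary analysis of $P(v):=(1+v)\ln v + 2(1-v)$ (for which $P''(v)=(v-1)/v^2$ and $P'(1)=0$, whence $P'\ge 0$ and $P$ is strictly increasing with $P(1)=0$) shows, via $G'(v)=P(v)/(1-v)^3$, that $G$ is a strictly decreasing diffeomorphism $(0,\infty)\to(0,1)$, with $G'(1)=-1/6$ by Taylor expansion.

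Using \eqref{elc} to eliminate $d_{12}=-(a_1+b_1)-(a_2+b_2)$, the equation $M(s)=0$ becomes
\begin{equation*}
G\bigl(e^{-\sqrt{3}\,s}\bigr) = \frac{a_1+b_1}{(a_1+b_1)+(a_2+b_2)},
\end{equation*}
which admits a unique simple solution iff the right-hand side lies in $(0,1)$, equivalently iff $(a_1+b_1)(a_2+b_2)>0$ --- precisely condition \eqref{concord}. This simple zero produces the transverse heteroclinic in the energy surface between the plus saddles; the analogous statement for $\mathfrak{e}_-^{(0)},\mathfrak{e}_-^{(1)}$ follows at once from the reversing involution \eqref{anglesymmetry}, which maps $\gamma_+$ to $\gamma_-$ and swaps the roles of stable and unstable manifolds. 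The most delicate point I anticipate is not the Melnikov computation itself but the heteroclinic (rather than homoclinic) character of the connection: the two saddles lie on a common energy level only by virtue of \eqref{elc}, and one has to verify carefully that a single scalar Melnikov integral controls full transversality within the 3-dimensional energy level --- classical in two degrees of freedom, but requiring attention in this heteroclinic setting.
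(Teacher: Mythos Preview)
Your argument is correct and follows the same Melnikov strategy as the paper. Both proofs use the reversing involution \eqref{anglesymmetry} to reduce to the $+$ case and then detect the heteroclinic intersection via a one-variable Melnikov computation; the geometric setup (why a single scalar Melnikov integral suffices for transversality within the energy level) is identical.

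The only difference is computational packaging. The paper works with the Melnikov \emph{potential} $\mathcal{L}_+^{(0)}(\tau_0)=\int\mathcal{H}_1\circ\gamma_+\,dt$, evaluates it in closed form via the integral identity $\int K_i^+(1-K_j^+)\,dt=(\tau_i-\tau_j)/(1-e^{-\sqrt{3}(\tau_i-\tau_j)})$, and then shows the potential is strictly convex (or concave) under \eqref{concord} by checking the sign of the second derivative. You instead compute the Melnikov \emph{function} directly via the slick substitution $u=K_1^+(\tau)$, obtain $M(s)=-(a_1+b_1)-d_{12}G(e^{-\sqrt{3}s})$, and prove that $G:(0,\infty)\to(0,1)$ is a strictly decreasing diffeomorphism. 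Your monotonicity of $G$ is precisely the derivative of the paper's convexity of $\mathcal{L}_+^{(0)}$, so the two arguments are equivalent; your route is arguably more direct since it simultaneously yields existence, uniqueness, and simplicity of the zero without having to separately analyze the asymptotics and the second derivative. (Incidentally, your formula makes it transparent that the heteroclinic point is unique, something the paper's convexity argument also implies but does not state.)
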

See in Figure \ref{fig:Heteros} an example of heteroclinic connections. We devote the rest of the section to prove Proposition \ref{teoetero}.

\begin{figure} 
\begin{center}
\includegraphics[width=0.7\textwidth]{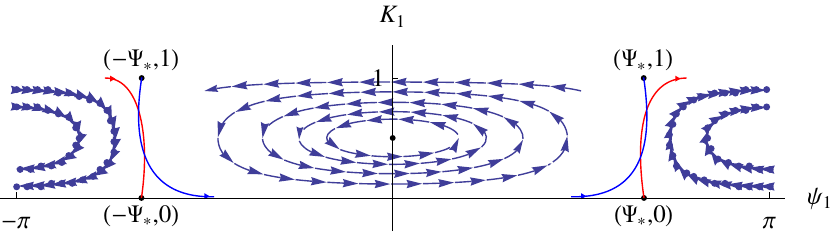}
\end{center}
\caption{Transverse heteroclinic orbits for $\eps$ small enough.}
\label{fig:Heteros}
\end{figure}

\begin{proof}[Proof of Proposition \ref{teoetero}]
Thanks to the symmetry \eqref{anglesymmetry} of the system \eqref{calham0}, one of the intersections implies the other one.
We just deal with the first one.

Consider a compact subset $\mathbf{K}$ of $\mathbb{R}^2$.
Let $\vec{\tau}=(\tau_1, \tau_2)\in \mathbf{K}$ and $m>0$. We consider the line
\begin{equation*}
\Sigma(\vec{\tau})=\{\gamma_+(\vec{\tau})+r\,\nabla \mathcal{H}^{(1)}_0 (\gamma_+(\vec{\tau})),\,\,\,r\in (-m, m)\},
\end{equation*}
which passes through $\gamma_+(\vec{\tau})$ and it is orthogonal to $\{\mathcal{H}^{(1)}_0=\mathcal{H}^{(1)}_0(\gamma_+(\vec{\tau}))\}$ at $\gamma_+(\vec{\tau})$. Since the system has two degrees of freedom and energy conservation, it is enough to measure the distance along this line. It would be equivalent to consider $\mathcal{H}^{(2)}_0$.
Since $\gamma_+(\vec{\tau})\in W_{0}^s(\mathfrak{e}_+^{(1)})=W_{0}^u(\mathfrak{e}_+^{(0)})$,
if we consider $\varepsilon$ small enough we can ensure that $\Sigma(\vec{\tau})$ intersects transversally $W_{\varepsilon}^s(\mathfrak{e}_+^{(1)})$ and $W_{\varepsilon}^u(\mathfrak{e}_+^{(0)})$ at just one point, $q_{\varepsilon}^s=q_{\varepsilon}^s(\vec{\tau})$ and $q_{\varepsilon}^u=q_{\varepsilon}^u(\vec{\tau})$ respectively.
Then,  the distance between the invariant manifolds in $\Sigma(\vec{\tau})$ is given by
\begin{equation}\label{distsign}
d(\vec{\tau}):=\left\langle \dfrac{ \nabla \mathcal{H}^{(1)}_0 (\gamma_+(\vec{\tau}))}{\lVert    \nabla \mathcal{H}^{(1)}_0 (\gamma_+(\vec{\tau}))    \rVert}, q_{\varepsilon}^s(\vec{\tau})-q_{\varepsilon}^u(\vec{\tau}) \right\rangle.
\end{equation}
%
%
Application of the classical Melnikov Theory gives the following result.
\begin{lem}\label{melnikovlemma}
The function $d(\vec{\tau})$ introduced in \eqref{distsign} satisfies
\begin{equation*}
d(\vec{\tau})=\dfrac{\varepsilon}{\lVert    \nabla \mathcal{H}^{(1)}_0 (\gamma_+(\vec{\tau}))    \rVert} \,\mathcal{M_+}(\vec{\tau})+\mathcal{O}_{C^1(\mathbf{K})}(\varepsilon^2), \qquad \vec{\tau}\in\mathbf{K},
\end{equation*}
where
\begin{equation}\label{melint}
\begin{aligned}
&\mathcal{M_+}(\vec{\tau}):= \int_{\mathbb{R}} \{ \mathcal{H}^{(1)}_0 ,  \mathcal{H}_1\} \circ \Phi^t_{\mathcal{H}_0}(\gamma_{+}(\vec{\tau}))\,dt = \int_{\mathbb{R}} \{ \mathcal{H}^{(1)}_0 ,  \mathcal{H}_1\} \circ (\gamma_{+}(\tau_1+t,\tau_2+t))\,dt
\end{aligned}
\end{equation}
is the so-called \emph{Melnikov function} (see \cite{Melnikov63}).
\end{lem}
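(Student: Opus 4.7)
\medskip
\noindent\textbf{Proof plan.} This is the classical Melnikov computation adapted to the present context, where the role of ``energy'' is played by the first integral $\mathcal{H}_0^{(1)}$ of the unperturbed system. The key observation is that, since $\mathcal{H}_0=\mathcal{H}_0^{(1)}+\mathcal{H}_0^{(2)}$ is a sum of Hamiltonians in disjoint variables, $\{\mathcal{H}_0^{(1)},\mathcal{H}_0\}=0$, so along any trajectory $q(t)$ of the full system \eqref{calham0} one has
\begin{equation*}
\frac{d}{dt}\,\mathcal{H}_0^{(1)}(q(t))=\varepsilon\,\{\mathcal{H}_0^{(1)},\mathcal{H}_1\}(q(t)).
\end{equation*}
Thus $\mathcal{H}_0^{(1)}$ measures, to leading order, the transverse separation between the perturbed invariant manifolds along the normal direction $\nabla\mathcal{H}_0^{(1)}(\gamma_+(\vec\tau))$ defining $\Sigma(\vec\tau)$.

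The first step is to parametrize $q_\varepsilon^u(\vec\tau)$ and $q_\varepsilon^s(\vec\tau)$, the intersection points of $W_\varepsilon^u(\mathfrak{e}_+^{(0)})$, $W_\varepsilon^s(\mathfrak{e}_+^{(1)})$ with $\Sigma(\vec\tau)$, and let $\varphi_\varepsilon^{u/s}(t;\vec\tau)$ denote the perturbed trajectories through them, set to pass through $\Sigma(\vec\tau)$ at $t=0$. Since $\mathfrak{e}_\pm^{(j)}$ are hyperbolic saddles (cf.\ Remark~\ref{rmk:SaddlesPerturbed}), by the stable/unstable manifold theorem $\varphi_\varepsilon^u(t;\vec\tau)\to\mathfrak{e}_+^{(0)}(\varepsilon)$ as $t\to-\infty$ and $\varphi_\varepsilon^s(t;\vec\tau)\to\mathfrak{e}_+^{(1)}(\varepsilon)$ as $t\to+\infty$, both exponentially, uniformly in $\vec\tau\in\mathbf{K}$. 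I would then integrate the identity above from $-\infty$ to $0$ along $\varphi_\varepsilon^u$ and from $0$ to $+\infty$ along $\varphi_\varepsilon^s$, and subtract, obtaining
\begin{equation*}
\mathcal{H}_0^{(1)}(q_\varepsilon^s)-\mathcal{H}_0^{(1)}(q_\varepsilon^u)
=\varepsilon\!\int_{-\infty}^{0}\!\!\!\{\mathcal{H}_0^{(1)},\mathcal{H}_1\}(\varphi_\varepsilon^u)\,dt
+\varepsilon\!\int_{0}^{+\infty}\!\!\!\{\mathcal{H}_0^{(1)},\mathcal{H}_1\}(\varphi_\varepsilon^s)\,dt
+\mathcal{H}_0^{(1)}(\mathfrak{e}_+^{(1)}(\varepsilon))-\mathcal{H}_0^{(1)}(\mathfrak{e}_+^{(0)}(\varepsilon)).
\end{equation*}
The boundary term is $O(\varepsilon^2)$: indeed $\mathcal{H}_0^{(1)}$ vanishes at both unperturbed saddles (they sit on $\{K_1=0\}$ and $\{K_1=1\}$ respectively) and its gradient vanishes there, so the $O(\varepsilon)$ displacement of the saddles produces only a quadratic error.

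The second step is to replace, in the integrals, the perturbed trajectories $\varphi_\varepsilon^{u/s}$ by the unperturbed heteroclinic orbit $\gamma_+(\tau_1+t,\tau_2+t)$. On any compact time interval $[-T,T]$ standard Gronwall estimates give $|\varphi_\varepsilon^{u/s}(t;\vec\tau)-\gamma_+(\vec\tau+t\mathbf{1})|=O(\varepsilon)$, producing an $O(\varepsilon^2)$ contribution after multiplication by $\varepsilon$. On the tails $|t|>T$ one uses that $\{\mathcal{H}_0^{(1)},\mathcal{H}_1\}$ vanishes at the saddles (because $\mathcal{H}_0^{(1)}$ has a critical point there and the bracket involves its gradient), combined with the exponential approach of both $\varphi_\varepsilon^{u/s}$ and $\gamma_+$ to the saddles, to bound the tails by $Ce^{-\sqrt{3}\,T}$ independently of $\varepsilon$. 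Choosing $T=T(\varepsilon)$ appropriately (e.g.\ $T\sim|\log\varepsilon|$) yields
\begin{equation*}
\mathcal{H}_0^{(1)}(q_\varepsilon^s)-\mathcal{H}_0^{(1)}(q_\varepsilon^u)=\varepsilon\,\mathcal{M}_+(\vec\tau)+O(\varepsilon^2),
\end{equation*}
with $\mathcal{M}_+$ as in \eqref{melint}, and this estimate is $C^1$ in $\vec\tau$ because the parametrizations of $W_\varepsilon^{u/s}$ and the heteroclinic family depend smoothly on $\vec\tau$ and the tails are controlled uniformly after differentiation thanks to the same exponential bounds.

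Finally, to convert the energy difference into the geometric distance $d(\vec\tau)$ defined in \eqref{distsign}, I would Taylor--expand: since $q_\varepsilon^s-q_\varepsilon^u$ is $O(\varepsilon)$ and lies along $\nabla\mathcal{H}_0^{(1)}(\gamma_+(\vec\tau))$ (both points sit on $\Sigma(\vec\tau)$), one has
\begin{equation*}
\mathcal{H}_0^{(1)}(q_\varepsilon^s)-\mathcal{H}_0^{(1)}(q_\varepsilon^u)=\bigl\langle\nabla\mathcal{H}_0^{(1)}(\gamma_+(\vec\tau)),\,q_\varepsilon^s-q_\varepsilon^u\bigr\rangle+O(\varepsilon^2)=\|\nabla\mathcal{H}_0^{(1)}(\gamma_+(\vec\tau))\|\,d(\vec\tau)+O(\varepsilon^2),
\end{equation*}
which gives the stated formula upon dividing by $\|\nabla\mathcal{H}_0^{(1)}(\gamma_+(\vec\tau))\|$ (nonzero on $\mathbf{K}$ since $\gamma_+(\vec\tau)$ avoids the saddles). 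The main technical obstacle, as always in Melnikov theory, is the control of the infinite-time integration: it rests on (i) the hyperbolicity of the saddles, (ii) the vanishing of $\{\mathcal{H}_0^{(1)},\mathcal{H}_1\}$ at the saddles, and (iii) the fact that both boundary points of the heteroclinic lie at the same level $\{\mathcal{H}_0^{(1)}=0\}$; all three are explicit features of the present system, so the standard scheme applies without further subtlety.
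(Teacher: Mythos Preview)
Your proof is correct and is precisely the standard Melnikov computation that the paper invokes by citing ``classical Melnikov Theory'' without further details. One minor simplification: since the perturbed Hamiltonian still leaves $\{K_1=0\}$ and $\{K_1=1\}$ invariant (the $\psi_1$-dependence of $\mathcal{H}_1$ is through $K_1(1-K_1)\cos\psi_1$), the perturbed saddles $\mathfrak{e}_+^{(j)}(\varepsilon)$ satisfy $\mathcal{H}_0^{(1)}(\mathfrak{e}_+^{(j)}(\varepsilon))=0$ exactly, so the boundary term actually vanishes rather than being merely $O(\varepsilon^2)$.
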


Since the Hamiltonian system \eqref{calham0} is autonomous, the Melnikov function  $\mathcal{M_+}$ depends just on the one-dimensional variable $\tau_1-\tau_2$. That is, there exists a function $\MM_+^{(0)}:\R\to\R$ such that
\[
 \MM_+(\tau_1,\tau_2)=\MM^{(0)}_+(\tau_1-\tau_2).
\]
By Lemma \ref{melnikovlemma}, we will deduce Theorem \ref{teoetero} by proving that there exists a non-degenerate zero of the function $\mathcal{M}^{(0)}_+$ in \eqref{melint}.

It is convenient to introduce the Melnikov potential $\mathcal{L}_{+}\colon\mathbb{R}^2\to\mathbb{R}$, since it is usually easier to compute. It is defined, up to constants, as a primitive of the Melnikov function, namely
\begin{equation*}
\partial_{\tau_1} \mathcal{L}_+ (\vec{\tau})=\mathcal{M}_+ (\vec{\tau}).
\end{equation*}
We have
\begin{equation*}
\mathcal{L}_+(\vec{\tau})=\int_{\mathbb{R}}\mathcal{H}_1\circ \Phi^t_{\mathcal{H}_0}\left(\gamma_+(\vec{\tau})\right)\,dt=\int_{\mathbb{R}}\mathcal{H}_1\left(\gamma_+(\tau_1+t,\tau_2+t)\right)\,dt.
\end{equation*}
Recall that we are assuming \eqref{elc}, which implies $\mathcal{H}_1(\mathfrak{e}_+^{(0)})=\mathcal{H}_1(\mathfrak{e}_+^{(1)})=0$. Therefore, the integrand decays exponentially to zero as $t\to\pm\infty$.


The Melnikov potential satisfies  $\mathcal{L}_+(\vec{\tau})= \mathcal{L}^{(0)}_+(\tau_0)$ where $\tau_0:=\tau_1-\tau_2$ and $ \mathcal{L}^{(0)}_+$ is called \emph{reduced Melnikov potential}.
Then,
\[
\partial_{\tau_0} \mathcal{L}_{+}^{(0)}(\tau_0)=\mathcal{M}^{(0)}_+(\tau_0).
\]
Hence we shall look for non-degenerate critical points of $\mathcal{L}_{+}^{(0)}$, which correspond to non-degenerate zeros of $\mathcal{M}_{+}^{(0)}$.
The following lemma concludes the proof of Proposition \ref{teoetero}.

\begin{lem}\label{lemma:HeteroMelnPot}
There exists a constant $\tilde{\eta}\in\mathbb{R}$ such that  the reduced Melnikov potential  $\mathcal{L}^{(0)}_+$ is given by
\begin{equation}\label{nice}
 \mathcal{L}^{(0)}_+(\tau_0) =\tau_0\,\,\dfrac{(a_1+b_1)\,e^{-\sqrt{3}\tau_0}+(a_2+b_2)}{ 1-e^{-\sqrt{3} \tau_0} }+\tilde{\eta}.
\end{equation}
Therefore, provided \eqref{concord} is satisfied, it possesses a non-degenerate critical point.
\end{lem}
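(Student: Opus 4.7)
The plan is to compute $\mathcal{L}^{(0)}_+(\tau_0)$ explicitly and then locate a non-degenerate critical point. For the computation I would substitute the heteroclinic \eqref{gammaplus} into $\mathcal{H}_1$ from \eqref{calham0}. Along $\gamma_+$ one has $\psi_1=\psi_2=\Psi_*=2\pi/3$, so $\cos\psi_i=-1/2$ and the $c_i$--cosine terms reduce to $-\tfrac{c_i}{2}K_i(1-K_i)$. After the shift $\sigma=t+\tau_2$, the integrand depends only on $\tau_0=\tau_1-\tau_2$ (as already observed just before the lemma), and the problem reduces to integrating a polynomial in $K_1(\sigma+\tau_0)$ and $K_2(\sigma)$, where $K_i(s)=(1+e^{-\sqrt{3}s})^{-1}$.

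The key point is that several of the individual monomials give divergent integrals, but the hypothesis \eqref{elc}, equivalent to $d_{12}=-(a_1+b_1)-(a_2+b_2)$, allows the regrouping
\[
\mathcal{H}_1\bigl(\gamma_+(t+\tau_1,t+\tau_2)\bigr)=(a_1+b_1)K_1(1-K_2)+(a_2+b_2)K_2(1-K_1)-\sum_{i=1}^{2}\!\bigl(b_i+\tfrac{c_i}{2}\bigr)K_i(1-K_i),
\]
in which every summand decays exponentially as $\sigma\to\pm\infty$. The integrals $\int_{\mathbb{R}}K_i(1-K_i)\,d\sigma=1/\sqrt{3}$ are $\tau_0$--independent constants, absorbed into $\tilde\eta$. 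The main contribution comes from
\[
I(\tau_0):=\int_{\mathbb{R}} K_1(\sigma+\tau_0)\bigl(1-K_2(\sigma)\bigr)\,d\sigma;
\]
the substitution $u=e^{-\sqrt{3}\sigma}$, $\alpha=e^{-\sqrt{3}\tau_0}$, followed by partial fractions, yields
\[
I(\tau_0)=\frac{1}{\sqrt{3}}\int_{0}^{\infty}\frac{du}{(1+u)(1+\alpha u)}=\frac{-\ln\alpha}{\sqrt{3}(1-\alpha)}=\frac{\tau_0}{1-e^{-\sqrt{3}\tau_0}},
\]
while the symmetric cross integral with $K_1,K_2$ interchanged equals $\tau_0 e^{-\sqrt{3}\tau_0}/(1-e^{-\sqrt{3}\tau_0})$. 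Collecting the four contributions produces the formula \eqref{nice}.

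For the critical-point analysis, set $A=a_1+b_1$, $B=a_2+b_2$. The function $\mathcal{L}^{(0)}_+-\tilde\eta$ is real-analytic on $\mathbb{R}$ (with a removable singularity at $\tau_0=0$), and a direct inspection of \eqref{nice} shows linear growth at both ends of the real line with slopes proportional to $A$ and $B$ respectively. Under \eqref{concord} one has $AB>0$, so $\mathcal{L}^{(0)}_+$ tends to $+\infty$ (respectively $-\infty$) at both ends, and hence attains a global extremum $\tau_0^{\ast}\in\mathbb{R}$, which is a critical point of $\mathcal{L}^{(0)}_+$. To obtain non-degeneracy I would clear denominators in the critical-point equation, arriving at the single real-analytic equation
\[
\Phi(u):=(Au+B)(1-u)+u(A+B)\ln u=0,\qquad u=e^{-\sqrt{3}\tau_0},
\]
and conclude through an elementary monotonicity analysis of $\Phi$ on $(0,\infty)$ that, apart from a double root at $u=1$ cancelled by the factor $(1-u)^2$ that appears upon differentiation, $\Phi$ admits a unique simple zero whenever $AB>0$, yielding the desired non-degenerate critical point. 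The main obstacle is the bookkeeping at the first step: the naive term-by-term integration diverges, and it is essential to invoke \eqref{elc} before attempting any explicit integration; once this is done, the rest reduces to a single partial-fractions computation and a one-variable monotonicity argument.
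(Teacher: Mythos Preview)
Your derivation of \eqref{nice} follows the paper exactly: use \eqref{elc} to regroup $\mathcal{H}_1\circ\gamma_+$ into exponentially decaying pieces $K_i(1-K_j)$ before integrating, and then evaluate the two cross integrals by the substitution $u=e^{-\sqrt{3}\sigma}$ and partial fractions, which is precisely the paper's display \eqref{def:IntegralForMelnikov}. (One bookkeeping remark: your regrouping puts $(a_1+b_1)$ in front of $K_1(1-K_2)$, so when you assemble the pieces you obtain \eqref{nice} with the roles of $(a_1+b_1)$ and $(a_2+b_2)$ interchanged; this is harmless since the rest of the argument is symmetric in these two quantities, but your sentence ``produces the formula \eqref{nice}'' is not literally correct.)

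Where you diverge from the paper is in the non-degeneracy argument. The paper does not pass to the variable $u=e^{-\sqrt{3}\tau_0}$; instead it differentiates \eqref{nice} twice and obtains the closed form
\[
\partial_{\tau_0}^{2}\mathcal{L}^{(0)}_+(\tau_0)
=-(A+B)\,\frac{\sqrt{3}}{4}\Bigl(2-\sqrt{3}\,\tau_0\coth\tfrac{\sqrt{3}\tau_0}{2}\Bigr)\,\mathrm{csch}^{2}\tfrac{\sqrt{3}\tau_0}{2},
\]
and checks in one line that $x\coth(x/2)>2$ for all $x\neq 0$. Hence $\mathcal{L}^{(0)}_+$ is strictly convex (or concave), and the critical point you found by the intermediate-value argument is automatically unique and non-degenerate. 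Your route via $\Phi(u)=(Au+B)(1-u)+u(A+B)\ln u$ is correct in spirit but is more work than you indicate: the ``elementary monotonicity analysis'' actually splits into the cases $A>B$ and $A<B$ (where $\Phi''$ changes sign on different sides of $u=1$), and in the borderline case $A=B$ the point $u=1$ is a \emph{triple} root of $\Phi$ rather than a double one, so there is no ``unique simple zero'' elsewhere; the critical point of $\mathcal{L}^{(0)}_+$ then sits at $\tau_0=0$ and one checks directly that $\mathcal{L}^{(0)}_+(\tau_0)=A\,\tau_0\coth(\sqrt{3}\tau_0/2)+\tilde\eta$ has a non-degenerate extremum there. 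So your proof can be completed, but the paper's direct second-derivative computation is shorter and avoids this case analysis.
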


\begin{remark}\label{rem}
Note that $\Upsilon\gamma_+(\vec{\tau})=\gamma_-(-\vec{\tau})$, $i=1, 2$ (see \eqref{gammaplus}) where $\Upsilon$ is the involution introduced in \eqref{anglesymmetry}.Then $\mathcal{L}_+(\vec{\tau})=\mathcal{L}_-(-\vec{\tau})$ and $\mathcal{L}^{(0)}_+(\tau_0)=\mathcal{L}^{(0)}_-(-\tau_0)$ . Therefore, if \eqref{concord} holds, $\mathcal{L}_-^{(0)}$ has a non-degenerate critical point.
\end{remark}

\begin{proof}[Proof of Lemma \ref{lemma:HeteroMelnPot}]
Using the definition of $\mathcal{H}_0$ in \eqref{calham0}, \eqref{elc},
one can write $\mathcal{L}_+$ as
\begin{equation*}
\begin{aligned}
\mathcal{L}_+(\vec{\tau})&= (a_2+b_2)\int_{\mathbb{R}} K_1^+(t+\tau_1)(1-K_2^+(t+\tau_2))\,dt +(a_1+b_1)\int_{\mathbb{R}} K_2^+(t+\tau_2)(1-K_1^+(t+\tau_1))\,dt+\tilde{\eta}\\
&=(a_2+b_2)\int_{\mathbb{R}} K_1^+(s+\tau_0)(1-K_2^+(s))\,ds +(a_1+b_1)\,\int_{\mathbb{R}} K_2^+(s)(1-K_1^+(s+\tau_0))\,ds+\tilde{\eta}\\
&=:\mathcal{L}_+^{(0)}(\tau_0),
\end{aligned}
\end{equation*}
where $\tau_0=\tau_1-\tau_2$ and the constant $\tilde{\eta}\in\R$ is given by
\begin{equation*}
\begin{split}
\tilde{ \eta}:=&\,\int_\R \left(b_1 K_1(t)(K_1(t)-1)+b_2 K_2(t)(K_2(t)-1)\right)\,dt\\
 &+\,\int_\R \left( c_1 K_1(t)(1-K_1(t))\cos\Psi_*+c_2 K_2(t)(1-K_2(t))\cos\Psi_* \right)\,dt.
\end{split}
\end{equation*}
For $i, j=1, 2$ we have (recall \eqref{gammaplus})
 \begin{equation}\label{def:IntegralForMelnikov}
\begin{split}
 \int_{\mathbb{R}} K^+_i(t+\tau_i) (1-K^+_j(t+\tau_j))\,dt&= \int_{\mathbb{R}} \dfrac{e^{-\sqrt{3}(t+\tau_j)}}{(1+e^{-\sqrt{3}(t+\tau_i)})(1+e^{-\sqrt{3}(t+\tau_j)}) }  \,dt\\
 &=(\tau_i-\tau_j) \dfrac{1}{1-e^{-\sqrt{3}(\tau_i-\tau_j)}},
\end{split}\end{equation}
which gives \eqref{nice}.
Therefore,  we have that
\begin{align*}
\lim_{\tau_0\to +\infty} \partial_{\tau_0} \mathcal{L}^{(0)}_+(\tau_0)=a_2+b_2, \qquad \lim_{\tau_0\to -\infty} \partial_{\tau_0}\mathcal{L}^{(0)}_+(\tau_0)=-(a_1+b_1) \label{limelle+}.
\end{align*}
If $(a_1+b_1)(a_2+ b_2)>0$ (see \eqref{concord}) the reduced Melnikov potential $\mathcal{L}^{(0)}_{+}$ has at least one critical point.
Moreover,
\begin{equation*}
\,\partial_{\tau_0}^2\mathcal{L}^{(0)}_+(\tau_0)=-(a_1+b_1+a_2+b_2)
\dfrac{\sqrt{3}}{4}\left( 2-\sqrt{3} \tau_0\coth\left(\dfrac{\sqrt{3}\tau_0}{2} \right)\right) {\mathrm{csch}^2\left(  \dfrac{\sqrt{3} \tau_0}{2} \right) } .
\end{equation*}
By \eqref{concord} this function has constant sign since
\begin{align*}
 2-\sqrt{3}\tau_0\,\coth\left(\dfrac{\sqrt{3}\tau_0}{2} \right)< 0 \qquad \forall \tau_0\neq 0,\\
 \lim_{\tau_0\to 0} \left( 2-\sqrt{3}\tau_0\coth\left(\dfrac{\sqrt{3}\tau_0}{2} \right)\right) {         \mathrm{csch}^2\left(  \dfrac{\sqrt{3} \tau_0}{2} \right) }  =-\frac{2}{3}.
\end{align*}
Therefore $\mathcal{L}_{+}^{(0)}$ is either convex or concave (depending on the sign of $a_1+b_1+a_2+b_2$) and its critical points are non-degenerate.
\end{proof}


\end{proof}
\subsubsection{Transversal homoclinic orbits to saddles: Proof of Theorem \ref{thm:SplittingHomo2Rect}}\label{sec:HomoMelnikov}

We use the computation of the heteroclinic Melnikov potential in Lemma \ref{lemma:HeteroMelnPot} to prove the existence of  homoclinic transversal intersections given by Theorem \ref{thm:SplittingHomo2Rect}.
Since the Hamiltonian \eqref{calham0} with $\eps=0$ does not have connections between $\mathfrak{e}_{\pm}^{(0)}$, we  cannot apply directly Melnikov Theory to obtain such connections for $\eps>0$. Instead, we exploit the usual technique of considering a modified unperturbed Hamiltonian and using two parameters $\eps$ and $\delta$.

We consider  the Hamiltonian
\begin{equation}\label{bfham0}
\begin{aligned}
\mathcal{H} =\mathbf{H}_0+\varepsilon\mathbf{H}_1, \quad \mathbf{H}_0(\psi_1, \psi_2, K_1, K_2)& =\mathbf{H}_0^{(1)}+\mathbf{H}_0^{(2)},\\
\mathbf{H}_0^{(1)}(\psi_1, \psi_2, K_1, K_2)&=K_1(1-K_1)(1+2\, \cos(\psi_1))-\delta K_1^2,\\
\mathbf{H}_0^{(2)}(\psi_1, \psi_2, K_1, K_2)&=K_2(1-K_2)(1+2\, \cos(\psi_2))-\delta K_2^2
\end{aligned}
\end{equation}
and
\begin{equation}\label{trueperturbation}
\begin{aligned}
\mathbf{H}_1(\psi_1, \psi_2, K_1, K_2):=&\,d_{12}\, K_1 \,K_2\\
&+ a_1 K_1+(b_1+1) K_1^2+c_1\,K_1(1-K_1) \cos(\psi_1) \\
&+a_2 K_2+(b_2+1) K_2^2+c_2\,K_2(1-K_2) \cos(\psi_2) .
\end{aligned}
\end{equation}
If one takes $\de=\eps$, this Hamiltonian coincides with \eqref{calham0}. Nevertheless, for now we consider $\delta$ and $\eps$ independent parameters. Later one we will take $\de=\eps$.

If $\delta=0$, then the dynamics of $\mathbf{H}_0$ is the same described in Section \ref{sec:unperturbeddynamics}.
If $\delta\neq 0$, the tori defined in \eqref{tori} are $\mathbf{H}_0$-invariant; moreover, they belong to different energy levels, since
\[
\mathbf{H}_{0_{| \mathbb{T}_0 }}=0, \qquad \mathbf{H}_{0_{| \mathbb{T}_1 }}=-2\delta.
\]
The equilibrium points contained in $\mathbb{T}_0$ are the saddles $\mathfrak{e}_{\pm}^{(0)}$ defined in \eqref{def:saddles}.
 Now we compute the heteroclinic manifold that connects (forward in time) $\mathfrak{e}^{(0)}_+$ with $\mathfrak{e}^{(0)}_-$ (see Figure \ref{fig:PhaseSpaceHomoclinic}). Such orbit corresponds to a homoclinic to the saddle $P_-$ in \eqref{def:saddledown} (expressed in  the ``blow down'' coordinates \eqref{def:cartesian}).

\begin{lem}\label{lemma:UnperturedHomo2Rect}
 The saddles $\mathfrak{e}^{(0)}_+$ with $\mathfrak{e}^{(0)}_-$ of Hamiltonian $\mathbf{H}_0$ in \eqref{bfham0} are connected by a two-dimensional heteroclinic manifold parameterized as
\begin{equation}\label{gammazero}
\begin{aligned}
\gamma_0(\vec{\tau}):&=(\gamma_0^{(1)}(\tau_1), \gamma_0^{(2)}(\tau_2))=(\psi_1^{(0)}(\tau_1), \psi_2^{(0)}(\tau_2), K_1^{(0)}(\tau_1), K_2^{(0)}(\tau_2)),\\
\psi_j^{(0)}(\tau_j):&=2\,\arctan (\Lambda (\tau_j)), \quad  K_j^{(0)}(\tau_j)=\frac{1}{1-\frac{\delta}{3}(1-2\cosh(\sqrt{3}\tau_j))}\quad j=1, 2,
\end{aligned}
\end{equation}
where $\Lambda(t):=-\sqrt{3}\,\, \tanh \left( \dfrac{\sqrt{3}}{2} t \right)$.
\end{lem}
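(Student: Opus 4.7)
The plan is to exploit the separable structure of $\mathbf{H}_0$. Since $\mathbf{H}_0(\psi_1,\psi_2,K_1,K_2)=\mathbf{H}_0^{(1)}(\psi_1,K_1)+\mathbf{H}_0^{(2)}(\psi_2,K_2)$ with two identical summands of the form
\[
f(\psi,K):=K(1-K)(1+2\cos\psi)-\delta K^2,
\]
the $4$-dimensional flow is the product of two decoupled planar flows. Consequently, the $2$-dimensional heteroclinic manifold connecting $\mathfrak{e}^{(0)}_+$ to $\mathfrak{e}^{(0)}_-$ factors as the product of the corresponding saddle-to-saddle heteroclinic in each planar factor, parameterized independently by $\tau_1$ and $\tau_2$. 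Hence it suffices to exhibit, in a single planar system with Hamiltonian $f$, the heteroclinic orbit from $(\Psi_*,0)$ to $(-\Psi_*,0)$.

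For that planar problem, I would first observe that both saddles $(\pm\Psi_*,0)$ lie on the level set $\{f=0\}$ since $1+2\cos\Psi_*=0$. Any connecting orbit with $K\not\equiv 0$ must therefore lie in the component of $\{f=0\}\setminus\{K=0\}$ joining the two saddles. Solving $f(\psi,K)=0$ for $K\neq 0$ yields the explicit separatrix curve
\[
K=\frac{1+2\cos\psi}{1+2\cos\psi+\delta},\qquad \psi\in(-\Psi_*,\Psi_*),
\]
which is a smooth arc joining the two saddles in the strip $0<K<1$. This identifies the heteroclinic geometrically.

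To obtain the time parametrization I would substitute this relation into one of Hamilton's equations, say $\dot\psi=\partial_K f=(1-2K)(1+2\cos\psi)-2\delta K$. On the level set, the right-hand side simplifies dramatically to $\dot\psi=-\delta K/(1-K)$, which, after using again the level-set relation to express $\delta K/(1-K)$ as a function of $\psi$, becomes a first-order separable ODE whose quadrature leads to an elementary integral involving $\int d\psi/(3+\text{something}\cos\psi)$. After the Weierstrass substitution $u=\tan(\psi/2)$, this integral reduces to $\int du/(c-u^2)$ for a suitable constant $c$ and integrates to an $\operatorname{arctanh}$. Inverting the resulting relation produces exactly $\psi^{(0)}(\tau)=2\arctan\bigl(-\sqrt3\tanh(\sqrt3\tau/2)\bigr)$, up to a time shift which is fixed by choosing $\psi^{(0)}(0)=0$. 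Substituting this into the level-set formula, together with the half-angle identity $\cos(2\arctan x)=(1-x^2)/(1+x^2)$ and $\cosh(\sqrt 3\tau)=1+2\sinh^2(\sqrt3\tau/2)$, produces the claimed form of $K^{(0)}(\tau)$.

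The verification of the limits $\tau_j\to\pm\infty$ is immediate: $\tanh(\sqrt3\tau_j/2)\to\pm 1$ sends $\Lambda(\tau_j)\to\mp\sqrt3$, hence $\psi_j^{(0)}(\tau_j)\to\mp 2\pi/3=\mp\Psi_*$, while $\cosh(\sqrt 3\tau_j)\to+\infty$ forces $K_j^{(0)}(\tau_j)\to 0$, so the orbit connects $\mathfrak{e}^{(0)}_+$ (at $\tau=-\infty$) to $\mathfrak{e}^{(0)}_-$ (at $\tau=+\infty$) as required. I do not foresee any real obstacle: the only subtlety is computational, namely keeping track of the half-angle identities and the hyperbolic identity $\tanh(\sqrt3\tau/2)=\sinh(\sqrt3\tau)/(1+\cosh(\sqrt3\tau))$ so that the parametrization lands in the clean form \eqref{gammazero}. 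The separability of $\mathbf{H}_0$ does all the real work.
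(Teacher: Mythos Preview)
Your proposal is correct and follows essentially the same route as the paper: solve $f(\psi,K)=0$ for $K$ as a function of $\psi$, reduce to the scalar ODE $\dot\psi=-(1+2\cos\psi)$, integrate via the Weierstrass substitution, and then recover $K^{(0)}$ from the level-set relation. The only cosmetic difference is that the paper obtains $\dot\psi=-(1+2\cos\psi)$ in one step by direct substitution, whereas you pass through the intermediate form $\dot\psi=-\delta K/(1-K)$ before invoking the level-set identity again; this is harmless.
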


\begin{proof}
Using that $\mathbf{H}_0^{(1)}$ is zero when restricted to $\mathbb{T}_0$ we get
\begin{equation}\label{def:kappa1}
K_1=\frac{1+2 \cos(\psi_1)}{1+2 \cos(\psi_1)+\delta}.
\end{equation}
When the angle $\psi_1\in [-\Psi_*, \Psi_*]$ the numerator in \eqref{def:kappa1} is positive. Hence $K_1\in (0, 1)$ if $\delta>0$.
Plugging \eqref{def:kappa1} in the equation for $\psi_1$ we have
\[
\dot{\psi}_1=-(1+2\cos(\psi_1)),
\]
which leads to
\begin{equation}\label{psi1}
\psi_1(t)=2\,\arctan(\Lambda (t)), \quad \Lambda(t)=-\sqrt{3}\,\, \tanh \left( \dfrac{\sqrt{3}}{2} t \right).
\end{equation}
By using \eqref{def:kappa1} and the trigonometric identity  $\cos(2 \arctan (x))=(1-x^2)/(1+x^2)$ we have
\begin{equation}\label{def:kappasimple}
K_j^{(0)}(t)=\frac{1}{1-\frac{\delta}{3}(1-2\cosh(\sqrt{3}t))}.
\end{equation}
Reasoning in the same way for $(\psi_2, K_2)$ we get that
the homoclinic orbit to $\mathbb{T}_0$ is given by \eqref{gammazero}.
\end{proof}

\begin{figure}
\begin{center}
\includegraphics[width=0.7\textwidth]{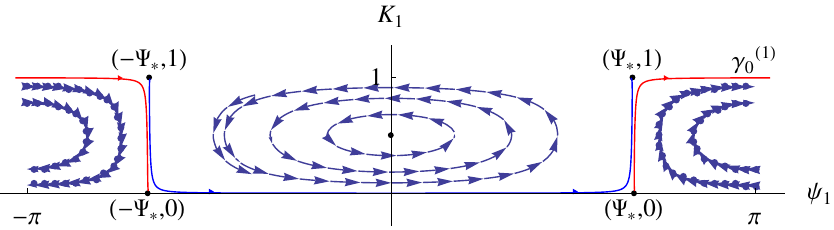}
\end{center}
\caption{{\small Phase space restricted to the $(\psi_1,K_1)$-coordinates for the Hamiltonian $\mathcal{H}$ in \eqref{bfham0}-\eqref{trueperturbation}.} }\label{fig:PhaseSpaceHomoclinic}
\end{figure}

By reasoning as in the proof of Theorem \ref{teoetero} we have that the distance between the manifolds in a suitable section is given by
\begin{equation}\label{dist2}
d(\vec{\tau})=\dfrac{\varepsilon}{\lVert    \nabla \mathbf{H}^{(1)}_0 (\gamma_0(\vec{\tau}))    \rVert} \,\mathcal{M}_0(\vec{\tau})+\mathcal{O}_{C^1(\mathbf{K})}(\varepsilon^2), \qquad \vec{\tau}\in\mathbf{K},
\end{equation}
where the Melnikov function is given by
\begin{equation}\label{melfunz}
\mathcal{M}_0(\vec{\tau})=\int_{\mathbb{R}} \{ \mathbf{H}^{(1)}_0, \mathbf{H}_1\}\circ \Phi^t_{\mathbf{H}_0}(\gamma_0(\vec{\tau}))\,dt.
\end{equation}
It can be easily checked that the $\mathcal{O}_{C^1(\mathbf{K})}(\varepsilon^2)$ are uniform for $\de$ small enough.

The associated  Melnikov potential is
\begin{align*}
\mathcal{L}_0(\vec{\tau})&=\int_{\mathbb{R}}  \mathbf{H}_1\circ \Phi^t_{\mathbf{H}_0}(\gamma_0(\vec{\tau}))\,dt    \label{melpot}
={d_{12}} \int_{\mathbb{R}} K^{(0)}_1(t+\tau_1) \,K^{(0)}_2(t+\tau_2)\,dt+\eta_*
\end{align*}
where
\begin{equation*}
\begin{split}
\eta_*=&\int_{\mathbb{R}} \Big(a_1 K^{(0)}_1(t)+(b_1+1) (K^{(0)}_1(t))^2+c_1\,K^{(0)}_1(t)(1-K^{(0)}_1(t)) \cos\psi^{(0)}_1(t)\Big)\,dt\\
&+\int_{\mathbb{R}} \Big(a_2 K^{(0)}_2(t)+(b_2+1) (K^{(0)}_2(t))^2+c_2\,K^{(0)}_2(t)(1-K^{(0)}_2(t)) \cos\psi^{(0)}_2(t) \Big)\,dt.
\end{split}
\end{equation*}
As before we consider the reduced Melnikov potential
\begin{equation}\label{def:melpotred}
\mathcal{L}_0^{(0)}(\tau_0)={d_{12}} \int_{\mathbb{R}} K^{(0)}_1(s+\tau_0) \,K^{(0)}_2(s)\,ds+\eta_*.
\end{equation}
%
%

%
We want to deduce that $\mathcal{L}_0^{(0)}$ has non-degenerate critical points by using the information on the Melnikov potentials \eqref{nice} of the heteroclinic case.

\begin{prop}\label{propfond}
Fix an interval $\mathcal{I}\subset\mathbb{R}$. There exists $\de_0>0$ such that $\forall \de\in (0, \de_0)$ there exists  a real number $\eta$ and a constant $\nu_0>0$ such that, for $\tau_0\in \mathcal{I}$,
\begin{equation*}
 \mathcal{L}^{(0)}_0(\tau_0)=\eta+d_{1 2}\,\tau_0\,\coth\left( \frac{\sqrt{3}\tau_0}{2} \right)+\mathcal{O}_{C^2(\mathcal{I})}(\de^{\nu_0}).
\end{equation*}
\end{prop}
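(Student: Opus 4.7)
The plan is to compute $\mathcal{L}_0^{(0)}(\tau_0)$ in closed form, modulo an $O(\delta^{\nu_0})$ error, by reducing the homoclinic integral to the heteroclinic integrals already evaluated in Lemma~\ref{lemma:HeteroMelnPot}. A preliminary simplification comes from \eqref{trueperturbation}: all monomials in $\mathbf{H}_1$ except the coupling $d_{12}K_1K_2$ depend on a single pair $(\psi_j,K_j)$, so their contribution to $\mathcal{L}_0(\vec\tau)$ is independent of $\tau_0=\tau_1-\tau_2$ and gets absorbed into the constant $\eta$. Thus the only object to analyse is
\[
I(\tau_0)\;:=\;\int_{\R} K^{(0)}(s+\tau_0)\,K^{(0)}(s)\,ds.
\]

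The core of the argument is a clean decomposition of the homoclinic profile $K^{(0)}$ into a superposition of two shifted heteroclinic profiles $K^-$ from \eqref{gammaplus}. Substituting $u=e^{\sqrt{3}\,t}$ in $K^{(0)}(t)=3/[(3-\delta)+2\delta\cosh(\sqrt{3}\,t)]$ factors the denominator as $\delta(u+\alpha)(u+\beta)$ with $\alpha\beta=1$ and $\alpha+\beta=(3-\delta)/\delta$; a partial-fraction expansion then yields
\[
K^{(0)}(t)\;=\;C_\delta\bigl[\,K^-(t-T)-K^-(t+T)\,\bigr],\qquad C_\delta=\frac{3}{\sqrt{9-6\delta-3\delta^{2}}}=1+O(\delta),
\]
where $T=T(\delta)=(1/\sqrt{3})\log\beta=O(\log(1/\delta))$. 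Geometrically this confirms the expected picture: for $\delta$ small the homoclinic $\gamma_0$ is a slow excursion that shadows the upper torus $\mathbb{T}_1$ for a time $\approx 2T$, consisting of a ``rising'' heteroclinic centered at $-T$, a long plateau where $K^{(0)}\approx 1$, and a ``falling'' heteroclinic centered at $T$.

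Inserting this decomposition into $I(\tau_0)$ and using $K^-=1-K^+$ turns each of the four cross terms into an integral of the shape $\int K^+(s+a)\,K^-(s+b)\,ds$, which was already evaluated in \eqref{def:IntegralForMelnikov}: it equals $f(a-b)$ with $f(x):=x/(1-e^{-\sqrt{3}\,x})$. Collecting terms gives
\[
I(\tau_0)\;=\;C_\delta^{\,2}\bigl[\,f(\tau_0+2T)+f(\tau_0-2T)-2f(\tau_0)\,\bigr].
\]
The elementary identities $f(\tau_0)+f(-\tau_0)=\tau_0\coth(\sqrt{3}\tau_0/2)$ and $f(\tau_0)-f(-\tau_0)=\tau_0$, verified by direct manipulation of hyperbolic functions, yield $2f(\tau_0)=\tau_0+\tau_0\coth(\sqrt{3}\tau_0/2)$. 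For $\tau_0\in\mathcal{I}$ compact and $T$ large we have $f(\tau_0+2T)=\tau_0+2T+O(\delta^{2})$, while $f(\tau_0-2T)=O(\delta^{2}\log(1/\delta))$ is exponentially small in $T$. Substituting back and absorbing the $\tau_0$-independent piece $2T\,d_{12}$ into $\eta$ produces the announced formula with an error $O(\delta\log(1/\delta))$, which is $O(\delta^{\nu_0})$ for any $\nu_0<1$.

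The main technical obstacle is upgrading the pointwise estimate to a $C^2(\mathcal{I})$ bound. I would handle this by differentiating under the integral sign: the partial-fraction decomposition of $K^{(0)}$ is algebraic and thus persists under $\partial_{\tau_0}$, the prefactor $C_\delta^{2}-1$ is a constant in $\tau_0$, and the exponentially small tails of $f$ produce exponentially small contributions to all derivatives. Hence the $C^2$-estimate follows from the same asymptotic expansion applied to $\partial_{\tau_0}^k I(\tau_0)$ for $k=0,1,2$, with no new phenomena appearing at the level of derivatives.
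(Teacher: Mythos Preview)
Your argument is correct and takes a cleaner route than the paper's. Both start from the observation that only the coupling term $d_{12}\int K^{(0)}(s+\tau_0)K^{(0)}(s)\,ds$ depends on $\tau_0$. The paper then approximates $\gamma_0$ by the concatenation of the heteroclinics $\gamma_\pm$ shifted by $p\sim|\log\delta|$, and bounds the resulting discrepancy $\mathfrak{O}_F$ (Lemma~\ref{propdiff}) by splitting $\R$ into five intervals and combining the exponential-decay estimates of Lemma~\ref{convexp} with the pointwise closeness of Lemma~\ref{difforb}; the $C^2$ bound requires rerunning this scheme with the iterated Poisson brackets $\{F,\mathcal{H}_0^{(1)}\}$ and $\{\{F,\mathcal{H}_0^{(1)}\},\mathcal{H}_0^{(1)}\}$ in place of $F$. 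Your partial-fraction identity $K^{(0)}=C_\delta\bigl[K^-(\cdot-T)-K^-(\cdot+T)\bigr]$ is \emph{exact}, so the integral collapses to the closed expression $C_\delta^2\bigl[f(\tau_0+2T)+f(\tau_0-2T)-2f(\tau_0)\bigr]$ and the asymptotics---including the $C^2$ control---follow directly from the explicit form of $f$, with no interval-splitting or soft approximation argument needed. This is more elementary and in fact sharper (the $\tau_0$-dependent error is $O_{C^2}(\delta)$ rather than $O_{C^2}(\delta^{\nu_0})$ for some unspecified $\nu_0\in(0,1)$); the paper's approach has the compensating virtue of robustness, in that it would survive perturbations of the profile $K^{(0)}$ admitting no algebraic factorisation. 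One small point: your computation actually yields $\eta-d_{12}\,\tau_0\coth(\sqrt3\,\tau_0/2)$ rather than $+d_{12}$; the paper's own proof produces the same sign, so this is a harmless typo in the statement and irrelevant for the downstream use (only the non-degeneracy of the critical point at $\tau_0=0$ matters).
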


The proof of this proposition is deferred to Section \ref{def:prop59}.

To complete the proof of Theorem \ref{thm:SplittingHomo2Rect} it is enough  use \eqref{dist2} and Proposition \ref{propfond} and take  $\de=\eps$. Indeed, the transverse homoclinic  points are $\eps$-close to the non-degenerate critical point for $\mathcal{L}_0^{(0)}$. By Proposition \ref{propfond}, $\mathcal{L}_0^{(0)}$ has a non-degenerate critical point $\eps^{\nu_0}$-close to $\tau_0=0$.



\subsubsection{Proof of Proposition \ref{propfond}}\label{def:prop59}

Thanks to the exponential convergence of the homoclinic orbit to the equilibrium points $\mathfrak{e}_\pm^{(0)}$ we have that (recall \eqref{gammazero})
\[
\int_{\mathbb{R}} K_j^{(0)}(t)\,dt<\infty \qquad i=1, 2.
\]
We write $\mathcal{L}^{(0)}_0$ as
\begin{equation}\label{ellezero}
\begin{aligned}
\mathcal{L}^{(0)}_0(\tau_0)=-d_{12}\int_{\mathbb{R}} K^{(0)}_1(s+\tau_0) \,(1-K^{(0)}_2(s))\,ds+\eta_1,
\end{aligned}
\end{equation}
where
\[
 \eta_1=\eta_*+d_{12}\int_\mathbb{R}K^{(0)}_1(s)ds.
\]
Define the function
\[
 F(\psi_1,\psi_2,K_1,K_2)=K_1 (1-K_2).
\]
By \eqref{def:IntegralForMelnikov}, we have that
\[
 \begin{split}
\mathcal{F}_+(\tau_1,\tau_2)&= \int_\mathbb{R}F(\gamma_+(\tau_1+t,\tau_2+t))dt=(\tau_1-\tau_2)\frac{1}{1-e^{-\sqrt{3}(\tau_1-\tau_2)}},\\
\mathcal{F}_-(\tau_1,\tau_2)&  =  \int_\mathbb{R}F(\gamma_-(\tau_1+t,\tau_2+t))dt=-(\tau_1-\tau_2)\frac{1}{1-e^{\sqrt{3}(\tau_1-\tau_2)}},
 \end{split}
\]
which is just the integral of the function $F$ along the heteroclinic orbits $\gamma_\pm$ introduced in \eqref{gammaplus}. These functions satisfies $\mathcal{F}_\pm(\tau_1,\tau_2)=\mathcal{F}_\pm(0,\tau_2-\tau_1)$.

Since the homoclinic orbit \eqref{gammazero} is ``close'' to the concatenation of $\gamma_+$ and $\gamma_-$ in \eqref{gammaplus}, we show that there exists $\nu_0>0$ such that
 the integral in \eqref{ellezero} satisfies
\[
\begin{split}
\int_{\mathbb{R}} K^{(0)}_1(s+\tau_0) \,(1-K^{(0)}_2(s))\,ds&=\mathcal{F}_+(0,\tau_0)+\mathcal{F}_-(0,\tau_0)+\OO\left(\delta^{\nu_0}\right)\\
&=\tau_0 \mathrm{coth} \left( \frac{\sqrt{3}\tau_0}{2} \right)+\OO\left(\delta^{\nu_0}\right).
\end{split}
\]
%
%
The estimate for the error is proved in the following lemma. To state it, we define
\begin{equation}\label{def:oeffe}
\mathfrak{O}_F(\vec{\tau}):=\int_{\mathbb{R}}\left[ F(\gamma_0(\tau_1+t,\tau_2+ t))-F(\gamma_+(\tau_1+t,\tau_2+ t))-F(\gamma_-(\tau_1+t,\tau_2+ t))\right]\,dt.
\end{equation}

%
 \begin{lem}\label{propdiff}
 Let $\mathbf{K}$ be a compact subset of $\mathbb{R}^2$.
 There exists $\delta_0>0$ small, such that $\forall \delta\in (0, \delta_0)$ and $\tau\in\mathbf{K}$ there exists a positive constant $\nu_0\in (0, 1)$ such that the following holds
 \begin{equation}\label{integrand}
  \lVert \mathfrak{O}_F \rVert_{C^0(\mathbf{K})}+ \lVert \partial_{\tau_1} \mathfrak{O}_F \rVert_{C^0(\mathbf{K})}+\lVert \partial^2_{\tau_1} \mathfrak{O}_F \rVert_{C^0(\mathbf{K})}  \lesssim \delta^{\nu_0}.
 \end{equation}
 \end{lem}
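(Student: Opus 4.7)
The plan is to exploit the fact that, for small $\delta$, the homoclinic orbit $\gamma_0$ behaves like a concatenation of the two heteroclinics $\gamma_\pm$ joined by a long excursion (of duration $\sim|\ln\delta|$) along the upper torus $\mathbb{T}_1$. I would set $T_\delta:=\frac{1}{\sqrt{3}}\ln(3/\delta)$ so that $e^{-\sqrt{3}T_\delta}=\delta/3$, and begin by verifying the pointwise identity
\[
K_j^{(0)}(s) - K_j^+(s+T_\delta)\,K_j^-(s-T_\delta) = \frac{\delta/3+(\delta/3)^2}{D_1(s)\,D_2(s)},
\]
where $D_1(s),\,D_2(s)\geq 1$ are the denominators of the two terms on the left-hand side. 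This provides a uniform $O(\delta)$ approximation which, on the tails $|s|>T_\delta$, improves to $O(\delta\,e^{-2\sqrt{3}(|s|-T_\delta)})$.

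Using the translation invariance of the heteroclinic integrals, I would replace each $\int F(\gamma_\pm(\tau_1+t,\tau_2+t))\,dt$ by the same integral with the time argument shifted by $\mp T_\delta$; this choice aligns the transitions of the shifted heteroclinics with the departure of $\gamma_0$ from $\mathfrak{e}_+^{(0)}$ and its arrival at $\mathfrak{e}_-^{(0)}$. I would then set $a_i(s_i):=1-K_i^+(s_i+T_\delta)$ and $b_i(s_i):=1-K_i^-(s_i-T_\delta)$, which satisfy $a_i\leq(\delta/3)\,e^{-\sqrt{3}s_i}$ on $\{s_i\geq -T_\delta\}$ and $b_i\leq(\delta/3)\,e^{\sqrt{3}s_i}$ on $\{s_i\leq T_\delta\}$. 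Using $K^++K^-\equiv 1$, I would expand both the integrand of $\mathfrak{O}_F$ built from $\gamma_0$ and the shifted heteroclinic integrands as polynomials in $a_i,b_i$. A direct algebraic check shows that the terms linear in $a_i,b_i$ cancel, leaving only a bilinear remainder of the form $a_ib_j$, $a_ib_i$, plus error contributions controlled by the identity above.

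I would then split $\mathbb{R}=I_-\cup I_0\cup I_+$ with $I_0=[-T_\delta/2,T_\delta/2]$ and $I_\pm$ the complementary half-lines. A key observation on $I_0$ is that $a_1(s_1)\,b_2(s_2)=(\delta/3)^2\,e^{-\sqrt{3}\tau_0}$ is identically constant in $t$, so the bilinear remainder integrates to $O(\delta^2 T_\delta)$; together with the uniform $O(\delta)$ error contribution over an interval of length $2T_\delta$, the total from $I_0$ is $O(\delta|\ln\delta|)$. On the tails $I_\pm$, the non-matched shifted heteroclinic integrand is pointwise $O(\delta^{3/2})$ with absolutely convergent integral, the bilinear remainder has at least one factor of size $O(\delta^{1/2})$ or smaller in $L^1$, and the error term benefits from its exponential tail decay. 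All three pieces thus produce $O(\delta|\ln\delta|)$, which is $\lesssim\delta^{\nu_0}$ for any $\nu_0\in(0,1)$.

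For the derivatives, since $F=K_1(1-K_2)$ is affine in $K_1$, one has $\partial_{K_1}^2F=0$ and
\[
\partial_{\tau_1}\mathfrak{O}_F = \int_{\mathbb{R}}\bigl[(1-K_2^{(0)})(K_1^{(0)})' - (1-K_2^+)(K_1^+)' - (1-K_2^-)(K_1^-)'\bigr]\,dt,
\]
while an integration by parts in $t$ converts $\partial_{\tau_1}^2\mathfrak{O}_F$ into an integral of $(K_1)'(K_2)'$-type differences. Since $D_1'(s)=D_2'(s)$ identically, the derivatives of the two factorizations satisfy the same $O(\delta)$ pointwise approximation as before and the region-splitting argument carries through with analogous estimates. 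The main obstacle throughout is that the integrand of $\mathfrak{O}_F$ is \emph{not} pointwise small on $I_0$; the required smallness arises only after the algebraic cancellation produced by $K^++K^-\equiv 1$ together with the aligned time shifts, after which the remaining bilinear terms either are $t$-independent or live on short intervals, and so integrate to the desired size.
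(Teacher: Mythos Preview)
The proposal outlines a correct strategy, but it differs substantially from the paper's proof. The paper shifts $\gamma_0$ by $-p$ (your $T_\delta$) and $\gamma_-$ by $-2p$, then splits $\mathbb{R}$ into \emph{five} intervals: two outer tails where both $\gamma_0$ and the relevant $\gamma_\pm$ are exponentially close to the saddles $\mathfrak{e}_\pm^{(0)}$ (where $F$ vanishes); two transition intervals where Lemma~\ref{difforb} gives $\|\gamma_0(\cdot\mp p)-\gamma_\pm\|\lesssim\delta^\nu$; and a central plateau where $\gamma_0$ sits $O(\delta^{3/4})$-close to $K_1=K_2=1$ (where again $F$ vanishes) while the $\gamma_\pm$ contributions have already converged to $\mathfrak{e}_\pm^{(1)}$. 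For the derivatives, the paper does not exploit the linearity of $F$ in $K_1$; instead it writes $\partial_{\tau_1}F\circ\gamma = \{F,\mathbf{H}_0^{(1)}\}\circ\gamma$ (resp.\ $\{F,\mathcal{H}_0^{(1)}\}$) and observes that these Poisson brackets \emph{also} vanish at all four saddles, so the entire five-interval argument repeats verbatim with $G:=\{F,\mathcal{H}_0^{(1)}\}$ in place of $F$, and once more with $E:=\{G,\mathcal{H}_0^{(1)}\}$.

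Your route --- the factorization identity $K^{(0)}=K^+K^- + O(\delta)$, the expansion in $a_i,b_i$, and the three-interval splitting --- is more algebraic and explicit, and it avoids the auxiliary Lemma~\ref{difforb} entirely. Two small inaccuracies to fix: your stated shift ``$\mp T_\delta$'' for $\gamma_\pm$ has the wrong sign (your own definitions of $a_i,b_i$ use $+T_\delta$ for $K^+$ and $-T_\delta$ for $K^-$, which is the correct alignment); and $a_1b_2$ is not \emph{identically} constant in $t$ --- only its numerator $(\delta/3)^2e^{-\sqrt{3}\tau_0}$ is --- though the bound $a_1b_2\lesssim\delta$ on $I_0$ that you actually need follows anyway. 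The invocation of $K^++K^-\equiv 1$ is also a bit misleading, since after the shifts $(1-a_i)+(1-b_i)\neq 1$; the linear-term cancellation you rely on comes directly from the expansion, not from that identity. What your approach buys is a shorter, more self-contained computation tailored to this specific $F$; what the paper's Poisson-bracket approach buys is model-independence (it works for any smooth $F$ vanishing at the saddles, not just $F=K_1(1-K_2)$), which is why its derivative step needs no integration by parts.
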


This lemma implies Proposition \ref{propfond}. We devote the rest of this section to prove Lemma \ref{propdiff}.

%
%
%
\begin{proof}[Proof of Lemma \ref{propdiff}]
We write the function $\mathfrak{O}_F$ in \eqref{def:oeffe} as
\[
\mathfrak{O}_F(\vec{\tau}):=\int_{\mathbb{R}} F(\gamma_0(\tau_1-p+t, \tau_2-p+t)-F(\gamma_+(\tau_1+t, \tau_2+t)-F(\gamma_-(\tau_1-2 p+t, \tau_2- 2 p+t))\,dt
\]
where
\begin{equation}\label{delta}
p:=\dfrac{1}{\sqrt{3}}\left|\ln\frac{\delta}{3}\right|.
\end{equation}
Note that the shifts by the vector $(p, p)$ do not alter the value of the integral. These shifts are  useful to bound the integrand.
To obtain such estimates, we need the following lemmas.

\begin{lem}\label{difforb}
Let $\sigma_1\in (0, 1)$.
Consider $\gamma_\pm, \gamma_0$ in  \eqref{gammazero}, \eqref{gammaplus} and
\begin{equation}\label{def:interval}
\mathcal{I}:=\left(-\frac{\sigma_1}{\sqrt{3}}\left|\ln\frac{\delta}{3}\right|, \frac{\sigma_1}{\sqrt{3}} \left|\ln\frac{\delta}{3}\right|\right).
\end{equation}
 There exists a constant $\nu\in(0, 1)$ such that
\begin{equation}\label{differenceaction}
\left\lVert K_i^{(0)}\left(\tau_i\mp p+ t\right)-K_i^{\pm}(\tau_i+ t) \right\rVert_{C^0(\mathcal{I}\times\mathbf{K})}\lesssim  \delta^{\nu}, \qquad i=1, 2,
\end{equation}
\begin{equation}\label{differenceangle}
\left\lVert \sin\left(\psi_i^{(0)}\left(\tau_i\mp p+ t\right)\right)-\sin (\psi_i^{\pm}(\tau+ t)) \right\rVert_{C^0(\mathcal{I}\times\mathbf{K})}\lesssim  \delta^{\nu}\qquad i=1, 2.
\end{equation}

\end{lem}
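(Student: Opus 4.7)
My plan is to reduce everything to direct manipulations of the explicit formulas \eqref{gammaplus}, \eqref{gammazero}, exploiting the key identity $e^{\mp\sqrt{3}p} = (\delta/3)^{\pm 1}$ which follows from the definition \eqref{delta} of $p$.

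For the action estimate I would first rewrite
$2\cosh(\sqrt{3}(\tau_i - p + t)) = (\delta/3)e^{\sqrt{3}(\tau_i+t)} + (3/\delta)e^{-\sqrt{3}(\tau_i+t)}$,
which substituted in \eqref{def:kappasimple} yields
\[
K_i^{(0)}(\tau_i - p + t) = \frac{1}{1 - \frac{\delta}{3} + \frac{\delta^2}{9}e^{\sqrt{3}(\tau_i+t)} + e^{-\sqrt{3}(\tau_i+t)}},
\]
to be compared with $K_i^+(\tau_i+t)=1/(1+e^{-\sqrt{3}(\tau_i+t)})$. Subtracting and combining over a common denominator, the numerator equals $\delta/3 - (\delta^2/9)e^{\sqrt{3}(\tau_i+t)}$, while the two denominators are uniformly bounded below by a positive constant on $\mathcal{I}\times\mathbf{K}$. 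Since $e^{\sqrt{3}(\tau_i+t)}\le C_{\mathbf{K}}(3/\delta)^{\sigma_1}$ for $t\in\mathcal{I}$ and $\tau\in\mathbf{K}$, the numerator has size $\lesssim \delta + \delta^{2-\sigma_1} \lesssim \delta^{1-\sigma_1}$, establishing \eqref{differenceaction} with $\nu := 1 - \sigma_1 \in (0,1)$. The $+p$ case reduces to the $-p$ one via the symmetry $s\mapsto -s$ of $K_i^{(0)}$, which interchanges $K_i^+$ and $K_i^-$.

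For the angle estimate I would start from $\psi_i^{(0)}(s) = 2\arctan(-\sqrt{3}\tanh(\sqrt{3}s/2))$ given by \eqref{gammazero} and \eqref{psi1}, together with $\psi_i^\pm\equiv\pm\Psi_*$ along $\gamma_\pm$ from \eqref{gammaplus}. Writing $\tanh(\sqrt{3}s/2) = -1 + 2e^{\sqrt{3}s}/(1 + e^{\sqrt{3}s})$ for $s<0$ and Taylor expanding $\arctan$ around $\sqrt{3}$ yields $\psi_i^{(0)}(s) = \Psi_* - \sqrt{3}e^{\sqrt{3}s} + O(e^{2\sqrt{3}s})$ as $s\to-\infty$. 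Substituting $s = \tau_i - p + t$ and using once more $e^{\sqrt{3}s}=e^{-\sqrt{3}p}e^{\sqrt{3}(\tau_i+t)}\le C_{\mathbf{K}}\delta^{1-\sigma_1}$ on $\mathcal{I}\times\mathbf{K}$ gives $|\psi_i^{(0)}(\tau_i-p+t)-\Psi_*|\lesssim \delta^{1-\sigma_1}$, and \eqref{differenceangle} follows by Lipschitz continuity of $\sin$; the $+p$ case is analogous using the opposite asymptotic as $s\to+\infty$.

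The hard part will be controlling the loss coming from the length of $\mathcal{I}$: because $\mathcal{I}$ grows like $|\ln\delta|$, the quantity $e^{\sqrt{3}(\tau_i+t)}$ can be as large as $\delta^{-\sigma_1}$, which is exactly what prevents the cleaner bound $\nu = 1$. The choice $\nu = 1 - \sigma_1$ is therefore essentially optimal for a fixed $\sigma_1\in(0,1)$, and the same logarithmic length of $\mathcal{I}$ is what later introduces the extra $|\ln\delta|$ factor in the integration leading to Lemma \ref{propdiff}, which is absorbed by choosing the final exponent $\nu_0$ strictly less than $\nu$.
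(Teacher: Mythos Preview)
Your proposal is correct and follows essentially the same approach as the paper. For the action part you reproduce verbatim the paper's computation of $K_i^{(0)}(\tau_i\mp p+t)$ via the identity $e^{\mp\sqrt{3}p}=(\delta/3)^{\pm1}$ and then bound the difference with $K_i^\pm$; for the angle part the paper instead computes $\sin(2\arctan x)=2x/(1+x^2)$ directly and expands $\tanh(\sqrt{3}t_\pm)$ and $(2\cosh(\sqrt{3}t_\pm)-1)^{-1}$, whereas you first expand $\arctan$ around $\sqrt{3}$ and then invoke the Lipschitz bound for $\sin$ --- a cosmetic difference leading to the same exponent $\nu=1-\sigma_1$.
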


\begin{proof}
To simplify the notation let us consider $i=1$.
By \eqref{def:kappasimple} we have that
\begin{equation*}
K_1^{(0)}\left(t\pm p\right)=\frac{1}{1+e^{\pm\sqrt{3}t}-\frac{\delta}{3}+\frac{\delta^2}{9} e^{\mp\sqrt{3}t}}.
\end{equation*}
Thus for $\sigma\in [-\sigma_1, \sigma_1]$,
\[
 \left\lvert K_1^{(0)}\left(\frac{\sigma}{\sqrt{3}} \left|\ln\frac{\delta}{3}\right|\pm p\right)-K_1^\mp\left( \frac{\sigma}{\sqrt{3}}\left|\ln\frac{\delta}{3}\right|\right)\right\rvert \lesssim \max\{ \delta, \delta^{2-\sigma_1} \}.
 \]
 This gives the bounds \eqref{differenceaction}.
By using \eqref{psi1} and the trigonometric identity $\sin(2\arctan(x))=2 x/(1+x^2)$ we have

\[
 \sin(\psi^{(0)}_1(t))=-\frac{\sqrt{3}\sinh\left(\sqrt{3}t\right)}{2\cosh\left(\sqrt{3}t\right)-1}=-\frac{\sqrt{3}}{2} \tanh(\sqrt{3} t)\Big( 1+\frac{1}{2 \cosh(\sqrt{3} t)-1} \Big).
\]
Then, for $t_\pm=\displaystyle \pm p+\frac{\sigma}{\sqrt{3}}\left|\ln\frac{\delta}{3}\right|$ with $\sigma\in [-\sigma_1, \sigma_1]$, we have
\[
\tanh(\sqrt{3} t_{\pm})=
\pm1+\OO\left(\delta^{2(1-\sigma_1)}\right),\qquad
\frac{1}{2 \cosh(\sqrt{3} t_{\pm})-1}=\OO\left(\delta^{1-\sigma_1}\right).
\]
To prove \eqref{differenceangle} it is enough to use these estimates and \eqref{gammaplus}, to obtain
\[
 \sin(\psi^{(0)}_1(t_\pm))=\mp \frac{\sqrt{3}}{2}+\OO\left(\de^{1-\sigma_1}\right)=\sin (\psi_i^{\mp}(\tau_i))+\OO\left(\de^{1-\sigma_1}\right).
 \]
%

\end{proof}

\begin{lem}\label{convexp}
There exists $T>0$ independent of $\varepsilon$ such that
(see \eqref{gammaplus},  \eqref{gammazero})
\[
\begin{aligned}
\lVert  \gamma_+(\tau_1+t, \tau_2+t)-\mathfrak{e}_+^{(0)} \rVert_{C^0(\mathbf{K})}&\lesssim  \,e^{\sqrt{3} t} \qquad &\text{for}\,\, t<0,&\\
\lVert  \gamma_+(\tau_1+t, \tau_2+t)-\mathfrak{e}_+^{(1)} \rVert_{C^0(\mathbf{K})}&\lesssim \,e^{-\sqrt{3}  t} \qquad &\text{for}\,\, t>0,&\\
\lVert  \gamma_-(\tau_1+t, \tau_2+t)-\mathfrak{e}_-^{(1)} \rVert_{C^0(\mathbf{K})}&\lesssim \,e^{\sqrt{3} t}  \qquad &\text{for}\,\, t<0,&\\
\lVert  \gamma_-(\tau_1+t, \tau_2+t)-\mathfrak{e}_-^{(0)} \rVert_{C^0(\mathbf{K})}&\lesssim \,e^{-\sqrt{3} t}  \qquad &\text{for}\,\, t>0,&\\
\lVert  \gamma_0(\tau_1-p+t, \tau_2-p+t)-\mathfrak{e}^{(0)}_{+} \rVert_{C^0(\mathbf{K})}&\lesssim \,e^{\sqrt{3}t}\qquad &\text{for}\,\, t<-T,&\\
\lVert  \gamma_0(\tau_1+p+t, \tau_2+p+t)-\mathfrak{e}^{(0)}_{-} \rVert_{C^0(\mathbf{K})}&\lesssim \,e^{-\sqrt{3}t}\qquad &\text{for}\,\, t>T&.
\end{aligned}
\]

\end{lem}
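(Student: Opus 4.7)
The plan is to verify each of the six bounds by direct computation from the explicit formulas \eqref{gammaplus} and \eqref{gammazero}, exploiting the fact that the shift $p$ in \eqref{delta} is chosen precisely so that $e^{\sqrt{3}p}=3/\delta$.

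First I would handle the four bounds on $\gamma_{\pm}$, which are immediate. Since by \eqref{gammaplus} the angular components of $\gamma_+$ and $\gamma_-$ are constantly equal to $\Psi_*$ and $-\Psi_*$ respectively, only the action components need to be estimated. Using
\[
 1/(1+e^{-\sqrt{3}(\tau_1+t)}) \leq e^{\sqrt{3}(\tau_1+t)}\qquad (t<0),
\]
together with the boundedness of $\tau_1,\tau_2$ on $\mathbf{K}$, the bound $\|\gamma_+(\tau_1+t,\tau_2+t)-\mathfrak{e}_+^{(0)}\|_{C^0(\mathbf K)} \lesssim e^{\sqrt{3}t}$ for $t<0$ follows; writing $1-1/(1+e^{-\sqrt 3 s})= e^{-\sqrt 3 s}/(1+e^{-\sqrt 3 s})\leq e^{-\sqrt 3 s}$ then yields the corresponding bound at $\mathfrak{e}_+^{(1)}$ for $t>0$. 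The estimates for $\gamma_-$ follow by the symmetry $t\mapsto -t$, or equivalently by the same computation.

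The interesting part is the last two bounds, which concern the homoclinic $\gamma_0$ and depend on the shift $p$. First I would handle the action. Since $\cosh(\sqrt 3(t-p))=\tfrac12(e^{\sqrt 3 t}e^{-\sqrt 3 p}+e^{-\sqrt 3 t}e^{\sqrt 3 p})=\tfrac{\delta}{6}e^{\sqrt 3 t}+\tfrac{3}{2\delta}e^{-\sqrt 3 t}$, a direct substitution in \eqref{gammazero} gives
\[
 K_j^{(0)}(\tau_j-p+t)=\frac{1}{(1-\delta/3)+(\delta^2/9)e^{\sqrt 3(\tau_j+t)}+e^{-\sqrt 3(\tau_j+t)}}.
\]
For $t<-T$ with $T$ sufficiently large (but independent of $\delta$), the term $e^{-\sqrt 3(\tau_j+t)}$ dominates the denominator uniformly in $\tau_j\in\mathbf K$ and $\delta$ small, so $K_j^{(0)}(\tau_j-p+t)\lesssim e^{\sqrt 3 t}$, proving the action bound in the fifth inequality. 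For the angle, using $\psi_j^{(0)}(s)=2\arctan(-\sqrt 3\tanh(\sqrt 3 s/2))$ and the identity $\tanh(x)=-1+2e^{2x}/(1+e^{2x})$, for $s=\tau_j-p+t$ with $t<-T$ one obtains $\tanh(\sqrt 3 s/2)=-1+O(\delta\, e^{\sqrt 3 t})$, hence $\psi_j^{(0)}(\tau_j-p+t)=\Psi_*+O(\delta\, e^{\sqrt 3 t})=\Psi_*+O(e^{\sqrt 3 t})$. Combining the two estimates yields the fifth bound. The sixth bound follows analogously from the time-reversal symmetry $\gamma_0(-s)$ of the homoclinic and the corresponding change $p\to -p$.

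The only potential obstacle is choosing $T$ uniformly in the small parameter $\delta$. The key observation is that the ``bad'' terms $(1-\delta/3)$ and $(\delta^2/9)e^{\sqrt 3(\tau_j+t)}$ in the denominator of $K_j^{(0)}(\tau_j-p+t)$ are bounded uniformly in $\delta$ for $t<-T$ (with $T$ depending only on $\mathbf K$), while $e^{-\sqrt 3(\tau_j+t)}\geq e^{\sqrt 3 T}/\max_{\mathbf K}e^{\sqrt 3 \tau_j}$ can be made arbitrarily large by enlarging $T$. Thus a fixed $T$ works for all sufficiently small $\delta$, and all implicit constants in the six bounds are independent of $\delta$.
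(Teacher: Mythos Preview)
Your proof is correct and follows the same approach as the paper, which simply states that ``the lemma follows by straightforward estimates and the hyperbolicity of the equilibria.'' You have carried out those straightforward estimates explicitly, including the key identity $e^{\sqrt{3}p}=3/\delta$ and the verification that $T$ can be chosen independently of $\delta$, which is exactly what the paper leaves to the reader.
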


\begin{proof}
The lemma follows by straightforward estimates and the hyperbolicity of the equilibria.


\end{proof}

%
%
%


We split $\mathbb{R}=\bigcup_{k=1}^5 \mathcal{I}_k$
\[
\mathcal{I}_1:=(-\infty, \mathtt{a}), \quad \mathcal{I}_2:=[\mathtt{a}, \mathtt{b}], \quad \mathcal{I}_3:=[\mathtt{b}, \mathtt{c}], \quad \mathcal{I}_4:=[\mathtt{c}, \mathtt{d}], \quad \mathcal{I}_5:=(\mathtt{d}, +\infty)
\]
with
\begin{equation*}
\mathtt{a}=-\frac{3}{4\sqrt{3}} \left| \ln\delta\right|,\quad \mathtt{b}=\frac{3}{4\sqrt{3}}\left| \ln\delta\right|, \quad \mathtt{c}=2p-\frac{3}{4\sqrt{3}}\left| \ln\delta\right|, \quad \mathtt{d}=2p+\frac{3}{4\sqrt{3}}\left| \ln\delta\right|.
\end{equation*}
We have
\[
\int_{\mathbb{R}}\left[ F(\gamma( \tau_1-p +t, \tau_2-p+t))-F(\gamma_+(\tau_1+t, \tau_2+t))-F(\gamma_-( \tau_1-2p +t, \tau_2-2p+t))\right]\,dt=\sum_{j=1}^5\mathtt{T}_j
\]
where
\begin{align}
\mathtt{T}_1=&\,\int_{\mathcal{I}_1} \left[ F(\gamma_0( \tau_1-p +t, \tau_2-p+t))-F(\gamma_+(\tau_1+t, \tau_2+t))\right]\,dt   \label{t1} \\
\mathtt{T}_2=&\,\int_{\mathcal{I}_2}\left[  F(\gamma_0( \tau_1-p +t, \tau_2-p+t))-F(\gamma_+(\tau_1+t, \tau_2+t))\right]\,dt     \label{t2}\\
\mathtt{T}_3=&\,\int_{\mathcal{I}_3} F(\gamma_0( \tau_1-p +t, \tau_2-p+t))\,dt-\int_{\mathtt{b}}^{+\infty} F(\gamma_+(\tau_1+t, \tau_2+t))\,dt\label{t3}\\
&- \int^{\mathtt{c}}_{-\infty} F(\gamma_-(\tau_1-2 p+t, \tau_2-2 p+t))\,dt    \notag\\
\mathtt{T}_4=&\,\int_{\mathcal{I}_4} \left[  F(\gamma_0( \tau_1-p +t, \tau_2-p+t))-F(\gamma_-(\tau_1-2 p+t, \tau_2-2 p+t))\right]\,dt \label{t4}\\
\mathtt{T}_5=&\,\int_{\mathcal{I}_5}\left[  F(\gamma_0( \tau_1-p +t, \tau_2-p+t))-F(\gamma_-(\tau_1-2 p+t, \tau_2-2 p+t))\right]\,dt. \label{t5}
\end{align}
By the symmetry of the problem it is sufficient to provide bounds for $\lvert \mathtt{T}_i \rvert$ with $i=1, 2, 3$. The idea is to use the exponentially fast convergence of the orbits $\gamma_0$, $\gamma_{\pm}$ to the saddles (see Lemma \ref{convexp}) to get bounds on the integrals over the unbounded intervals and to exploit the closeness of such orbits on the compact intervals using Lemma \ref{difforb}.
%

\begin{itemize}
\item \textbf{Bound for $\mathtt{T}_1$} (see \eqref{t1}): Let us call $S:=\{ \gamma_0( \tau_1-p +t, \tau_2-p+t) \}_{t\in\mathcal{I}_1}$. $S$ is a compact subset of $\mathbb{R}^2$.
Recalling that $F(\mathfrak{e}^{(0)}_+)=0$, by Lemma \ref{convexp} and the mean value theorem we have that (recall \eqref{gammazero})
\begin{equation}\label{stima1}
\begin{split}
\int_{\mathcal{I}_1} \lvert F(\gamma_0( \tau_1-p +t, \tau_2-p+t)) \rvert\,dt &=\int_{\mathcal{I}_1} \lvert F(\gamma_0( \tau_1-p +t, \tau_2-p+t))-F(\mathfrak{e}^{(0)}_+) \rvert\,dt\\
&\le \lVert F \rVert_{C^1(S)} \int_{\mathcal{I}_1} \lVert \gamma_0( \tau_1-p +t, \tau_2-p+t)-\mathfrak{e}_+^{(0)}\rVert\,dt\\
&\lesssim \delta^{3/4}.
\end{split}
\end{equation}
By Lemma \ref{convexp}, the compactness of the orbit $\{\gamma_+(\tau_1+t, \tau_2+t)\}_{t\in\mathcal{I}_1}$, $F(\mathfrak{e}_+^{(0)})=0$ one can reason in the same way to obtain the same bound for the term involving $\gamma_+$.

\item \textbf{Bound for $\mathtt{T}_2$} (see \eqref{t2}): We note that $\mathcal{I}_2$ is of the form \eqref{def:interval}. By using the compactness of the orbits and the mean value theorem as in the previous step, we can apply Lemma \ref{difforb} and obtain
\begin{equation}\label{stima2}
\lvert \mathtt{T}_2 \rvert\lesssim \delta^{\nu}\,\lvert \mathcal{I}_2 \rvert\lesssim \delta^{\nu}\,\lvert \ln\delta \rvert
\end{equation}
where $\nu\in(0, 1)$ is given by Lemma \ref{difforb}.

\item \textbf{Bound for $\mathtt{T}_3$} (see \eqref{t3}):  We use that $F( 1, 1)=0$. Let us denote $\mathtt{m}:=-(\mathtt{b}-p)=\mathtt{c}-p>0$ (see \eqref{delta} for the definition of $p$). By translating the variable $t$ we obtain
\begin{align*}
\int_{\mathcal{I}_3} \lvert F(\gamma_0(\tau_1- p+t, \tau_2- p+t)) \rvert\,dt& =\int_{\mathcal{I}_3} \lvert F(\gamma_0(\tau_1- p+t, \tau_2- p+t))-F( 1, 1) \rvert\,dt\\
&=\int_{-\mathtt{m}}^{\mathtt{m}}  \lvert F(\gamma_0(\tau_1+t, \tau_2+t))-F(1, 1) \rvert\,dt\\
&\lesssim \lVert F \rVert_{C^1(S)} \sum_{i=1}^2 \int_{-\mathtt{m}}^{\mathtt{m}} | K_i^{(0)} (t)-1 |\,dt.
\end{align*}
Now, using \eqref{def:kappasimple}, one can see that on the  interval $[-\mathtt{m}, \mathtt{m}]$, one has that $| K_i^{(0)} (t)-1 |\lesssim \delta^{3/4}$, which implies
\[
\int_{\mathcal{I}_3} \lvert F(\gamma_0(\tau_1- p+t, \tau_2- p+t)) \rvert\,dt\lesssim \delta^{3/4} \,\lvert \ln\delta\rvert.
\]
By Lemma \ref{convexp} we have
\[
\begin{split}
\int_{\mathtt{b}}^{\infty} \lvert F(\gamma_+(\tau_1+t, \tau_2+t))\rvert\,dt&\lesssim  \delta^{3/4}\\
\int^{\mathtt{c}}_{-\infty} \lvert F(\gamma_-(\tau_1- 2p+t, \tau_2- 2p+t)) \rvert\,dt& = \int^{\mathtt{a}}_{-\infty} \lvert F(\gamma_-(\tau_1+t, \tau_2+t)) \rvert\,dt\lesssim  \delta^{3/4}.
\end{split}
\]
Hence,
\begin{equation}\label{stima3}
\lvert \mathtt{T}_3\rvert\lesssim  \delta^{3/4} \,\lvert \ln \delta \rvert.
\end{equation}
%
\end{itemize}

By \eqref{stima1}, \eqref{stima2}, \eqref{stima3}
we have that
\[
\lVert \mathfrak{O}_F \rVert_{C^0(\mathcal{A})}\lesssim \delta^{\nu}\lvert \ln \delta \rvert
\]
(changing $\nu$ if necessary). Now we observe that (recall \eqref{calham0}, \eqref{bfham0})
\begin{align*}
\partial_{\tau_1} F(\gamma_0(\tau_1- p+t, \tau_2- p+t))&=\{ F, \mathbf{H}^{(1)}_0\} (\gamma_0(\tau_1- p+t, \tau_2- p+t))\\
\partial_{\tau_1} F(\gamma_+(\tau_1+t, \tau_2+t))&=\{ F, \mathcal{H}^{(1)}_0 \}(\gamma_+(\tau_1+t, \tau_2+t)), \\
\partial_{\tau_1} F(\gamma_-(\tau_1-2 p+t, \tau_2-2 p+t))&=\{ F, \mathcal{H}^{(1)}_0\}(\gamma_-(\tau_1-2 p+t, \tau_2-2 p+t)).
\end{align*}
By the particular form of the Hamiltonians $\mathcal{H}^{(1)}_0$ and $\mathbf{H}^{(1)}_0$ in  \eqref{calham0}, \eqref{bfham0} one can check that $\{ F, \mathcal{H}^{(1)}_0\}=\{ F, \mathbf{H}^{(1)}_0\}$. Let us call $G:=\{ F, \mathcal{H}^{(1)}_0\}$. Clearly $G(\mathfrak{e}_\pm^{(0)})=G(\mathfrak{e}_\pm^{(1)})=0$. Now we can repeat the same strategy to get the bounds for the associated $\mathtt{T}_i$. The only difference is that when we compare the orbits $\gamma_0, \gamma_{\pm}$ on compact intervals we need to use also \eqref{differenceangle}. Then we obtain $\lVert \mathfrak{O}_{G} \rVert_{C^0(\mathbf{K})}\lesssim \delta ^{\nu}\lvert \ln \delta \rvert$ (recall \eqref{def:oeffe}).

%
Regarding the second derivatives in $\tau_1$ we have
\begin{align*}
\partial_{\tau_1}^2 F(\gamma_0(\tau_1- p+t, \tau_2- p+t))&=\{ G, \mathbf{H}^{(1)}_0\} (\gamma_0(\tau_1- p+t, \tau_2- p+t))\\
\partial_{\tau_1}^2 F(\gamma_+(\tau_1+t, \tau_2+t))&=\{ G, \mathcal{H}^{(1)}_0 \}(\gamma_+(\tau_1+t, \tau_2+t)), \\
\partial_{\tau_1}^2 F(\gamma_-(\tau_1-2 p+t, \tau_2-2 p+t))&=\{ G, \mathcal{H}^{(1)}_0\}(\gamma_-(\tau_1-2 p+t, \tau_2-2 p+t)).
\end{align*}
We observe that on a compact set $\lvert \{G, \mathbf{H}^{(1)}_0\}-\{ G, \mathcal{H}^{(1)}_0\}\rvert\lesssim \delta$. Then we can consider the function $E:=\{G, \mathcal{H}^{(1)}_0\}$ and repeat the same arguments above to prove that $\lVert \mathfrak{O}_{E} \rVert_{C^0(\mathbf{K})}$ has a bound like \eqref{integrand}. We conclude by noting that
\[
 \lVert \mathfrak{O}_F \rVert_{C^0(\mathbf{K})}+ \lVert \partial_{\tau_1} \mathfrak{O}_F \rVert_{C^0(\mathbf{K})}+\lVert \partial^2_{\tau_1} \mathfrak{O}_F \rVert_{C^0(\mathbf{K})}    \lesssim \Vert \mathfrak{O}_F \rVert_{C^0(\mathbf{K})}+\lVert \mathfrak{O}_{G} \rVert_{C^0(\mathbf{K})}+\lVert \mathfrak{O}_{E} \rVert_{C^0(\mathbf{K})}.
\]
 \end{proof}

\subsection{Transversal homoclinic orbits to saddles: $N$ resonant tuples}

In this section we prove the generalization of Theorem \ref{thm:SplittingHomo2Rect} for the case of multiple resonant tuples. To break integrability we need to impose a non-degeneracy condition on the coefficients $d_{ij}$ in \eqref{coeffN}. To state it we introduce the matrix
\begin{equation}\label{def:MatrixD}
\mathcal{D}
=
\begin{pmatrix}
d_{1,N} + \sum_{j \neq 1} d_{1,j} & -d_{1,2} & \ldots & -d_{1,N-1} \\
-d_{2,1} & \ddots & \vdots & \vdots \\
\vdots & \vdots & \ddots &  -d_{N-2,N-1} \\
-d_{N-1,1} & \ldots & -d_{N-1,N-2} & d_{N-1,N} + \sum_{j \neq N-1} d_{N-1,j}
\end{pmatrix}.
\end{equation}

\begin{prop}\label{prop:homointersaddlesN}
Assume that the matrix $\mathcal{D}$ satisfies
\begin{equation}\label{cond:HessD}
\det \mathcal{D}\neq 0.
\end{equation}
Then, there exists $\varepsilon_0>0$ such that for all $\varepsilon\in(0, \varepsilon_0)$  the invariant manifolds $W_{\varepsilon}^-(\mathfrak{e}^{(0)}_-)$ and $W_{\varepsilon}^+(\mathfrak{e}^{(0)}_+)$ of the saddles \eqref{saddlesN} of the Hamiltonian \eqref{def:hamN} intersect transversally along an orbit (within the energy level).
\end{prop}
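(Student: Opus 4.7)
The plan is to follow closely the two-tuple argument developed in Section~\ref{sec:HomoMelnikov}. As in that case, the saddles $\mathfrak{e}_\pm^{(0)}$ have no heteroclinic connection for the unperturbed system $\mathcal{H}_{|\varepsilon=0}$ (their $N$-dimensional invariant manifolds coincide instead with those of $\mathfrak{e}_\pm^{(1)}$), so direct Melnikov theory does not apply. I would therefore split $\mathcal{H}$ using two independent parameters $\varepsilon$ and $\delta$, writing $\mathcal{H}=\mathbf{H}_0+\varepsilon\mathbf{H}_1$ with
\[
\mathbf{H}_0(\psi,K)=\sum_{j=1}^N\bigl[K_j(1-K_j)(1+2\cos\psi_j)-\delta K_j^2\bigr],
\]
moving a multiple of $\sum_j K_j^2$ into the unperturbed Hamiltonian and compensating it in $\mathbf{H}_1$, and at the end taking $\delta=\varepsilon$. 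For $\delta>0$, $\mathbf{H}_0$ is the product of $N$ copies of the integrable one-degree-of-freedom system analysed in Lemma~\ref{lemma:UnperturedHomo2Rect}, and $\mathfrak{e}_+^{(0)}$ and $\mathfrak{e}_-^{(0)}$ are connected by an $N$-parameter heteroclinic manifold
\[
\gamma_0(\vec\tau)=\bigl(\gamma_0^{(1)}(\tau_1),\ldots,\gamma_0^{(N)}(\tau_N)\bigr),\qquad \vec\tau\in\mathbb{R}^N.
\]

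Standard Melnikov theory, adapted as in \eqref{dist2}, reduces the transverse intersection of $W^u_\varepsilon(\mathfrak{e}^{(0)}_+)$ and $W^s_\varepsilon(\mathfrak{e}^{(0)}_-)$ inside the energy level to the existence of a non-degenerate critical point of the reduced Melnikov potential
\[
\mathcal{L}_0^{(0)}(\tilde\tau_1,\ldots,\tilde\tau_{N-1})=\int_{\mathbb{R}}\mathbf{H}_1\bigl(\gamma_0(\vec\tau+t\,\mathbf{1})\bigr)\,dt,\qquad \tilde\tau_j:=\tau_j-\tau_N,
\]
where autonomy and energy conservation have been used to drop one variable. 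Only the coupling monomials $d_{ij}K_iK_j$ depend non-trivially on $\vec\tau$; the remaining pieces of $\mathbf{H}_1$ (the terms in $a_j,b_j,c_j$) produce a $\vec\tau$-independent constant $\eta$.

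The central technical step is an $N$-dimensional analogue of Proposition~\ref{propfond}: on any compact set of $\tilde\tau$'s there should exist $\nu_0\in(0,1)$ such that
\[
\mathcal{L}_0^{(0)}(\tilde\tau)=\eta+\sum_{1\leq i<j\leq N}d_{ij}\,(\tilde\tau_i-\tilde\tau_j)\coth\!\left(\frac{\sqrt{3}(\tilde\tau_i-\tilde\tau_j)}{2}\right)+\mathcal{O}_{C^2}(\delta^{\nu_0}),
\]
with the convention $\tilde\tau_N:=0$. To prove this I would repeat the strategy of Lemma~\ref{propdiff}: write each pairwise integral $\int K_i^{(0)}(t+\tilde\tau_i)\,K_j^{(0)}(t+\tilde\tau_j)\,dt$ as the sum of contributions along a shift of $\gamma_0^{(i,j)}$, compare it to the concatenation of the corresponding heteroclinic pieces $\gamma_\pm^{(i,j)}$ using the splitting of $\mathbb{R}$ into intervals calibrated by $p=\frac{1}{\sqrt 3}|\ln(\delta/3)|$, and exploit the exponential convergence of $\gamma_0^{(j)}$ to the saddles together with the closed-form heteroclinic computation \eqref{def:IntegralForMelnikov}.

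Finally I would verify that $\tilde\tau=0$ is a non-degenerate critical point of the leading term. Setting $g(x):=x\coth(\sqrt{3}x/2)$, a Taylor expansion yields $g'(0)=0$ and $g''(0)=1/\sqrt{3}\neq 0$, so the Hessian of the leading term at $\tilde\tau=0$ equals $g''(0)$ times a matrix whose $(k,k)$-entry is $\sum_{j\neq k,\,1\leq j\leq N}d_{kj}$ and whose $(k,l)$-entry ($k\neq l$, both $\leq N-1$) is $-d_{kl}$; this is precisely the matrix $\mathcal{D}$ of \eqref{def:MatrixD}. Hypothesis~\eqref{cond:HessD} therefore gives non-degeneracy, and the implicit function theorem (applied to the full $\mathcal{L}_0^{(0)}$ perturbed by the $\mathcal{O}(\delta^{\nu_0})$ remainder) yields a persistent non-degenerate critical point, hence a transverse intersection inside the energy level after setting $\delta=\varepsilon$. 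The main obstacle is the error estimate: the number of cross terms to control grows with $N$, and one has to verify carefully that the interval-splitting scheme of Lemma~\ref{propdiff} (and the exponential decay estimates of Lemma~\ref{convexp}) remain uniform when several independent time shifts $\tilde\tau_j$ are present simultaneously; however, since the relevant sums are finite and each pairwise integral is of the same type as in the $N=2$ case, I expect this to go through with only combinatorial complications.
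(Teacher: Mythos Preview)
Your proposal is correct and follows the same route as the paper: the two-parameter trick $\mathcal H=\mathbf H_0(\delta)+\varepsilon\mathbf H_1$, the reduced Melnikov potential along the $\delta$-dependent homoclinic manifold $\gamma_0(\vec\tau)$, the identification of $\tilde\tau=0$ as a critical point of the leading part, and the computation of its Hessian as $g''(0)\,\mathcal D$ with $g(x)=x\coth(\sqrt3 x/2)$, finishing by the implicit function theorem after setting $\delta=\varepsilon$.

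One simplification you are missing: you do not need an ``$N$-dimensional analogue'' of Proposition~\ref{propfond}, nor to revisit the interval-splitting of Lemma~\ref{propdiff}. The coupling part of the potential is a finite sum
\[
\sum_{1\le i<j\le N} d_{ij}\int_{\mathbb R} K_i^{(0)}(t+\tau_i)\,K_j^{(0)}(t+\tau_j)\,dt,
\]
and each summand is \emph{exactly} the integral analysed in \eqref{def:melpotred}, depending on the single difference $\tau_i-\tau_j$. Hence Proposition~\ref{propfond} applies verbatim to each term separately, with a uniform error $\mathcal O_{C^2}(\delta^{\nu_0})$ on compacts; summing over the $\binom{N}{2}$ pairs gives the formula you wrote. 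The ``main obstacle'' you flag --- interactions between several independent time shifts --- therefore never arises: the cross terms decouple completely into two-variable problems already handled in the $N=2$ analysis.
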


\begin{remark}
Note that condition \eqref{cond:HessD} is satisfied for a generic choice of coefficients $d_{ij}$. Indeed, the determinant of such matrix is a polynomial in the variables $d_{i j}$. Then, it is enough to  show that such polynomial is not identically zero. If one consider
\[
d_{i j}=\begin{cases}
1 \quad \text{if}\,\,i=1, \dots, N-1,\,\,j=N,\\
0 \quad \text{otherwise}
\end{cases}
\]
the matrix \eqref{def:MatrixD} is a multiple of the identity. This means that at some point the polynomial is not zero and therefore it is not-zero for almost every choice of $d_{ij}$.
In Section \ref{sec:ApplicationMultiple}, we prove that condition \eqref{cond:HessD} is satisfied for the resonant models associated to the Wave, Beam and Hartree equations that we consider.
\end{remark}

\begin{proof}
We proceed as for the case $N=2$ in Section \ref{sec:HomoMelnikov}. That is, we introduce a second parameter $\de$ and we define the Hamiltonian $\mathcal{H}=\sum_{j=1}^N\mathbf{H}_0^{(j)}+\varepsilon \mathbf{H}_1$ given by (recall \eqref{coeffN})
\begin{equation}\label{def:hamN}
\begin{aligned}
\mathbf{H}_0^{(j)}(\psi_j, K_j; \delta) :=&\,K_j (1-K_j)(1 +2  \,\cos(\psi_j))-\delta\, K^2_j ,\\
\mathbf{H}_1(\psi_1, \dots, \psi_N, K_1, \dots, K_N) :=&\,\sum_{j=1}^N \big(a_{j} K_j + (b_{j}+1) K^2_j +c_j\,K_j (1-K_j)\,\cos(\psi_j)\big)\\
&\,+ \sum_{i, j=1, i< j}^N d_{i j} K_i\,K_j.
\end{aligned}
\end{equation}
If $\de=\eps$, it coincides with \eqref{redhamN}.

We proceed as in the proof of Theorem \ref{thm:SplittingHomo2Rect}. For $\varepsilon=0$ the dynamics is the same described in Section \ref{sec:HomoMelnikov}.
In particular it is easy to see that, when $\varepsilon=0$, one can consider the two saddle points (recall that $\Psi_*:=2\pi/3$)
\begin{equation}\label{saddlesN}
\mathfrak{e}^{(0)}_{\pm}:=(\pm \Psi_*, \dots, \pm \Psi_*, 0, \dots, 0)
\end{equation}
connected by the $\de$-dependent homoclinic manifolds (recall \eqref{psi1}, \eqref{gammazero})
\begin{equation}\label{gammazeroN}
\gamma_0(\vec{\tau}):=(\psi_1^{(0)}(\tau_1), \ldots, \psi_N^{(0)}(\tau_N), K_1^{(0)}(\tau_1), \ldots, K_N^{(0)}(\tau_N)),\qquad \vec{\tau}=(\tau_1,\ldots,\tau_N).\nonumber \\
\end{equation}
We define the associated Melnikov potential
\begin{equation}\label{def:malnikovpotcaseN}
\mathcal{L}_{0, N}(\vec{\tau}):=\int_{\mathbb{R}} \mathbf{H}_1\circ \Phi^t_{\mathbf{H}_0} (\gamma_0(\vec{\tau}))\,dt= \sum_{i, j=1, i< j}^N d_{i j} \int_{\mathbb{R}}  K^{(0)}_i(\tau_i+t)\,K^{(0)}_j(\tau_j+t) \,dt+\eta_*,
\end{equation}
where
\[
\eta_*:=\sum_{i=1}^N\int_{\mathbb{R}} a_{i} K^{(0)}_i(t) + (b_{i}+1) (K^{(0)}_i)^2(t) +c_i\,K^{(0)}_i (1-K^{(0)}_i(t))\,\cos\psi^{(0)}_i(t)\,dt.
\]
We note that such function is the sum of terms of the form \eqref{def:melpotred}.
Thanks to the autonomous nature of the system the potential, $\mathcal{L}_{0, N}$ depends just on $\tau_1-\tau_N, \dots, \tau_{N-1}-\tau_N$. Thus one can consider the reduced Melnikov potential $\mathcal{L}_{0, N}^{(0)}$, which satisfies
\[
 \mathcal{L}_{0, N}^{(0)}\left(\tau_1-\tau_N, \dots, \tau_{N-1}-\tau_N\right)=\mathcal{L}_{0, N}(\tau_1,\ldots,\tau_N).
\]
Classical Melnikov Theory ensures that  non-degenerate critical points of this reduced Melnikov potential gives rise to transversal (within the energy level) intersections between $W_{\varepsilon}^-(\mathfrak{e}^{(0)}_-)$ and $W_{\varepsilon}^+(\mathfrak{e}^{(0)}_+)$.

Denoting  $\,\,{\tilde{\tau}}:= (\tau_1-\tau_N, \dots, \tau_{N-1}-\tau_N)$, Proposition \ref{propfond} implies that there exists a constant $\eta\in\mathbb{R}$ such that
\[
\mathcal{L}_{0, N}^{(0)}(\tilde{\tau})=\eta+\sum_{i, j=1, i< j}^{N-1} d_{i j}\, (\tilde{\tau}_i-\tilde{\tau}_j)\coth\left(\frac{\sqrt{3}}{2} (\tilde{\tau}_i-\tilde{\tau}_j) \right)+\sum_{j=1}^{N-1} d_{j N} \tilde{\tau}_j \coth\left(\frac{\sqrt{3}}{2} \tilde{\tau}_j \right)+\mathcal{O}_{C^2}(\delta^{\nu_0})
\]
for some $\nu_0>0$.
Since $x\,\coth((\sqrt{3}/2)x)$ is an even function, the origin
 $(0, \dots, 0)\in\mathbb{R}^{N-1}$ is a critical point of the first order of  $\mathcal{L}_{0, N}^{(0)}$ (that is, dropping the errors $\mathcal{O}_{C^2}(\de^{\nu_0})$). The  Hessian matrix of the first order of $\mathcal{L}_{0, N}^{(0)}$ at the origin is
\[\mathrm{Hess}=  \frac{1}{\sqrt{3}}\mathcal{D}\]
where $\mathcal{D}$ is the matrix introduced in \eqref{def:MatrixD}. Then, condition \eqref{cond:HessD} implies $\det\mathrm{Hess}\neq 0$.

The non-degeneracy of the Hessian implies that the reduced Melnikov potential $\mathcal{L}_{0, N}^{(0)}$ has a non-degenerate critical point $\de^{\nu_0}$--close to $\tilde\tau=0$. Then, taking $\de=\eps$ one can use  classical Melnikov Theory to ensure the existence of the transverse intersection between invariant manifolds stated in Proposition \ref{prop:homointersaddlesN}.
\end{proof}

\section{Proof of Theorem \ref{TeoWaveBeam}}\label{sec:Proofthm1}

The goal of this section is to prove Theorem \ref{TeoWaveBeam}. The key point of the proof is to construct symbolic dynamics (an infinite symbols Smale horseshoe) for the resonant model \eqref{calham0} which has been derived from the equations \eqref{Hartree}, \eqref{Wave}, \eqref{Beam}. In Theorem \ref{thm:SplittingHomo2Rect} we have constructed transverse homoclinic orbits to saddles for \eqref{calham0}. It is well known that the intersection of invariant manifolds of critical points in flows do not always lead to the existence of symbolic dynamics (see, for instance, \cite{Devaney78}).
Therefore, the first step of the proof is to
obtain transverse homoclinic points to certain periodic orbits. This is done in Section \ref{sec:Thm1:MelnikovPO}. Then, following \cite{Moser01}, in Section \ref{sec:Thm1:SymbDyn} we construct an invariant set of (a suitable Poincar\'e map of) the flow associated to the Hamiltonian \eqref{calham0} whose dynamics is conjugated to a shift of infinite symbols (see Section \ref{sec:heuristics}). Finally in Section \ref{sec:ProofThm1:Application} we complete the proofs of Theorem  \ref{TeoWaveBeam} by checking that the non-degeneracy conditions imposed on \eqref{calham0} are satisfied for the resonant models obtained from the PDEs \eqref{Wave}, \eqref{Hartree} and \eqref{Beam}.


%

\subsection{Transversality of invariant manifolds of periodic orbits}\label{sec:Thm1:MelnikovPO}
The main result in this section is the following.

\begin{prop}\label{prop:SplittingPO2Rect}
Consider the Hamiltonian \eqref{calham0} and assume that \eqref{tildekappas} holds.
Then there exists $\varepsilon_0>0$ such that for all $\varepsilon\in (0, \varepsilon_0)$ there exists $h_0=h_0(\varepsilon)>0$ such that for all $h\in (0, h_0)$,
\begin{enumerate}
\item[$(i)$] The hyperbolic  periodic orbits $\mathtt{P}^{\pm, 0 }_h$ in \eqref{def:PO:ext}
persist and have period $\mathtt{T}_h$. That is, Hamiltonian \eqref{calham0} has
hyperbolic periodic orbits  $\mathtt{P}^{\pm, 0 }_{h,\eps}$ that are
$\mathcal{O}(\eps)$--close to $\mathtt{P}^{\pm, 0}_h$.
\item[$(ii)$] The invariant manifolds   $W^u(\mathtt{P}^{+, 0 }_{h,\eps})$ and
$W^s(\mathtt{P}^{-, 0 }_{h,\eps})$ intersect transversally along orbits  (within the
energy level).
\end{enumerate}
\end{prop}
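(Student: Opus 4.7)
For item (i), observe that at $\varepsilon=0$ the orbit $\mathtt{P}^{\pm,0}_h=\mathtt{P}_h\times\{(\pm\Psi_*,0)\}$ is $\mathtt{T}_h$-periodic and carries a hyperbolic normal direction with eigenvalues $\pm\sqrt{3}$ coming from the saddle factor $(\pm\Psi_*,0)$ of $\mathcal{H}_0^{(2)}$. Introducing a two-dimensional Poincar\'e section transverse to the flow at a point of $\mathtt{P}^{\pm,0}_h$ within the energy level $\Sigma_h=\{\mathcal{H}=h\}$, the return map has a hyperbolic fixed point. The implicit function theorem then yields a unique persistent hyperbolic periodic orbit $\mathtt{P}^{\pm,0}_{h,\varepsilon}$ of period $\mathtt{T}_h+\mathcal{O}(\varepsilon)$ for all sufficiently small $\varepsilon>0$.

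For item (ii), since $\mathtt{P}^{+,0}_h$ and $\mathtt{P}^{-,0}_h$ are not connected at $\varepsilon=0$, we reuse the $\delta$-trick of Section \ref{sec:HomoMelnikov}: write $\mathcal{H}=\mathbf{H}_0+\varepsilon\mathbf{H}_1$ as in \eqref{bfham0}-\eqref{trueperturbation}, treat $\delta$ and $\varepsilon$ as independent parameters, and set $\delta=\varepsilon$ only at the end. For $\varepsilon=0$ and $\delta>0$, Lemma \ref{lemma:UnperturedHomo2Rect} supplies the $(\psi_2,K_2)$-heteroclinic $\gamma_0^{(2)}(t)$ joining the saddles $(\pm\Psi_*,0)$, while $\mathbf{H}_0^{(1)}$ retains the periodic orbit $\mathtt{p}_h$. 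Hence the periodic orbits $\mathtt{P}^{\pm,0}_{h,0}$ are joined by the $2$-dimensional heteroclinic cylinder
\[
\mathcal{C}_h=\bigl\{(\mathtt{p}_h(\tau_1+t),\,\gamma_0^{(2)}(t)):\;\tau_1\in\mathbb{R}/\mathtt{T}_h\mathbb{Z},\;t\in\mathbb{R}\bigr\},
\]
parametrized, modulo the flow, by the phase $\tau_1$ along the periodic orbit.

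Adapting the Melnikov theory of Sections \ref{sec:HeteroMelnikov}-\ref{sec:HomoMelnikov} from saddle-to-saddle heteroclinics to heteroclinics between hyperbolic periodic orbits, the signed distance between $W^u(\mathtt{P}^{+,0}_{h,\varepsilon})$ and $W^s(\mathtt{P}^{-,0}_{h,\varepsilon})$ on a section transverse to $\mathcal{C}_h$ within $\Sigma_h$ equals $\varepsilon\,\partial_{\tau_1}\mathcal{L}(\tau_1)+\mathcal{O}(\varepsilon^2)$ up to a positive factor. Because the uncoupled $(\psi_1,K_1)$-terms of $\mathbf{H}_1$ do not decay along $\mathtt{p}_h$, one must first subtract the common asymptotic value of $\mathbf{H}_1$ at $K_2=0$; the only term in $\mathbf{H}_1$ which both depends on $(\psi_2,K_2)$ and couples $(\psi_1,K_1)$ to it is $d_{12}K_1K_2$, leaving
\[
\mathcal{L}(\tau_1)=\mathrm{const}+d_{12}\int_{\mathbb{R}}K_1^{(h)}(\tau_1+t)\,K_2^{(0)}(t;\delta)\,dt,
\]
where $K_1^{(h)}(t):=K_1(\mathtt{p}_h(t))$ is smooth, $\mathtt{T}_h$-periodic and non-constant.

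The main difficulty is to verify that the extrema of $\mathcal{L}$ are non-degenerate critical points (i.e.\ $\mathcal{L}''\neq 0$) after setting $\delta=\varepsilon$. Non-constancy is immediate from $d_{12}\neq 0$ \eqref{tildekappas} together with the exponential tails $K_2^{(0)}(t)\sim e^{-\sqrt{3}|t|}$, which make $\widehat{K_2^{(0)}}(\omega)$ nonvanishing at every real frequency, so every nontrivial Fourier harmonic of $K_1^{(h)}$ survives in the convolution. The delicate regime is $h\to 0$ with $\varepsilon$ fixed, where $\mathtt{T}_h\sim|\log h|/\sqrt{3}\to\infty$ and $\mathtt{p}_h$ approaches the separatrix cycle of $\mathcal{H}_0^{(1)}$. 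The plan is to exploit the freedom to choose $h_0(\varepsilon)$ so small that $\mathtt{T}_h\gg|\log\varepsilon|$, and then to adapt the shadowing/asymptotic argument of Lemma \ref{propdiff} and Proposition \ref{propfond} to the present convolution: decomposing $K_1^{(h)}$ according to its behaviour near the four saddles of $\mathcal{H}_0^{(1)}$ and along the connecting heteroclinic arcs, one extracts an explicit leading-order form of $\mathcal{L}$ whose maximum and minimum are unique on a period and non-degenerate. Standard Melnikov correspondence then converts each non-degenerate zero of $\mathcal{M}=\mathcal{L}'$ into a transverse heteroclinic orbit within $\Sigma_h$, proving (ii).
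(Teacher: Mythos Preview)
Your overall architecture matches the paper's: for (i) a persistence argument, for (ii) the same $\delta$-trick and a Melnikov potential of the form $d_{12}\int K_1^{(h)}(\tau_1+t)K_2^{(0)}(t;\delta)\,dt$ plus constant. The substantive difference is how you establish the non-degenerate zero of $\mathcal{M}_h=\mathcal{L}'$.

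The paper takes a shorter route that you may have overlooked. Rather than analysing the convolution $\mathcal{L}$ directly, it observes that as $h\to 0$ the periodic orbit $\gamma_{\delta,h}^{(1)}$ converges pointwise to the homoclinic $\gamma_0^{(1)}$ of $\mathbf{H}_0^{(1)}$ (Remark~\ref{rem:uniformconv}), and hence $\mathcal{M}_h^{(0)}\to\mathcal{M}_0^{(0)}$ in $C^1$ on compacts (Lemma~\ref{lem:periodicomo}). The proof of that lemma is elementary: split the integral at $|t|=\mathtt{c}|\ln\delta|$, use that $\{\mathbf{H}_0^{(2)},\mathbf{H}_1\}$ vanishes on $\{K_2=0\}$ to control the tails, and use pointwise convergence on the compact part. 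Since $\mathcal{M}_0^{(0)}$ already has a non-degenerate zero by Proposition~\ref{propfond}, so does $\mathcal{M}_h^{(0)}$ for $h$ small. This bypasses your Fourier argument and your proposed separatrix decomposition entirely.

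Your alternative is not wrong in principle, but it is more laborious and two points deserve care. First, the claim that $\widehat{K_2^{(0)}}(\omega)\neq 0$ for \emph{every} real $\omega$ is not obvious from exponential tails alone (analyticity in a strip gives only isolated zeros); you would still need to argue that the fundamental harmonic of $K_1^{(h)}$ is not killed. Second, your decomposition of $K_1^{(h)}$ ``near the four saddles of $\mathcal{H}_0^{(1)}$'' is slightly off: once $\delta>0$, the relevant object is the periodic orbit of $\mathbf{H}_0^{(1)}$, whose $h\to 0$ limit is the single homoclinic $\gamma_0^{(1)}$ of Lemma~\ref{lemma:UnperturedHomo2Rect}, not the four-saddle separatrix cycle of the $\delta=0$ system. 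Recognising this is precisely what makes the paper's comparison $\mathcal{M}_h\to\mathcal{M}_0$ immediate.

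For (i), note also that the paper exploits a simpler structural fact: the subspace $\{x_2=y_2=0\}$ (i.e.\ $\{K_2=0\}$ in blow-down coordinates \eqref{def:cartesian}) remains invariant for the full perturbed Hamiltonian, and the restricted system is an integrable one-degree-of-freedom Hamiltonian for every $\varepsilon$, so the periodic orbits at energy $h$ are simply the level curves there---no implicit function theorem is needed.
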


Note that in the coordinates introduced in \eqref{def:cartesian}, the periodic orbits $\mathtt{P}^{+, 0 }_{h,\eps}$ and $\mathtt{P}^{-, 0 }_{h,\eps}$ blow down to the same periodic orbit, which we denote by $\mathtt{P}^{0 }_{h,\eps}$. In the coordinates  \eqref{def:cartesian}, Proposition \ref{prop:SplittingPO2Rect} can be restated as that the  manifolds $W^u(\mathtt{P}^{0 }_{h,\eps})$ and $W^s(\mathtt{P}^{0 }_{h,\eps})$ intersect transversally
within the energy level.

\begin{proof}[Proof of Proposition \ref{prop:SplittingPO2Rect}]
To prove $(i)$ is more convenient to use the cartesian coordinates $\{x_j,y_j\}$ in \eqref{def:cartesian} and therefore Hamiltonian $\widetilde H_\Res$ in \eqref{redhamNCartesian} (with $N=2$) to avoid the blow up of $K_j=0$. Then, the invariant subspace $\{K_2=0\}$ corresponds to $\{x_2=y_2=0\}$. The Hamiltonian on this invariant subspace is given by
\begin{equation*}
\begin{aligned}
\mathcal{H}( x_1, 0,y_1, 0)=& \frac{1}{2}\left(3x_1^2-y_1^2\right) - \frac{1}{4}
\left(3x_1^2-y_1^2\right)\left(x_1^2+y_1^2\right)\\
&+\varepsilon\Big[ \frac{1}{2} a_{1} \left(x_1^2+y_1^2\right)
+\frac{1}{4} b_{1} \left(x_1^2+y_1^2\right)^2  +\frac{1}{4}c_1
\left(x_1^2+y_1^2\right)\left(2-x_1^2-y_1^2\right)\Big].
\end{aligned}
\end{equation*}
This Hamiltonian is integrable both for $\eps=0$ and  $\varepsilon>0$ and has the saddle $(0,0)$ at the energy level. Integrability and the particular form of $\mathcal{H}$ implies that the energy levels close to zero are given by periodic orbits. These periodic orbits are $\eps$-close to those of the unperturbed problem
(see \eqref{calham0}).

To  prove $(ii)$ we proceed as in Section \ref{sec:resonantmodel} by doing approximations of several Melnikov functions and using an auxiliary parameter $\de$. We follow the notation of Section \ref{sec:HomoMelnikov}, In particular,  we consider the Hamiltonians $\mathbf{H}_0$, $\mathbf{H}_1$ in \eqref{bfham0}, which taking $\de=\eps$ also define the Hamiltonian $\mathcal{H}$.

By the particular form of  Hamiltonian $\mathbf{H}_0^{(j)}$,
$j=1,2$ (see \eqref{bfham0}, it can be easily checked that it  has the saddles
%
$(\pm \Psi_*,0)$ (they correspond to $x_1=y_1=0$ in the blow down coordinates \eqref{def:cartesian}. These saddles are connected by the homoclinic orbits $\gamma_0^{(j)}$, $j=1, 2$, introduced in \eqref{gammazero}.

Let $h>0$ small, then the Hamiltonian $\mathbf{H}_0$ possesses
%
%
the hyperbolic periodic orbits
\[
\mathtt{P}^{\pm,0}_{\delta,h} =\left\{\big(\gamma_{\delta,h}^{(1)}(\tau),\pm \Psi_*,0 \big) : \tau\in\mathbb{R}\right\},
\]
where $\gamma_{\delta,h}^{(k)}$ is the time parametrization of the periodic orbit defined by
$\{\mathbf{H}_0^{(k)}=h\}$ (see Figure \ref{fig:PhaseSpaceHomoclinic}).
When $\varepsilon = 0$, the homoclinic manifold
$W^u_0(\mathtt{P}_{\delta,h}^{+,0})\equiv W^s_0(\mathtt{P}_{\delta,h}^{-,0})$ is
parameterized by
\begin{equation}\label{man}
\Gamma_{\delta,h,0}(\vec{\tau}) := (\gamma_{\delta,h}^{(1)}(\tau_1),\gamma_{0}^{(2)}(\tau_2)).
\end{equation}


\begin{remark}\label{rem:uniformconv}
The periodic orbits $\mathtt{P}^{\pm,0}_{\delta,h}$ converge pointwise for any fixed $\tau$  to $(\gamma_0^{(1)}(\tau),\pm \Psi_*,0)$ as $h\to 0$.
Similarly, for fixed $\tau$, the parametrization $\Gamma_{\delta,h,0}(\vec{\tau})$ converges
to $\gamma_0(\vec{\tau})$ in \eqref{gammazero} as $h\to 0$.
\end{remark}
 When $\varepsilon>0$, the periodic orbits  $\mathtt{P}_{\delta,h}^{\pm,0}$ persist
 .
Direct application of Melnikov Theory, as in Section
\ref{sec:resonantmodel}, ensures the following. There exists $\de_0>0$,
$\eps_0>0$ small enough such that for any $\de\in(0,\de_0)$ and $\eps\in
(0,\eps_0)$ small enough, the distance function between the invariant manifolds
$W^u(\mathtt{P}^{+, 0 }_{h,\eps})$ and $W^s(\mathtt{P}^{-, 0 }_{h,\eps})$ in a well
chosen transversal section is given by
\[
 d(\vec{\tau})=\eps
\frac{\mathcal{M}_h(\vec{\tau})}{\lVert\nabla\mathbf{H}_0^{(2)}(\gamma_{0}^{(2)}
(\tau_2)) \rVert}+\OO\left(\eps^2\right),
\]
where the error $\OO(\eps^2)$ is uniform in $\de$ and $h$ and $\mathcal{M}_h$ is the
Melnikov function given by
\begin{equation}\label{melfunz2}
\mathcal{M}_h(\tau_1,\tau_2)=\int_\mathbb{\R}\left\{\mathbf{H}_0^{(2)},\mathbf{H}
_1\right\} (\Gamma_{\delta,h,0}(\tau_1+t,\tau_2+t))dt.
\end{equation}
We note that as $t\to\pm\infty$ the $K_2$-component of $\Gamma_{\delta,h,0}(\tau_1+t,\tau_2+t)$ goes exponentially fast to zero, then by a direct computation it is easy to see that
\[
\lim_{\tau_2 \to \pm\infty} \{ \mathbf{H}_0^{(2)}, \mathbf{H}_1 \}(\gamma_{\delta,h}^{(1)}(\tau_1),\gamma_{0}^{(2)}(\tau_2))=0
\]
with exponentially fast convergence and \eqref{melfunz2} is well defined.
To obtain the non-degeneracy of the zeros of the Melnikov function, we compare
\eqref{melfunz2} to the Melnikov function \eqref{melfunz} associated to the homoclinic
orbits to the saddles $\mathfrak{e}_{\pm}^{(0)}$.

Let us consider the reduced Melnikov functions $\mathcal{M}_h^{(0)}(\tau_0)=\mathcal{M}_h(\tau_1, \tau_2)$ and $\mathcal{M}_0^{(0)}(\tau_0)=\mathcal{M}_0(\tau_1, \tau_2)$, where $\tau_0:=\tau_1-\tau_2$ (recall \eqref{melfunz}).

\begin{lem}\label{lem:periodicomo}
Let $\mathbf{K}\subset\mathbb{R}$ be a closed interval and
 let us define
\[
\mathfrak{O}_h(\tau_0):=\mathcal{M}_0^{(0)}(\tau_0)-\mathcal{M}_h^{(0)}(\tau_0).
\]
There exists $\de_0>0$ small such that $\forall \de\in (0, \de_0)$ there exist $h_0=h_0(\de)$, positive and small, such that $\forall h\in (0, h_0)$ there exists $\nu_*\in (0, 1)$ such that the following holds
\begin{equation*}
 \lVert \mathfrak{O}_h \rVert_{C^0(\mathbf{K})}+ \lVert \partial_{\tau_0} \mathfrak{O}_h \rVert_{C^0(\mathbf{K})} \lesssim \de^{\nu_*}.
\end{equation*}
\end{lem}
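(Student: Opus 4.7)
The plan is to exploit the special structure of the unperturbed parametrization \eqref{man}: since $\Gamma_{\delta,h,0}(\tau_1,\tau_2) = (\gamma_{\delta,h}^{(1)}(\tau_1), \gamma_0^{(2)}(\tau_2))$ and $\gamma_0(\tau_1,\tau_2) = (\gamma_0^{(1)}(\tau_1), \gamma_0^{(2)}(\tau_2))$, the second components coincide, so the difference $\mathfrak{O}_h$ only records the discrepancy in the first component. First I would rewrite both Melnikov integrals using the same transverse function $\mathbf{H}_0^{(2)}$ (as in \eqref{melfunz2}); a direct inspection of the explicit Hamiltonians \eqref{bfham0}--\eqref{trueperturbation} reveals that $\{\mathbf{H}_0^{(2)}, \mathbf{H}_1\}$ depends on $(\psi_1, K_1)$ only through a linear term in $K_1$ (coming from the coupling $d_{12} K_1 K_2$) and vanishes identically on $\{K_2 = 0\}$. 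Consequently, the integrand reduces to the form $[K_1^{(0)}(\tau_0+t) - K_1^{\delta,h}(\tau_0+t)]\cdot G(\gamma_0^{(2)}(t))$ with $|G(\gamma_0^{(2)}(t))|\lesssim |K_2^{(0)}(t)|$, which by \eqref{def:kappasimple} is bounded by $C\,e^{-\sqrt{3}|t|}$ for $|t|$ large.

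Next I would split $\mathbb{R} = [-T_h, T_h] \cup (\mathbb{R}\setminus[-T_h, T_h])$ and control each piece separately, in the same spirit as Lemma \ref{propdiff}. On the tails $|t|>T_h$, the exponential decay of $G$ together with the uniform boundedness of $K_1^{(0)}$ and $K_1^{\delta,h}$ gives a contribution $\lesssim e^{-\sqrt{3} T_h}$. On the compact window $[-T_h, T_h]$, I would invoke regular dependence of the $1$-d.o.f.\ Hamiltonian $\mathbf{H}_0^{(1)}$ on the energy parameter to obtain a $C^1$ estimate $\|K_1^{(0)} - K_1^{\delta,h}\|_{C^1([-T_h-C,\,T_h+C])}\lesssim h^\alpha$ for some $\alpha>0$, valid provided $T_h$ stays below a small multiple of the period $\mathtt{T}_h \sim |\ln h|/\sqrt{3}$ of the periodic orbit (so that the periodic orbit has not yet completed its loop). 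Integrating against the exponentially decaying weight then gives a compact-part contribution $\lesssim h^\alpha$.

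Choosing $T_h = c\,|\ln h|$ with $c>0$ small balances the two estimates and yields $\|\mathfrak{O}_h\|_{C^0(\mathbf{K})}\lesssim h^{\alpha'}$ for some $\alpha'\in(0,1)$. To match the statement of the lemma, I would then fix $\nu_*\in(0,1)$ and set $h_0(\delta):= \delta^{\nu_*/\alpha'}$, absorbing $h^{\alpha'}$ into $\delta^{\nu_*}$ for all $h\in(0,h_0(\delta))$. The derivative $\partial_{\tau_0}\mathfrak{O}_h$ is obtained by differentiating under the integral sign; the derivative $\partial_{\tau_0}(K_1^{(0)} - K_1^{\delta,h})$ admits the analogous bound from the $C^2$ convergence of the level sets of $\mathbf{H}_0^{(1)}$ on logarithmic-in-$h$ windows.

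The main obstacle will be to quantify the rate $\alpha$ of the convergence of $K_1^{\delta,h}$ to $K_1^{(0)}$ on time windows that themselves grow like $|\ln h|$: one must verify that the error does not accumulate too fast as the window expands, which reduces to analyzing the variational equation along $\gamma_0^{(1)}$ and exploiting the hyperbolic rate $\sqrt{3}$ of the saddles in \eqref{saddlesN} to absorb the logarithmic growth of $T_h$ into a polynomial factor $h^{\alpha'}$.
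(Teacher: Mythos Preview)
Your approach is correct and would work, but it takes a genuinely different---and harder---route than the paper. Both arguments begin from the same structural observation you make: since the second component of $\Gamma_{\delta,h,0}$ and of $\gamma_0$ coincide, and $\{\mathbf{H}_0^{(2)},\mathbf{H}_1\}$ depends on the first component only through the linear term $d_{12}K_1$, the integrand of $\mathfrak{O}_h$ factors as $[K_1^{(0)}-K_1^{\delta,h}]\cdot G(\gamma_0^{(2)})$ with $G$ vanishing on $\{K_2=0\}$.

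The key difference is the choice of splitting scale. You split at $T_h=c|\ln h|$, which forces you to control $\|K_1^{(0)}-K_1^{\delta,h}\|_{C^1}$ on a window that \emph{grows} with $h\to 0$; this is precisely the obstacle you flag, and resolving it requires a Gronwall-type variational bound of the form $|K_1^{(0)}(t)-K_1^{\delta,h}(t)|\lesssim h\,e^{\sqrt{3}|t|}$ along the homoclinic, together with the constraint $c<1/\sqrt{3}$. The paper instead splits at $\mathtt{c}|\ln\delta|$. The tail is then bounded by $C\delta^{\mathtt d}$ \emph{uniformly in $h$} (because the decay comes entirely from $K_2^{(0)}$, which does not depend on $h$). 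On the compact window $[-\mathtt{c}|\ln\delta|,\mathtt{c}|\ln\delta|]$, which is now \emph{fixed} once $\delta$ is fixed, the paper invokes only the pointwise convergence of Remark~\ref{rem:uniformconv} (uniform on compacts, no rate) and simply \emph{defines} $h_0(\delta)$ implicitly as a threshold below which the supremum over this fixed window is at most $\delta$. This completely sidesteps the obstacle you identify.

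What each approach buys: the paper's argument is shorter and needs no quantitative analysis of the periodic orbit near the separatrix. Your approach, if completed, would yield an explicit power-law relation $h_0(\delta)=\delta^{\nu_*/\alpha'}$ rather than an implicit threshold, at the cost of proving the growing-window estimate. Note also that your exponential tail bound $|G|\lesssim C e^{-\sqrt{3}|t|}$ carries a hidden constant $C\sim\delta^{-1}$ from \eqref{def:kappasimple}; this is harmless in your scheme since you take $h\ll\delta$ anyway, but it should be tracked.
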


\begin{proof}
We consider the splitting
\begin{align}
\mathfrak{O}_h(\tau_0)=& \int_{\lvert t \rvert>\mathtt{c}|\ln\de|}  \{ \mathbf{H}_0^{(2)}, \mathbf{H}_1 \}(\gamma_0(t,\tau_0+t))- \{ \mathbf{H}_0^{(2)}, \mathbf{H}_1 \}(\Gamma_{\delta,h,0}(t,\tau_0+t))\,dt
\label{pippo1} \\ \label{pippo2}
&+\int_{\lvert t \rvert\le \mathtt{c}|\ln\de|}  \{ \mathbf{H}_0^{(2)}, \mathbf{H}_1 \}(\gamma_0(t,\tau_0+t))- \{ \mathbf{H}_0^{(2)}, \mathbf{H}_1 \}(\Gamma_{\delta,h,0}(t,\tau_0+t))\,dt,
\end{align}
where $\mathtt{c}$ is some positive constant. We observe that
\[
\{ \mathbf{H}_0^{(2)}, \mathbf{H}_1\}_{|_{\{K_1=0\}}}=0 \qquad \mbox{and} \qquad \mathtt{P}_{\delta, h}^{\pm,0}\subset \{ K_2=0\}.
\]
Then, by the exponential convergence of the flow to the hyperbolic saddles $\mathfrak{e}^{\pm}_0$ and the hyperbolic periodic orbits $\mathtt{P}_{\delta, h}^{\pm,0}$,
 the term on the r. h. s. of \eqref{pippo1} is bounded by $C\de^{\mathtt{d}}$ where $C, \mathtt{d}>0$ are two constants independent of $h$. Let us call $\mathcal{I}:=[-\mathtt{c}\lvert \ln\de \rvert, \mathtt{c}\lvert \ln\de\rvert]$. By Remark \ref{rem:uniformconv} we have
\[
\lim_{h\to 0} \sup_{(\tau_0, t)\in \mathbf{K}\times \mathcal{I}} \lvert \gamma_0(t,\tau_0+t)-\Gamma_{\delta,h,0}(t,\tau_0+t) \rvert=\lim_{h\to 0} \sup_{(\tau_0, t)\in \mathbf{K}\times \mathcal{I}} \lvert  (\gamma_0^{(1)}(t)-\gamma_{\delta, h}^{(1)}(t),0) \rvert=0.
\]
Hence there exists $h_0=h_0(\de)>0$ such that if $h\in (0, h_0)$ then \eqref{pippo2} is bounded, up to constant factors, by $\de$. The derivative $\partial_{\tau_0} \mathfrak{O}_h$ has the expression \eqref{pippo1}, \eqref{pippo2} with the double Poisson $\{ \mathbf{H}_0^{(2)}, \{ \mathbf{H}_0^{(2)}, \mathbf{H}_1 \} \}$ instead of $\{ \mathbf{H}_0^{(2)}, \mathbf{H}_1 \}$. Clearly it is still true that this Poisson vanishes at $\{K_2=0\}$. Then one can repeat the same argument to get a bound as for $\mathfrak{O}_h$.
\end{proof}

Lemma \ref{lem:periodicomo} and Proposition \ref{propfond} imply that $\mathcal{M}_h$ has a non-degenerate zero. Then, proceeding as in Section \ref{sec:HomoMelnikov} and taking $\eps=\de$ one obtains transverse heteroclinic orbits between the periodic orbits $\mathtt{P}^{+, 0 }_{h,\eps}$ and $\mathtt{P}^{-, 0 }_{h,\eps}$.
This
completes the proof of Proposition \ref{prop:SplittingPO2Rect}.
\end{proof}

\subsection{Symbolic dynamics of infinite symbols}\label{sec:Thm1:SymbDyn}
To construct symbolic dynamics for Hamiltonian \eqref{calham0} we consider a
section transverse to the flow, within a given energy level, and the associated
Poincar\'e map.

We proceed as
in \cite{Moser01}. Fix $0<\eps\ll1$ and $0<h\ll 1$.  We build an invariant set with points
arbitrarily close
to the transverse homoclinic orbit to the $\mathtt{T}_h$-periodic orbit
$\mathtt{P}^{0 }_{h,\eps}$ obtained in Proposition \ref{prop:SplittingPO2Rect}.

We define the section within the energy level $\{\mathcal{H}=h\}$,
\begin{equation}\label{def:SectionSh}
\mathcal{S}_h=\{K_2=1/2,\,\, \psi_2\in (-2\pi/3-m,-2\pi/3+m),\,\,
\mathcal{H}(\Psi_1,\Psi_2,K_1,K_2)=h \},
\end{equation}
for some small $m>0$ (see Figure~\ref{fig:horseshoe}). This section is transverse to the unperturbed flow
($\eps=0$) and therefore also transverse, for $\eps>0$ small enough, to the perturbed one. In particular, by Proposition \ref{prop:SplittingPO2Rect}, it contains points in $W^u(\mathtt{P}^{0 }_{h,\eps})\cap W^s(\mathtt{P}^{0 }_{h,\eps})$ (classical perturbative arguments ensure that the perturbed invariant manifolds are $\OO(\eps)$ to the unperturbed ones).

Denote by $\Phi^t_\mathcal{H}$ the flow associated to the Hamiltonian \eqref{calham0}. For a point
$z\in\mathcal{S}_h$, we define $T(z)>0$ the first (forward) return time of the trajectory $\Phi^t_\mathcal{H}(z)$ to this
section whenever it is defined. For those points whose forward
trajectory never hits again $\mathcal{S}_h$ we can take $T(z)=+\infty$. Note that
this happens in particular for the points in $W^s(\mathtt{P}^{-, 0 }_{h,\eps})$
(note that by the perturbative results in Section \ref{sec:Thm1:MelnikovPO} this
intersection is not empty).

Then, we define the open set $\mathcal{U}\subset\mathcal{S}_h$ as
$\mathcal{U}=\{z\in\mathcal{S}_h:T(z)<+\infty\}$ and the associated Poincar\'e
map $\mathtt{P}\colon \mathcal{U}\subset\mathcal{S}_h\to\mathcal{S}_h$ defined by
\[
\mathtt{P}(z):=\Phi^{T(z)}_{\mathcal{H}} (z).
\]

\begin{figure} 
\begin{center}
\includegraphics[width=1.\textwidth]{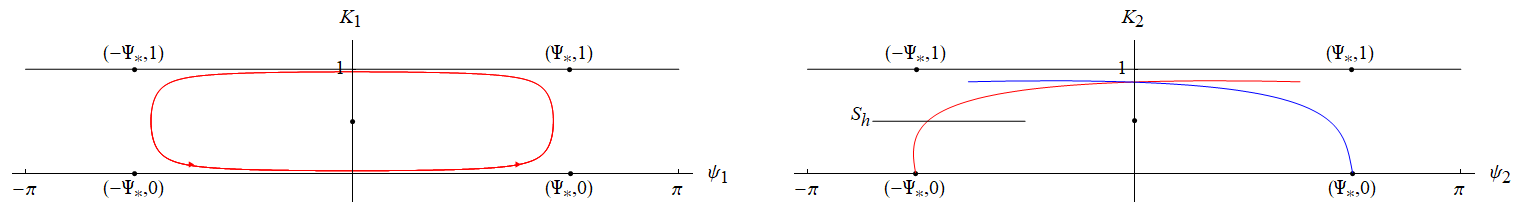}
\end{center}
\caption{The periodic orbit in the $(\Psi_1,K_1)$-plane, its invariant manifolds and the section $\mathcal{S}_h$.}
\label{fig:horseshoe}
\end{figure}

\begin{prop}[Existence  of Horseshoe]\label{horseshoeteo}
 Assume \eqref{tildekappas}.
Then there exists $\varepsilon_0>0$ such that $\forall \varepsilon\in (0,
\varepsilon_0)$ the Poincar\'e map $\mathtt{P}$ possesses an invariant set
$Y\subset\mathcal{U}$ whose dynamics is conjugated to the infinite symbols
shift. Namely, there exists a homeomorphism $h:\Sigma\to Y$, where
\[
\Sigma=\N^\Z=\left\{\{\omega_k\}_{k\in\Z}:\omega_k\in\N\right\},
\]
such that $\mathtt{P}|_Y=h\circ\sigma\circ h^{-1}$ where
$\sigma:\Sigma\to\Sigma$ is the shift, that is
\[
 (\sigma\omega)_k=\omega_{k+1},\quad k\in\Z.
\]
Moreover, $h^{-1}$ can be defined as follows. Fix $z^*\in Y$ and define
$\omega^*=h^{-1}(z^*)$. Associated to $z$ one can define the sequence of
hitting times
\[
t_0=0,\qquad t_k=T\left(\mathtt{P}^{k-1}(z)\right)\quad\text{for}\quad k\geq
1,\qquad  t_k=T\left(\mathtt{P}^{k}(z)\right)\quad\text{for}\quad k\leq -1.
\]
Then, there exists $C^*\in\mathbb{N}$ independent of $z^*$ such that
\begin{equation}\label{def:omegastar}
 \omega^*_k=\left\lfloor\frac{t_k-t_{k-1}}{\mathtt{T}_h}\right\rfloor-C^*
\end{equation}
where $\mathtt{T}_h$ is the period of the periodic orbit $\mathtt{P}^{\pm, 0
}_{h,\eps}$.
 \end{prop}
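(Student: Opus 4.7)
The plan is to follow closely the classical Moser construction of symbolic dynamics from a transverse homoclinic point to a hyperbolic periodic orbit (see \cite{Moser01}), adapted to our Hamiltonian flow and to the fact that the alphabet is infinite. First I would fix a homoclinic point $p^{*} \in \mathcal{S}_h$ lying in the transverse intersection $W^{u}(\mathtt{P}^{0}_{h,\eps}) \cap W^{s}(\mathtt{P}^{0}_{h,\eps})$ provided by Proposition \ref{prop:SplittingPO2Rect}. The periodic orbit $\mathtt{P}^{0}_{h,\eps}$ crosses $\mathcal{S}_h$ at a single hyperbolic fixed point $p_h$ of the Poincaré map $\mathtt{P}$; restricted to the $3$-dimensional slice $\mathcal{S}_h$ inside the energy level $\{\mathcal{H}=h\}$, $p_h$ has one-dimensional local stable and unstable manifolds $W^{s/u}_{\loc}(p_h)$, whose saturation by the flow recovers $W^{s/u}(\mathtt{P}^{0}_{h,\eps})$. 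Working in a small neighborhood $U\subset\mathcal{S}_h$ of $p^{*}$, I would use a $C^{1}$ linearization (or the $\lambda$-lemma) near $p_h$ together with the continuous-time flow near $\mathtt{P}^{0}_{h,\eps}$ to describe how orbits starting in $U$ traverse a neighborhood of the periodic orbit before returning to $U$.

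The core combinatorial ingredient is the construction of the strips $\{\mathcal{V}_j\}_{j\in\N}$. I would define an integer $C^{*}\in\N$ as (essentially) the number of periods $\mathtt{T}_h$ elapsed by the reference homoclinic orbit during the segment of its trajectory that lies outside a small linearizing neighborhood of $\mathtt{P}^{0}_{h,\eps}$ between two consecutive passes through $\mathcal{S}_h$ near $p^{*}$. Using the expansion rate of $\mathtt{P}$ along $W^{u}_{\loc}(p_h)$, for every sufficiently large $j\in\N$ the set of points in $U$ that re-enter $U$ after exactly $j+C^{*}$ consecutive crossings of $\mathcal{S}_h$ forms a non-empty closed strip $\mathcal{V}_j$ transverse to $W^{s}_{\loc}(p_h)$; these strips are pairwise disjoint and accumulate on $W^{s}_{\loc}(p_h)\cap U$ as $j\to\infty$. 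Transversality of the homoclinic intersection, pushed forward by $\mathtt{P}^{j+C^{*}}$, implies that $\mathtt{P}^{j+C^{*}}(\mathcal{V}_j)$ is a strip crossing every $\mathcal{V}_k$ transversally and with uniform contraction/expansion cones. The symbolic coding is then the standard one: for $\omega=(\omega_k)_{k\in\ZZ}\in\Sigma$, setting $N_k=\omega_k+C^{*}$ and $n_k=\sum_{i=1}^{k} N_i$ (with $n_0=0$ and the analogous sum for $k\le 0$), define
\[
h(\omega):=\bigcap_{k\in\ZZ}\mathtt{P}^{-n_k}(\mathcal{V}_{\omega_k}).
\]
Standard Moser-type cone arguments show this intersection is a single point, $h$ is a homeomorphism onto $Y:=h(\Sigma)$, and $\mathtt{P}|_{Y}\circ h=h\circ\sigma$. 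The identity \eqref{def:omegastar} follows because any orbit in $\mathcal{V}_{\omega_k}$ returns to $\mathcal{S}_h$ after a time whose integer part in units of $\mathtt{T}_h$ equals $\omega_k+C^{*}$, up to a bounded error absorbed by the floor function once $C^{*}$ is enlarged by a harmless constant.

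The main obstacle is the non-compactness of $\Sigma=\N^{\ZZ}$: strips $\mathcal{V}_j$ shrink and accumulate on $W^{s}_{\loc}(p_h)$ while their forward images accumulate on $W^{u}_{\loc}(p_h)$, so both the domain $\mathcal{U}$ and the invariant set $Y$ fail to be compact and the return time is unbounded. Handling this requires quantitative estimates on the contraction rates of $\mathtt{P}$ near $p_h$ (obtained via a smooth linearization around the periodic orbit, together with the exponential dichotomy in the local chart) in order to guarantee that the diameter of $\mathtt{P}^{-n_k}(\mathcal{V}_{\omega_k})$ along the unstable direction and of $\mathtt{P}^{n_k}(\mathcal{V}_{\omega_{-k}})$ along the stable direction tend to zero as $|k|\to\infty$, uniformly in $\omega$. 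The transversality of the homoclinic intersection within the energy level, ensured by Proposition \ref{prop:SplittingPO2Rect}, is precisely what makes the cone fields propagate correctly across the homoclinic excursion and thus makes the nested intersection a single point. Once these ingredients are in place, the remaining verification that $h$ and its inverse are continuous (with the product topology on $\Sigma$) is routine from the definition of the strips.
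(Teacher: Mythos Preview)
Your overall strategy matches the paper's: both defer to Moser's construction in Chapter~3 of \cite{Moser01}, and your description of the strips $\mathcal{V}_j$, the nested-intersection coding, and the handling of the non-compactness of $\Sigma$ is faithful to that source and in fact more detailed than what the paper itself provides. However, there is a concrete geometric error in your setup: the periodic orbit $\mathtt{P}^{0}_{h,\eps}$ does \emph{not} cross the section $\mathcal{S}_h$. Recall from \eqref{def:SectionSh} that $\mathcal{S}_h$ is cut out by $K_2=1/2$ (with $\psi_2$ near $-2\pi/3$), whereas $\mathtt{P}^{0}_{h,\eps}$ is the blow-down of the orbits $\mathtt{P}^{\pm,0}_{h,\eps}$ in \eqref{def:PO:ext}, which live on the invariant plane $\{K_2=0\}$. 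So there is no hyperbolic fixed point $p_h$ of $\mathtt{P}$ on $\mathcal{S}_h$; the section is deliberately placed along the homoclinic excursion, away from the periodic orbit. Relatedly, $\mathcal{S}_h$ is two-dimensional (two scalar constraints $K_2=1/2$ and $\mathcal{H}=h$ in a four-dimensional phase space), not three-dimensional as you wrote.

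This does not wreck the argument, but it changes the picture you should have in mind. In Moser's setup the first-return map $\mathtt{P}$ on $\mathcal{S}_h$ is defined only on an open set that accumulates on $W^{s}(\mathtt{P}^0_{h,\eps})\cap\mathcal{S}_h$ (where the return time blows up), and the strips $\mathcal{V}_j$ are indexed directly by the return time, with no reference to a fixed point on the section. The local hyperbolic analysis you attribute to a neighborhood of $p_h$ must instead be carried out in a separate transversal near the periodic orbit (or in a flow-box around it) and then transported back to $\mathcal{S}_h$ along the homoclinic connection. Once you correct this, the remainder of your sketch --- the cone-field propagation through the homoclinic excursion, the single-point nested intersection, and the derivation of \eqref{def:omegastar} --- goes through exactly as you wrote and coincides with what the paper intends.
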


This proposition gives symbolic dynamics for a Poincar\'e map associated to
Hamiltonian \eqref{calham0}. Note that it is constructed in a way that higher
symbols in $\Sigma$ imply longer return times. In particular those can be
unbounded. The proof of this proposition follows the same lines as the
construction of symbolic dynamics done  by Moser in Chapter 3 of
\cite{Moser01}. Note that the natural $C^*$ in \eqref{def:omegastar} is just to normalize and have as symbols $\N$ (since the horseshoe is build close the homoclinic orbit, the hitting times satisfy $|t_k-t_{k-1}|\gg 1$).

We remark that condition \eqref{tildekappas} is necessary, indeed
the term that breaks the integrability in the Hamiltonian \eqref{calham0} has
the form
$d_{12} \,K_1\,K_2$ (see for instance \eqref{bfham0},
\eqref{trueperturbation}). Hence if the condition \eqref{tildekappas} does not
hold then the Hamiltonian \eqref{calham0} is integrable.

\subsection{Application to the Wave, Beam and Hartree
equations}\label{sec:ProofThm1:Application}

To proof  Theorem \ref{TeoWaveBeam} (and also the result for the Hartree equation \eqref{Hartree}) by applying Proposition \ref{horseshoeteo},  one needs to
check that the condition  \eqref{tildekappas} is satisfied by the resonant models
derived from the Hartree, Beam and Wave equations.
To thus end, recall the definitions \eqref{def:commonham}, \eqref{def:Cj1}, \eqref{def:Cj2} and the symplectic reduction performed in Section \ref{sectionsymplecticreduction}.
Next lemmas check condition \eqref{tildekappas}
under the hypotheses considered for these three equations.

\begin{lem}\label{lemma:WaveBeamMelnikovCond}
Let us consider Hamiltonian \eqref{calham0} associated to either the Wave
equation \eqref{Wave} or the Beam equation \eqref{Beam} and to a set
$\Lambda$ satisfying Proposition \ref{prop:LambdaSetWaveHom} . Then, the
condition  \eqref{tildekappas} is satisfied.
\end{lem}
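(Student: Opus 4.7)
The plan is to show that $d_{12}$ has a clean multiplicative structure for the Wave/Beam coefficients, which factors into two terms each controlled by a single nuclear family. Concretely, using \eqref{def:coeffmatrixA}, write $\eps A_{k,r}=1-\mu_{k,r}$ with $\mu_{k,r}:=C_{n_kn_rn_kn_r}/\mathtt g$, and substitute into the definition \eqref{coeffN} of $d_{12}$. The formula for $d_{12}$ is an alternating sum of $16$ entries $A^{(1,2)}_{n,m}=A_{n,\,4+m}$ with signs $(-1)^{n+m+1}$, so
\[
\eps\, d_{12}=\sum_{n,m=1}^{4}(-1)^{n+m+1}\bigl(1-\mu_{n,\,4+m}\bigr).
\]
The constant part contributes $-\big(\sum_{n=1}^4(-1)^n\big)\big(\sum_{m=1}^4(-1)^m\big)=0$, so the equation reduces to $\eps\,d_{12}=\sum_{n,m}(-1)^{n+m}\mu_{n,\,4+m}$.

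Next, I would use the explicit form of the $C$-coefficients in Lemma \ref{lem:HresLambda}: for both Wave and Beam one has $C_{n_kn_rn_kn_r}\propto |n_k|^{-\kkk}|n_r|^{-\kkk}$ with $\kkk=1$ (Wave) or $\kkk=2$ (Beam). Hence $\mu_{n,4+m}=K\,|n_n|^{-\kkk}\,|n_{4+m}|^{-\kkk}$ for a nonzero constant $K$, and the double sum factorizes:
\[
\eps\, d_{12}=K\Bigl[\sum_{n=1}^{4}(-1)^{n}|n_n|^{-\kkk}\Bigr]\Bigl[\sum_{m=1}^{4}(-1)^{m}|n_{4+m}|^{-\kkk}\Bigr]=K\,\Delta_1\,\Delta_2,
\]
where $\Delta_r=\bigl(|n_{4r-2}|^{-\kkk}+|n_{4r}|^{-\kkk}\bigr)-\bigl(|n_{4r-3}|^{-\kkk}+|n_{4r-1}|^{-\kkk}\bigr)$, i.e. ``children minus parents'' of the $r$-th nuclear family.

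It remains to show $\Delta_1,\Delta_2\ne0$, and this is where the resonance relation and the assumption \eqref{eq:Wave} intervene. Set $r_j:=|n_j|$. For the Wave case, the $4$-resonance gives $r_1+r_3=r_2+r_4$, so
\[
r_1^{-1}+r_3^{-1}=\frac{r_1+r_3}{r_1r_3},\qquad r_2^{-1}+r_4^{-1}=\frac{r_1+r_3}{r_2r_4},
\]
and equality would force $r_1r_3=r_2r_4$; together with the sum relation this yields $\{r_1,r_3\}=\{r_2,r_4\}$ as multisets, contradicting \eqref{eq:Wave}. For the Beam case, the resonance reads $r_1^2+r_3^2=r_2^2+r_4^2$; an analogous factorization of $r_1^{-2}+r_3^{-2}$ and $r_2^{-2}+r_4^{-2}$ with common numerator leads, in the same way, to $\{r_1,r_3\}=\{r_2,r_4\}$, again ruled out by \eqref{eq:Wave}. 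The same argument applied to the second nuclear family gives $\Delta_2\ne0$, whence $d_{12}\ne0$.

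The only non-routine point is the combinatorial factorization of the alternating sum, which is essentially a $4\times 4$ bookkeeping exercise; once that is in place, the symmetry of the resonance (equal sums of $|n|^{\kkk}$) naturally pairs with the reciprocal symmetry appearing in the Melnikov coefficient to turn the non-equality of moduli into non-vanishing of $\Delta_r$. No further hypotheses on the set $\Lm$ are needed beyond those already secured by Proposition \ref{prop:LambdaSetWaveHom}.
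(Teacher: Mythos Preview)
Your proof is correct and follows the same overall strategy as the paper: express $d_{12}$ via the alternating sum over the $16$ off-diagonal entries $A_{n,4+m}$, use the multiplicative form $C_{n_in_jn_in_j}\propto |n_i|^{-\kkk}|n_j|^{-\kkk}$ to factor $d_{12}$ as a nonzero constant times two per-tuple quantities, and then argue that each factor is nonzero using the resonance relation and \eqref{eq:Wave}.

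The difference lies in the last step. The paper pairs indices $(1,2)$ and $(3,4)$ within each tuple and uses the resonance identity $\Delta_{1,2}=\Delta_{4,3}$ to extract a factor $(|n_1|^\kkk-|n_2|^\kkk)\bigl(|n_3|^\kkk|n_4|^\kkk-|n_1|^\kkk|n_2|^\kkk\bigr)$; it then needs \eqref{eq:Wave} for the first factor and separate, case-by-case algebra (different for Wave and for Beam) to rule out $|n_1||n_2|=|n_3||n_4|$. You pair $(1,3)$ and $(2,4)$ instead and use the resonance in the form $r_1^{\kkk}+r_3^{\kkk}=r_2^{\kkk}+r_4^{\kkk}$ to get the single factor $r_1^{\kkk}r_3^{\kkk}-r_2^{\kkk}r_4^{\kkk}$ times a strictly positive sum, and then conclude by the symmetric-function (Vieta) argument that equal sum and equal product force $\{r_1,r_3\}=\{r_2,r_4\}$, contradicting \eqref{eq:Wave}. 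Your route is uniform in $\kkk$, avoids the two-case algebra, and yields the conclusion in one stroke; the paper's route reaches the same endpoint through a slightly longer, model-specific computation.
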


\begin{lem}\label{lemma:HartreeMelnikovCond}
Let us consider Hamiltonian \eqref{calham0} associated to the Hartree equation
\eqref{Hartree} with a potential $V$ as in \eqref{def:potential} and to a set $\Lambda$
satisfying Proposition \ref{prop:LambdaSetHartreeHet}
Then for a generic choice of the $\{\gamma_{n}\}_{n\in\Lambda}$, the condition  \eqref{tildekappas}  is satisfied.
\end{lem}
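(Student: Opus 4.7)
The plan is to show that $d_{12}$ is a nontrivial linear function of the parameters $\{\gamma_j\}$ appearing in \eqref{cond:potential}, so that the set $\{d_{12}=0\}$ is a proper hyperplane in the space of admissible potentials and hence \eqref{tildekappas} holds off a codimension-one subset.

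First I would compute $d_{12}$ explicitly in the Hartree case. Setting $\mathtt{g}=1$ and $C_{n_i n_j n_i n_j}=V_{n_i-n_j}$ in \eqref{def:coeffmatrixA}, and using \eqref{cond:potential}, one obtains $A_{i,j}=-\gamma_{n_i-n_j}$ for $i\neq j$. Since the two resonant tuples $\RR_1,\RR_2$ making up $\Lm$ are disjoint, every entry $A^{(1,2)}_{n,m}=A_{n,4+m}$ appearing in \eqref{coeffN} is off-diagonal, and collecting signs from \eqref{coeffN} yields
\begin{equation*}
d_{12}\;=\;\sum_{n,m=1}^{4}(-1)^{n+m}\,\gamma_{n_n-n_{4+m}}.
\end{equation*}

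Next I would verify that this linear form in the $\gamma_j$'s does not vanish identically. Property \eqref{nondegLambda} of Proposition \ref{prop:LambdaSetHartreeHet} is a Sidon-type condition: any two ordered pairs of distinct elements of $\Lm$ sharing a common difference must coincide. Consequently the sixteen cross-tuple differences $\{n_n-n_{4+m}\}_{n,m=1}^{4}$ are pairwise distinct. Moreover, none of them equals the negative of another: an equality $n_n-n_{4+m}=-(n_{n'}-n_{4+m'})$ would mean that the pairs $(n_n,n_{4+m})$ and $(n_{4+m'},n_{n'})$ share a difference, hence by \eqref{nondegLambda} they would coincide, forcing $n_n\in \RR_1\cap\RR_2=\emptyset$, a contradiction. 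Therefore, even after imposing the reality/evenness constraint $\gamma_j=\gamma_{-j}$ inherited from \eqref{def:potential}, the sixteen parameters $\gamma_{n_n-n_{4+m}}$ remain independent, and each enters $d_{12}$ with coefficient $\pm 1\neq 0$.

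The conclusion is then immediate: $d_{12}$ is a nonzero linear function on the finite-dimensional parameter space of admissible potentials, so $d_{12}\neq 0$ on an open dense subset. The only substantive input is the Sidon-type non-degeneracy used above, but this is already built into the construction of $\Lm$ by Proposition \ref{prop:LambdaSetHartreeHet}; beyond that the argument is purely combinatorial and I do not anticipate any serious obstacle.
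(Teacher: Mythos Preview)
Your proof is correct and follows essentially the same approach as the paper: express $d_{12}$ as a linear form in the potential parameters $\gamma_k$ and use the Sidon-type condition \eqref{nondegLambda} to see that the distinct cross-tuple differences yield independent coefficients, so the form is nontrivial. In fact you are slightly more careful than the paper, since you explicitly check that no two of the sixteen differences $n_n-n_{4+m}$ are negatives of one another, which is needed to accommodate the evenness constraint $\gamma_j=\gamma_{-j}$ coming from \eqref{def:potential}.
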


These lemmas, together with Proposition \ref{horseshoeteo}, complete the proof of Item $(i)$ of Theorem \ref{thm:geometric}.

\begin{proof}[Proof of Lemma \ref{lemma:WaveBeamMelnikovCond}]
Recall \eqref{def:g}, \eqref{coeffN}, \eqref{def:coeffmatrixA}, \eqref{def:Cj2}.
For the Wave and Beam equations \eqref{Wave}, \eqref{Beam},
\begin{equation*}
 d_{12}=\frac{3}{32 \mathtt{g}}\,\sum_{\substack{1 \leq i \leq 4 \\ 5 \leq j \leq 8}}
\frac{(-1)^{i+j}}{|n_i|^{\kappa} |n_j|^\kappa}=\frac{3}{32 \mathtt{g}}\left(\sum_{i=1}^4\frac{(-1)^i}{|n_i|^{\kappa}}\right)\left(\sum_{j=5}^8\frac{(-1)^j}{|n_j|^{\kappa}}\right)
 \end{equation*}
where $\kappa=1$ for the Wave equation and  $\kappa=2$ for the Beam equation.

We write $d_{12}$ in a different form. To this end, we introduce the following notations. For each finite set of indexes
$I=\{i_1,\ldots,i_n\} \subset \{ 1, \dots, 8 \}$ and for any pair of positive
integers $i_1, i_2 \in \{ 1, \dots, 8 \}$, we define
\begin{equation}\label{def:IndexesPiDelta}
\prod^{I}= \frac{1}{\prod_{k=1}^n |n_{i_k}|^\kappa},\qquad
\Delta_{i_1,i_2}= |n_{i_1}|^\kappa - |n_{i_2}|^\kappa.
\end{equation}
Using the  identities
\[\prod^{i,j} - \prod^{i,k} = \prod^{i,j,k} \Delta_{k,j},\qquad
\prod^{i,j,k} - \prod^{l,j,k} = \prod^{i,j,k,l} \Delta_{l,i},\]
and the fact that the resonance relations (see \eqref{def:Abh},\eqref{def:Aw})
imply  $\Delta_{1,2}
= \Delta_{4,3}$, $\Delta_{5,6} = \Delta_{8,7}$,
one can see that
\begin{equation}\label{eq:numbeam3}
d_{12}= \Delta_{2,1} \Delta_{6,5} \left( |n_3|^\kappa |n_4|^\kappa - |n_1|^\kappa |n_2|^\kappa
\right) \left( |n_7|^\kappa |n_8|^\kappa - |n_5|^\kappa |n_6|^\kappa \right)
\prod^{1,2,3,4,5,6,7,8}.
\end{equation}
%
Therefore $d_{12}$ vanishes if one of the following
conditions holds
\begin{equation}\label{NotAdmissibleBeam}
|n_1| = |n_2|,\qquad|n_5| = |n_6|,\qquad
 |n_1||n_2| = |n_3||n_4|,\qquad
|n_5||n_6| = |n_7||n_8|.
\end{equation}
\begin{remark}
We point out that the conditions \eqref{NotAdmissibleBeam} do not involve at the same time modes belonging to two different $4$-tuple resonances.
\end{remark}
Condition \eqref{eq:Wave} implies that the two first
conditions cannot be satisfied.
We check now that under the hypotheses of Proposition  \ref{prop:LambdaSetWaveHom}, one has
\begin{equation}\label{def:CondExtraForPk}
|n_1||n_2| \neq |n_3||n_4|
\end{equation}
(the condition $|n_5||n_6| \neq |n_7||n_8|$ can be checked analogously).

We start with the Beam equation, that is $\kappa=2$. Arguing by contradiction,
assume that $n_1,n_2,n_3,n_4$ satisfy  $|n_1||n_2| = |n_3||n_4|$, \eqref{eq:Wave} and the resonance condition
\begin{equation}\label{cond:resforPk}
 |n_1|^2-|n_2|^2=-|n_3|^2+|n_4|^2
\end{equation}
The resonance relation can be written as
\begin{align*}
(|n_1|-|n_2|)(|n_1|+|n_2|) &= (|n_4|-|n_3|)(|n_4|+|n_3|)
\end{align*}
Squaring each side, one has
\begin{align*}
(|n_1|^2 + |n_2|^2)^2 - 4 |n_1|^2 |n_2|^2= (|n_3|^2 + |n_4|^2)^2 - 4 |n_3|^2 |n_4|^2.
\end{align*}
Therefore, since we are assuming $|n_1||n_2| = |n_3||n_4|$,  we get $|n_1|^2 +
|n_2|^2= |n_3|^2 +
|n_4|^2$, which combined with the resonance relation \eqref{cond:resforPk} leads to $|n_2|^2=
|n_3|^2$,
which contradicts assumption \eqref{eq:Wave}.

For the Wave equation \eqref{Wave}, that is $\kappa=1$, one can proceed
analogously, arguing by contradiction. Assume that
$n_1,\ldots,n_4$ satisfy \eqref{eq:Wave},
the resonance condition
\[
 |n_1|-|n_2|=-|n_3|+|n_4|
\]
and $|n_1||n_2| = |n_3||n_4|$. Squaring the resonance condition and using this
last assumption, one has
\[|n_1|^2 + |n_2|^2 = |n_3|^2 + |n_4|^2.\]
Multiplying both sides by $|n_4|^2$ and using again $|n_1||n_2| = |n_3||n_4|$
one obtains
$(|n_1|^2 -|n_4|^2)(|n_4|^2 - |n_2|^2) = 0$,
which contradicts \eqref{eq:Wave}.
\end{proof}

\begin{proof}[Proof of Lemma \ref{lemma:HartreeMelnikovCond}]
Recall \eqref{coeffN}, \eqref{def:coeffmatrixA}.
For the Hartree equation \eqref{Hartree}, $d_{12}$ is of the form
\[
 d_{12}=\sum_{k\in I}\al_k V_k
\]
where $\al_k\neq 0$ and
\[
I:=\{ k\in\mathbb{Z}^2 : k=n_i-n_j \,\,\mbox{for some}\,\,n_i\in\mathcal{R}_1, \,\,n_j\in\mathcal{R}_2\}.
\]
We observe that the cardinality of $I$ is bounded by $4N(4N-1)/2$. Therefore, by condition
\eqref{nondegLambda}, $d_{12}$ is a polynomial in the $4N(4N-1)/2$ variables $\gamma_k$, $k\in I$. Such polynomial is not identically zero because if we set one of the $\gamma_k$'s equal to one and all the others at zero then $d_{12}\neq 0$.

\end{proof}

\subsection{End of the proof of Theorem \ref{TeoWaveBeam}}

 Lemmas \ref{lemma:WaveBeamMelnikovCond}, \ref{lemma:HartreeMelnikovCond} imply that condition \eqref{tildekappas} holds and, therefore,  Proposition \ref{horseshoeteo} can be applied to the resonant models associated to the Wave \eqref{Wave}, Beam \eqref{Beam} and Hartree \eqref{Hartree} equations. This proposition gives certain orbits of these resonant models. These orbits will be the first order (up to changes of coordinates) of orbits of equations \eqref{Wave}, \eqref{Beam} and \eqref{Hartree}.

Fix $0<\eps\ll 1$ and $0<h\ll 1$ and consider the periodic orbit $\mathtt{P}^{0 }_{h,\eps}$ given by Proposition \ref{prop:SplittingPO2Rect}, which has period $\mathtt{T}_h$. By Proposition \ref{horseshoeteo} there exist a set  $Y\subset\mathcal{S}_h$ which is an invariant hyperbolic set (a Smale horseshoe) for the Poincar\'e map associated to the Hamiltonian $\mathcal{H}$ in \eqref{calham0}. This set can be built arbitrarily close to homoclinic points of  $\mathtt{P}^{0 }_{h,\eps}$. Fix $\omega\in\Sigma$  such that $|\omega_k|\geq M_0\,\mathtt{T}_{h}$, where $M_0$ satisfies $M_0\gtrsim\log\eps$ and $\mathtt{T}_h$ is the period of the periodic orbit $\mathtt{P}^{0}_{h, \varepsilon}$.
Then, Proposition \ref{horseshoeteo} ensures that there exists an orbit $\gamma(t)$  of $\mathcal{H}$ with initial condition in $Y$,
\[
\gamma(t):=(\Psi_1(t), \Psi_2(t), K_1(t), K_2(t)), \quad t\in [0, T] \quad \mbox{for some}\,\,\,T>0,
\]
 which satisfies the following. There exists a sequence of times $\{t_k\}_{k\in\ZZ}$ satisfying \eqref{def:omegastar} such that $\gamma(t_k)\in \mathcal{S}_h$ where $\mathcal{S}_h$ is the section defined in \eqref{def:SectionSh}. Note that, by \eqref{def:omegastar}, the times $t_k$ satisfy
\[
 t_{k+1}=t_{k}+ \mathtt{T}_h (\omega^*_k+C^*+\theta_k)\qquad \text{ for some }\quad \theta_k\in (0,1) \quad \text{ and }C^*\in\N.
\]
By construction, there exists another sequence of times $\{\bar t_k\}_{k\in\ZZ}$ with $\bar t_k\in (t_k,t_{k+1})$ such that $\gamma(\bar t_k)$ satisfies
\[
 K_2(\bar t_k)=\frac{1}{2},\qquad  \left|\Psi_2(\bar t_k)-\frac{2\pi}{3}\right|\ll 1.
\]
The Smale horseshoe, can be built arbitrarily close to the invariant manifolds of $\mathtt{P}^{0}_{h, \varepsilon}$ and therefore, one can ensure that there exist intervals
\begin{itemize}
\item  $I_k\subset (\overline{t_k},t_{k+1})$  such that, for $t\in I_k$, $\ga(t)$ belongs to a $\eps$-neighborhood of  $\mathtt{P}^{0}_{h, \varepsilon}$;
\item  $J_k\subset (t_k, \overline{t_k})$  such that for $t\in J_k$ the orbit $\gamma(t)$ belongs to a $\OO(\eps)$-neighborhood of $K_2=1$, since the homoclinic orbit obtained in Proposition \ref{prop:SplittingPO2Rect} have points $\OO(\eps)$-close to $K_2=1$.
\end{itemize}
This behavior implies estimates \eqref{eq:TheoRandomTimeOsc1} and \eqref{eq:TheoRandomTimeOsc2} in Theorem \ref{TeoWaveBeam}, once we undo the symplectic reductions, the changes of coordinates and we add the error terms as it is  explained below.
By Proposition \ref{prop:SplittingPO2Rect} the parameterization of the periodic orbit  $\mathtt{P}^{0}_{h, \varepsilon}$ is $\varepsilon$-close to \eqref{def:PO:ext}, hence we have that
\[
 K_1(t)=Q(t)+\tilde{R}_2(t)
\]
where $Q(t)$ is the time parameterization of  $\mathtt{P}^{0 }_{h,\eps}$ and thus is   $\mathtt{T}_h$-periodic,  and $\sup_{t\in [0, T]} |\tilde{R}_2(t)|\le \varepsilon$.

%
%
%
%

By the symplectic reduction performed in Section \ref{sectionsymplecticreduction} there exists $r(t)$ solution of $H_{\Res}$ in \eqref{Hres} with Fourier support $\Lambda$ such that
\[
|r_{n_1}(t)|^2=| K_{1}(t) |^2, \qquad |r_{n_5}(t)|^2=| K_{2}(t) |^2,\qquad \text{for}\qquad t\in [0,T].\]
This can be seen using Remark \ref{rem:Kcartcoord}, which gives also the behavior of the other actions.

Since the solutions of $H_{\Res}$ are invariant under the scaling \eqref{def:scaling}, we can consider $r^{\delta}(t):=\delta r(\delta^2 t)$. Then, $r^{\delta}(t)$ is also a solution of $H_{\Res}$ for  $t\in [0, \delta^{-2} T]$.

Now it only remains to obtain an orbit for the equations \ref{Wave}, \ref{Beam} and \ref{Hartree} which is close (up to certain changes of coordinates) to $r^{\delta}(t)$. First step is to apply Proposition \ref{approximationargument}. It ensures that there exists $0<\delta_2\ll 1$ such that for all $\delta\in (0, \delta_2)$, there exists a solution $w(t)$ of $H\circ \Gamma\circ \Psi=H_{\Res}+\mathcal{R}'$   such that $w(t)=r^{\delta}(t)+\widetilde{R}(t)$ with $\widetilde{R}(0)=0$, $\| \widetilde{R}(t) \|_{\rho}\lesssim \delta^2$ for $t\in [0, \delta^{-2} T]$. We note that,  by Item $(ii)$ of Proposition \ref{WBNF}, the Birkhoff map $\Gamma$ is $\delta^3$-close to the identity. Finally the transformations \eqref{rotatingcoord1} and \eqref{fase} preserve the modulus of the Fourier coefficients. The last change of coordinates that one has to apply (for the Wave \eqref{Wave} and Beam \eqref{Beam} equations) is  passing from complex coordinates \eqref{def:complexcoordinates} to the original ones. We remark that by \eqref{eq:Wave} if $n_i\in\Lambda$ then $-n_i\notin \Lambda$. Thus
  \[
  u_{n_i}=\frac{1}{\sqrt{2 |j|}} \Psi_{n_i} \qquad n_i\in\Lambda.
  \]

\section{Transfer of beating effects: Proof of Theorem \ref{thm:traveling_periodic_beating}}\label{sec:transfer}
We devote this section to prove Theorem \ref{thm:traveling_periodic_beating}. First, in Section \ref{sec:MultiSplitting} we prove the transversality of the stable and unstable invariant manifolds of different periodic orbits of the Hamiltonian \eqref{redhamN}. As a consequence of this transversality, we construct orbits which shadow these invariant manifolds for infinite time. Then, in Section \ref{sec:ApplicationMultiple} we prove that the resonant models associated to the Wave, Beam and Hartree equations that we consider fit into the framework of Section \ref{sec:MultiSplitting} and we complete the proof of Theorem \ref{thm:traveling_periodic_beating}.

\subsection{Heteroclinic connections between periodic orbits and their shadowing}\label{sec:MultiSplitting}

Reasoning as in Proposition \ref{prop:SplittingPO2Rect}, the Hamiltonian $\mathcal{H}$ in \eqref{redhamN}  possesses hyperbolic periodic orbits
$\mathtt{P}_{\varepsilon,h,k}^\pm$ at the energy level $h$ whose time parameterization is of the form
\begin{equation*}
\gamma_{\varepsilon,h,k}^{\pm,p}(\tau_k) = (\Psi_{\pm,\varepsilon,1}^*, \dots,\Psi_{\pm,\varepsilon,k-1}^*, \Psi_k^{(h)}(\tau_k),\Psi_{\pm,\varepsilon,k+1}^*, \dots,
\Psi_{\pm,\varepsilon,N}^*, 0, \dots,0, K_k^{(h)}(\tau_k),0, \dots,0)
\end{equation*}
where
\[
 \Psi_{\pm,\varepsilon,1}^*=\pm\Psi_\ast+\OO(\eps)
\]
(see \eqref{def:psi}) and $( \Psi_k^{(h)}, K_k^{(h)})$ is $\eps$-close to the periodic orbit $\mathtt{P}_h$ (see \eqref{def:Gammah}).

When $\varepsilon = 0$, the  invariant manifolds $W^u (\mathtt{P}_{0,h,k}^+)$ and  $W^s(\mathtt{P}_{0,h,k}^-)$ coincide.
%

\begin{prop}\label{prop:SplittingMulti}
Take any $i,j=1,\ldots, N$, $i\neq j$. Assume that the condition  \eqref{cond:HessD} is satisfied (see \eqref{def:MatrixD},  \eqref{coeffN}). Then, there exists $\varepsilon_0>0$ such that for
$\varepsilon\in(0, \varepsilon_0)$ and $h_0>0$
such that for any $h \in(0, h_0)$,
the manifolds
$W^u(\mathtt{P}_{\varepsilon, h, i}^-)$ and
$W^s(\mathtt{P}_{\varepsilon, h, j}^+)$ intersect transversally
within the energy level.
\end{prop}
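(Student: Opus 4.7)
The plan is to combine the strategies of Propositions \ref{prop:SplittingPO2Rect} and \ref{prop:homointersaddlesN}: a Melnikov computation for transverse heteroclinics between periodic orbits, where the splitting is controlled by comparison with the saddle-to-saddle Melnikov function already analyzed. As in those results, I would introduce the auxiliary parameter $\delta$ and work with the Hamiltonian $\mathbf{H}_0 + \varepsilon \mathbf{H}_1$ from \eqref{def:hamN}, so that for $\varepsilon=0$ and $\delta>0$ all periodic orbits $\mathtt{P}^\pm_{\delta,h,k}$ share the same energy level $\{\mathbf{H}_0 = h\}$ (since each $\mathbf{H}_0^{(k)}$ vanishes at the saddles $(\pm\Psi_*,0)$), which is required for a heteroclinic within the energy surface to exist. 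Setting $\delta = \varepsilon$ at the end recovers the Hamiltonian in \eqref{redhamN}.

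At $\varepsilon = 0$ the product structure of $\mathbf{H}_0$ gives explicit $N$-dimensional parameterizations of $W^u(\mathtt{P}^-_{\delta,h,i})$ and $W^s(\mathtt{P}^+_{\delta,h,j})$: in the $N-2$ coordinates $k \neq i,j$ both manifolds follow the same heteroclinic orbit of $\mathbf{H}_0^{(k)}$ connecting $(-\Psi_*,0)$ to $(+\Psi_*,0)$ in its unstable/stable half, while the $i$-th (resp. $j$-th) coordinate is on the periodic orbit of $\mathbf{H}_0^{(i)}$ (resp. $\mathbf{H}_0^{(j)}$). I would then set up the distance in a codimension-$1$ section within the energy level by $N-1$ Melnikov-type integrals (one direction is removed by energy conservation, another by the autonomy of the flow), obtaining a reduced Melnikov potential $\mathcal{L}_{h,i,j}^{(0)}(\tilde\tau)$ of $N-1$ variables analogous to the one in \eqref{def:malnikovpotcaseN}. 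The key step is to prove, in the spirit of Lemma \ref{lem:periodicomo}, that
\[
\mathcal{L}_{h,i,j}^{(0)}(\tilde\tau) \longrightarrow \mathcal{L}_{0,N}^{(0)}(\tilde\tau) \qquad \text{as } h \to 0
\]
in $C^2$ norm on compact subsets, where $\mathcal{L}_{0,N}^{(0)}$ is the saddle-to-saddle reduced Melnikov potential from the proof of Proposition \ref{prop:homointersaddlesN}. Since that potential, by Proposition \ref{prop:homointersaddlesN} and condition \eqref{cond:HessD}, has a non-degenerate critical point near the origin, the convergence then implies the same for $\mathcal{L}_{h,i,j}^{(0)}$ for all sufficiently small $h$, which by standard Melnikov theory yields the transverse intersection within the energy level.

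The main obstacle is the convergence statement above. In the $N-2$ coincident coordinates, the comparison is immediate. In the $i$-th and $j$-th coordinates the parameterizations of $W^u_0$ and $W^s_0$ do \emph{not} coincide for $h>0$: one is on a periodic orbit, the other on a separatrix. The period $\mathtt{T}_h$ diverges as $h\to 0$, and the orbit shadows the full separatrix of $\mathbf{H}_0^{(i)}$ (or $\mathbf{H}_0^{(j)}$) during each loop. I would handle this by splitting the integration range as in the proofs of Lemmas \ref{lem:periodicomo} and \ref{propdiff}: on intervals $|t|\gtrsim |\ln\delta|$ near the saddles, the exponential decay of the integrands (which vanish at the saddles) gives a $\delta^\nu$-bound; on the remaining compact intervals, the pointwise convergence of the periodic-orbit parameterizations to the separatrix of Lemma \ref{lemma:UnperturedHomo2Rect} as $h \to 0$ gives smallness. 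The extra difficulty compared to Lemma \ref{lem:periodicomo} is that this type of estimate must now be carried out simultaneously in two coordinate directions and propagated through the $C^2$-control needed for non-degeneracy of the critical point; the Poisson-bracket trick used in the proof of Lemma \ref{propdiff} for the derivative estimates should transfer without essential changes.
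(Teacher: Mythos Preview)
Your overall strategy---auxiliary parameter $\delta$, comparison with the saddle-to-saddle Melnikov of Proposition~\ref{prop:homointersaddlesN} via an $h\to 0$ limit, and invocation of condition~\eqref{cond:HessD}---matches the paper's. The convergence argument you sketch in the last paragraph is essentially the paper's Lemma~\ref{lem:Melnikov_between_periodic_orbits}. But there is one genuine gap in your setup.

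You speak of a ``reduced Melnikov potential $\mathcal{L}_{h,i,j}^{(0)}$'' whose non-degenerate critical points give the transverse intersections. A Melnikov potential in this sense requires that at $\varepsilon=0$ the two invariant manifolds coincide along a heteroclinic sheet. They do not: for $h>0$ and $i\neq j$, the $i$-th component of $W^u(\mathtt{P}^-_{0,h,i})$ sits on $\{\mathbf{H}_0^{(i)}=h\}$ while the $i$-th component of $W^s(\mathtt{P}^+_{0,h,j})$ sits on $\{\mathbf{H}_0^{(i)}=0\}$ (and symmetrically for the $j$-th component). You yourself note this mismatch, but then continue to reason with a Melnikov potential as if it were well-defined. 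If one tried to manufacture such a potential by integrating $\mathbf{H}_1$ along $\gamma_{h,i}$ for $t<0$ and along $\gamma_{h,j}$ for $t>0$, the resulting object would acquire boundary terms at $t=0$ upon differentiation, and its critical points would not correspond to manifold intersections.

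The paper bypasses this by measuring the distance directly with the level functions $\widetilde{\mathbf{H}}_0^{(k)}$ (after absorbing the uncoupled part of $\mathbf{H}_1$ into the integrable Hamiltonian, see \eqref{def:hamtrick1}--\eqref{def:hamtrick3}). This produces an expansion
\[
d_{\varepsilon,h}(\vec\tau)=d_{0,h}+\varepsilon\,\mathcal{M}_h(\vec\tau)+\mathcal{O}(\varepsilon^2),
\]
where $d_{0,h}$ is a \emph{constant} vector with entries $\pm h$ or $0$, reflecting precisely the non-coincidence you observed. Only by taking $h\ll\varepsilon$ does the Melnikov term dominate, and then the $h\to 0$ convergence $\mathcal{M}_h\to\mathcal{M}_0$ (your argument) together with the non-degenerate zero of $\mathcal{M}_0$ from Proposition~\ref{prop:homointersaddlesN} closes the proof via the implicit function theorem. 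Your proposal is missing this explicit handling of the $\mathcal{O}(h)$ zeroth-order offset and the accompanying choice of scales $h=h(\varepsilon)\ll\varepsilon$; once you add that, the rest of your outline goes through.
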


The transversality of the invariant manifolds allows to construct orbits which shadow them. Note that in the coordinates introduced in \eqref{def:cartesian}, the periodic orbits $\mathtt{P}_{0,h,k}^+$ and $\mathtt{P}_{\eps,h,k}^-$ blow down to the same periodic orbit, which we denote by $\mathtt{P}_{0,h,k}$. In the coordinates  \eqref{def:cartesian}, Proposition \ref{prop:SplittingMulti} can be restated as that the  manifolds $W^u(\mathtt{P}_{\varepsilon, h, i})$ and $W^s(\mathtt{P}_{\varepsilon, h, j})$ intersect  transversally along an orbit
within the energy level.

%
\begin{defi}
\label{def:transition_chain} We will say that a family of hyperbolic periodic
orbits $\{\mathtt{P}_{\ell}\}_{\ell \in \N}$ of a system of differential equations,  is a \emph{transition
chain} if  $W^u(\mathtt{P}_{\ell}) \pitchfork
W^s(\mathtt{P}_{{\ell+1}})$, for all $\ell \in \N$.
\end{defi}
Note that Proposition \ref{prop:SplittingMulti} gives full transversality between the invariant manifolds on the energy level. Thus, recalling that  $\mathcal{H}(\mathtt{P}_{i_\ell})=h$, from now on, we restrict the flow to this energy level, which is a regular manifold.
%

\begin{cor}
\label{cor:transition_chain}
Let $(i_\ell)_{\ell \in \N}$, with $i_\ell \in \{1,\dots,N\}$, be any sequence.
Then, if $\varepsilon>0$ is small enough, there exists $h_0$ such
 that for any $0<h<h_0$, $\{\mathtt{P}_{\eps,h,i_\ell}\}_{\ell \in \N}$ is a transition chain
 of Hamiltonian $\mathcal{H}$ on the manifold $\mathcal{H}=h$.
\end{cor}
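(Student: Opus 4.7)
The plan is to deduce Corollary \ref{cor:transition_chain} directly from Proposition \ref{prop:SplittingMulti}, the only nontrivial point being to make the constants $\varepsilon_0$ and $h_0$ uniform across all consecutive pairs of indices that can appear in the (possibly infinite) sequence $(i_\ell)$.

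First I would apply Proposition \ref{prop:SplittingMulti} to each of the finitely many ordered pairs $(i,j)\in\{1,\ldots,N\}^2$ with $i\neq j$, obtaining constants $\varepsilon_0(i,j)>0$ and, for each admissible $\varepsilon$, a positive $h_0(i,j;\varepsilon)$. Setting
\[
\varepsilon_0:=\min_{i\neq j}\varepsilon_0(i,j),\qquad h_0:=\min_{i\neq j}h_0(i,j;\varepsilon),
\]
both are strictly positive because they are minima over finite sets, and the choice is independent of the sequence $(i_\ell)$. In the blow-down Cartesian coordinates \eqref{def:cartesian}, the pair of periodic orbits $\mathtt{P}^{\pm}_{\varepsilon,h,k}$ of the reduced Hamiltonian \eqref{redhamN} is identified with a single periodic orbit $\mathtt{P}_{\varepsilon,h,k}$, and the transverse heteroclinic orbits provided by Proposition \ref{prop:SplittingMulti} translate into transverse intersections $W^u(\mathtt{P}_{\varepsilon,h,i})\pitchfork W^s(\mathtt{P}_{\varepsilon,h,j})$ within the energy level $\{\mathcal{H}=h\}$. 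Hence, for any $\varepsilon<\varepsilon_0$, any $h<h_0$, and any sequence $(i_\ell)_{\ell\in\N}$ with $i_\ell\neq i_{\ell+1}$, the required transversality holds at every consecutive step, which is precisely Definition \ref{def:transition_chain}.

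The only subtle point, which I would flag as the mild obstacle, concerns possible repetitions $i_\ell=i_{\ell+1}$. Such a case would require a transverse homoclinic intersection at a single periodic orbit $\mathtt{P}_{\varepsilon,h,i}$, which is not directly covered by Proposition \ref{prop:SplittingMulti}. This is harmless for the application to Theorem \ref{thm:traveling_periodic_beating}, since one may always assume without loss of generality that $i_\ell\neq i_{\ell+1}$ (consecutive repetitions can be collapsed without altering the prescribed visiting pattern along the chain). Alternatively, one can obtain the homoclinic connections by the same Melnikov argument of Section \ref{sec:resonantmodel}: integrability of the single-rectangle unperturbed system is broken by any coupling term $d_{ij}K_iK_j$ with $j\neq i$, exactly as in Proposition \ref{prop:SplittingPO2Rect}, so the analogue of condition \eqref{cond:HessD} yields also the homoclinic transversality uniformly in the rectangle index.
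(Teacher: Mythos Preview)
Your proof is correct and follows exactly the intended approach: the paper states the corollary without an explicit proof, treating it as an immediate consequence of Proposition \ref{prop:SplittingMulti}, and your argument---taking the minimum of $\varepsilon_0$ and $h_0$ over the finitely many ordered pairs $(i,j)$---is precisely the one-line justification the authors leave implicit. You are in fact more careful than the paper in flagging the case $i_\ell=i_{\ell+1}$, which is indeed not literally covered by the hypothesis $i\neq j$ in Proposition \ref{prop:SplittingMulti}; both of your resolutions (collapsing repetitions, or noting that the same Melnikov computation applies when $i=j$, since then the constant term $d_{0,h}$ in Lemma 7.5 vanishes identically and the argument only simplifies) are valid.
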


\begin{prop}
\label{prop:diffusion_orbits}
Let $(i_\ell)_{\ell \in \N}$, with $i_\ell \in \{1,\dots,N\}$, be any sequence. Assume that
$\varepsilon>0$ is small enough such that $h_0$ in Corollary~\ref{cor:transition_chain} exists.
Let $\{\mathtt{P}_{\eps,h,i_\ell}\}_{\ell \in \N}$ be a transition chain
 of Hamiltonian $\mathcal{H}$. Let $(\nu_\ell)_{\ell \in \N}$, with $\nu_\ell >0$, be an arbitrary sequence.
Let $N_{\ell} := \{z\mid d(z,\mathtt{P}_{\eps,h,i_\ell})<\nu_{\ell}\}$. Then, there exists a trajectory $\gamma(t)$ of  Hamiltonian $\mathcal{H}$ in \eqref{redhamN}
and an increasing sequence  $(t_\ell)_{\ell \in \N}$ of times such that $\gamma(t_\ell) \in N_{\ell}$, for all $\ell\in\N$.
\end{prop}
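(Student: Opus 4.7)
The plan is to carry out a standard shadowing argument along the transition chain, based on an appropriate version of the $\lambda$-Lemma (Inclination Lemma) for normally hyperbolic invariant manifolds. All the work takes place inside the (regular) energy manifold $\Sigma_h := \{\mathcal{H}=h\}$: by assumption $\mathcal{H}(\mathtt{P}_{\eps,h,i_\ell}) = h$ for every $\ell$, and by Proposition~\ref{prop:SplittingMulti} the heteroclinic intersections $W^u(\mathtt{P}_{\eps,h,i_\ell}) \pitchfork W^s(\mathtt{P}_{\eps,h,i_{\ell+1}})$ are transverse \emph{inside} $\Sigma_h$, so these manifolds are middle-dimensional submanifolds of $\Sigma_h$ and their intersections are isolated heteroclinic orbits.

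First, I would fix, for every $\ell \in \N$, a heteroclinic point $p_\ell \in W^u(\mathtt{P}_{\eps,h,i_\ell}) \cap W^s(\mathtt{P}_{\eps,h,i_{\ell+1}})$. The key fact I would invoke is the $\lambda$-Lemma of~\cite{FontichM01} for normally hyperbolic invariant manifolds, which applies here because the $\mathtt{P}_{\eps,h,i_\ell}$ are hyperbolic periodic orbits and the nondegeneracy hypothesis of that lemma reduces, in our setting, to the transversality already established in Proposition~\ref{prop:SplittingMulti}. The conclusion is the inclusion
\[
 W^u(\mathtt{P}_{\eps,h,i_{\ell+1}}) \subset \overline{W^u(\mathtt{P}_{\eps,h,i_\ell})},
\]
and, by an obvious induction, $W^u(\mathtt{P}_{\eps,h,i_k}) \subset \overline{W^u(\mathtt{P}_{\eps,h,i_\ell})}$ for every $k \ge \ell$.

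Next, I would turn this topological inclusion into the desired shadowing by a nested-sets construction. Let $U_\ell \subset N_\ell$ be an open neighborhood of $\mathtt{P}_{\eps,h,i_\ell}$ with $\overline{U_\ell} \subset N_\ell$. Define the open set
\[
A_\ell := \{ z \in \Sigma_h : \text{the forward orbit of } z \text{ enters } U_1, U_2,\ldots,U_\ell \text{ in order} \}.
\]
Using the $\lambda$-Lemma at each heteroclinic point $p_\ell$, together with continuous dependence of the flow on initial conditions, one shows inductively that $A_\ell \cap W^u(\mathtt{P}_{\eps,h,i_1}) \neq \emptyset$: given a point $q_\ell \in A_\ell \cap W^u(\mathtt{P}_{\eps,h,i_1})$ whose orbit approaches $\mathtt{P}_{\eps,h,i_\ell}$ (and hence later comes close to $p_\ell$), the Inclination Lemma yields a point of $W^u(\mathtt{P}_{\eps,h,i_1})$ arbitrarily close to any preimage of $p_{\ell}$ under the flow, and so in particular a point whose forward orbit, after passing close to $p_\ell$, shadows a piece of $W^u(\mathtt{P}_{\eps,h,i_{\ell+1}})$ tending to $\mathtt{P}_{\eps,h,i_{\ell+1}}$; one then shrinks the choice to ensure the orbit enters $U_{\ell+1}$ as well, producing $q_{\ell+1} \in A_{\ell+1}\cap W^u(\mathtt{P}_{\eps,h,i_1})$. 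Taking closures, the sets $B_\ell := \overline{A_\ell}\cap K$ (where $K$ is a fixed compact piece of $\overline{W^u(\mathtt{P}_{\eps,h,i_1})}$ in $\Sigma_h$ containing all the constructed $q_\ell$) form a nested decreasing sequence of nonempty closed subsets of the compact set $K$; their intersection contains a point $z_\infty$, and the trajectory $\gamma(t) := \Phi^t_{\mathcal{H}}(z_\infty)$ visits every $N_\ell$ in order at some increasing sequence $t_\ell$, as required.

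The main obstacle, and where one must be careful, is the verification of the nondegeneracy condition under which the $\lambda$-Lemma of~\cite{FontichM01} gives the accumulation statement in a strong enough form to propagate induction: one needs that the heteroclinic intersections are not only transverse within $\Sigma_h$ but also ``transverse with respect to the normally hyperbolic structure'', i.e.\ that the projections onto the center directions of the two periodic orbits are surjective at the heteroclinic point. In our setting the center directions of $\mathtt{P}_{\eps,h,i_\ell}$ and $\mathtt{P}_{\eps,h,i_{\ell+1}}$ are one-dimensional (tangent to the periodic orbits) and lie in different invariant coordinate subspaces, while the transverse heteroclinic built by the Melnikov argument is obtained perturbatively from the unperturbed heteroclinic manifold~\eqref{man}, whose dependence on the time parameter $\tau_k$ of the periodic orbit component is non-trivial; this is precisely what guarantees the required nondegeneracy, so that the $\lambda$-Lemma applies and the above inductive construction closes.
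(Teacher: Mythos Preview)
Your overall strategy is the same as the paper's: invoke the $\lambda$-Lemma of \cite{FontichM01} along the transition chain, then run a nested-sets argument to produce a single orbit that shadows the whole chain. The paper even states the accumulation in the dual form (for stable rather than unstable manifolds), so your setup is fine.

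The gap is in the last step. You define $A_\ell$ as the open set of points whose forward orbit enters $U_1,\dots,U_\ell$ in order, pass to the closures $B_\ell=\overline{A_\ell}\cap K$, and then claim that a point $z_\infty\in\bigcap_\ell B_\ell$ has an orbit visiting every $N_\ell$. This does not follow: membership in $\overline{A_\ell}$ does \emph{not} imply the visiting property, because the visiting times of approximating points in $A_\ell$ can drift to $+\infty$. For instance, points on $W^u(\mathtt{P}_{\eps,h,i_1})$ that loiter arbitrarily long near $\mathtt{P}_{\eps,h,i_1}$ before leaving lie in $A_\ell$ with unbounded first visiting time, and their limit (a point on the periodic orbit itself, or on $W^s$ of the next one) lies in $\overline{A_\ell}$ without ever reaching $U_2$. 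Having shrunk $U_\ell$ so that $\overline{U_\ell}\subset N_\ell$ only helps once you know the visiting time is finite.

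The paper's fix is precisely to build the nested sets so that the visiting property is a property of the \emph{whole} set with \emph{fixed} times: one picks a closed ball $B_0$ centered at a point of $W^s(\mathtt{P}_{\eps,h,i_0})$ with $\Phi^{t_0}(B_0)\subset N_0$; by the $\lambda$-Lemma $W^s(\mathtt{P}_{\eps,h,i_1})$ meets $B_0$, so one picks a smaller closed ball $B_1\subset B_0$ centered there with $\Phi^{t_1}(B_1)\subset N_1$ for some $t_1>t_0$; and so on. Then every $B_\ell$ already satisfies $\Phi^{t_j}(B_\ell)\subset N_j$ for all $j\le\ell$, and any point in $\bigcap_\ell B_\ell$ inherits all the inclusions. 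Your argument becomes correct if you replace the global sets $A_\ell$ by such nested balls (or, equivalently, if you restrict $A_\ell$ to points with visiting times bounded by some $T_\ell$ chosen during the induction). Your discussion of the nondegeneracy hypothesis for the $\lambda$-Lemma is more explicit than the paper's one-line citation, but this is not where the difficulty lies.
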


\begin{proof}
Since $\{\mathtt{P}_{\eps,h,i_\ell}\}_{\ell \in \N}$ is a transition chain
 then the Inclination Lemma in~\cite{FontichM01} (Theorem $4.5$) ensures that $W^s_\varepsilon( \mathtt{P}_{\eps,h,i_{\ell}})\subseteq \overline{ \cup_{t\leq 0} \Phi^t_{\mathcal{H}}(W_{\varepsilon}^{s} (\mathtt{P}_{i_{\ell}+1}))}$ for all $\ell\in\mathbb{N}$ \footnote{More precisely we apply Theorem 4.5 in~\cite{FontichM01}to the flow map $f:=\Phi^{\tau}_{\mathcal{H}}$, where $\tau>0$ is chosen to be not a multiple of any frequency of the periodic orbits $\mathtt{P}_{{\eps,h,i_\ell}}$. Note that the Inclination Lemma stated in~\cite{FontichM01} is stated for the unstable manifold; in order to deduce the statement for the stable manifold it suffices to replace $f$ by $f^{-1}$.} .

  Let $x\in W^s_\varepsilon(\mathtt{P}_{i_0})$. We can find a closed ball $B_0$ centered at $x$ such that
  \begin{equation}\label{eq:inclusion}
  \Phi_{\mathcal{H}}^{t_0}(B_0)\subset N_0
  \end{equation}
  for some $t_0>0$. By the inclination Lemma we have that $W_{\varepsilon}^s(\mathtt{P}_{i_1})\cap B_0\neq \emptyset$. Hence we can find a closed ball $B_1$ centered at a point in $W_{\varepsilon}^s(\mathtt{P}_{i_1})\cap B_0$ such that, besides satisfying \eqref{eq:inclusion},
 \begin{equation*}
  \Phi_{\mathcal{H}}^{t_1}(B_1)\subset N_1
 \end{equation*}
 for some $t_1>t_0$. Proceeding by induction we can construct a sequence of closed nested balls $B_{{i+1}}\subset B_{i} \subset \dots\subset B_0$ and times $t_{i+1}>t_i>\ldots >t_0$ such that
 \[
 \Phi_{\mathcal{H}}^{t_j} (B_i)\subset N_{j}, \qquad i\ge j.
 \]
 Since the balls are compact, the intersection $\cap_{n\geq 0} B_n$ is non-empty, and we can consider $\gamma(t)$ as an orbit with initial datum in that set.

\end{proof}

\subsubsection{Proof of Proposition \ref{prop:SplittingMulti}}


We proceed as in Section \ref{sec:HomoMelnikov} by considering an auxiliary parameter $\de$ and the Hamiltonian $\mathcal{H} = \sum_{j=1}^N \mathbf{H}_0^{(j)}
+ \varepsilon \mathbf{H}_1$ defined in \eqref{def:hamN}. The Hamiltonian \eqref{def:hamN} has two saddle points,
\[
\mathfrak{e}^{(0)}_{\pm,\varepsilon}=  (\Psi_{\pm, \varepsilon,1}^*,\dots, \Psi_{\pm, \varepsilon,N}^*, 0,\dots,0),
\]
which, for $\varepsilon = 0$ are
$\mathfrak{e}^{(0)}_{\pm}$ (see~\eqref{saddlesN}). For $\varepsilon = 0$ and any $\delta>0$ small, they
 are connected by the homoclinic manifolds
\begin{equation}
\label{def:homoclinic_manifold_ndim}
\gamma_0(\vec{\tau} ) = (\Psi_1^{(0)}(\tau_1), \ldots, \Psi_N^{(0)}(\tau_N), K_1^{(0)}(\tau_1), \ldots, K_N^{(0)}(\tau_N)), \quad \vec{\tau}:=(\tau_1, \dots, \tau_N),
\end{equation}
where $\Psi_j^{(0)}$, $K_j^{(0)}$, $j=1,\dots, N$, have been introduced in~\eqref{gammazero}.
This parametrization of the homoclinic manifold satisfies ${\Phi_{\mathcal{H}}^t}_{\mid \varepsilon = 0} \gamma_0(\vec{\tau} )
= \gamma_0(\tau_1+t, \dots,\tau_n+t)$.
Fix $1\le k \le N$. The set
\[
\pi_k = \{(\Psi_1,\dots, \Psi_N, K_1,\dots,K_N): \;K_\ell = 0, \; \ell \neq k\}
\]
is invariant by the flow of $\mathcal{H}$ for any $\varepsilon$ and $\delta$ (this is properly seen in the coordinates \eqref{def:cartesian}, since then $\pi_k$ corresponds to $x_\ell=y_\ell=0$, $\ell\neq k$, see \eqref{redhamNCartesian}).

The dynamics on the  $\pi_k$ plane is integrable and is given by the $1$-d.o.f. Hamiltonian
$\mathbf{H}_0^{(k)} + \varepsilon {\mathbf{H}_1}_{\mid \pi_k}$. This Hamiltonian has two saddles $(\Psi_{\pm,\varepsilon,\ell}^*,0)$
 $\varepsilon$-close to $(\pm \Psi_*,0)=(\pm 2\pi/3,0)$, at the zero energy level. For $h>0$ small, the set $\{\mathbf{H}_0^{(k)} + \varepsilon {\mathbf{H}_1}_{\mid \pi_k}=h\}$ is a periodic orbit, whose period tends to infinity when $h$ goes to $0$. Let $(\Psi_k^{(h)}(\tau_k), K_k^{(h)}(\tau_k))$ be a time parametrization of this periodic orbit satisfying
\begin{equation}
\label{eq:choice_param_periodic_orbit}
\lim_{h\to 0} (\Psi_k^{(h)}(0), K_k^{(h)}(0)) =
(\Psi_k^{(0)}(0), K_k^{(0)}(0)),
\end{equation}
where $(\Psi_k^{(0)}, K_k^{(0)})$ are components of the homoclinic manifold introduced in \eqref{def:homoclinic_manifold_ndim}.

Then,
the Hamiltonian $\mathcal{H}$  possesses two hyperbolic periodic orbits
$\mathtt{P}_{\varepsilon,h,k}^\pm$ at the energy level $h$, whose time parametrization is given by
\begin{equation}\label{def:timeparapo}
\gamma_{\varepsilon,h,k}^{\pm,p}(\tau_k) = (\Psi_{\pm,\varepsilon,1}^*, \dots,\Psi_{\pm,\varepsilon,k-1}^*, \Psi_k^{(h)}(\tau_k),\Psi_{\pm,\varepsilon,k+1}^*, \dots,
\Psi_{\pm,\varepsilon,N}^*, 0, \dots,0, K_k^{(h)}(\tau_k),0, \dots,0).
\end{equation}
When $\varepsilon = 0$, the  invariant manifolds $W^u (\mathtt{P}_{0,h,k}^+)$ and  $W^s(\mathtt{P}_{0,h,k}^-)$ coincide. This homoclinic manifold can be parameterized as
\begin{equation}\label{def:omoclinicpo}
\gamma_{h,k}(\vec{\tau} ) = (\Psi_1^{(0)}(\tau_1), \ldots, \Psi_k^{(h)}(\tau_k), \dots,  \Psi_N^{(0)}(\tau_N), K_1^{(0)}(\tau_1), \ldots, K_k^{(h)}(\tau_k), \dots, K_N^{(0)}(\tau_N)),
\end{equation}
where $(\Psi_k^{(0)}, K_k^{(0)})$ are components of the homoclinic manifold introduced in \eqref{def:homoclinic_manifold_ndim}.

Now, fix $i,j \in \{1,\dots, N\}$.
For small $\varepsilon>0$, the periodic orbits  $\mathtt{P}_{\varepsilon,h,i}^-$, $\mathtt{P}_{\varepsilon,h,j}^+$ and their invariant manifolds,
$W^s(\mathtt{P}_{\varepsilon,h,i}^+)$ and $W^u(\mathtt{P}_{\varepsilon,h,j}^-)$ persist slightly deformed. We show now that the perturbation allows them to intersect.

In order to analyze the possible intersection,
we introduce a $N$-dimensional section in the following way.
We define, taking into account~\eqref{def:hamN},
\begin{equation}\label{def:hamtrick1}
\widetilde{\mathbf{H}}_0^{(k)} = \mathbf{H}_0^{(k)} + \varepsilon \widehat{\mathbf{H}}_0^{(k)}, \qquad k = 1,\dots,N,
\end{equation}
where
\begin{equation*}
\widehat{\mathbf{H}}_0^{(k)}(\psi_k,K_k) =
a_{k} K_k + (b_{k}-1) K^2_k +c_k\,K_k (1-K_k)\,\cos(\psi_k)
\end{equation*}
only depends on $(\psi_k,K_k)$. We have that $\widetilde{\mathbf{H}}_0^{(k)}$ is integrable and $\mathcal{H}$ can be also written as
\begin{equation}
\label{def:newdefinitionofmathcalH}
\mathcal{H} = \sum_{k=1}^N \mathbf{H}_0^{(k)} + \varepsilon \mathbf{H}_1 = \sum_{k=1}^N \widetilde{\mathbf{H}}_0^{(k)} + \varepsilon \widetilde{\mathbf{H}}_1
\end{equation}
where
\begin{equation}\label{def:hamtrick3}
\widetilde{\mathbf{H}}_1(K_1,\dots,K_n) = \sum_{k, \ell=1, k<\ell}^N d_{k \ell} K_k\,K_\ell.
\end{equation}
We consider the $N$-dimensional section
\begin{equation}
\label{def:transsecNrectangles}
\Sigma(\vec{\tau})=\left\{\gamma_{0}(\vec{\tau})+\sum_{k=1}^ Nr_k\,{\nabla \widetilde{\mathbf{H}}^{(k)}_0}_{\mid \gamma_0(\vec{\tau})},
r=(r_1,\dots,r_{N})\in (-m, m)^{N}\right\}
\end{equation}
where $\gamma_0$ is the homoclinic manifold introduced in \eqref{def:homoclinic_manifold_ndim}. Observe that $\gamma_{0}(\vec{\tau})$, which is $N$-dimensional, intersects transversally $\Sigma(\vec{\tau})$ at $r=0$. Then, for $h$ small, $\gamma_{h,i}$ and $\gamma_{h,j}$ (see \eqref{def:omoclinicpo}) intersect transversally
$\Sigma(0)$ at points $r_{i}$ and $r_{j}$, close to $\gamma_{0}(\vec{\tau})$. Hence, for $\eps$ small enough, the invariant manifolds  $W^u(\mathtt{P}_{\varepsilon,h,i}^-)$ and $W^s(\mathtt{P}_{\varepsilon,h,j}^+)$  intersect transversally $\Sigma(0)$ at points $r_{\varepsilon,i}$ and $r_{\varepsilon,j}$ close to $r_{i}$ and $r_{j}$, respectively.

Let $\gamma_{\varepsilon,h,i}^u$ and $\gamma_{\varepsilon,h,j}^s$ be parametrizations of the perturbed invariant manifolds $W^u(\mathtt{P}_{\varepsilon,h,i}^-)$ and $W^s(\mathtt{P}_{\varepsilon,h,j}^+)$ such that
$\gamma_{\varepsilon,h,i}^u(0) = r_{\varepsilon,i}$, $\gamma_{\varepsilon,h,j}^s(0) = r_{\varepsilon,j}$
and
%
$\Phi^t_{\mathcal{H}} \gamma(\tau_1,\dots, \tau_n) = \gamma (\tau_1+t,\dots, \tau_N+t)$, for $\gamma = \gamma_{\varepsilon,h,i}^{u},\gamma_{\varepsilon,h,j}^{s}$, where $\Phi^t_{\mathcal{H}}$ is the flow of Hamiltonian $\mathcal{H}$.
Up to a shift in the initial conditions in the periodic orbits, the parameterization of the periodic orbits and the homoclinic manifold satisfy the following property: for any $\tau$ there exists constants $\la, K, M>0$ such that
\begin{equation*}
 \begin{aligned}
\|\gamma_{\varepsilon,h,k}^{u}(\tau_1+t,\ldots,\tau_N+t) - \gamma_{\varepsilon,h,i}^{-,p}(\tau_i+t)\| \le &K e^{\lambda t}\qquad &\text{ for }\quad t\leq M\\
\|\gamma_{\varepsilon,h,k}^{s}(\tau_1+t,\ldots,\tau_N+t) - \gamma_{\varepsilon,h,j}^{+,p}(\tau_j+t)\| \le &K e^{- \lambda t}\qquad &\text{ for }\quad t\geq M.
  \end{aligned}
\end{equation*}
Let us remark that $\mathcal{H}_{\mid W^u (\mathtt{P}_{\varepsilon,h,i}^-)}=\mathcal{H}_{\mid W^s (\mathtt{P}_{\varepsilon,h,j}^+)}=h$. Therefore, to analyze their intersections it is enough to measure their distance along $(N-1)$-directions of those defining the section $\Sigma$ in \eqref{def:transsecNrectangles}. That is, the manifolds $W^u (\mathtt{P}_{\varepsilon,h,i}^-)$ and $W^s (\mathtt{P}_{\varepsilon,h,j}^+)$
 intersect transversally along an orbit at the non-degenerate zeros of the vector function (see \eqref{def:hamtrick1})
\begin{equation}
\label{def:distance_between_manifolds_of_periodic_orbits}
d_{\varepsilon,h}(\vec{\tau}) =
\begin{pmatrix}
\widetilde{\mathbf{H}}_0^{(1)}(\gamma_{\varepsilon,h,i}^u(\vec{\tau}))-
\widetilde{\mathbf{H}}_0^{(1)}(\gamma_{\varepsilon,h,j}^s(\vec{\tau})) \\
\vdots \\
\widetilde{\mathbf{H}}_0^{(N-1)}(\gamma_{\varepsilon,h,i}^u(\vec{\tau}))-
\widetilde{\mathbf{H}}_0^{(N-1)}(\gamma_{\varepsilon,h,j}^s(\vec{\tau}))
\end{pmatrix}.
\end{equation}

\begin{lem}
The function $d_{\varepsilon,h}$ in~\eqref{def:distance_between_manifolds_of_periodic_orbits} can be written as
\begin{equation}
\label{eq:difMelnikov}
d_{\varepsilon,h}(\vec{\tau}) = d_{0,h}+ \varepsilon \mathcal{M}_h(\vec{\tau}) + \OO\left(\varepsilon^2\right),
\end{equation}
where  the vector $d_{0,h}=(d_{0,h}^1, \ldots d_{0,h}^{N-1})^\top$ is of the form
\[
 d_{0,h}^i=h,\quad d_{0,h}^j=-h\quad \text{ and }\quad d_{0,h}^k=0\quad\text{ for }\quad k\neq i,j
\]
%
%
and
$\mathcal{M}_h(\vec{\tau}) = ( \mathcal{M}_{h}^1(\vec{\tau}) ,\dots,
\mathcal{M}_{h}^{N-1}(\vec{\tau}))^\top$, with
\[
\mathcal{M}_{h}^k(\vec{\tau}) :=\int_{-\infty}^{0} \{ \mathbf{H}_0^{(k)}, \widetilde{\mathbf{H}}_1\} \, \circ \, \Phi^t_{\mathbf{H}_0}(\gamma_{h,i}(\vec{\tau}))\,dt+\int_0^{\infty} \{ \mathbf{H}_0^{(k)}, \widetilde{\mathbf{H}}_1\}\, \circ \,\Phi^t_{\mathbf{H}_0}(\gamma_{h,j}(\vec{\tau}))\,dt.
\]
\end{lem}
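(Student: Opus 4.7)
The key structural observation is that in the decomposition \eqref{def:newdefinitionofmathcalH}--\eqref{def:hamtrick3}, each $\widetilde{\mathbf{H}}_0^{(k)}$ depends only on the canonically conjugate pair $(\psi_k, K_k)$, so the collection $\{\widetilde{\mathbf{H}}_0^{(k)}\}_{k=1}^N$ is pairwise in involution. Consequently, along any trajectory of the full Hamiltonian $\mathcal{H}$ one has
\[
\frac{d}{dt}\widetilde{\mathbf{H}}_0^{(k)} = \{\widetilde{\mathbf{H}}_0^{(k)}, \mathcal{H}\} = \varepsilon \{\widetilde{\mathbf{H}}_0^{(k)}, \widetilde{\mathbf{H}}_1\},
\]
so each $\widetilde{\mathbf{H}}_0^{(k)}$ moves only at order $\varepsilon$. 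Moreover, on the invariant plane $\pi_k$, where $K_\ell=0$ for $\ell\neq k$, we check that both $\widetilde{\mathbf{H}}_1$ and all its partial derivatives $\partial_{K_j}\widetilde{\mathbf{H}}_1$ vanish; therefore $\widetilde{\mathbf{H}}_0^{(\ell)}$ is preserved along the periodic orbits $\mathtt{P}_{\varepsilon,h,k}^{\pm}\subset\pi_k$, and takes the value $h\,\delta_{\ell k}$ there, because $\mathcal{H}|_{\mathtt{P}_{\varepsilon,h,k}^{\pm}}=h$ while $\widetilde{\mathbf{H}}_0^{(\ell)}|_{\pi_k}=0$ for $\ell\neq k$.

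Next, I would integrate the above identity along the unstable/stable manifold parametrizations. Using the asymptotic conditions $\gamma_{\varepsilon,h,i}^{u}(\vec{\tau}+t\mathbf{1})\to\mathtt{P}_{\varepsilon,h,i}^{-,p}$ as $t\to-\infty$ and $\gamma_{\varepsilon,h,j}^{s}(\vec{\tau}+t\mathbf{1})\to\mathtt{P}_{\varepsilon,h,j}^{+,p}$ as $t\to+\infty$, the fundamental theorem of calculus yields
\[
\widetilde{\mathbf{H}}_0^{(k)}\bigl(\gamma_{\varepsilon,h,i}^{u}(\vec{\tau})\bigr)= h\,\delta_{ki} + \varepsilon\!\int_{-\infty}^{0}\!\!\{\widetilde{\mathbf{H}}_0^{(k)},\widetilde{\mathbf{H}}_1\}\bigl(\gamma_{\varepsilon,h,i}^{u}(\vec{\tau}+t\mathbf{1})\bigr)dt,
\]
and the analogous formula (with sign change coming from integrating forward in time) for $\gamma_{\varepsilon,h,j}^{s}$. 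Subtracting these two expressions gives precisely $d_{\varepsilon,h}^{k}(\vec{\tau})=h(\delta_{ki}-\delta_{kj})+\varepsilon R_{k}(\vec{\tau};\varepsilon)$, where $R_k$ is the sum of the two improper integrals. This accounts for the constant term $d_{0,h}$ and locates the Melnikov contribution as an $\varepsilon$-multiple.

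Finally, to extract the expansion \eqref{eq:difMelnikov} I would replace $\widetilde{\mathbf{H}}_0^{(k)}$ by $\mathbf{H}_0^{(k)}$ inside the bracket (which costs an extra $\varepsilon$) and the perturbed parametrizations $\gamma_{\varepsilon,h,i}^{u},\gamma_{\varepsilon,h,j}^{s}$ by their unperturbed counterparts $\gamma_{h,i},\gamma_{h,j}$ in \eqref{def:omoclinicpo}, which coincide with the orbit $\Phi^{t}_{\mathbf{H}_0}(\gamma_{h,\cdot}(\vec\tau))$ evaluated at the flow of the unperturbed system. This yields
\[
R_k(\vec{\tau};\varepsilon)=\mathcal{M}_h^{k}(\vec{\tau}) + O(\varepsilon),
\]
so $d_{\varepsilon,h}=d_{0,h}+\varepsilon\mathcal{M}_h+O(\varepsilon^2)$. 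The main technical point, and the step I expect to require the most care, is to make the $O(\varepsilon)$ error in the improper integrals uniform in $\vec{\tau}$. This reduces to showing that the integrand $\{\mathbf{H}_0^{(k)},\widetilde{\mathbf{H}}_1\}$ decays exponentially along the homoclinic manifolds $\gamma_{h,i}$ and $\gamma_{h,j}$ as $t\to\mp\infty$; this follows from the fact that $\widetilde{\mathbf{H}}_1$ and its first derivatives vanish on $\pi_i$ and $\pi_j$ (as observed above), combined with the hyperbolic, exponentially-fast convergence of $\gamma_{\varepsilon,h,i}^{u}$ and $\gamma_{\varepsilon,h,j}^{s}$ to the respective periodic orbits, just as in Lemma \ref{convexp} and the approximation argument already used in the proof of Proposition \ref{propfond}. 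Once this exponential decay is in hand, standard $C^1$-dependence of the perturbed manifolds on $\varepsilon$ (uniform on compact sets in $\vec{\tau}$) converts the pointwise $O(\varepsilon)$ replacement into a uniform $O(\varepsilon)$ bound inside the integral, completing the proof.
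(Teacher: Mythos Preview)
Your proposal is correct and follows essentially the same approach as the paper: both compute $\widetilde{\mathbf{H}}_0^{(k)}$ on the periodic orbits as $h\delta_{ik}$ (using $\widetilde{\mathbf{H}}_1|_{\pi_k}=0$), apply the fundamental theorem of calculus along the perturbed invariant manifolds via $\frac{d}{dt}\widetilde{\mathbf{H}}_0^{(k)}=\varepsilon\{\widetilde{\mathbf{H}}_0^{(k)},\widetilde{\mathbf{H}}_1\}$, and then replace the perturbed manifolds and the bracket by their unperturbed versions at $O(\varepsilon)$ cost. The paper's write-up is more terse and does not spell out the uniformity argument you highlight at the end, but the underlying mechanism is identical.
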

\begin{proof}
We will compute the formula for $\widetilde{\mathbf{H}}_0^{(k)}(\gamma_{\varepsilon,h,i}^u(\vec{\tau}))$, $k=1,\dots, N-1$,
being the derivation for the one of $\widetilde{\mathbf{H}}_0^{(k)}(\gamma_{\varepsilon,h,j}^s(\vec{\tau}))$ analogous.

We first observe that, since $\mathcal{H}_{\mid \mathtt{P}_{\varepsilon,h,i}^-}= h$ and
$\widetilde{\mathbf{H}}_{1\,\mid \mathtt{P}_{\varepsilon,h,i}^-}= 0$,
 $\widetilde{\mathbf{H}}_0^{(k)}(\gamma_{\varepsilon,h,i}^{-,p}(\vec{\tau})) = \delta_{ik}h$, being $\delta_{ik}$ the Kronecker's delta. Then, taking into account~\eqref{def:newdefinitionofmathcalH}, it is immediate that
\[
\begin{aligned}
\widetilde{\mathbf{H}}_0^{(k)}(\gamma_{\varepsilon,h,i}^u(\vec{\tau})) &
= \widetilde{\mathbf{H}}_0^{(k)}(\gamma_{\varepsilon,h,i}^u(\vec{\tau})) - \widetilde{\mathbf{H}}_0^{(k)}(\gamma_{\varepsilon,h,i}^{-,p}(\vec{\tau})) + \delta_{ik} h \\
& = \int_{-\infty}^0 \frac{d}{dt} \widetilde{\mathbf{H}}_0^{(k)} \circ \Phi_{\mathcal{H}}^t \gamma_{\varepsilon,h,i}^u(\vec{\tau})\, dt + \delta_{ik} h \\
& = \varepsilon \int_{-\infty}^0 \{ \widetilde{\mathbf{H}}_0^{(k)}, \widetilde{\mathbf{H}}_1\} \circ \Phi_{\mathcal{H}}^t( \gamma_{\varepsilon,h,i}^u(\vec{\tau}))\, dt + \delta_{ik} h \\
& = \varepsilon \int_{-\infty}^0 \{ \widetilde{\mathbf{H}}_0^{(k)}, \widetilde{\mathbf{H}}_1\} \circ \Phi_{\mathbf{H}_0}^t ( \gamma_{h,i}(\vec{\tau}))\, dt + \delta_{ik} h + \mathcal{O}(\varepsilon^2) \\
& = \varepsilon \int_{-\infty}^0 \{ \mathbf{H}_0^{(k)}, \widetilde{\mathbf{H}}_1\} \circ \Phi_{\mathbf{H}_0}^t (\gamma_{h,i}(\vec{\tau}))\, dt + \delta_{ik} h +  \mathcal{O}(\varepsilon^2),
\end{aligned}
\]
where $\gamma_{h, i}$ is defined in \eqref{def:omoclinicpo}.
\end{proof}
We observe that, since the components of $d_{0,h}$ are either $0$ or $\pm h$,
if we consider $h\ll \varepsilon$, the main order of the difference in~\eqref{eq:difMelnikov} is given by $\mathcal{M}_h(\vec{\tau})$. Thus we shall prove that this function has a non-degenerate zero, so that we can conclude by the Implicit Function Theorem that the manifolds $W^s_\varepsilon(\mathtt{P}^-_{\varepsilon, h, i})$ and $W^u_\varepsilon(\mathtt{P}^+_{\varepsilon, h, j})$ intersect transversally.

To do so, we introduce
\begin{equation}
\label{def:homoclinic_Melnikov}
\mathcal{M}_{0}(\vec{\tau}) := ( \mathcal{M}_0^1(\vec{\tau}),\dots, \mathcal{M}_0^{N-1}(\vec{\tau}))^\top,
\end{equation}
where
\begin{equation*}
 \mathcal{M}_{0}^k(\vec{\tau}) := \int_{-\infty}^{\infty} \{ \mathbf{H}_0^{(k)}, \widetilde{\mathbf{H}}_1\}\,\circ\,\Phi^t_{\mathbf{H}_0}(\gamma_{0}(\vec{\tau}))\,dt,
\end{equation*}
where $\gamma_{0}$ was introduced in~\eqref{def:homoclinic_manifold_ndim} (see also~\eqref{gammazero}), which is the Melnikov function associated
to the homoclinic between $\mathfrak{e}^{(0)}_{\pm}$. We observe that the derivative of the Melnikov potential \eqref{def:malnikovpotcaseN} with respect to the variable $\tau_k-\tau_N$ coincides with the Melnikov integral $\mathcal{M}_0^k(\vec{\tau})$ in  \eqref{def:homoclinic_Melnikov}: Indeed, recall that the Melnikov Potential integral associated to $(\mathbf{H}_1-\widetilde{\mathbf{H}}_1)$ is constant, and equivalently
\[
\mathcal{M}_{0}^k(\vec{\tau}) = \int_{-\infty}^{\infty} \{ \mathbf{H}_0^{(k)}, \widetilde{\mathbf{H}}_1\}\,\circ\,\Phi^t_{\mathbf{H}_0}(\gamma_{0}(\vec{\tau}))\,dt= \int_{-\infty}^{\infty} \{ \mathbf{H}_0^{(k)}, \mathbf{H}_1\}\,\circ\,\Phi^t_{\mathbf{H}_0}(\gamma_{0}(\vec{\tau}))\,dt.
\]
Then, by Proposition \ref{prop:homointersaddlesN}, if condition \eqref{cond:HessD} is satisfied and $\varepsilon>0$ is small enough, $\mathcal{M}_{0}(\vec{\tau})$ has a non-degenerate zero.

\begin{lem}
\label{lem:Melnikov_between_periodic_orbits} Let $\varepsilon>0$ and assume that the condition \eqref{cond:HessD} in Proposition \ref{prop:homointersaddlesN} is satisfied. Then, there exists $h_0$ such that  for any $0<h<h_0$,
$\mathcal{M}_{h}(\vec{\tau})$ has a non-degenerate zero.
\end{lem}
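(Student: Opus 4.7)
The plan is to show that $\mathcal{M}_h$ converges to $\mathcal{M}_0$ in $C^1$ on any fixed compact set $\mathbf{K}\subset\R^N$ as $h\to 0$, and then invoke Proposition \ref{prop:homointersaddlesN} together with the implicit function theorem. Since $\mathcal{M}_0$ admits a non-degenerate zero under the assumption \eqref{cond:HessD}, any sufficiently small $C^1$ perturbation of it will also have a non-degenerate zero located near the unperturbed one. So the whole task reduces to the $C^1$-smallness of the difference $\mathfrak{O}_h(\vec\tau) := \mathcal{M}_0(\vec\tau) - \mathcal{M}_h(\vec\tau)$ for $h$ small. This is the straightforward $N$-dimensional analogue of Lemma \ref{lem:periodicomo}, and I would carry it out component by component, fixing $k\in\{1,\dots,N-1\}$.

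To estimate $\mathfrak{O}_h^k$ I would split the integration domain in $t$ into a bounded region $|t|\le \mathtt{c}|\ln\de|$ (with $\delta$ as in Section \ref{sec:HomoMelnikov} and $\mathtt{c}>0$ chosen once and for all) and its complement. On the unbounded tail, one exploits the fact that the Poisson bracket $\{\mathbf{H}_0^{(k)},\widetilde{\mathbf H}_1\}$ vanishes identically on each coordinate plane $\{K_\ell=0\}$ (this is immediate from \eqref{def:hamtrick3}, since every term is quadratic in distinct $K_\ell$'s). Combined with the exponential convergence of both $\gamma_0(\vec\tau+t\mathbf 1)$ to the saddles $\mathfrak{e}^{(0)}_{\pm}$ and of $\gamma_{h,i}(\vec\tau+t\mathbf 1)$, $\gamma_{h,j}(\vec\tau+t\mathbf 1)$ to the periodic orbits $\mathtt{P}_{\eps,h,i}^-$, $\mathtt{P}_{\eps,h,j}^+$ (which lie in appropriate coordinate planes $\{K_\ell=0\}_{\ell\ne i}$, $\{K_\ell=0\}_{\ell\ne j}$), the tail integrals are $\mathcal{O}(\de^{\mathtt d})$ uniformly in $\vec\tau\in\mathbf{K}$ for some $\mathtt d>0$ independent of $h$. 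This is the exact same mechanism as in the proof of Lemma \ref{lem:periodicomo}, just replicated for the additional $N-2$ ``frozen'' coordinates.

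On the bounded region $|t|\le \mathtt{c}|\ln\de|$ I would use the normalization \eqref{eq:choice_param_periodic_orbit} of the parametrization of the periodic orbit together with continuity of the flow with respect to initial conditions on compact time intervals. This yields
\[
\sup_{(\vec\tau,t)\in\mathbf{K}\times[-\mathtt{c}|\ln\de|,\mathtt{c}|\ln\de|]}\Big\|\gamma_0(\vec\tau+t\mathbf 1)-\gamma_{h,i}(\vec\tau+t\mathbf 1)\Big\|\xrightarrow[h\to 0]{} 0,
\]
and similarly for $\gamma_{h,j}$. Since the integrand $\{\mathbf{H}_0^{(k)},\widetilde{\mathbf H}_1\}$ is smooth on a neighbourhood of these compact pieces of trajectory, the mean value theorem produces a bound $\lesssim h^{\nu}$ (with $\nu$ possibly small but positive) times the length $|\ln\de|$ of the interval. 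Choosing first $\de$ and then $h=h(\de)$ small enough, the bounded and unbounded contributions together give $\|\mathfrak{O}_h\|_{C^0(\mathbf K)}\to 0$. For the $C^1$ part one differentiates under the integral: $\partial_{\tau_\ell}$ of the integrand equals $\{\mathbf{H}_0^{(\ell)},\{\mathbf{H}_0^{(k)},\widetilde{\mathbf H}_1\}\}$ evaluated along the relevant trajectory, which is again a smooth function vanishing on the coordinate planes $\{K_r=0\}$ because $\widetilde{\mathbf H}_1$ is a polynomial with no linear or cubic monomials in the $K_r$'s. Thus the same splitting works verbatim and yields $\|\partial_{\tau_\ell}\mathfrak{O}_h\|_{C^0(\mathbf K)}\to 0$ for every $\ell$.

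The main obstacle, as in Lemma \ref{lem:periodicomo}, is the interplay between the two parameters $\de$ (auxiliary, used to create the homoclinic orbits to $\mathfrak{e}^{(0)}_\pm$ in the unperturbed Hamiltonian \eqref{def:hamN}) and $h$ (energy of the periodic orbit): one must choose $\de$ small first so that both Melnikov-type approximations are valid, and then $h=h(\de)$ much smaller still, so that the periodic orbits are close enough to the saddles on the logarithmically long window $|t|\le \mathtt{c}|\ln\de|$. Once this ordering is respected, taking finally $\eps=\de$ as in Section \ref{def:prop59} and applying the implicit function theorem to the perturbed equation $\mathcal{M}_h(\vec\tau)=0$ near the non-degenerate zero of $\mathcal{M}_0$ produces the desired non-degenerate zero of $\mathcal{M}_h$, completing the proof.
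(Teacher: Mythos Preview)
Your proposal is correct and follows essentially the same strategy as the paper: split each integral into a bounded piece (where the parametrizations $\gamma_{h,i},\gamma_{h,j}$ converge to $\gamma_0$ as $h\to 0$ thanks to~\eqref{eq:choice_param_periodic_orbit}) and a tail (controlled by exponential decay toward the periodic orbits and saddles, on which the Poisson bracket $\{\mathbf{H}_0^{(k)},\widetilde{\mathbf H}_1\}$ vanishes), then pass to derivatives by the same argument. The only minor difference is that the paper takes a \emph{fixed} cutoff $T>0$ chosen after the target accuracy~$\nu$, rather than your $\de$-dependent window $|t|\le \mathtt{c}|\ln\de|$; since $\de$ is already fixed at this stage of the argument, the careful $\de$--$h$ ordering you emphasize is not actually needed here and the paper's version is slightly simpler.
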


This lemma implies Proposition \ref{prop:SplittingMulti};
indeed, one can proceed as in Section \ref{sec:HomoMelnikov} by taking $\de=\eps$ and applying Implicit Function Theorem. We devote the rest of the Section to prove Lemma \ref{lem:Melnikov_between_periodic_orbits}.

\begin{proof}[Proof of Lemma \ref{lem:Melnikov_between_periodic_orbits}]
We have that, for any $1\le k \le N-1$,
\begin{align*}
&\int_{-\infty}^0 \{ \mathbf{H}_0^{(k)}, \widetilde{\mathbf{H}}_1\}\,\circ\,\Phi^t_{\mathbf{H}_0}(\gamma_{h,i}(\vec{\tau}))\,dt
+\int_0^{\infty} \{ \mathbf{H}_0^{(k)}, \widetilde{\mathbf{H}}_1\}\,\circ\,\Phi^t_{\mathbf{H}_0}(\gamma_{h,j}(\vec{\tau}))\,dt-\mathcal{M}_0^{k}(\vec{\tau})=\\
&=\int_{-\infty}^0 \Big(\{ \mathbf{H}_0^{(k)}, \widetilde{\mathbf{H}}_1\}\,\circ\,\Phi^t_{\mathbf{H}_0}(\gamma_{h,i}(\vec{\tau}))-\{ {\mathbf{H}}_0^{(k)}, \widetilde{\mathbf{H}}_1\}\,\circ\,\Phi^t_{\mathbf{H}_0}(\gamma_{0}(\vec{\tau}))\,\Big)\,dt\\
&+\int_0^{\infty}\Big(\{ \mathbf{H}_0^{(k)}, \widetilde{\mathbf{H}}_1\}\,\circ\,\Phi^t_{\mathbf{H}_0}(\gamma_{h,j}(\vec{\tau}))-\{ \mathbf{H}_0^{(k)}, \widetilde{\mathbf{H}}_1\}\,\circ\,\Phi^t_{\mathbf{H}_0}(\gamma_{0}(\vec{\tau}))\,\Big)\,dt \\
& = I_{h,i}(\vec{\tau})+I_{h,j}(\vec{\tau}).
\end{align*}

We prove that, for any compact set $\mathbf{K}\subset \R^{N}$, $\|I_{h,k}\|_{C^1(\mathbf{K})}$ tends to $0$ as $h\to 0$, for $k=i,j$.
We give the argument for $I_{h,i}$, being the one for $I_{h,j}$ analogous. The claim follows immediately from this convergence.

Let $\mathbf{K}\subset \R^{N}$ be a compact set. If $h$ is small enough, the parametrization $\gamma_{h,i}$ is well defined;
since the period of the periodic orbit tends to infinity when $h$ goes to $0$, $\gamma_{h,i}$ intersects $\Sigma(\vec{\tau})$ at a point close to $r=0$, for all $\vec{\tau} \in \mathbf{K}$.

For a given $T>0$, we split the integral $I_{h,i}$ as
\begin{equation}
\label{eq:split_of_Ihi}
\begin{aligned}
I_{h,i} (\vec{\tau})  
 = & \int_{-T}^0 \Big(\{ \mathbf{H}_0^{(k)}, \widetilde{\mathbf{H}}_1\}\,\circ\,\Phi^t_{\mathbf{H}_0}(\gamma_{h,i}(\vec{\tau}))
 -\{ \mathbf{H}_0^{(k)}, \widetilde{\mathbf{H}}_1\}\,\circ\,\Phi^t_{\mathbf{H}_0}(\gamma_{0}(\vec{\tau}))\,\Big)\,dt \\
& + \int_{-\infty}^{-T} \Big(\{ \mathbf{H}_0^{(k)}, \widetilde{\mathbf{H}}_1\}\,\circ\,\Phi^t_{\mathbf{H}_0}(\gamma_{h,i}(\vec{\tau}))-\{ \mathbf{H}_0^{(k)}, \widetilde{\mathbf{H}}_1\}\,\circ\,\Phi^t_{\mathbf{H}_0}(\gamma_{0}(\vec{\tau}))\,\Big)\,dt.
\end{aligned}
\end{equation}
From~\eqref{def:hamN}, \eqref{def:hamtrick3}
\begin{equation}
\label{eq:Poisson_bracket_H0H1}
\{ \mathbf{H}_0^{(k)}, \widetilde{\mathbf{H}}_1\} = - 2 K_k(1-K_k) \sin \psi_k \sum_{\ell \neq k} d_{\ell,k} K_\ell.
\end{equation}
In particular,
\[
\{ \mathbf{H}_0^{(k)}, \widetilde{\mathbf{H}}_1\}_{\mid \mathfrak{e}^{(0)}_{-}}
= \{ \mathbf{H}_0^{(k)}, \widetilde{\mathbf{H}}_1\}_{\mid\mathtt{P}^-_{0, h, i}} = 0.
\]
The hyperbolic character of the periodic orbits implies the existence of constants $C, \lambda >0$ such that, for $\ell \neq i$ and for all $\vec{\tau}\in\mathbf{K}$,
\[
|\pi_{K_\ell} \gamma_{h,i}(\vec{\tau}) | \le C e^{-\lambda(\tau_\ell + t)}, \qquad t \ge 0.
\]
Also, with the same $C, \lambda >0$, for any $1 \le \ell \le N$,
\[
|\pi_{K_\ell} \gamma_{0}(\vec{\tau}) | \le C e^{-\lambda(\tau_\ell + t)}, \qquad t \ge 0.
\]
Hence, by~\eqref{eq:Poisson_bracket_H0H1}, for any $\nu >0$,
since $\mathbf{K}$ is compact, there exists $T>0$ such that, for any
$\vec{\tau} \in \mathbf{K}$,
\[
\left| \int_{-\infty}^{-T} \{ \mathbf{H}_0^{(k)},
\widetilde{\mathbf{H}}_1\}\,\circ\,\Phi^t_{\mathbf{H}_0}(\gamma_{h,i}(\vec{\tau}))\,dt
\right|, \, \left| \int_{-\infty}^{-T} \{ \mathbf{H}_0^{(k)},
\widetilde{\mathbf{H}}_1\}\,\circ\,\Phi^t_{\mathbf{H}_0}(\gamma_{0}(\vec{\tau}))\,dt
\right| \le \nu.
\]
To bound the other part of $I_{h,i}$ in~\eqref{eq:split_of_Ihi}, we observe that,
from~\eqref{man},
\[
\gamma_{h,i}(\vec{\tau}) - \gamma_{0}(\vec{\tau})
= (0, \ldots, \Psi_i^{(h)}(\tau_i)-\Psi_i^{(0)}(\tau_i), \dots,  0, 0, \ldots, K_i^{(h)}(\tau_i)-K_i^{(0)}(\tau_i), \dots, 0).
\]
We remark that, as $h$ goes to $0$, the period of the periodic orbit
$\mathtt{P}_{0,h,i}^-$  goes to $\infty$. Then, the choice of the parametrization of the periodic orbit~\eqref{eq:choice_param_periodic_orbit} implies that,
taking $h$ small enough, $\lim_{h\to 0}(\Psi_i^{(h)}(\tau_i+t), K_i^{(h)}(\tau_i+t)) =
(\Psi_i^{(0)}(\tau_i+t), K_i^{(0)}(\tau_i+t))$, for all $(t, \vec{\tau}) \in [-T,0]\times
\mathbf{K}$ and, furthermore, this convergence is the $C^k$-norm on
$[-T,0]\times \mathbf{K}$. In particular, this implies that, if $h$ is small
enough,
\[
\left| \int_{-T}^0 \Big(\{ \mathbf{H}_0^{(k)},
\widetilde{\mathbf{H}}_1\}\,\circ\,\Phi^t_{\mathbf{H}_0}(\gamma_{h,i}(\vec{\tau}))
 -\{ \mathbf{H}_0^{(k)}, \widetilde{\mathbf{H}}_1\}\,\circ\,\Phi^t_{\mathbf{H}_0}(\gamma_{0}(\vec{\tau}))\,\Big)\,dt
 \right| < \nu.
\]
\end{proof}

\subsection{Application to the Wave, Beam and Hartree equations: Proof of Theorem \ref{thm:geometric}--$(ii)$}\label{sec:ApplicationMultiple}
Recall the matrix \eqref{def:MatrixD}.
We now check the condition \eqref{cond:HessD} in Proposition \ref{prop:homointersaddlesN} for the resonant models associated to the equations \eqref{Wave}, \eqref{Beam} and \eqref{Hartree}. For the Wave and Beam equations this corresponds to choosing suitable sets $\Lambda$ (actually a suitable modification of those obtained in Proposition \ref{prop:LambdaSetWaveHom}). For the Hartree equation this corresponds to imposing a non-degeneracy condition to the potential $V$.

\begin{lem}\label{lemma:WaveBeamMelnikovCondN}
Let us consider either the Wave
equation \eqref{Wave} or the Beam equation \eqref{Beam}. Then, there exists a set $\Lambda\subset\Z^2$ satisfying Propositions \eqref{prop:LambdaSetWaveHom} such that the associated Hamiltonian \eqref{calham0} satisfies
condition \eqref{cond:HessD}.
\end{lem}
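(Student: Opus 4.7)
The plan is to exploit the specific factorized structure of the coefficients $d_{ij}$ for the Wave and Beam equations, which was already observed in the proof of Lemma \ref{lemma:WaveBeamMelnikovCond}. Indeed, the same computation that yielded \eqref{eq:numbeam3} shows that for any pair of resonant tuples $\mathcal{R}_i,\mathcal{R}_j$, $i\ne j$, one has the factorization
\[
 d_{ij} \;=\; \frac{3}{32\mathtt{g}}\,\beta_i\,\beta_j,\qquad \beta_k:=\sum_{\ell=1}^{4}\frac{(-1)^\ell}{|n^{(k)}_\ell|^\kappa},
\]
where $\beta_k$ depends only on the modes of the $k$-th resonant tuple. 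Rewriting $\beta_k$ in the form \eqref{eq:numbeam3}, Lemma \ref{lemma:WaveBeamMelnikovCond} already guarantees $\beta_k\neq 0$ provided the set $\Lambda$ satisfies conditions \eqref{eq:Wave} and \eqref{def:CondExtraForPk} tuple by tuple; the latter condition follows (as verified in that lemma) from the resonance relation together with \eqref{eq:Wave}.

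Given this rank-one--type structure, the matrix $\mathcal{D}$ in \eqref{def:MatrixD} can be computed explicitly. Setting $c:=3/(32\mathtt{g})$ and $A:=\sum_{k=1}^{N}\beta_k$, one has for $i,j\in\{1,\ldots,N-1\}$
\[
 \mathcal{D}_{ij} \;=\; c\,\beta_i\bigl(A\,\delta_{ij}-\beta_j\bigr),
\]
so that $\mathcal{D}=c\,\mathrm{diag}(\beta_1,\ldots,\beta_{N-1})\bigl(A\,I_{N-1}-\mathbf{1}\beta^{\top}\bigr)$ with $\beta=(\beta_1,\ldots,\beta_{N-1})^{\top}$ and $\mathbf{1}=(1,\ldots,1)^{\top}$. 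The matrix determinant lemma gives
\[
 \det\bigl(A\,I_{N-1}-\mathbf{1}\beta^{\top}\bigr)=A^{N-1}-A^{N-2}\,\beta^{\top}\mathbf{1}=A^{N-2}\bigl(A-(A-\beta_N)\bigr)=A^{N-2}\,\beta_N,
\]
so that
\[
 \det\mathcal{D}\;=\;c^{N-1}\,A^{N-2}\,\prod_{k=1}^{N}\beta_k\;=\;\left(\frac{3}{32\mathtt{g}}\right)^{N-1}\!\!\Bigl(\sum_{k=1}^{N}\beta_k\Bigr)^{N-2}\prod_{k=1}^{N}\beta_k.
\]

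It thus remains to produce a set $\Lambda$ satisfying Proposition \ref{prop:LambdaSetWaveHom} such that, in addition, $\beta_k\ne 0$ for every $k$ and $\sum_{k=1}^N\beta_k\ne 0$. The first family of conditions has already been handled: choose the tuples as in Proposition \ref{prop:LambdaSetWaveHom} and invoke Lemma \ref{lemma:WaveBeamMelnikovCond} (applied to each tuple individually, since $\beta_k$ depends only on $\mathcal{R}_k$). For the last inequality we use the genericity statement in Remark \ref{rmk:Lambda}: after fixing $\eps>0$, the tuples can be chosen essentially independently in the annulus $\{R(1-\eps)\le |n|\le R(1+\eps)\}$, and each $\beta_k$ is a nonzero rational function of the $(|n_\ell^{(k)}|^\kappa)_\ell$. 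Hence the equation $\sum_{k=1}^N\beta_k=0$ defines a proper algebraic subvariety in the space of admissible configurations, and one can choose $\Lambda$ outside this variety while preserving all the properties $1_\Lambda$--$4_\Lambda$ and \eqref{eq:Wave}, \eqref{eq:Wave2}.

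The main potential obstacle is precisely this last step: one must check that the genericity argument from Proposition \ref{prop:LambdaSetWaveHom} (and its proof in Appendix \ref{app:LambdaSet}) is compatible with the additional algebraic condition $\sum_k\beta_k\ne 0$, i.e.\ that the construction there does not accidentally force $\sum_k\beta_k=0$. Since the $N$ tuples are built with independent parameters (each nuclear family is a rectangle/parallelogram inscribed in an ellipse, whose vertices can be perturbed while remaining in the allowed annulus and keeping disjoint supports), this condition is generically satisfied and an explicit example can be exhibited by a concrete choice of modes if needed.
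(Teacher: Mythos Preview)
Your proposal is correct and follows essentially the same approach as the paper: you exploit the factorization $d_{ij}=c\,\beta_i\beta_j$ (the paper writes $P_k$ for your $\beta_k$), compute $\det\mathcal{D}=c^{N-1}\bigl(\prod_k\beta_k\bigr)\bigl(\sum_k\beta_k\bigr)^{N-2}$, and invoke Lemma~\ref{lemma:WaveBeamMelnikovCond} for $\beta_k\neq 0$.

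The only notable difference is in the final step, ensuring $\sum_{k=1}^N\beta_k\neq 0$. You argue by genericity, which is correct in spirit but somewhat imprecise about compatibility with the integer/parity constraints on $\Lambda$. The paper instead gives a concrete one--parameter deformation: if $\sum_k P_k=0$, replace the first tuple $(n_1^{(1)},\ldots,n_4^{(1)})$ by $(\lambda n_1^{(1)},\ldots,\lambda n_4^{(1)})$ with $\lambda\in\mathbb{Q}$ close to $1$; since $P_1$ scales as $\lambda^{-\kappa}$ and $P_1\neq 0$, the sum is nonzero for all $\lambda\neq 1$, and a subsequent rational blow--up restores $\Lambda\subset\mathbb{Z}^2$ while preserving the properties of Proposition~\ref{prop:LambdaSetWaveHom}. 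This explicit argument avoids having to revisit the details of the construction in Appendix~\ref{app:LambdaSet}.
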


\begin{lem}\label{lemma:HartreeMelnikovCondN}
Let us consider Hamiltonian \eqref{calham0} associated to the Hartree equation
\eqref{Hartree} with a potential $V$ as in \eqref{def:potential} and to a set $\Lambda$
satisfying Proposition \ref{prop:LambdaSetHartreeHet}
Then, for a generic choice of the $\{\gamma_{n}\}_{n\in\Lambda}$, the condition  \eqref{cond:HessD}  is satisfied.
\end{lem}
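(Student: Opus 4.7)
\smallskip

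\noindent\textbf{Proof plan.}
The plan is to regard $\det\mathcal{D}$ as a polynomial in the parameters $\{\gamma_k\}_{k\in I}$ appearing in the Fourier coefficients $V_k = 1+\varepsilon\gamma_k$ (see \eqref{cond:potential}), and to prove that this polynomial is not identically zero. Generic non-degeneracy then follows, since the zero locus of a non-trivial polynomial in finitely many variables is a proper algebraic subvariety and hence has Lebesgue measure zero (in fact its complement is open and dense).

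First, I would unravel the dependence of the matrix $\mathcal{D}$ on the potential. By \eqref{coeffN}, each entry $d_{ij}$ ($i<j$) is a linear combination of entries of the matrix $A$ in \eqref{hamN}, which for the Hartree equation are, via \eqref{def:coeffmatrixA} and \eqref{def:Cj1}, linear combinations of the Fourier coefficients $V_{n-m}$ with $n\in\RR_i$ and $m\in\RR_j$. Following the scheme of the proof of Lemma \ref{lemma:HartreeMelnikovCond}, one writes
\[
d_{ij}\;=\;\sum_{k\in I_{ij}}\alpha^{(ij)}_{k}\,V_k,\qquad I_{ij}:=\{n-m:n\in\RR_i,\,m\in\RR_j\},
\]
with explicitly computable nonzero coefficients $\alpha^{(ij)}_k$. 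The crucial observation is the \emph{independence property} forced by \eqref{nondegLambda}: if $k=n-m=n'-m'$ with $n\neq m$, $n'\neq m'$, then necessarily $(n,m)=(n',m')$. Consequently, for distinct unordered pairs $\{i,j\}\neq\{i',j'\}$ one has $I_{ij}\cap I_{i'j'}=\emptyset$, and the families of parameters $\{\gamma_k\}_{k\in I_{ij}}$ controlling the different off-diagonal entries $d_{ij}$ are disjoint. Hence the $d_{ij}$'s can be independently prescribed by a suitable choice of the $\gamma_k$'s.

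Next, I would view $\det\mathcal{D}$ as a polynomial in the independent variables $\{d_{ij}\}_{1\le i<j\le N}$ and exhibit a single specialisation making it non-zero. The natural choice is to put
\[
d_{i,N}=1\;\;(1\le i\le N-1),\qquad d_{i,j}=0\;\;(1\le i<j\le N-1),
\]
which, in view of \eqref{def:MatrixD}, turns $\mathcal{D}$ into a constant positive multiple of the identity and therefore has non-zero determinant. Because each $d_{ij}$ is, for $\varepsilon\neq 0$, a non-degenerate affine function of the corresponding block of parameters $\{\gamma_k\}_{k\in I_{ij}}$ (the coefficients $\alpha^{(ij)}_k$ being all non-zero), this assignment is realised by an explicit choice of the $\gamma_k$'s. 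This shows that the polynomial $\det\mathcal{D}(\{\gamma_k\})$ is not identically zero, and the genericity statement of the Lemma follows.

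The only delicate point is ensuring the independence property for the different $d_{ij}$'s, i.e.\ that no single $V_k$ simultaneously contributes to two different entries of $\mathcal{D}$; this is exactly the content of the non-degeneracy condition \eqref{nondegLambda} built into $\Lambda$ in Proposition \ref{prop:LambdaSetHartreeHet}, so no extra work is needed. Everything else is linear algebra and the standard fact that a non-trivial polynomial vanishes on a set of measure zero.
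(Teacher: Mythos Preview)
Your proposal is correct and follows essentially the same approach as the paper, which simply refers back to the argument of Lemma \ref{lemma:HartreeMelnikovCond}: view $\det\mathcal{D}$ as a polynomial in the $\gamma_k$'s and exhibit a single choice making it non-zero. Your write-up in fact makes explicit the key step the paper leaves implicit, namely that condition \eqref{nondegLambda} forces the sets $I_{ij}$ (and their negatives, using $V_{-k}=V_k$) to be pairwise disjoint for distinct unordered pairs $\{i,j\}$, so the $d_{ij}$'s can be prescribed independently and the specialisation $d_{i,N}=1$, $d_{i,j}=0$ ($j<N$) from the remark after Proposition \ref{prop:homointersaddlesN} is attainable.
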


These two lemmas allow us to complete the proof of Item $(ii)$ of Theorem  \ref{thm:geometric}

\begin{proof}[Proof of Item $(ii)$ of Theorem  \ref{thm:geometric}]
Lemmas \ref{lemma:WaveBeamMelnikovCondN} and Lemma \ref{lemma:HartreeMelnikovCondN} ensure that the non-degeneracy condition  \eqref{cond:HessD}  of Proposition \ref{prop:SplittingMulti}. Therefore, any pair of periodic orbits $\mathtt{P}_{\varepsilon,h,i}$, $\mathtt{P}_{\varepsilon,h,j}$ have transverse heteroclinic connections. This implies that all infinite sequences of such periodic orbits form a transition chain in the sense of Definition \ref{def:transition_chain}. Then, to complete the proof of Item $(ii)$ of Theorem \ref{thm:geometric} it is enough to apply  Proposition \ref{prop:diffusion_orbits}.
\end{proof}

We devote the rest of this section to prove Lemmas \ref{lemma:WaveBeamMelnikovCondN} and \ref{lemma:HartreeMelnikovCondN}.
 Lemma \ref{lemma:HartreeMelnikovCondN} is proved following the same argument of the proof of Lemma \ref{lemma:HartreeMelnikovCond}. To the prove  Lemma \ref{lemma:WaveBeamMelnikovCondN}, we consider a set $\Lambda_0\subset\Z^2$ satisfying Proposition \ref{prop:LambdaSetWaveHom} and we modify it slightly. By modification, we refer to construct a set $\Lambda\in\Q^2$ arbitrarily close to $\Lambda_0\subset\Z^2$ and then to scale it  so that the set belongs to $\Z^2$.

\begin{proof}[Proof of Lemma \ref{lemma:WaveBeamMelnikovCondN}]
  Let us call $n^{(i)}_k:=n_{4 (i-1)+k}$ for $i=1,\dots, N$, $k=1, \dots, 4$.
Recall the expression of the coefficients $d_{i j}$ in \eqref{coeffN}. By  using \eqref{def:coeffmatrixA} and Lemma \ref{lem:HresLambda} we obtain
\begin{equation}\label{def:d12N}
 d_{i j}=\frac{3}{32 \mathtt{g}} \,\sum_{\substack{1 \leq r, s \leq 4}}
\frac{(-1)^{r+s}}{|n^{(i)}_r|^{\kappa} |n^{(j)}_s|^\kappa},
 \end{equation}
 where $\kappa=1, 2$ corresponds respectively to the Wave and Beam equations.
 We define  (recall formulas \eqref{def:IndexesPiDelta})
\begin{align}
P_r=\Delta_{n^{(r)}_{1}, n^{(r)}_{2}} (|n_1^{(r)}|^{\kappa} |n^{(r)}_2|^{\kappa}-|n^{(r)}_3|^{\kappa} |n^{(r)}_4|^{\kappa}) \prod_r^{1, 2, 3, 4} \quad \text{where}\quad
\prod_r^{1, 2, 3, 4} &:= \frac{1}{ \prod_{i=1}^4 |n_i^{(r)}|^{\kappa} }. \label{def:Pkk}
\end{align}
We remark that by the resonance relations \eqref{def:Abhwt} and \eqref{def:Aw} we have $\Delta_{n^{(r)}_{1}, n^{(r)}_{2}}=\Delta_{n^{(r)}_{4}, n^{(r)}_{3}}$.
Then we can express the right hand side of \eqref{def:d12N} in terms of the $P_r$'s in the following way:
\[
d_{i j}=\frac{3}{32 \mathtt{g}}\, P_{i} P_j.
\]
Then, the determinant  of the matrix $\mathcal{D}$ in \eqref{def:MatrixD} is of the form
\begin{equation*}
\det(\mathcal{D}) = \left(\frac{3}{32\mathtt{g} }\right)^{N-1}\left( \prod_{k=1}^{N-1} P_k \right) \; \text{det} \;
\begin{pmatrix}
P_N + \sum_{j \neq 1}  P_j & -P_2 & \ldots & -P_{N-1} \\
-P_1 & \ddots & \vdots & \vdots \\
\vdots & \vdots & \ddots &  -P_{N-1} \\
-P_1 & \ldots & -P_{N-2} & P_{N} + \sum_{j \neq N-1} P_j
\end{pmatrix}.
\end{equation*}
This determinant can be written as
\begin{equation}\label{detHessianNP2}
\det(\mathcal{D}) = \left(\frac{3}{32\mathtt{g} }\right)^{N-1}\left( \prod_{k=1}^{N} P_k \right) \left(\sum_{k=1}^{N}P_k\right)^{N-2}.
\end{equation}
Indeed, it is enough to modify the matrix in two steps. First replace the last column by the sum of all columns. Then, the last column is the vector with all components equal to $P_N$. Second, subtract the last row to the other rows. Then, it is very easy to obtain \eqref{detHessianNP2}.

Recall  that in the proof of Lemma \ref{lemma:WaveBeamMelnikovCond} we have shown that the sets $\Lambda$ of Proposition \ref{prop:LambdaSetWaveHom} satisfy
\[
 |n_1^{(r)}|^{\kappa} |n^{(r)}_2|^{\kappa}-|n^{(r)}_3|^{\kappa} |n^{(r)}_4|^{\kappa}\neq 0
\]
(see \eqref{def:CondExtraForPk}). Moreover, in Proposition \ref{prop:LambdaSetWaveHom} it shown that  they also satisfy \eqref{eq:Wave}. These three properties imply
\begin{equation}\label{def:Pk0}
 P_k\neq 0\quad \text{ for all}\quad k=1,\ldots, N
\end{equation}
Therefore, by \eqref{detHessianNP2}, to prove $\det(\mathcal{D}) \neq0$, it only remains to check that
\begin{equation}\label{cond:sum}
 \sum_{k=1}^{N}P_k\neq 0.
\end{equation}
If the set $\Lambda$ obtained in Proposition \ref{prop:LambdaSetWaveHom} satisfies this property, the proof is complete. Now, we show  that if the set $\Lambda$ obtained in these propositions  satisfies $\sum_{k=1}^{N}P_k= 0$, one can modify it slightly so that the new one satisfies \eqref{cond:sum}.
Assume thus that  $\Lambda$ satisfies $\sum_{k=1}^{N}P_k= 0$ and  \eqref{def:Pk0}. Then, we  modify the first resonant tuple $(n_1^{(1)},n_2^{(1)},n_3^{(1)},n_4^{(1)})$ to obtain a set $\Lambda\subset\mathbb{Q}^2$ which satisfies \eqref{cond:sum}. We consider the family of resonant tuples in $\mathbb{Q}^2$, given by
\[
 (\lambda n_1^{(1)}, \lambda n_2^{(1)}, \lambda n_3^{(1)}, \lambda n_4^{(1)}), \qquad \lambda\in\mathbb{Q}\setminus\{0\}.
\]
Then, by \eqref{def:Pkk},
\[
 P_1(\lambda n_1^{(1)}, \lambda n_2^{(1)}, \lambda n_3^{(1)}, \lambda n_4^{(1)})=\lambda^{-\kappa} P_1(n_1^{(1)},n_2^{(1)},n_3^{(1)},n_4^{(1)}).
\]
Then, since $P_1\neq 0$,  $P_1$ is strictly decreasing in $\lambda$ and therefore $\sum_{k=1}^{N}P_k= 0$ can only happen for $\lambda=1$. Thus, one can modify the first rectangle by taking $\lambda\in\mathbb{Q}$ arbitrarily close to 1 and then blowing up the $N$ rectangles so that the new rectangles belong to $\mathbb{Z}^2$. It is clear that with this modification (for $\lambda$ close enough to 1) the properties in Proposition \ref{prop:LambdaSetWaveHom} are still satisfied.

 \subsection{End of the proof of Theorem \ref{thm:traveling_periodic_beating}}
  Lemmas \ref{lemma:WaveBeamMelnikovCondN}, \ref{lemma:HartreeMelnikovCondN} imply that the assumptions required by Proposition \ref{prop:SplittingMulti} hold. Then we can use Proposition \ref{prop:diffusion_orbits} to deduce dynamical results on the resonant models  of equations \eqref{Wave}, \eqref{Beam} and \eqref{Hartree}.

 Let us fix $N\geq 2$, $k\gg 1$ and a sequence $ \omega_1, \dots, \omega_k $ with $\omega_p\in \{1, \dots, N\}$ for $k=1\ldots k$.
 We apply Proposition \ref{prop:diffusion_orbits} choosing $\nu_{\ell}=\varepsilon$ for all $\ell=1, \dots, k$. Then there exist $T>0$ and  an orbit
 \[
 \gamma(t)=(\Psi_1(t), \dots, \Psi_N(t), K_1(t), \dots, K_N (t)), \quad t\in [0, T]
 \]
 of the Hamiltonian $\mathcal{H}$ (see \eqref{redhamN}) which has the following behavior:

There exists  some $\al_p,\beta_p$ satisfying $\al_p<\beta_p<\al_{p+1}$
such that, if one splits the time interval as $[0, T]=\calI_1\cup \calJ_{1, 2}\cup \calI_2\cup \calJ_{2, 3}\cup\dots \cup \calJ_{k-1, k }\cup \calI_k$
with
\[
\calI_p=[\al_p,\beta_p],\quad \calJ_{p,p+1}=[\delta^{-2}\beta_p,\delta^{-2}\al_{p+1}],
\]
the orbit $\gamma(t)$ has two different regimes
\begin{itemize}
 \item \emph{Beating regime:} For $t\in [\alpha_{p}, \beta_{p}]$,  $\gamma(t)$ belongs to an $\varepsilon$-neighborhood of the periodic orbit $\mathtt{P}_{\varepsilon, h, \omega_{p}}$. The orbit $\ga(t)$ spends $\OO(\log\varepsilon)$-time inside this neighborhood and then it leaves it.
  \item \emph{Transition regime:} For $t\in (\beta_{p}, \alpha_{p+1})$,  the orbit $\gamma(t)$ shadows the heteroclinic connection between two hyperbolic periodic orbits $\mathtt{P}_{\varepsilon, h, \omega_{p}}$ and $\mathtt{P}_{\varepsilon, h,  \omega_{p+1}}$.
 \end{itemize}
By  \eqref{def:timeparapo}, the time parameterization of the periodic orbit  $\mathtt{P}_{\varepsilon, h, \omega_{p}}$ satisfies
\[
 K_p(t)=Q(t), \qquad K_i(t)=0\quad \text{for}\quad i\neq p,
\]
where $Q(t)$ is a periodic orbit. Then, the orbit $\gamma(t)$ satisfies that for $t\in [\alpha_{p}, \beta_{p}]$
 \[
|K_p(t)-Q(t-t_p)|\le\eps,\qquad  | K_i(t) |^2\le \varepsilon \qquad \forall  \quad i\neq p,
 \]
for some $t_p>0$.

In the time interval $(\beta_{\ell}, \alpha_{\ell+1})$, the travel along the heteroclinic connection implies that all the actions $|K_i|^2$ experience a change of order $\OO(1)$ (see the proof of Proposition \ref{prop:SplittingMulti}).

By the symplectic reduction performed in Section \ref{sectionsymplecticreduction} there exists $r(t)$ solution of $H_{\Res}$ in \eqref{Hres} with Fourier support $\Lambda$ such that the actions $|r_{n_1^{(\omega_i)}}|^2$ satisfy
\[|r_{n_1^{(\omega_i)}}(t)|^2=| K_{i}(t) |^2\qquad \text{for}\qquad t\in [0,T].\]
This can be seen using Remark \ref{rem:Kcartcoord}, which gives also the behavior of the other actions.

Since the solutions of $H_{\Res}$ are invariant under the scaling \eqref{def:scaling}, we can consider $r^{\delta}(t):=\delta r(\delta^2 t)$. Then, $r^{\delta}(t)$ is also a solution of $H_{\Res}$ for  $t\in [0, \delta^{-2} T]$.

Now it only remains to obtain an orbit for the equations \ref{Wave}, \ref{Beam} and \ref{Hartree} which is close (up to certain changes of coordinates) to $r^{\delta}(t)$. First step is to apply Proposition \ref{approximationargument}. It ensures that there exists $0<\delta_2\ll 1$ such that for all $\delta\in (0, \delta_2)$, there exists a solution $w(t)$ of $H\circ \Gamma\circ \Psi=H_{\Res}+\mathcal{R}'$   such that $w(t)=r^{\delta}(t)+\widetilde{R}(t)$ with $\widetilde{R}(0)=0$, $\| \widetilde{R}(t) \|_{\rho}\lesssim \delta^2$ for $t\in [0, \delta^{-2} T]$. We note that,  by Item $(ii)$ of Proposition \ref{WBNF}, the Birkhoff map $\Gamma$ is $\delta^3$-close to the identity. Finally the transformations \eqref{rotatingcoord1} and \eqref{fase} preserve the modulus of the Fourier coefficients. The last change of coordinates that one has to apply (for the Wave \eqref{Wave} and Beam \eqref{Beam} equations) is  passing from complex coordinates \eqref{def:complexcoordinates} to the original ones. We remark that by \eqref{eq:Wave} if $n_i\in\Lambda$ then $-n_i\notin \Lambda$. Thus
  \[
  u_{n_i}=\frac{1}{\sqrt{2 |j|}} \Psi_{n_i} \qquad n_i\in\Lambda.
  \]

\end{proof}

\appendix

\section{The set $\Lambda$: Proof of Propositions
\ref{prop:LambdaSetHartreeHet} and
\ref{prop:LambdaSetWaveHom}}\label{app:LambdaSet}
The proofs of Propositions
\ref{prop:LambdaSetHartreeHet} and
\ref{prop:LambdaSetWaveHom} are modifications of the proof of the construction
of the set $\Lambda\subset\mathbb{Z}^2$ in \cite{CKSTT}. Note that the
resonances of the cubic nonlinear
Schr\"odinger equation considered in  \cite{CKSTT} are the tuples contained in
$\widetilde\AAA_{bh}$ in \eqref{def:Abhwt}. We summarize the
ideas in
that paper and explain the main modifications.

 In \cite{CKSTT}, the set $\Lambda$ is first constructed in $\mathbb{Q}^2$ and
then scaled to $\Z^2$.  The placement of the modes in
$\Q^2$ is done
inductively: first one places the modes in  $\Lambda_1$, then those in
$\Lambda_2$, checking at each placement that conditions
$1_\Lambda$--$4_\Lambda$ are fulfilled. To this end, one has to  ensure that
the imposed non-degeneracy conditions are open and dense in $\Q^2$ and then for
``most of the placements'' are satisfied. More concretely, the placement goes
as follows
\begin{itemize}
\item \emph{First generation}: In order to place the first generation we
have to chose $2N$ points in $\mathbb{Q}^2$. We choose them inductively checking that they satisfy the non-degeneracy conditions. Condition $2_\Lambda$ and $3_\Lambda$ are satisfied if all the points are chosen different and $1_\Lambda$ will be satisfied by construction. The condition $4_\Lambda$ is equivalent to check that each new point does not make a right angle with two of the modes already placed. That is, consider any segment whose endpoints are two points already chosen. Then, this new point cannot belong either to a line orthogonal to this segment and containing one of the points nor to the circle having this segment as a diameter.

\item \emph{Second generation}: The set $\Lambda_1$ is divided into pairs of
modes, which are the parents of the $N$ nuclear families. For each of these
pairs $n_1,n_3\in\Lambda_1\subset\Q^2$, we place a pair of points
$n_2,n_4\in\Lambda_2$ in such a way that they form a rectangle with the other
pair. That is, we consider the circle having as a diameter the segment between
$n_1$ and $n_3$. Then, the new modes $n_2$,$n_4$ have to be endpoints of another
diameter of this circle. To ensure that $n_2,n_4\in\Q^2$ it is enough to chose
an angle between the two diameters which has rational tangent. Note that those angles are dense. The choice is done checking that the non-degeneracy conditions are verified $1_\Lambda$--$4_\Lambda$ following the same arguments as for the first generation.
\end{itemize}

This placement is \emph{generic} in the following sense
\begin{enumerate}
\item The first generation is placed generically in $\Q^2$, that is anywhere except in the zero set of one polynomial.
\item The placement angles $\theta$ for the second generation are any angle such that  $\tan\theta\in\Q$ except a finite number of values.
\end{enumerate}

%

We use this scheme developed in \cite{CKSTT} to prove Proposition \ref{prop:LambdaSetHartreeHet}.

\begin{proof}[Proof of Proposition \ref{prop:LambdaSetHartreeHet}]
It is a direct consequence of the  scheme developed in \cite{CKSTT}. Indeed, the only extra condition added with respect to \cite{CKSTT} is \eqref{nondegLambda}, which is certainly satisfied by a generic placement in $\Q^2$. Indeed, in placing inductively the new points one has only to avoid a finite number of points.
\end{proof}

\begin{proof}[Proof of Proposition \ref{prop:LambdaSetWaveHom} (Beam case: $\AAA=\AAA_{bh}$)]
The set $\Lambda$ from  Proposition \ref{prop:LambdaSetWaveHom} has
three differences with respect to the one in \cite{CKSTT}:  properties
\eqref{eq:Wave} and \eqref{eq:Wave2} and the fact that the
condition $4_\Lambda$ requires that the modes in $\Lambda$ do not satisfy any
of the resonance conditions in $\AAA_{bh} \setminus \widetilde\AAA_{bh}$
in \eqref{def:Abh}-\eqref{def:Abhwt}. One can easily
check that a generic placement satisfies  \eqref{eq:Wave} and the
$4_\Lambda$ condition. Indeed, in placing the
new modes one has to avoid circles centered at zero with radius equal to the
norm of the already placed modes and the circles and hyperbolas defined by
\eqref{def:Abh} when two modes are fixed.

To build a set $\Lambda$  having property \eqref{eq:Wave2}, we also
follow the ideas in \cite{CKSTT}. We first construct a prototype embedding. That
is  a ``bad'' set $\Lambda_0\in\Q^2$ which is the union of $N$ rectangles but
which however does not satisfy the non-degeneracy conditions. For instance,
consider
\[
 \Lambda_0=\cup_{i=1}^N\mathcal{R}_i,\qquad \mathcal{R}_i=\{(\pm 1,0),(0,\pm 1)\}.
\]
This embedding satisfies \eqref{eq:Wave2} but does not satisfy conditions
$1_\Lambda- 4_\Lambda$ nor \eqref{eq:Wave} (in particular is not
injective). However, by genericity one can chose points in
$\Q^2$ which are $\eps/4$-close to those of $\Lambda_0$  which define a set
$\Lambda$ satisfying that all points are different and also  conditions
$1_\Lambda-4_\Lambda$.

Finally, one needs to apply a scaling and a translation to obtain a set
$\Lambda\subset\Zodd\times\mathbb{Z}$. Indeed, consider  $R\gg 1$ such that
$R\Lambda\subset\mathbb{Z}^2$ and $R\eps\gg 1$. Then, we define
\[
 \Lambda'=2R\Lambda+(1,0)
\]
Then, one can check that for $n'\in \Lambda'$, which is of the form
$n'=2Rn+(1,0)$ with $n\in\Lambda$, taking $R$ large enough,
\[
\left||n'|-2R\right|=2R\left|\left|n+\left(\frac{1}{2R},
0\right)\right|-1\right|\leq \frac{R\eps}{2}+\OO(1)\leq  R\eps.
\]

\end{proof}

\begin{proof}[Proof of Proposition \ref{prop:LambdaSetWaveHom} (Wave case: $\AAA=\AAA_w$)]
 To prove Proposition \ref{prop:LambdaSetWaveHom} one has to take into account
that the resonance condition for the Wave equation \eqref{Wave} given in
\eqref{def:Aw} is different from that of the cubic nonlinear Schr\"odinger,
Hartree \eqref{Hartree} and Beam \eqref{Beam} equations (see \eqref{def:Abh}).
Now four resonant modes $(n_1,n_2,n_3,n_4)\in\mathbf{A}_w$ form a
parallelogram whose vertices are on an ellipse with one focus at zero.
Indeed, if one fixes the modes $n_1$ and $n_3$, then $n_2,n_4$ must belong to
the ellipse defined by
\begin{align} \label{eq:ModesWave}
\left\{n\in\mathbb{Q}^2:|n| + |n-(n_1+n_3)| = |n_1|+|n_3|\right\},
\end{align}
that is, the ellipse with foci at $0$ and $n_1+n_3$ and such that the sum of distances from any point of the ellipse to the two foci is given by $|n_1|+|n_3|$. Note that the case $n_1=-n_3$ trivially corresponds to the circle with center $0$ and radius $|n_1|$.

We need to consider $N$ ellipses of this type with dense rational points to apply the genericity arguments as in the previous cases. The standard ellipse
\begin{equation}\label{def:StandardEllipse}
 \frac{x^2}{a^2}+\frac{y^2}{b^2}=1
\end{equation}
 has dense rational points provided $a,b\in\Q$. To obtain ellipses of the form
\eqref{eq:ModesWave} from \eqref{def:StandardEllipse} one needs to apply a
translation (one could also apply a rotation, but there is no need for it). To
ensure
that the transformed ellipse has dense rational points it is enough to ensure
that the foci of the standard ellipse \eqref{def:StandardEllipse} are rational.
Assuming that $a>b$, the foci are given by $F_\pm=(\pm
c,0)=(\pm\sqrt{a^2-b^2},0)$.

 Therefore, to build  ellipses $\mathcal{E}_j$ with dense rational
points , $j=1\ldots N$, it is enough
to consider $N$ different rational Pythagorean triples
$\{(a_j,b_j,c_j)\}_{j=1}^N$, that is $a_j^2=b_j^2+c_j^2$, $a_j,b_j,c_j\in\Q$,
$a_j>b_j$. Then, one can apply a translation to place one of the foci at 0.
Let us denote by
$F_j$ the focus of the ellipse $\mathcal{E}_j$ which is not at the origin.

Having fixed these ellipses, one can prove Proposition
\ref{prop:LambdaSetWaveHom} following the scheme of
\cite{CKSTT} explained above. One first places each pair of the first generation in one of
the ellipses. To place one pair $\mathcal{E}_j$  it is enough to chose one
rational point $n_{j_1}\in \mathcal{E}_j$. Then, the other mode is obtained
through the equation
\begin{equation*}
n_{j_1}+n_{j_3}=F_j\qquad \text{ (see \eqref{eq:ModesWave}).}
\end{equation*}
Since the ellipses have dense rational points, one can
place the points  such that the conditions $2_\Lambda-4_\Lambda$ and
\eqref{eq:Wave} are satisfied as follows. Let us assume that we have
placed all modes of the first generation for the ellipses $\mathcal{E}_j$,
$j=1\dots j^*-1$ and we want to place the first generation modes in the ellipse
$\mathcal{E}_{j^*}$. We show that we only need  to avoid a finite number of points.
\begin{enumerate}
\item For property \eqref{eq:Wave}, we need to avoid the
intersection points of $\mathcal{E}_{j^*}$ with all the circles centered at the
origin and radius equal to the norm of the already placed modes.
\item For properties $2_\Lambda$, $3_\Lambda$, we need to avoid the  points at
the intersection of $\mathcal{E}_{j^*}$ with the other ellipses
$\mathcal{E}_j$, $j=1\ldots j^*-1, j^*+1\ldots N$.
\item For property $4_\Lambda$, one needs to avoid placing a mode such that
with two previous modes $m,m'$ and an extra mode may create a nuclear family.
To this end we have to avoid the following points:
\begin{itemize}
 \item Case (i) -- $m, m'$ are non adjacent vertices of the parallelogram: One has to avoid the
intersection points between $\mathcal{E}_{j^*}$ and the ellipse defined by $m,m'$,
that is
\[
 |n|+|n-(m+m')|=|m|+|m'|
\]
Note that this ellipse is different from  $\mathcal{E}_{j^*}$ since by Item 2 above, $m,m'\not \in\mathcal{E}_{j^*}$.
\item Case (ii) -- $m, m'$ are adjacent vertices of the parallelogram: One has to avoid the
intersection points  between $\mathcal{E}_{j^*}$ and the hyperbolas defined by
$m,m'$,
that is
\[
 |n|-|n-(m+m')|=\pm |m|\mp|m'|.
\]
\item One can deal analogously with the conditions which arise from avoiding
the resonances conditions in $\AAA_w$ given by
\[
 n_1+n_2+n_3-n_4=0,\quad |n_1|+|n_2|+|n_3|-|n_4|=0,
\]
which either define ellipses or hyperbolas.
\end{itemize}
Note that the two new placed modes and one already placed mode cannot be part
of a nuclear family since the already placed mode does not belong to the
ellipse defined by the two new modes.
\end{enumerate}

One can proceed analogously to place the
second generation. Note that this construction implies Property $1_\Lambda$.

To build a set $\Lambda$ satisfying also condition \eqref{eq:Wave2} it is
enough to chose the rational Pythagorean triples $\{(a_j,b_j,c_j)\}_{j=1}^N$ such that
$|a_j-1|,|b_j-1|,c_j\ll \eps$ in such a way that the ellipses are $\eps$-close
to the unit circle. Note that this is possible since, in particular, rational
Pythagorean triples are dense in the unit circle.

This construction gives a set $\Lambda$ in $\Z^2$. Note that one cannot scale
and translate to construct a set $\Lambda$ in $\Z^2_\odd$ as in the proof of
Proposition \ref{prop:LambdaSetWaveHom} for the Beam case. Indeed, the resonance condition
\eqref{def:Abhwt} is not invariant by translation. Instead, we refine the
construction of the set $\Lambda$ in $\Q^2$ by choosing more carefully the
modes.

To this end, we recall that the rational modes on the unit circle are given by
 \[
z=\left(\frac{p_1}{q}, \frac{p_2}{q}\right)=\left(\frac{m^2-n^2}{m^2+n^2}
,\frac{2mn}{m^2+n^2}\right),\quad m,n\in\Z.
 \]
If one choses $m$ odd and $n$ even  one obtains a point $z\in\Q^2$
whose denominator is odd and their numerators are odd in the first component and
even in the second component. Certainly such points are dense in the unit
circle. After a blow up by $q$ (or any odd multiple of it), one obtains a
point in $\Z^2_\odd$.

We show that one can construct a set $\Lambda\subset\Q^2$ as just done keeping
track of the rational numbers to show that all of them can be chosen of the
form
\begin{equation}\label{def:OddEven}
 z=\left(\frac{\text{odd}}{\text{odd}},\frac{\text{even}}{\text{odd}}\right).
\end{equation}
Indeed, one can choose the ellipses $\mathcal{E}_j$ with  rational Pythagorean
triples
$\{(a_j,b_j,c_j)\}_{j=1}^N$, $a_j,b_j,c_j\in\Q$, such that $a_j$, $b_j$ are of
the form $\text{odd}/\text{odd}$ and $c_j$ is $\text{even}/\text{odd}$. Then,
the rational points on the ellipse $\mathcal{E}_j$ are of the form
\[
 z=\left(c_j+a_j\frac{m^2-n^2}{m^2+n^2}
,b_j\frac{2mn}{m^2+n^2}\right),\quad m,n\in\Z.
\]
Choosing $m$ odd and $n$ even, one has a point $z$ of the form
\eqref{def:OddEven}. Since points of this form are dense in $\mathcal{E}_j$ one
can proceed the construction such that all points in $\Lambda\subset \Q^2$ are
of the form \eqref{def:OddEven}.

Finally, it only remains to multiply by the least common divisor of all points
in $\Lambda$ to obtain a set in $\Z^2_\odd$ and the same happens by the
multiplication by any odd multiple of the least common divisor.

\end{proof}

\bibliography{references}
\bibliographystyle{alpha}

\end{document}